\newtheorem{theo}{Theorem}
\newtheorem*{theo*}{Theorem}
\newtheorem{cl}{Claim}
\newtheorem{lem}{Lemma}
\newtheorem{cor}{Corollary}
\newtheorem{prop}{Propositon}
\newtheorem{conj}{Conjecture}
\newtheorem{prob}{Problem}
\newcommand{\SYT}{\mathop{\mathrm{SYT}}}
\newcommand{\Pa}{\mathop{\mathcal{P}}}
\newcommand{\sch}{\mathop{\mathrm{Sch}}}
\newcommand{\schP}{\mathop{\widetilde{\mathrm{Sch}}}}
\newcommand{\schT}{\mathop{\overline{\mathrm{Sch}}}}
\newcommand{\schB}{\mathop{\underline{\mathrm{Sch}}}}
\newcommand{\A}{\mathop{\mathrm{area}}}
\newcommand{\B}{\mathop{\mathrm{bounce}}}
\newcommand{\N}{\mathop{\mathrm{numph}}}
\newcommand{\dinv}{\mathop{\mathrm{dinv}}}
\newcommand{\R}{\mathop{\mathrm{read}}}
\newcommand{\dd}{\mathop{\mathrm{des}}}
\newcommand{\DD}{\mathop{\mathrm{Des}}}
\newcommand{\m}{\mathop{\mathrm{maj}}}
\newcommand{\inv}{\mathop{\mathrm{inv}}}
\newcommand{\co}{\mathop{\mathrm{co}}}
\newcommand{\hooks}{\mathop{\mathrm{hooks}}}
\newcommand{\hook}{\mathop{\mathrm{hook}}}
\newcommand{\pp}{\mathop{\mathrm{Part}}}
\newcommand{\TT}{\mathop{\mathrm{Touch}}}
\newcommand{\ttt}{\mathop{\mathrm{touch}}}
\title[Toward a Schurification]{Toward a Schurification of Parking Function Formulas via bijections with Young Tableaux}
\author{\href{wallace.nancy@courrier.uqam.ca}{Nancy Wallace}}
\begin{document}
\maketitle
\tableofcontents
\footnote{This work was supported by a scholarship from the NSERC.}
\begin{abstract}
This paper contains a partial answer to the open problem 3.11 of \cite{[H2008]}. That is to find an explicit bijection on Schr\"oder paths that inverts the statistics area and bounce. This paper started as an attempt to write the sum over $m$-Schr\"oder paths with a fix number of diagonal steps into Schur functions in the variables $q$ and $t$. Some  results have been generalized to parking functions, and some bijections were made with standard Young tableaux giving way to partial combinatorial formulas in the basis $s_\mu(q,t)s_\lambda(X)$ for $\nabla(e_n)$ (respectively, $\nabla^m(e_n)$), when $\mu$ and $\lambda$ are hooks (respectively, $\mu$ is of length one). We also give an explicit algorithm that gives all the Schr\"oder paths related to a Schur function $s_\mu(q,t)$ when $\mu$ is of length one. In a sense, it is a partial decomposition of Schr\"oder paths into crystals.
\end{abstract}
\section{Introduction}
In this paper, Proposition~\ref{Prop : prob 3.1}  gives a partial answer to the open problem 3.11 of \cite{[H2008]}. The problem asks for an explicit bijection on Schr\"oder paths that inverts the statistic area and bounce. But the aim of this paper is to decompose parking functions and Schr\"oder paths in terms of the basis $s_\mu(q,t)s_\lambda(X)$. It is then used in \cite{[Wal2019a]} to give explicit combinatorial formulas for the modules of multivariate diagonal harmonics. In other words, the combinatorics of parking functions are used to elevate the understanding of the structure of the modules of diagonal harmonics. This combinatorial representation was first known as the Shuffle Conjecture. It was introduced in \cite{[HHLRU2005]} and proven by Carlson and Mellit in \cite{[CM2015]}, \cite{[M2016]}. It was shown beforehand in \cite{[GH1996a]} and \cite{[H2002]} that the Frobenius transformation of its graded characters may be expressed as $\nabla^m(e_n)$, where $\nabla$ is the Macdonald eigenoperator introduced in \cite{[BG1999]}, and $e_n$ is the $n$-th elementary symmetric function, both recalled in Section~\ref{Sec : tools}, along with classical combinatorial tools. 
More precisely, we will give a partial decomposition of parking functions and Schr\"oder paths in terms of the basis $s_\mu(q,t)s_\lambda(X)$. By proving the following:
\begin{theo}\label{The : main} If  $\mu\in\{(d,1^{n-d})~|~1\leq d\leq n\}$ and $\nu\vdash n$, then:
  \begin{equation}\label{Eq : 1 de the principale} \langle \nabla(e_n), s_{\mu}\rangle|_{\hooks}=\sum_{\tau\in \SYT(\mu)}  s_{\m(\tau)}(q,t)+\sum_{i=2}^{\dd(\tau)} s_{\m(\tau)-i,1}(q,t),
  \end{equation}
    \begin{equation}\label{Eq : 2 de the principale}\langle \nabla^m(e_n), s_\nu \rangle|_{1 \pp}=\sum_{\tau\in \SYT(\nu)}  s_{m\binom{n}{2}-\m(\tau')}(q,0)=\sum_{\tau\in \SYT(\nu)}  s_{m\binom{n}{2}-\m(\tau')}(q,0),
  \end{equation}
  and:
  \begin{equation}\label{Eq : 3 de the principale}\langle \nabla^m(e_n), e_n\rangle|_{\hooks}=  s_{m\binom{n}{2}}(q,t)+\sum_{i=2}^{n-1} s_{m\binom{n}{2}-i,1}(q,t).
  \end{equation}
\end{theo}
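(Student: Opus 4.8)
The plan is to treat all three identities through a common pipeline: first convert each inner product into a weighted sum over a combinatorial family via the Shuffle Theorem of \cite{[HHLRU2005]}, \cite{[CM2015]}, then transport that sum to standard Young tableaux by an explicit statistic-preserving bijection, and finally recognize the resulting $q,t$-generating function as a sum of two-variable Schur polynomials, keeping only the hook (resp. one-part) terms selected by $|_{\hooks}$ (resp. $|_{1\pp}$).

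First I would expand $\nabla^m(e_n)=\sum_{P} q^{\dinv(P)}t^{\A(P)}F_{\DD(P)}(X)$ over parking functions in Gessel's fundamental basis, with the descent data read off from $P$. Pairing with $s_\mu$ and using $s_\mu=\sum_{\tau\in\SYT(\mu)}F_{\DD(\tau)}$ together with the transition between the fundamental and Schur expansions, the coefficient $\langle\nabla^m(e_n),s_\mu\rangle$ becomes a sum of $q^{\dinv}t^{\A}$ over those parking functions whose inverse descent data is matched, via $s_\mu=\sum_\tau F_{\DD(\tau)}$, to a $\tau\in\SYT(\mu)$. For $\langle\nabla^m(e_n),e_n\rangle$ this collapses to Dyck and Schr\"oder paths since $e_n=s_{1^n}$ forces the single column tableau, and for the one-part restriction of $\langle\nabla^m(e_n),s_\nu\rangle$ only the row-shaped target survives.

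Next comes the bijective heart. For each fixed $\tau$ I would build a bijection $\Phi_\tau$ from the fiber of paths or parking functions onto a set whose bistatistic $(\A,\dinv)$ is governed by $\m(\tau)$ and $\dd(\tau)$: the principal contribution lies on the antidiagonal $\A+\dinv=\m(\tau)$, while corrections indexed by $2\le i\le\dd(\tau)$ sit on lower antidiagonals. Recalling that in two variables $s_{(a)}(q,t)=\sum_{j=0}^{a}q^j t^{a-j}$ and $s_{(a,1)}(q,t)=\sum_{j=1}^{a}q^j t^{a+1-j}$ are the only nonzero hook Schur polynomials, the monomials produced by each fiber assemble precisely into the row term $s_{\m(\tau)}(q,t)$ plus the hooks $s_{\m(\tau)-i,1}(q,t)$. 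The $q\leftrightarrow t$ symmetry needed to promote these staircases of monomials into genuine Schur polynomials is exactly the area/bounce-inverting involution of Proposition~\ref{Prop : prob 3.1}, which I would apply fiberwise. For the $t=0$ specialization of \eqref{Eq : 2 de the principale} the involution is unnecessary: setting $t=0$ kills area, $s_k(q,0)=q^k$, and the surviving $q$-monomial records $m\binom{n}{2}-\m(\tau')$, obtained by passing to conjugate tableaux and matching $\m(\tau')$ against the complemented $\dinv$. Identity \eqref{Eq : 3 de the principale}, being the $e_n$-case, has fibers indexed by a single tableau shape, so the closed form drops out with $\dd$ replaced by its maximal value $n-1$.

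I expect the main obstacle to be the assembly step: proving that the fiber contributions coalesce into \emph{exactly} the stated hook Schur polynomials, with the correct shapes $(\m(\tau)-i,1)$ and the correct range $2\le i\le\dd(\tau)$. This is where the fine structure of $\dinv$, and not merely the antidiagonal degree $\A+\dinv$, must be pinned down, and where the $q\leftrightarrow t$ symmetry must be shown to act term-by-term rather than only on the total generating function. Any mismatch here would leave spurious two-row, non-hook terms; the delicate point is to verify that such terms, while they may exist in the full expansion, never intrude on the hook part extracted by $|_{\hooks}$, so that the restricted identities hold as stated.
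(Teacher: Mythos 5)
Your skeleton (combinatorial expansion of the inner products, statistic-preserving bijections to standard Young tableaux, reassembly into two-variable Schur polynomials) matches the paper's, but the assembly step --- which you correctly flag as the main obstacle --- is resolved by a tool you do not actually have. You propose to apply the area/bounce-inverting involution of Proposition~\ref{Prop : prob 3.1} fiberwise to force the $q\leftrightarrow t$ symmetry term by term. That involution is only defined on $\varphi(\{NE,D\}^n)$, the union of the chains grown by the algorithm $\varphi$ out of area-$0$ paths, and those chains account precisely for the one-part terms $s_{\B(\gamma)}(q,t)$ (Proposition~\ref{Prop : sum algo 1 part}); no analogous chains or involution through the area-$1$ paths exist in the paper, and constructing them is exactly the portion of Haglund's problem 3.11 left open. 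The paper closes the gap by a different mechanism: since $\nabla(e_n)$ is the character of a $GL_2\times\mathbb{S}_n$-module, $\langle\nabla(e_n),s_\mu\rangle$ is a priori a \emph{nonnegative} sum of $s_{a,b}(q,t)$, and because the monomial $q^ct^0$ occurs only in $s_c(q,t)$ while $q^ct^1$ occurs only in $s_{c+1}(q,t)$ and $s_{c,1}(q,t)$, it suffices to enumerate the area-$0$ and area-$1$ paths with their bounce. This is done by the bijections $\mathcal{M}_{n,d}$ with $\B=\m(\tau)$ (Proposition~\ref{Prop : bij}) and $\mathcal{Q}_{n,d}(\tau,i)=\Pi_{n,d-1}^i(\mathcal{S}_{n,d}(\tau))$ with $\B=\m(\tau)-\dd(\tau)+i$ (Proposition~\ref{Prop : bij chemins tableaux}), together with a step entirely missing from your plan: the exclusion of the area-$1$ paths of shape $\gamma'NED^jNNEE$ or $\gamma'NDED^jNE$ (the case $i=n-d-1$; Lemma~\ref{Lem :  ens 1 part aire=1} and Corollary~\ref{Cor : aire 1 vs 1 part}), whose monomials are already consumed by the one-part Schur functions. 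That exclusion is what produces the range $2\le i\le\dd(\tau)$ rather than $1\le i\le\dd(\tau)$; without it you would overcount by one hook per tableau, and your own worry about ``intruding'' terms would materialize.

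A second concrete gap concerns $m>1$. For Equations~\eqref{Eq : 2 de the principale} and~\eqref{Eq : 3 de the principale} the bounce statistic is undefined, so your bounce-chain pipeline has no meaning there. The paper's actual content for~\eqref{Eq : 2 de the principale} is the characterization of parking functions with $\dinv=0$ (Proposition~\ref{Prop : critere dinv=0}, including $\R(\gamma,w)=w^{-1}$), the area formula $m\binom{n}{2}-\dd(w)n+\m(w)$ (Proposition~\ref{Prop : aire parking}), and the passage to tableaux via Equation~\eqref{Eq : S-L,H}, yielding Proposition~\ref{Prop : 1 part parking}; your proposal asserts the outcome (``record $m\binom{n}{2}-\m(\tau')$'') without any substitute for these characterizations. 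For~\eqref{Eq : 3 de the principale} your claim that the $e_n$ case ``drops out'' with $\dd$ maximal hides the real difficulty: the hook formula~\eqref{Eq : 1 de the principale} is actually \emph{false} for $m>1$ in general (Lemma~\ref{Lem : m>2 dinv=1} and the remark preceding Conjecture~\ref{Conj : nabla(e_n)}), so the $e_n$ case must be argued separately, which the paper does via the $\dinv=1$ characterization valid for all $m$ (Corollary~\ref{Cor : m>=1 dinv=1}) and the embedding $\gamma\mapsto\gamma E^{(m-1)n}$ shifting area by $(m-1)\binom{n}{2}$. In short, your route would go through for the first sum in~\eqref{Eq : 1 de the principale}, but the hook corrections and both $m>1$ statements require the positivity argument, the $i=n-d-1$ exclusion, and the $\dinv\le 1$ classifications, none of which your proposal supplies.
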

This will be done by characterizing particular parking functions, in Section~\ref{Sec : parking}, which leads to Equation\eqref{Eq : 2 de the principale}. In Section~\ref{Sec : schroder}, we restrict the characterization on Schr\"oder paths. In Section~\ref{Sec : bijections}, we give bijections between subsets of Schr\"oder paths and Standard Young tableaux, and use them to prove Equation~\eqref{Eq : 1 de the principale} and Equation~\eqref{Eq : 3 de the principale}.
Moreover, in Section~\ref{Sec : algo}, we exhibit an explicit algorithm that gives all the Schr\"oder paths associated to a Schur function in the variables $q$ and $t$ when $\mu$ is of length one. We will briefly explain, in Section~\ref{Sec : cristaux}, what it means in terms of Crystal decomposition. We end with a list of problems to solve in Section~\ref{Sec : conclu}. Finally, in Section~\ref{Sec : combi chemins}, we will recall notions on path combinatorics.
  \section{Combinatorial Tools}\label{Sec : tools}
 
The notions discussed in this section are classical and are recalled to set notations.
An alphabet, $A$, is a set. The elements of that set are called \begin{bf}letters\end{bf}. A \begin{bf}word\end{bf} is a finite sequence of elements of $A$, we usually omit the parentheses and the commas. The empty word is denoted $\varepsilon$. The number of letters in a word $w$ is called the \begin{bf}length\end{bf}, denoted $|w|$, the number of occurrences of the letter $a$ in $w$ is denoted $|w|_a$. The set of words of length $n$ in the alphabet $A$ is denoted $A^n$, we denote $A^*$ the set $\cup_n A^n$. A \begin{bf}factor\end{bf} of $w$ is a consecutive subsequence of $w$. Additionally, if we are interested in word ending with a fixed factor $u$, we will denote the set $A^*u$, and $u$ is called a suffix. If we want those words to be of length, $n+|u|$ we will denote the set $A^nu$. Likewise, a factor at the beginning of a word is called a prefix, and the set of words with prefix $u$ is denoted $uA^*$. For a word $w=w_1w_2\cdots w_k$, $w^n$ is the concatenation on $m$ copies of $w$, and $w^{-1}=w_k\cdots w_2 w_1$. For two words $u$ and $w$ the set $u\shuffle w$ is the set containing all words such that $u$ and $w$ are subsequences. We call these words \begin{bf}shuffles\end{bf}. 
A \begin{bf}permutation\end{bf} of $n$ can be represented as words of $\{1,\ldots,n\}^n$ with all distinct letters. The \begin{bf}descent set of a permutation $w=w_1\cdots w_n$, \end{bf} denoted $\DD(w)$, is the set of $i$'s such that $w_i>w_{i+1}$. The cardinality of the set will be denoted $\dd(w)$. The \begin{bf}major index of a permutation\end{bf}, denoted $\m(w)$, is by definition $\m(w)=\sum_{i\in\DD(w)} i$. To avoid confusion we will write the \begin{bf}inverse of a permutation\end{bf} $w$, $\textrm{inv}(w)$.
 
 A \begin{bf}partition\end{bf} of $n$ is a decreasing sequence of positive integers it can be represented by a Ferrers diagram (see Figure~\ref{Fig: diagram}). Each number in the sequence is called a \begin{bf}part\end{bf}, and, if it has $k$ parts, it is of \begin{bf}length\end{bf} $k$ denoted $\ell(\lambda)=k$. If $\lambda=\lambda_1,\cdots,\lambda_k$ and $n=\sum_i\lambda_i$, we say $\lambda$ is of \begin{bf}size\end{bf} $n$, denoted $|\lambda|=n$. Although the notation $|\cdot|$ is used for words and partitions, it will be clear by context which one is used.  For $\lambda$ a  \begin{bf}Ferrers diagram \end{bf} of shape $\lambda=\lambda_1,\lambda_2,\cdots,\lambda_k$ is a left justified pile of boxes having $\lambda_i$ boxes in the $i$-th row. We will use the French notation; hence, the second row lies on top of the first row (see Figure~\ref{Fig: diagram}). We can see them as a subset of $\mathbb{N}\times\mathbb{N}$ if we put the bottom left corner of the diagram to the origin. In this setting, we can associate the bottom left corner of a box to the coordinate it lies on. We say a partition is  \begin{bf}hook-shaped\end{bf} if it has the shape $(a,1, \cdots,1)=(a,1^k)$, where $a, k \in \mathbb{N}$. The  \begin{bf}conjugate \end{bf} of a partition $\lambda$, (or a diagram) is denoted $\lambda'$, and is its reflection through the line $x=y$ (see Figures~\ref{Fig: conjugate}). 
\begin{figure}[!htb]
\begin{minipage}{4.7cm}
\centering
\begin{tikzpicture}[scale=.5]
\draw (0,0)--(4,0)--(4,1)--(2,1)--(2,3)--(1,3)--(1,5)--(0,5)--(0,0);
\draw (1,0)--(1,3);
\draw(2,0)--(2,1);
\draw (3,0)--(3,1);
\draw (4,0)--(4,1);
\draw (0,1)--(2,1);
\draw (0,2)--(2,2);
\draw (0,3)--(1,3);
\draw (0,4)--(1,4);
\end{tikzpicture}
\caption{
$\lambda=42211$}
\label{Fig: diagram}
\end{minipage}
\begin{minipage}{4.7cm}
\centering
\begin{tikzpicture}[scale=.5]
\draw (0,0)--(5,0)--(5,1)--(3,1)--(3,2)--(1,2)--(1,4)--(0,4)--(0,0);
\draw (1,0)--(1,2);
\draw(2,0)--(2,2);
\draw (3,0)--(3,1);
\draw (4,0)--(4,1);
\draw (0,1)--(3,1);
\draw (0,2)--(1,2);
\draw (0,3)--(1,3);
\draw (0,4)--(1,4);
\end{tikzpicture}
\caption{
$\lambda'=5311$}
\label{Fig: conjugate}
\end{minipage}
\begin{minipage}{7.7cm}
\centering
\begin{tikzpicture}[scale=.5]
\draw (0,0)--(5,0)--(5,1)--(3,1)--(3,2)--(1,2)--(1,4)--(0,4)--(0,0);
\draw (1,0)--(1,2);
\draw(2,0)--(2,2);
\draw (3,0)--(3,1);
\draw (4,0)--(4,1);
\draw (0,1)--(3,1);
\draw (0,2)--(1,2);
\draw (0,3)--(1,3);
\draw (0,4)--(1,4);
\node(1) at (.5,.5){$1$};
\node(2) at (1.5,.5){$2$};
\node(3) at (.5,1.5){$3$};
\node(4) at (.5,2.5){$4$};
\node(5) at (2.5,.5){$5$};
\node(6) at (3.5,.5){$6$};
\node(7) at (4.5,.5){$7$};
\node(8) at (1.5,1.5){$8$};
\node(9) at (2.5,1.5){$9$};
\node(10) at (.5,3.5){$10$};
\end{tikzpicture}
\caption{$\tau \in \SYT(5311)$, with descent set $\{2,3,7,9\}$ and major index $25$}\label{Fig : tableau}
\end{minipage}
\end{figure}
A \begin{bf}tableau\end{bf} is a filling of a diagram by positive integers, the number in each box is called an \begin{bf}entry\end{bf}. The \begin{bf}size\end{bf} of a tableau relates to the size of the diagram it fills. It is said to be a  \begin{bf}semi-standard Young tableau\end{bf} if  all entries are weakly increasing in rows and strictly increasing in columns. A \begin{bf}standard Young tableau\end{bf} is a tableaux of  size $n$, such that all numbers from $1$ to $n$ appear exactly once and all entries are strictly increasing in rows and columns. If a tableau is a filling of the diagram associated to the  partition $\lambda$,   it is said to be of \begin{bf}shape\end{bf} $\lambda$. The set of standard Young tableaux of shape $\lambda$ is denoted $\SYT(\lambda)$. The \begin{bf}descent set of a tableau\end{bf} $\tau$, denoted $\DD(\tau)$ is the set of entries $i$ such that $i+1$ lies in a higher row. The cardinality of the descent set of $\tau$ is denoted $\dd(\tau)$, and the sum of the elements in the descent set is the \begin{bf}major index\end{bf} denoted $\m(\tau)$ (see Figure~\ref{Fig : tableau}). Again, it will be clear by context if the descent set and the major index are used on words or tableaux. Since each box of $\tau$ is associated to its own entry, we will write $c\in \tau$ when we refer to the entry $c$ in the tableau $\tau$. We will use the notation $x_\tau$ for the monomial $\prod_{c\in\tau}x_c$.

For a possibly infinite set of variables, $X=\{x_1,\ldots,x_n\}$, the \begin{bf}elementary symmetric functions\end{bf} $e_n(X)$ are the sum of all square-free monomials of degree $n$ in the set of variables $X$. The symmetric function $e_\lambda$ is simply $e_{\lambda_1}e_{\lambda_2}\cdots e_{\lambda_{\ell(\lambda)}}$.  The elementary symmetric functions form a basis of the symmetric functions. Another basis is the set of Schur functions. For $\lambda$ a partition the \begin{bf}Schur function\end{bf} $s_\lambda(X)=\sum x_\tau$, where the sum is over all semi-standard Young tableau of shape $\lambda$. The Schur basis in the $X$ variables is self-dual for the Hall scalar product, denoted $\langle \--,\-- \rangle$. We will use this notation when we want to display the coefficient of a particular Schur function. Note that the Schur functions in the variables $q$ and $t$ are coefficients and can go in and out of the scalar product. We will sometimes call Schur functions index by partitions that are hook-shaped, hook-shaped Schur functions or, simply, hook Schur functions. It will also be useful to remember that $e_n=s_{1^n}$. Furthermore, the \begin{bf}complete homogeneous symmetric functions\end{bf} are a basis such that $h_n(X)=s_{(n)}(X)$ and $h_\lambda:=h_{\lambda_1}h_{\lambda_2}\cdots h_{\lambda_{\ell(\lambda)}}$. We simply write $e_\lambda$ for $e_n(X)$, $s_\lambda$ for $s_\lambda(X)$ and $h_\lambda$ for $h_\lambda(X)$ (not for $e_\lambda(q,t)$, $h_\lambda(q,t)$ or $s\lambda(q,t)$). A curious reader could look at \cite{[Mac1995]}. 

The modified Macdonald polynomials $\tilde{H}_\mu(X;q,t)$ form another base of the ring of symmetric functions. In \cite{[BG1999]} Bergeron and Garsia introduce the operator $\nabla$ defined with the modified Macdonald polynomials as eigenfunctions, with eigenvalues $\prod_{(i,j)\in\mu}q^it^ j$. The Shuffle Theorem, proven by Carlson and Mellit (see \cite{[CM2015]} and \cite{[M2016]}), gives a combinatorial formula for $\nabla^m(e_n)$. This formula uses path combinatorics.
\section{Path Combinatorics}\label{Sec : combi chemins}
 
 Before we can state the Shuffle Theorem, we need more classical definitions, relating to path combinatorics. More details on these classical notions can be found in \cite{[H2008]}.
 
The following $q$-analogues will be very useful:
\begin{equation*}
[n]_q:=1+q+q^2+\cdots+q^{n-1}, \vspace{20pt}  [n]!_q:=\prod{i=1}^n [i]_q, \hspace{20pt} \text{and}\hspace{20pt} \begin{bmatrix}n\\k\end{bmatrix}_q:=\frac{[n]!_q}{[n-k]!_q[k]!_q}.
\end{equation*}
Let $\mathcal{C}^n_k$ be the set of paths composed of north and east steps, in an $(n-k)\times k$ grid starting at the bottom left corner. The \begin{bf}area\end{bf} of a path is the number of boxes under the path. A classical result relates the Gaussian Polynomials to path combinatorics. Indeed, $\begin{bmatrix}n\\k\end{bmatrix}_q=\sum_{\gamma\in \mathcal{C}^n_k}q^{\A(\gamma)}$. A path $\gamma \in \mathcal{C}_k^n$ can be identified with a word, $w_\gamma$ in $\{N,E\}^n$ such that $|w_\gamma|_E=k$. To facilitate reading we will frequently refer to $\gamma$ when we talk about $w_\gamma$ (see Figure~\ref{Fig : aire} for an example). 
\begin{figure}[!htb]
\centering
\begin{tikzpicture}[scale=.75]
\filldraw[pink] (0,0)--(2,0)--(2,1)--(3,1)--(3,5)--(4,5)--(4,0)--(0,0);
\draw[gray,very thin] (0,0) grid (4,5);
\draw[very thick] (0,0)--(2,0)--(2,1)--(3,1)--(3,5)--(4,5);
\end{tikzpicture}
\caption{Path of area $6$ in a $5\times 4$ grid, with word representation $EENENNNNE$}\label{Fig : aire}
\end{figure}
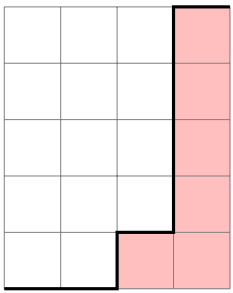

In an $n\times m$ grid, \begin{bf}the main diagonal\end{bf} is the diagonal starting at the bottom left corner and finishing at the top right corner. A \begin{bf}Dyck path\end{bf} of size $(n,m)$ is a path composed of north and east steps, starting at the bottom left corner  in an $n\times m$ grid such that the path always stays over the main diagonal. The set of such paths is denoted $\mathcal{D}_{n,m}$. A classical result makes it possible to represent Dyck paths by words in $\{N,E\}^*$ such that for all $\gamma_i$, prefix of $\gamma$, we have $|\gamma_i|_N\geq \frac{m}{n} |\gamma_i|_E$. The lines of the grid are numbered from bottom to top. A line $i$ is said to \begin{bf}contain an east step\end{bf} if the factor starting with the $i$-th north step and ending the letter before the $i+1$-th north step contains an east step. A \begin{bf}column of a path\end{bf}, $\gamma$, is a factor $N^jE^k$ such that $\gamma=uN^jE^kw$, where $u,w \in N\{N,E\}E\cup\{\varepsilon\}$. For example, in Figure~\ref{Fig : aire cat}, the path has 3 columns.

The \begin{bf}area of a Dyck path\end{bf} will be the number of boxes under the path and over the main diagonal (see Figure~\ref{Fig : aire cat} for an example). The area of the line $i$, denoted $a_i$, is the number of boxes in the line $i$ that are between the path and the main diagonal. Obviously, the area of a path is the sum of the $a_i$'s. The path $\gamma$ is said to have a \begin{bf}return to the main diagonal\end{bf} if there is $\gamma_i$, a non-trivial prefix of $\gamma$, such that $\gamma_i$ is a Dyck path and the end point of $\gamma_i$ lies on the main diagonal of $\gamma$.  The \begin{bf}touch sequence of a path\end{bf} $\gamma$, denoted $\TT(\gamma)$, is defined  as a sequence $(\gamma_1,\ldots,\gamma_k)$ of factors of $\gamma$ such that $\gamma=\gamma_1\cdots\gamma_k$, all $\gamma_i$ are Dyck paths, all  $\gamma_i$ contain no return to the main diagonal, and the end points of $\gamma_i$ returns to the main diagonal. Usually, the sequence $(\frac{1}{2}|\gamma_1|,\ldots,\frac{1}{2}|\gamma_k|)$ defines the touch vector $\ttt(\gamma)$. The touch vector contains all the  \begin{bf}touch points\end{bf}. For example, in Figure~\ref{Fig : aire cat}, $\TT(NNENNEEENE)=(NNENNEEE,NE)$ and $\ttt(NNENNEEENE)=(4,1)$. The \begin{bf}bounce path\end{bf} of a path $\gamma \in \mathcal{D}_{n,n}$  will be the Dyck path that remains under the path $\gamma$ and changes direction if and only if it touches the path $\gamma$ or the main diagonal. The \begin{bf}bounce vector\end{bf} is the vector containing the positions of the return to the main diagonal, starting from the top, of  the bounce path. For the bounce vector, the lines are numbered from the top starting at $0$. Finally, the \begin{bf}bounce statistic\end{bf} is the sum of the integer in the bounce vector minus $n$. It is, usually, simply called bounce (see Figure~\ref{Fig : bounce cat} for an example) . Note that the bounce statistic is not defined for Dyck paths in an $n\times m$ grid with $m\not=n$. In theses cases we use the diagonal inversion statistic which will be discussed at the end of this section.
\begin{figure}[!htb]
\begin{minipage}{8.2cm}
\centering
\begin{tikzpicture}[scale=.5]
\filldraw[pink] (0,1)--(0,2)--(1,2)--(1,4)--(3,4)--(3,3)--(2,3)--(2,2)--(1,2)--(1,1)--(0,1)--(0,2);
\draw[gray,very thin] (0,0) grid (5,5);
\draw[very thick] (0,0)--(0,2)--(1,2)--(1,4)--(4,4)--(4,5)--(5,5)--(0,0);
\node(1) at (6,.5){$1$};
\node(2) at (6,1.5){$2$};
\node(3) at (6,2.5){$3$};
\node(4) at (6,3.5){$4$};
\node(5) at (6,4.5){$5$};
\node(i) at (6,5.5){$i$};
\node(a1) at (7,.5){$0$};
\node(a2) at (7,1.5){$1$};
\node(a3) at (7,2.5){$1$};
\node(a4) at (7,3.5){$2$};
\node(a5) at (7,4.5){$0$};
\node(ai) at (7,5.5){$a_i$};
\end{tikzpicture}
\caption{Dyck path of area $4$, and word representation $NNENNEEENE$}\label{Fig : aire cat}
\end{minipage}
\begin{minipage}{7.7cm}
\centering
\begin{tikzpicture}[scale=.5]
\draw[gray,very thin] (0,0) grid (5,5);
\draw[very thick] (0,0)--(0,2)--(1,2)--(1,4)--(4,4)--(4,5)--(5,5)--(0,0);
\draw[red, thick, dashed] (0.1,0)--(0.1,1.9)--(2,2)--(2,3.9)--(4.1,3.9)--(4.1,4.9)--(5,4.9);
\node(5) at (6,0){$5$};
\node(4) at (6,1){$4$};
\node(3) at (6,2){$3$};
\node(2) at (6,3){$2$};
\node(1) at (6,4){$1$};
\node(0) at (6,5){$0$};
\node(r) at (10,2.8){\textcolor{blue}{return to diagonal}};
\node(r1) at (2,2){\textcolor{blue}{$\bullet$}};
\node(r2) at (4,4){\textcolor{blue}{$\bullet$}};
\node(p) at (-2,3){\textcolor{red}{peak}};
\node(p1) at (0,2){\textcolor{red}{$\bullet$}};
\node(p2) at (2,4){\textcolor{red}{$\bullet$}};
\draw[->,red] (p)--(p1);
\draw[->,red] (p)--(p2);
\draw[->,blue] (r)--(r1);
\draw[->,blue] (r)--(r2);
\end{tikzpicture}
\caption{Bounce vector is $(0,1,3,5)$ and bounce is $4$.}\label{Fig : bounce cat}
\end{minipage}
\end{figure}

A \begin{bf}Schr\"oder path\end{bf} of size $(n,rn)$  is a path composed of north, east and diagonal steps in an $n\times rn$ grid such that the path always stay over the main diagonal starting at the bottom left corner. In respect to Cartesian coordinates, a diagonal step corresponds to adding $(1,1)$ . The set of paths containing $d$ diagonal steps is denoted $\sch_{n,d}^{(r)}$. These paths can also be represented by words in the alphabet $\{N,E,D\}^*$ such that for all prefix $\gamma_i$ of $\gamma$ we have $|\gamma_i|_N\geq r|\gamma|_E$. Clearly $\mathcal{D}_{n,rn}=\sch_{n,0}^{(r)}$. Moreover, the path obtained by deleting all diagonal steps in a Schr\"oder path is a Dyck path. For a Schr\"oder path, $\pi$ this new path will be denoted $\Gamma(\pi)$. For example, the path $\pi$ in Figure ~\ref{Fig : aire sch}, is such that $\Gamma(\pi)$ is the path seen in Figure~\ref{Fig : aire cat}. We will also frequently use another subset of Schr\"oder paths:
\begin{equation*} 
 {\schP}_{n,d}=\{ \gamma \in {\sch}_{n,d} ~|~ \gamma=wNE, w \in \{D,N,E\}^*\}={\sch}_{n,d}\cap\{D,N,E\}^*NE
 \end{equation*} 
 
 The \begin{bf}area statistic of a Schr\"oder path\end{bf} is fairly the same as the other definitions of the area statistic. Instead of counting the squares, we count the number of \begin{bf}lower triangles\end{bf} under the path and over the main diagonal. Where a lower triangle is the lower half of a square cut in two starting by the botom left corner and ending at the top right corner (see Figure~\ref{Fig : aire sch} for an example). 
 
In \cite{[H2008]}, Haglund defines a bounce statistic for  Schr\"oder paths in an $n\times n$ grid. We first define the set of \begin{bf}peaks\end{bf} of the path, $\Gamma(\gamma)$. These are the set of lattice points at the beginning of an east step such that the bounce path of  $\Gamma(\gamma)$ switches from a north step to an east step. By extension the peaks of $\gamma$ are the lattice points found by reinserting the diagonal steps in $\Gamma(\gamma)$. The number of peaks of the path $\gamma$, with multiplicity, that lie under each diagonal step is the statistic \begin{bf}numph\end{bf}, denoted $\N(\gamma)$. The \begin{bf}bounce statistic\end{bf} will be extended to a Schr\"oder path, $\gamma$,  by the formula (see Figure~\ref{Fig : bounce sch} for an example):
\begin{equation*}
\B(\gamma)=\B(\Gamma(\gamma))+\N(\gamma)
\end{equation*}
Finally, touch points can be defined for Schr\"oder path, simply change Dyck path for Schr\"oder paths in the definition.
\begin{figure}[!htb]
\begin{minipage}{8.2cm}
\centering
\begin{tikzpicture}[scale=.5]
\filldraw[pink] (0,1)--(1,1)--(1,2);
\filldraw[pink] (1,2)--(2,2)--(2,3);
\filldraw[pink] (2,3)--(3,3)--(3,4);
\filldraw[pink] (3,4)--(4,4)--(4,5);
\filldraw[pink] (2,4)--(3,4)--(3,5);
\filldraw[pink] (4,5)--(5,5)--(5,6);
\filldraw[pink] (3,5)--(4,5)--(4,6);
\filldraw[pink] (5,6)--(6,6)--(6,7);
\filldraw[pink] (4,6)--(5,6)--(5,7);
\draw[gray,very thin] (0,0) grid (9,9);
\draw[very thick] (0,0)--(0,1)--(1,2)--(1,3)--(2,3)--(2,4)--(2,5)--(3,5)--(5,7)--(7,7)--(8,8)--(8,9)--(9,9)--(0,0);
\node(1) at (10,.5){$1$};
\node(2) at (10,1.5){$2$};
\node(3) at (10,2.5){$3$};
\node(4) at (10,3.5){$4$};
\node(5) at (10,4.5){$5$};
\node(6) at (10,5.5){$6$};
\node(7) at (10,6.5){$7$};
\node(8) at (10,7.5){$8$};
\node(9) at (10,8.5){$9$};
\node(i) at (10,9.5){$i$};
\node(a1) at (11,.5){$0$};
\node(a2) at (11,1.5){$1$};
\node(a3) at (11,2.5){$1$};
\node(a4) at (11,3.5){$1$};
\node(a5) at (11,4.5){$2$};
\node(a6) at (11,5.5){$2$};
\node(a7) at (11,6.5){$2$};
\node(a8) at (11,7.5){$0$};
\node(a9) at (11,8.5){$0$};
\node(ai) at (11,9.5){$a_i$};
\end{tikzpicture}
\caption{Schr\"oder path of area $9$, and word representation $NDNENNEDDEEDNE$}\label{Fig : aire sch}
\end{minipage}
\begin{minipage}{8.7cm}
\centering
\begin{tikzpicture}[scale=.5]
\draw[gray,very thin] (0,0) grid (9,9);
\draw[very thick] (0,0)--(0,1)--(1,2)--(1,3)--(2,3)--(2,4)--(2,5)--(3,5)--(5,7)--(7,7)--(8,8)--(8,9)--(9,9)--(0,0);
\node(p) at (-1.5,5){\textcolor{red}{peak}};
\node(p1) at (1,3){\textcolor{red}{$\bullet$}};
\node(p2) at (5,7){\textcolor{red}{$\bullet$}};
\draw[->,red] (p)--(p1);
\draw[->,red] (p)--(p2);
\end{tikzpicture}
\caption{For this path, $\gamma$, $\N(\gamma)=4$ and $\B(\gamma)=8$.}\label{Fig : bounce sch}
\end{minipage}
\end{figure}
 
The generating function of the Schr\"oder pathsis defined by:
\begin{equation*} 
{\sch}_{n,d}(q,t)=\sum_{\gamma\in {\sch}_{n,d}}q^{\B(\gamma)}t^{\A(\gamma)},
\end{equation*}
and the generating function of the Schr\"oder paths ending with $NE$ is defined by:
\begin{equation*} 
{\schP}_{n,d}(q,t)=\sum_{\gamma\in {\schP}_{n,d}}q^{\B(\gamma)}t^{\A(\gamma)}.
\end{equation*}
Since the subset $\schP$ is chosen to work with the bounce statistic which is not defined for $n\times nm$ grids when $m\not=1$, we will not define $\sch_{n,d}^{(m)}$. We will define ${\schP}_{n,d}^{(m)}(q,t)$ as follows:
\begin{equation*}
 {\schP}_{n,d}^{(m)}(q,t)=\sum_{k=d}^n (-1)^{k-d}{\schP}_{n,k}^{(m)}(q,t)
\end{equation*}
The reason is du to the fact that $s_{d+1,1^{n-d-1}}=\sum_{k=d}^n (-1)^{k-d}e_{n-d}h_{d}$, which is used in Equation~\eqref{Eq : nabla schur} due to Haglund and Equation~\eqref{Eq : nabla m schur} due to Mellit.
An \begin{bf}$(n,mn)$-parking function\end{bf} is a pair consisting of and a  $(n,mn)$-Dyck path and  a permutation of $n$, $w$, for which we write $w_i$ on the line $i$ of the Dyck path. Moreover, all factors of $w$ in a given column of the path must contain no descents (see Figure~\ref{Fig : exemple park} and Figure~\ref{Fig : nonexemple park} for examples). The set of all $(n,mn)$-parking function is denoted $\Pa_{n,mn}$ . 
\begin{figure}[!htb]
\begin{minipage}{9.5cm}
\centering
\begin{tikzpicture}[scale=.5]
\draw[gray,very thin] (0,0) grid (9,9);
\draw[very thick] (0,0)--(0,2)--(1,2)--(1,3)--(2,3)--(2,4)--(2,5)--(3,5)--(3,6)--(4,6)--(4,7)--(5,7)--(7,7)--(7,8)--(8,8)--(8,9)--(9,9);
\draw[very thick, gray] (0,0)--(9,9);
\node(1) at (.5,.5){$1$};
\node(2) at (8.5,8.5){$2$};
\node(3) at (1.5,2.5){$3$};
\node(4) at (2.5,3.5){$4$};
\node(5) at (2.5,4.5){$5$};
\node(6) at (4.5,6.5){$6$};
\node(7) at (3.5,5.5){$7$};
\node(8) at (.5,1.5){$8$};
\node(9) at (7.5,7.5){$9$};
\end{tikzpicture}
\caption{Here a parking function with $w=183457692$. The factors in each column are $18$, $3$, $45$, $7$, $6$, $9$, $2$, and contain no descents.}\label{Fig : exemple park}
\end{minipage}
\begin{minipage}{7.5cm}
\centering
\begin{tikzpicture}[scale=.5]
\draw[gray,very thin] (0,0) grid (9,9);
\draw[very thick] (0,0)--(0,2)--(1,2)--(1,3)--(2,3)--(2,4)--(2,5)--(3,5)--(3,6)--(4,6)--(4,7)--(5,7)--(7,7)--(7,8)--(8,8)--(8,9)--(9,9);
\draw[very thick, gray] (0,0)--(9,9);
\node(1) at (.5,.5){$8$};
\node(2) at (8.5,8.5){$2$};
\node(3) at (1.5,2.5){$1$};
\node(4) at (2.5,3.5){$5$};
\node(5) at (2.5,4.5){$4$};
\node(6) at (4.5,6.5){$6$};
\node(7) at (3.5,5.5){$7$};
\node(8) at (.5,1.5){$3$};
\node(9) at (7.5,7.5){$9$};
\end{tikzpicture}
\caption{This is NOT a parking function, since $w=831457692$. The factor in the first column is $83$ and has a descent.}\label{Fig : nonexemple park}
\end{minipage}
\end{figure}

The \begin{bf}reading word\end{bf} is obtained by reading the letters of $w$ (which are written immediately to the right of each north step) in regard to the diagonals parallel to the main diagonal starting from top right corner to the bottom left corner and starting with the diagonal that is the farthest from the main diagonal. For example, the reading word in Figure~\ref{Fig : exemple park} is $675438291$. The reading word of the parking function $(\gamma,w)$ is denoted $\R(\gamma,w)$.

The \begin{bf}area of a parking function\end{bf} is the area of its Dyck path. The \begin{bf}diagonal inversion statistic\end{bf}, of a parking function in $\mathcal{P}_{n,mn}$, (sometimes called $\dinv$ for short) is given by the formula $\sum_{i<j}d_i(j)$, where:
\begin{equation*}
d_i(j)=\begin{cases} \chi(w_i<w_j)\max(0,r-|a_i-a_j|)+\chi(w_i>w_j)\max(0,m-|a_j-a_i+1|) &\text{ if } i<j  
                                  \\  0 &\text{ if } i\geq j.
                                  \end{cases}
\end{equation*}
The diagonal inversion statistic of the parking function $(\gamma,w)$ is denoted $\dinv(\gamma,w)$. Note that all the definition work if $w$ is not a permutation (some authors use words but these can be regrouped with permutations as representatives).
Equivalently, for a $(\gamma,w)\in\Pa_{n,mn}$ we can consider the diagonal inversion of $(\tilde\gamma,\tilde w)$, where $\tilde\gamma$ is the $(mn,mn)$-Dyck path obtained by repeating all north steps $m$ times and for $w=w_1\cdots w_n$ we have $\tilde w=w_1^m\cdots w_n^m$ (here $\tilde w$ is not a permutation). In this case we can consider the sum $d_i(j)=\sum_{t=1}^m d_i^t(j)$, where $d_i^t(j)$ is calculated with $\tilde\gamma$, for the $t$-th copy of $w_i$. 

A visual representation of the diagonal inversion statistic for $(\gamma,w)\in\Pa_{n,n}$ is done considering one diagonal parallel to the main diagonal on each north step. For the north step on line $j$ if the diagonal crosses the north step on line $i$, with $i<j$ and $w_i<w_j$, then the pair $(i, j)$ contributes one to the diagonal inversion statistic. If the diagonal immediately over the diagonal crossing the north step on line $j$ crosses the line $i$, with $i<j$ and $w_i>w_j$, then the pair $(i, j)$ contributes one to the diagonal inversion statistic (see Figure~\ref{Fig : dinv diago} and Figure~\ref{Fig : dinv n-mn}).
\begin{figure}[!htb]
\begin{minipage}{8.5cm}
\centering
\begin{tikzpicture}[scale=.75]
\draw (0,0)--(0,1)--(1,1);
\draw (3,2)--(3,3)--(4,3);
\draw[dashed] (0,-.5)--(4,3.5);
\draw[dashed] (-1,-.5)--(3,3.5);
\draw (3,-.5)--(6,2.5);
\node(i) at (.5,.5){$w_i$};
\node(j) at (3.5,2.5){$w_j$};
\node(ai) at (1.75,0){$a_i$};
\node(aj) at (4.25,2){$a_j$};
\node(1) at (-.5,-.5){\tiny{$1$}};
\draw[->] (ai)--(0.1,0);
\draw[->] (ai)--(3.4,0);
\draw[->] (aj)--(3.1,2);
\draw[->] (aj)--(5.4,2);
\draw[->] (-.4,-.5)--(-0.1,-.5);
\draw[->] (-.6,-.5)--(-.9,-.5);
\node(m) at (7,1){\textcolor{blue}{main diagonal}};
\draw[->, blue] (m)--(4.6,1);
\end{tikzpicture}
\caption{The pair $(i,j)$ contributes $1$ if $i<j$ and $w_i>w_j$.}\label{Fig : dinv diago}
\end{minipage}
\begin{minipage}{8.5cm}
\centering
\begin{tikzpicture}[scale=.75]
\draw (1,0)--(1,1)--(2,1);
\draw (3,2)--(3,3)--(4,3);
\draw[dashed] (0,-.5)--(4,3.5);
\draw (3,-.5)--(6,2.5);
\node(i) at (1.5,.5){$w_i$};
\node(j) at (3.5,2.5){$w_j$};
\node(ai) at (2.5,0){$a_i$};
\node(aj) at (4.25,2){$a_j$};
\draw[->] (ai)--(1.1,0);
\draw[->] (ai)--(3.4,0);
\draw[->] (aj)--(3.1,2);
\draw[->] (aj)--(5.4,2);
\node(m) at (7,1){\textcolor{blue}{main diagonal}};
\draw[->, blue] (m)--(4.6,1);
\end{tikzpicture}
\caption{The pair $(i,j)$ contributes $1$ if $i<j$ and $w_i<w_j$.}\label{Fig : dinv n-mn}
\end{minipage}
\end{figure}
The Schr\"oder paths in an $n\times mn$ grid with $d$ diagonal steps can be represented by parking functions $(\gamma,w)$ such that $\R(\gamma,w) \in \{n-d+1,\cdots,n\}\shuffle \{n-d,\cdots,1\}$. Indeed, by definition of parking functions, if $w_i$ is in $ \{n-d+1,\cdots,n\}$, then the north step at line $i$ is followed by an east step. Therefore, for all $w_i$ in $ \{n-d+1,\cdots,n\}$ one can change the factor $NE$ on line $i$ for a $D$ and unlabel the path. This procedure gives us a Schr\"oder path with $d$ diagonal steps. Conversely, all $D$ steps of a Schr\"oder path can be changed for $NE$ factors and tagged in the reading order by the letters in $ \{n-d+1,\cdots,n\}$ and all the north steps can be tagged in the reading order by letters in $\{n-d,\cdots,1\}$. This bijection will be mostly used for proofs. Hence, we will often refer to Schr\"oder paths by their parking function description.

In \cite{[TW2018]}, Thomas and Williams proved that the zeta map, denoted $\zeta$, is a bijection on rational parking functions, that preserves statistics. In the $n\times n$ case it is such that $\dinv(\gamma,w)=\A(\zeta(\gamma,w))$ and $\A(\gamma,w)=\B(\zeta(\gamma,w))$. This will be used implicitly in the following way: if one as a decomposition of Schr\"oder paths with $d$ diagonal steps in Schur functions, in the variables $q$ and $t$, in terms of area and bounce, the decomposition in terms of diagonal inversions and area is the same. 
For more on $m$-Schr\"oder paths see \cite{[H2008]} and \cite{[S2005]}.

In this paper we will give explicit decomposition in Schur function in the variables $q$ and $t$ for $\langle\nabla^m (e_n), e_{n} \rangle|_{\hook}$, $\langle\nabla^m e_n,s_\mu \rangle|_{1 \pp}$ and $\langle\nabla e_n,s_{d+1,1^{n-d-1}} \rangle|_{\hook}$ by using Corollary 2.4 in \cite{[H2004]}:
\begin{theo*}[Haglund]\label{The : Hag} Let $n$, $d$ be positive integers such that $n\geq d$. Then:
\begin{equation}\label{Eq : nabla schur}{\schP}_{n,d}(q,t)=\langle\nabla e_n,s_{d+1,1^{n-d-1}} \rangle,
\end{equation}
and:
\begin{equation*}{\sch}_{n,d}(q,t)=\langle\nabla e_n,e_{n-d}h_d \rangle .
\end{equation*}
\end{theo*}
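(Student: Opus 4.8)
\emph{Strategy.} Since the excerpt grants the Shuffle Theorem, the plan is to read both identities off its fundamental quasisymmetric expansion, using the Pieri rule on the symmetric‑function side and the zeta map to transport statistics. (The original argument of \cite{[H2004]} predates the Shuffle Theorem: it extracts both sides from summation identities for the modified Macdonald polynomials together with a recursion in $d$. Given what is already available, the route below is shorter.) I will prove the $e_{n-d}h_d$ identity first and deduce the hook identity from it.

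\emph{The identity for $e_{n-d}h_d$.} Write the Shuffle Theorem in the form
\begin{equation*}
\nabla e_n=\sum_{(\gamma,w)\in\Pa_{n,n}} q^{\dinv(\gamma,w)}\,t^{\A(\gamma,w)}\,F_{\mathrm{iDes}(\gamma,w)},
\end{equation*}
where $F_S$ is Gessel's fundamental quasisymmetric function and $\mathrm{iDes}$ is the descent set of the inverse of the reading word. For any symmetric $f=\sum_S c_S F_S$ one has $\langle f,s_\lambda\rangle=\sum_{\tau\in\SYT(\lambda)} c_{\DD(\tau)}$, since $s_\lambda=\sum_{\tau\in\SYT(\lambda)}F_{\DD(\tau)}$. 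Applying this through the Pieri expansion $e_{n-d}h_d=s_{(d+1,1^{n-d-1})}+s_{(d,1^{n-d})}$ (add a vertical strip of size $n-d$ to a single row), or equivalently through Gessel's $F$-expansion of $e_{n-d}h_d$, shows that $\langle\nabla e_n,e_{n-d}h_d\rangle$ is the sum of $q^{\dinv}t^{\A}$ over the parking functions with $\R(\gamma,w)\in\{n-d+1,\dots,n\}\shuffle\{n-d,\dots,1\}$. By the reading‑word characterization recalled in Section~\ref{Sec : combi chemins}, these are precisely the Schr\"oder paths with $d$ diagonal steps.

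\emph{Transport of statistics.} Applying the Thomas--Williams zeta map $\zeta$ (recalled in Section~\ref{Sec : combi chemins}), a statistic‑preserving bijection with $\dinv(\gamma,w)=\A(\zeta(\gamma,w))$ and $\A(\gamma,w)=\B(\zeta(\gamma,w))$, turns the previous sum into $\sum_{\gamma\in\sch_{n,d}}q^{\A(\gamma)}t^{\B(\gamma)}$; the standard $q,t$-symmetry of this generating function (immediate from the left‑hand side being a symmetric‑function pairing) rewrites it as $\sum_{\gamma\in\sch_{n,d}}q^{\B(\gamma)}t^{\A(\gamma)}=\sch_{n,d}(q,t)$. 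This proves the second identity.

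\emph{The hook identity and the main obstacle.} Set $P_d:=\langle\nabla e_n,s_{(d+1,1^{n-d-1})}\rangle$, with the convention $P_n=0$ (the shape degenerates). The Pieri expansion and the identity just proved give $P_d+P_{d-1}=\sch_{n,d}(q,t)$. On the combinatorial side, partitioning $\sch_{n,d}$ according to whether a path ends in an $NE$ factor yields the same triangular relation $\sch_{n,d}(q,t)=\schP_{n,d}(q,t)+\schP_{n,d-1}(q,t)$, the second summand coming from a statistic‑preserving bijection that trades a terminal diagonal step for an $NE$ factor (and $\schP_{n,n}=0$, as the only path with $n$ diagonal steps is $D^n$). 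Both sequences satisfy the first‑order recurrence $X_{d-1}=\sch_{n,d}(q,t)-X_d$ with the same top value, so they coincide, giving $P_d=\schP_{n,d}(q,t)$, which is the first identity. The main obstacle is the extraction step of the second paragraph: one must check carefully that selecting the $e_{n-d}h_d$-coefficient from the $F$-expansion isolates exactly the shuffle class $\{n-d+1,\dots,n\}\shuffle\{n-d,\dots,1\}$ \emph{and} that $\dinv$ and $\A$ are faithfully carried along, then that $\zeta$ sends the resulting bivariate statistic to $(\B,\A)$; once this bookkeeping is secured, the Pieri computation and the inclusion--exclusion for the hooks are routine.
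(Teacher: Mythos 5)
The paper itself does not prove this statement: it is imported verbatim from Corollary 2.4 of \cite{[H2004]}, so your argument can only be judged on its own merits, and it has two genuine gaps. The first is the extraction lemma you rest everything on: for a symmetric $f=\sum_S c_S F_S$ it is \emph{not} true that $\langle f,s_\lambda\rangle=\sum_{\tau\in\SYT(\lambda)}c_{\DD(\tau)}$. Take $f=s_{21}=F_{\{1\}}+F_{\{2\}}$ and $\lambda=(2,1)$: your rule gives $c_{\{1\}}+c_{\{2\}}=2$, while $\langle s_{21},s_{21}\rangle=1$. The identity $s_\lambda=\sum_{\tau\in\SYT(\lambda)}F_{\DD(\tau)}$ does not let you read coefficients off term by term, because the $F_S$ are not orthonormal for the Hall pairing (their duality is with noncommutative ribbons, not with themselves). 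The correct devices are $\langle f,h_\mu\rangle=\sum_{S\subseteq\{\mu_1,\mu_1+\mu_2,\ldots\}}c_S$ and its $e$--$h$ refinement (the superization/shuffle lemma of \cite{[HHLRU2005]}, also in \cite{[H2008]}), or, since hooks are ribbons, the single-coefficient ribbon rule $\langle f,s_{(d+1,1^{n-d-1})}\rangle=c_{\{d+1,\ldots,n-1\}}$. Your intended conclusion --- that $\langle\nabla e_n,e_{n-d}h_d\rangle$ is the sum of $q^{\dinv}t^{\A}$ over parking functions with reading word in $\{n-d+1,\ldots,n\}\shuffle\{n-d,\ldots,1\}$ --- is the true standard statement, but the step you yourself flag as ``the main obstacle'' is justified by a false identity rather than by these lemmas.

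The second gap is fatal to your deduction of the hook identity. The dichotomy ``ends in an $NE$ factor / ends in a terminal diagonal step'' is not exhaustive: a Schr\"oder word can end in $DE$ or $EE$, e.g.\ $NDE\in\sch_{2,1}$ is in neither class. Worse, the triangular relation itself is false for the ending-in-$NE$ family: $\sch_{2,1}=\{DNE,NDE,NED\}$ has three elements, while $\schP_{2,1}\cup\schP_{2,0}=\{DNE\}\cup\{NENE\}$ has two, so $\sch_{2,1}(q,t)\neq\schP_{2,1}(q,t)+\schP_{2,0}(q,t)$ already at $q=t=1$; hence your downward induction cannot close. In fact no repair within this path family is possible, because the target identity as transcribed fails a multiplicity check: $\langle\nabla e_3,s_{21}\rangle|_{q=t=1}=5$ (the multiplicity of the two-dimensional irreducible in the $16$-dimensional space of diagonal harmonics, $16=5\cdot1+5\cdot2+1$), whereas $|\schP_{3,1}|=3$. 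Haglund's Corollary 2.4 concerns the Schr\"oder paths none of whose diagonal steps lie \emph{on} the main diagonal (there are exactly $5$ of these for $n=3$, $d=1$), and it is deduced from the $e_{n-k}h_k$ theorem via the sieve $s_{(d,1^{n-d})}=\sum_{k\geq d}(-1)^{k-d}e_{n-k}h_k$ together with an involution cancelling diagonal $D$ steps --- not by trading a terminal step. So to make your plan work you must both replace the extraction lemma by the superization or ribbon rule and carry out the inclusion--exclusion for the correct path family; the terminal-step bijection should be discarded.
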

The following equalities will also be used and can be inferred from Mellit's proof found in \cite{[M2016]} of the compositional shuffle conjecture of \cite{[BGLX2016]}. Let $n$, $d$, $m$ be positive integer such that $n\geq d$. Then:
\begin{equation}\label{Eq : nabla m schur}{\schP}_{n,d}^{(m)}(q,t)=\langle\nabla^m e_n,s_{d+1,1^{n-d-1}} \rangle
\end{equation}
and:
\begin{equation*}\nabla^m(e_n)=\sum_{(\gamma,w)\in \Pa_{n,nm}} t^{\dinv(\gamma,w)}q^{\A(\gamma)}F_{\co(\DD(\inv(\R(\gamma,w))))}(X),
\end{equation*}
where $F_c$ is the fundamental quasisymmetric function index by the composition $c$ and for $S$ a subset of $\{1,\ldots,n-1\}$, $\textrm{co}(S)$  is the composition associated to $S$. 
We can infer the last result from \cite{[S1979]} and \cite{[H2002]}:
\begin{equation}\label{Eq : S-L,H}\nabla(e_n)|_{q=0}=\sum_{\tau\in \SYT(n)} t^{\m(\tau)} s_{\lambda(\tau)}
\end{equation}
\section{Algorithm on Schr\"oder Paths Related to Schur Functions Index by One Part Partitions}\label{Sec : algo}
It was proven in \cite{[H2002]} that $\nabla(e_n)$ is the character of the $GL_2\times \mathbb{S}_n$-module of diagonal harmonics. Hence, the polynomials $\sch_{n,d}(q,t)$ and $\schP_{n,d}(q,t)$ are symmetric in $q,t$ and can be written as a sum of Schur functions evaluated in $q,t$. 
The restriction of a symmetric function to the sum of Schur functions indexed by only one part (respectively,hook-shaped Schur functions)  will be denoted by $|_{1 \pp}$ (respectively, $|_{\hooks}$). For example, if $f=\sum_{\lambda\in C}c_\lambda s_\lambda$, then the restriction to one part is $f|_{1 \pp}= \sum_{\lambda\in C, \ell(\lambda)=1}c_\lambda s_\lambda$ (respectively, the restriction to hooks is $f|_{\hooks}= \sum_{\lambda\in C, \lambda=(a,1^b)}c_\lambda s_\lambda$).
In this section, we will give a simple formula for the Schur functions indexed by one part partitions contained in the development of ${\sch}_{n,d}(q,t)$. This will be done by proving an algorithm that allows us to describe all the paths of $\schP_{n,d}$ relating to the restrictions to Schur functions indexed by one part in the Schur function decomposition of $\schP_{n,d}(q,t)$. 

Let us first notice that Schur functions on a set of $k$ ordered variables are indexed by a partition with  length smaller or equal to $k$. This follows from the combinatorial definition of Schur functions, since  the filling of the first column of the semi-standard Young tableau must be strictly increasing and the first column as the same number of boxes to fill than the number of parts of the partition. Hence, Schur functions in two variables have at most two parts. Furthermore, a Schur function in two variables is such that $s_{a,b}(q,t)=q^{a-b}t^{a-b}(q^b+q^{b-1}t+\cdots+qt^{b-1}+t^b)$. Ergo, for $c$ an integer, the monomial $q^c$ as a non-zero coefficient in the decomposition of a symmetric function $f(q,t)$ if and only if the decomposition in Schur function contains the term $s_c(q,t)$. 
This is equivalent to stating that the Schur functions in the variables $q$ and $t$ appearing in $\sch_{n,d}(q,t)|_{1 \pp}$ are the same than the Schur functions in the variable $q$ appearing in $\sch_{n,d}(q,0)$. The only paths that contribute to monomials with non-zero coefficients in $\sch_{n,d}(q,0)$ are the paths $\gamma$ such that $\A(\gamma)=0$. Hence, these are the set of paths $\{NE,D\}^n$.
\\

The algorithm $\varphi$ takes a path, $\gamma$ in $\{NE,D\}^*$ for input, and returns a sequence of paths $(\gamma_0, \gamma_1, \ldots,\gamma_{\B(\gamma)})$, obtained as follows:
\begin{align*}
&\text{First set } \varphi(\gamma)=(\gamma_0), ~k=|\gamma|_E+|\gamma|_N+|\gamma|_D.
\\&\text{For $v=1$ to  $\B(\gamma)$;}
\\ &\hspace{20pt} \text{Let $\gamma_{v-1}=w_1w_2\cdots w_{k}$. }
\\ &\hspace{20pt} \text{Let $i$ be such that $w_i=E$, $w_{i+1}\not=E$ and $w_j=E$ implies $w_{j+1}=E$ or $j\leq i$.}
\\ &\hspace{20pt} \text{Set $\gamma_v=w_1\cdots w_{i-1}w_{i+1}w_iw_{i+2} \cdots w_{k}$.}
\\ &\hspace{20pt}    \text{Append $\gamma_v$ to the sequence $\varphi(\gamma)$.}
\\&\text{repeat ;}
\\&\text{return } \varphi(\gamma).
\end{align*}
\begin{figure}[!htb]
\centering
\begin{tikzpicture}[scale=.3]
\draw (0,-1)--(1,0)--(2,1)--(2,2)--(3,2)--(4,3)--(4,4)--(5,4)--(5,5)--(6,5);
\node(1) at (8,2.5){$\rightsquigarrow$};
\filldraw[pink] (14,4)--(14,5)--(15,5)--(15,4)--(14,4);
\draw (10,-1)--(11,0)--(12,1)--(12,2)--(13,2)--(14,3)--(14,5)--(16,5);
\node(2) at (18,2.5){$\rightsquigarrow$};
\filldraw[pink] (24,4)--(24,5)--(25,5)--(25,4)--(24,4);
\filldraw[pink] (22,2)--(23,2)--(23,3)--(22,2);
\draw (20,-1)--(21,0)--(22,1)--(22,2)--(23,3)--(24,3)--(24,5)--(26,5);
\node(3) at (28,2.5){$\rightsquigarrow$};
\filldraw[pink] (34,4)--(34,5)--(35,5)--(35,4)--(34,4);
\filldraw[pink] (33,3)--(33,4)--(34,4)--(34,3)--(33,3);
\filldraw[pink] (32,2)--(33,2)--(33,3)--(32,2);
\draw (30,-1)--(31,0)--(32,1)--(32,2)--(33,3)--(33,4)--(34,4)--(34,5)--(36,5);
\node(4) at (38,2.5){$\rightsquigarrow$};
\filldraw[pink] (43,3)--(44,3)--(44,4)--(45,4)--(45,5)--(43,5)--(43,3);
\filldraw[pink] (42,2)--(43,2)--(43,3)--(42,2);
\draw (40,-1)--(41,0)--(42,1)--(42,2)--(43,3)--(43,5)--(46,5);
\end{tikzpicture}
\caption{The sequence $\varphi(DDNEDNENE)$. }
\label{Fig : phi sur Schroder aire 0}
\end{figure}
We first need to prove this algorithm provides us with a sequence of Schr\"oder paths.
\begin{lem}For all $\gamma \in \{NE,D\}^*$, the elements of the sequence $\varphi(\gamma)$ are Schr\"oder paths. Moreover, if $\gamma \in \sch_{n,d}$, then $\varphi(\gamma) \subseteq \sch_{n,d}$.
\end{lem}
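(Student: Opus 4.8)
The plan is to argue by induction on the index $v$, exploiting the fact that each iteration of $\varphi$ changes the current word by a single adjacent transposition. Passing from $\gamma_{v-1}=w_1\cdots w_k$ to $\gamma_v$ only swaps the letters in positions $i$ and $i+1$, where by the defining property of $i$ we have $w_i=E$ and $w_{i+1}\in\{N,D\}$. So it suffices to prove one clean stability statement: if $\beta\in\sch_{n,d}$ and $\beta'$ is obtained from $\beta$ by transposing an occurrence of $E$ with an immediately following letter that is not $E$, then $\beta'\in\sch_{n,d}$. Granting this, induction with base case $\gamma_0=\gamma$ yields the lemma. Throughout I would use the characterization recalled earlier: a word in $\{N,E,D\}^*$ lies in $\sch_{n,d}$ if and only if it has $n-d$ north steps, $n-d$ east steps, $d$ diagonal steps, and every prefix $\gamma_i$ satisfies $|\gamma_i|_N\ge|\gamma_i|_E$.

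For the base case I would first record that any $\gamma\in\{NE,D\}^*$ is a Schr\"oder path: it is a concatenation of blocks $NE$ and $D$, each of which begins and ends on the main diagonal, so every prefix $\gamma_i$ satisfies $|\gamma_i|_N\ge|\gamma_i|_E$ and the endpoint has equal numbers of $N$ and $E$ steps. If moreover $\gamma\in\sch_{n,d}$, then $\gamma_0=\gamma\in\sch_{n,d}$ trivially, which settles the start of the induction.

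The heart of the proof is the prefix bookkeeping for the transposition. Swapping positions $i$ and $i+1$ leaves the letter content of every prefix unchanged except for the prefix $P$ of length exactly $i$: prefixes shorter than $i$ do not reach the swapped pair, and prefixes of length at least $i+1$ contain both swapped letters and hence have the same multiset of letters as before. For $P$ itself, before the swap $P=w_1\cdots w_{i-1}E$, so the Schr\"oder condition on $\beta$ gives $|P|_N\ge|P|_E$; since $P$ ends in $E$, its east-count exceeds that of $w_1\cdots w_{i-1}$ by one, whence $|w_1\cdots w_{i-1}|_N=|P|_N\ge|P|_E=|w_1\cdots w_{i-1}|_E+1$. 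After the swap the new length-$i$ prefix is $w_1\cdots w_{i-1}w_{i+1}$ with $w_{i+1}\in\{N,D\}$, so its number of east steps drops by one while its number of north steps does not decrease; hence it still satisfies $N\ge E$. All other prefixes are untouched, and the transposition preserves the total multiplicities of $N$, $E$ and $D$, so $\beta'$ stays in the same $n\times n$ grid with the same number $d$ of diagonal steps. This gives $\beta'\in\sch_{n,d}$ and closes the induction.

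The one point I would justify rather than assume is that the algorithm is well defined, i.e. that at each of the $\B(\gamma)$ iterations an index $i$ with the stated properties actually exists, equivalently that $\gamma_{v-1}$ still contains an $E$ immediately followed by a non-$E$ letter. I expect this to be the main obstacle, since it is the only place where the specific number $\B(\gamma)$ of iterations enters; the transposition-stability argument above is robust but says nothing about how many swaps may be performed. I would handle it by observing that the configurations admitting no swappable $E$ are exactly the words of the form $uE^{j}$ with $u\in\{N,D\}^{*}$ (all east steps pushed to the end), and then showing that such a maximal configuration is not reached within the first $\B(\gamma)$ iterations. The latter I would establish in tandem with the area/bounce bookkeeping of the subsequent lemmas, where each swap is seen to raise the area by one, so that exactly $\B(\gamma)$ swaps are available before the maximal word is attained.
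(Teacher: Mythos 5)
Your transposition argument is correct and is essentially the paper's own: the paper likewise reduces the lemma to a single pass of the algorithm and compares the only affected prefixes, obtaining the chain $|w_1\cdots w_{i-1}w_{i+1}|_N\geq|w_1\cdots w_{i-1}w_{i}|_N\geq|w_1\cdots w_{i-1}w_{i}|_E>|w_1\cdots w_{i-1}w_{i+1}|_E$, which is exactly your observation that only the prefix of length $i$ changes letter content, losing an $E$ and gaining a letter of $\{N,D\}$. Your base case (any word of $\{NE,D\}^*$ is a Schr\"oder path) is also fine. For that half there is nothing to add.

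The genuine gap sits precisely where you predicted it: well-definedness of the $\B(\gamma)$ iterations. You correctly identify the stuck configurations as the words $uE^{j}$ with $u\in\{N,D\}^*$, but your plan to finish ``in tandem with the area/bounce bookkeeping of the subsequent lemmas'' is circular: Lemma~\ref{Lem : +1-1} is stated and proved for the already-formed sequence $\varphi(\gamma)=(\gamma_0,\ldots,\gamma_{\B(\gamma)})$, i.e.\ it presupposes that the algorithm survives $\B(\gamma)$ iterations, which is the very point at issue. Moreover, ``each swap raises the area by one'' does not by itself yield ``exactly $\B(\gamma)$ swaps are available'': it only shows that the number of swaps performable before reaching the stuck word equals the area of that stuck word, and you still owe the identity between this quantity and $\B(\gamma)$. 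That identity is exactly what the second paragraph of the paper's proof supplies (tersely): for $\gamma\in\{NE,D\}^n$ with $d$ diagonal steps, every east step carries a peak of $\Gamma(\gamma)=(NE)^{n-d}$; the bounce vector of $\Gamma(\gamma)$ records that the $i$-th east step has $n-d-i$ north steps to its right in the word, totalling $\B(\Gamma(\gamma))=\binom{n-d}{2}$, while $\N(\gamma)$ counts the pairs consisting of an east step and a later diagonal step. A non-circular repair of your sketch along these lines: each iteration destroys exactly one pair $(E,\,\text{later non-}E\text{ letter})$ and creates none, so the number of available iterations starting from $\gamma$ equals the number of such pairs, and the count just given shows this number is $\B(\Gamma(\gamma))+\N(\gamma)=\B(\gamma)$. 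Without some such count, your proof establishes stability under one swap but not that the sequence $(\gamma_0,\ldots,\gamma_{\B(\gamma)})$ exists.
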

\begin{proof}Recall that for a path $\gamma$ to be a Schr\"oder path we must have $|\omega|_N\geq|\omega|_E$ for all prefix $\omega$ of $\gamma$. Because $\gamma_0$ is a Schr\"oder path, it is sufficient to show that if $\gamma_i$ is a Schr\"oder path, then the path $\gamma_{i+1}$, obtained by parsing one time through the algorithm, is also a Schr\"oder path. Let $\gamma_i=w_1w_2\cdots w_k$, then $\gamma_{i+1}=w_1\cdots w_{i-1}w_{i+1}w_iw_{i+2} \cdots w_{k}$ and the only prefixes that are different are $w_1\cdots w_{i-1}w_{i+1}$ compared to $w_1\cdots w_{i-1}w_{i}$ and $w_1\cdots w_{i-1}w_{i+1}w_i$ compared to $w_1\cdots w_{i-1}w_iw_{i+1}$. The last pair has the same letters ordered differently, so $|w_1\cdots w_{i-1}w_{i+1}w_i|_N\geq |w_1\cdots w_{i-1}w_{i+1}w_i|_E$ if and only if $|w_1\cdots w_{i-1}w_iw_{i+1}|_N\geq|w_1\cdots w_{i-1}w_iw_{i+1}|_E$. Now for the first pair of prefixes we have:
 \begin{align*}
 |w_1\cdots w_{i-1}w_{i+1}|_N &\geq |w_1\cdots w_{i-1}w_{i}|_N, \text{ since $w_i=E$ and $w_{i+1}\in\{N,D\}$,}
 \\      &\geq |w_1\cdots w_{i-1}w_{i}|_E, \text{ since $\gamma_i$ is a Schr\"oder path, }
 \\      & > |w_1\cdots w_{i-1}w_{i+1}|_E, \text{ since $w_i=E$ and $w_{i+1}\not=E$.}
 \end{align*}
 Therefore, $\gamma_{i+1}$ is indeed a Schr\"oder path.
 
 Finally, for $\gamma$ a Schr\"oder path we can move east steps to the left at least a number of times equal to bounce. Indeed, the peaks are associated to an east step and the numph statistic gives the number of diagonal steps over that east step (to the right of that $E$ in the word representation). Because the bounce path associated to $\Gamma(\gamma)$ changes direction at the peak when it hits an east step, except for the last entry, the vector associated to $\B(\Gamma(\gamma))$ gives the number of north steps over that east step (to the right of that $E$ in word representation).
 
\end{proof}
We give an example of $\varphi(\gamma)$ for $\gamma$ a Schr\"oder path not in $\{NE,D\}$.
\begin{figure}[!htb]
\centering
\begin{tikzpicture}[scale=.3]
\filldraw[pink] (13,3)--(13,4)--(14,4)--(14,3)--(13,3);
\filldraw[pink] (12,2)--(13,2)--(13,3)--(12,2);
\draw (10,-1)--(11,0)--(12,1)--(12,2)--(13,3)--(13,4)--(15,4)--(15,5)--(16,5);
\node(2) at (18,2.5){$\rightsquigarrow$};
\filldraw[pink] (23,3)--(23,4)--(24,4)--(24,3)--(23,3);
\filldraw[pink] (24,4)--(24,5)--(25,5)--(25,4)--(24,4);
\filldraw[pink] (22,2)--(23,2)--(23,3)--(22,2);
\draw (20,-1)--(21,0)--(22,1)--(22,2)--(23,3)--(23,4)--(24,4)--(24,5)--(26,5);
\node(4) at (28,2.5){$\rightsquigarrow$};
\filldraw[pink] (33,3)--(34,3)--(34,4)--(35,4)--(35,5)--(33,5)--(33,3);
\filldraw[pink] (32,2)--(33,2)--(33,3)--(32,2);
\draw (30,-1)--(31,0)--(32,1)--(32,2)--(33,3)--(33,5)--(36,5);
\end{tikzpicture}
\caption{The sequence $\varphi(DDNDNEENE)$. }
\label{Fig : phi sur Schroder quelconque}
\end{figure}
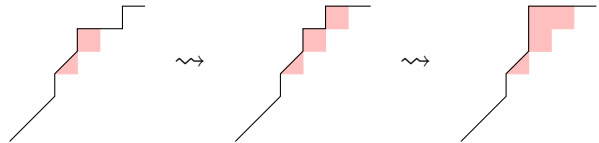
In Figure~\ref{Fig : phi sur Schroder quelconque} the $\B$ statistic does not decrease evenly throughout the iterations of the algorithm. In Figure~\ref{Fig : phi sur Schroder aire 0} each iteration increases the area statistic by exactly one and decreases the bounce statistic by exactly one. We will show in Lemma~\ref{Lem : +1-1} that this is not a coincidence if $\gamma_0$ is a Schr\"oder path of area $0$. But first, we need to show a result on the prefixes rellating to the paths in $\varphi(\gamma)$.
\begin{lem}\label{Lem : prefix algorithm}Let $\gamma$ be in  $\{NE,D\}^*$ and $\varphi(\gamma)=(\gamma_0,\ldots,\gamma_{\B(\gamma)})$. If $\gamma_i$ as a prefix $\alpha$ in $\{NE,D\}$, then $\alpha$ is a prefix of $\gamma$. Moreover, if $\alpha$ is the longest prefix of $\gamma_i$ such that $\alpha$ is in $\{NE,D\}$, then for $\beta$ such that $\alpha\beta=\gamma_i$ we have $\beta=\omega E^{|\beta|_E-1}$, where $\omega$ is a word in the alphabet $\{N,E,D\}^*$. Finally, if $\gamma_{i}=w_1\cdots w_{j-2}w_{j}w_{j-1}w_{j+1} \cdots w_{k}$ and $\gamma_{i-1}=w_1\cdots w_{j-2}w_{j-1}w_{j}w_{j+1} \cdots w_{k}$, then $w_{j-1}$ is a letter in $\omega$.
\end{lem}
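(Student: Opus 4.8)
The plan is to prove the three assertions simultaneously by induction on $i$, after strengthening the induction hypothesis so that it records the full shape of each $\gamma_i$. Writing $\gamma_i=\alpha^{(i)}\beta^{(i)}$ where $\alpha^{(i)}$ is the \emph{longest} prefix lying in $\{NE,D\}^*$, I will carry along the invariant that $\alpha^{(i)}$ is a prefix of $\gamma_0$ and that $\beta^{(i)}=\omega^{(i)}E^{|\beta^{(i)}|_E-1}$ with $\omega^{(i)}$ containing exactly one $E$ (equivalently, at most one $E$ of $\beta^{(i)}$ fails to lie in the final run of $E$'s). The first assertion is then immediate from the invariant: prefixes of a fixed word are totally ordered, so any $\{NE,D\}^*$-prefix of $\gamma_i$ is a prefix of the longest one $\alpha^{(i)}$, hence a prefix of $\gamma_0$. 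Thus it suffices to propagate the invariant and to track, at each step, where the swapped $E$ lands.

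The engine of the induction is one structural fact about the swap. Let $p$ be the position chosen by the algorithm in $\gamma_{i-1}=w_1\cdots w_k$, so $w_p=E$, $w_{p+1}=X\neq E$, and — by the maximality built into the choice of $p$ — every $E$ occurring after position $p$ is immediately followed by another $E$ (or is the last letter). Hence the suffix $w_{p+1}\cdots w_k$ has the form $YE^s$ with $Y\in\{N,D\}^*$ and $s\ge 0$, and after the swap $\gamma_i=w_1\cdots w_{p-1}\,X\,E\,Y'E^{s}$, where $Y=XY'$. I will then locate the new maximal prefix by splitting on whether $p\le|\alpha^{(i-1)}|$. If $p\le|\alpha^{(i-1)}|$, then $w_p=E$ sits inside $\alpha^{(i-1)}\in\{NE,D\}^*$, forcing $w_{p-1}=N$ and $w_1\cdots w_{p-2}\in\{NE,D\}^*$; after the swap the parse breaks exactly at the $N$ in position $p-1$ (now followed by $X\neq E$), so $\alpha^{(i)}=w_1\cdots w_{p-2}$, a proper prefix of $\alpha^{(i-1)}$. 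In either branch the leftover between $\alpha^{(i)}$ and the relocated $E$ consists only of $N$'s and $D$'s, so $\beta^{(i)}$ takes the required form $\omega^{(i)}E^{s}$ with $\omega^{(i)}$ holding a single $E$, namely the $E$ just moved; this is precisely the second and third assertions, since that moved $E$ is the letter $w_{j-1}$ of the statement and it lands in $\omega^{(i)}$.

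The delicate point — and the step I expect to be the real obstacle — is the complementary branch $p>|\alpha^{(i-1)}|$, where I must show the maximal prefix does \emph{not} grow, i.e. $\alpha^{(i)}=\alpha^{(i-1)}$. Here the previous lemma is essential: since every $\gamma_j$ is a Schr\"oder path and $\alpha^{(i-1)}\in\{NE,D\}^*$ satisfies $|\alpha^{(i-1)}|_N=|\alpha^{(i-1)}|_E$, the prefix $\alpha^{(i-1)}E$ would violate the defining inequality $|\cdot|_N\ge|\cdot|_E$; thus $\beta^{(i-1)}$ cannot begin with $E$. This pins $w_{|\alpha^{(i-1)}|+1}$ to be $N$ (it cannot be $D$ by maximality of $\alpha^{(i-1)}$, nor $E$ by the above) and forces $p\ge|\alpha^{(i-1)}|+2$. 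Consequently the swap leaves positions $1,\dots,|\alpha^{(i-1)}|+1$ untouched and cannot repair the parse failure at position $|\alpha^{(i-1)}|+1$ (the letter at $|\alpha^{(i-1)}|+2$ stays non-$E$, whether it is unchanged or replaced by $X$), so $\alpha^{(i)}=\alpha^{(i-1)}$ and the invariant passes through unchanged up to the relocation of the traveling $E$ inside $\omega^{(i)}$. I will finish by checking the base step $i=1$, which is just the branch $p\le|\alpha^{(0)}|=|\gamma_0|$, and by noting that $i=0$ makes the second and third assertions vacuous, completing the induction.
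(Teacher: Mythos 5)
Your proof is correct and takes essentially the same route as the paper's: induction on $i$ with the decomposition $\gamma_i=\alpha^{(i)}\beta^{(i)}$ into the maximal $\{NE,D\}^*$-prefix and its complement, a case split according to whether the swapped east step lies inside $\alpha^{(i-1)}$ (the paper's case $|\alpha'|>j-2$, your branch with $p\le|\alpha^{(i-1)}|$) or inside $\beta^{(i-1)}$ (the paper's case $|\alpha'|\le j-2$), and the rightmost-$E$-not-followed-by-$E$ property of the algorithm to control the terminal run of east steps. Your only divergence is cosmetic but welcome: you actually justify $\alpha^{(i)}=\alpha^{(i-1)}$ in the second branch via the Schr\"oder inequality $|\cdot|_N\ge|\cdot|_E$ (so $\beta^{(i-1)}$ cannot begin with $E$, pinning $p\ge|\alpha^{(i-1)}|+2$ and preserving the parse failure), a step the paper merely asserts.
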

\begin{proof}By induction on $i$. If $i=0$, then $\alpha=\gamma_0=\gamma$ and $\beta=\varepsilon$. For $i>0$ let $\alpha$ (respectively,$\alpha'$) be the longest prefix of $\gamma_i$ (respectively,$\gamma_{i-1}$) such that $\alpha$ is in $\{NE,D\}^*$ (respectively,$\alpha' \in \{NE,D\}^*$)  and $\beta$ (respectively,$\beta'$) be such that $\alpha\beta=\gamma_i$ (respectively,$\alpha'\beta'=\gamma_{i-1}$). By induction we know that $\alpha'$ is a prefix of $\gamma$, and $\beta'=\omega'E^{|\beta'|_E-1}$. 
By definition of each iteration of the algorithm, there is $j$ such that $\gamma_{i}=w_1\cdots w_{j-2}w_{j}w_{j-1}w_{j+1} \cdots w_{k}$ and $\gamma_{i-1}=w_1\cdots w_{j-2}w_{j-1}w_{j}w_{j+1} \cdots w_{k}$. 

If $|\alpha'|=l\leq j-2$, then $\beta'=w_{l+1}\cdots w_{j-2}w_{j-1}w_{j}w_{j+1} \cdots w_{k}$ and $\alpha=\alpha'$. By definition of the algorithm $w_j\in \{N,D\}$. Due to $\beta'=\omega'E^{|\beta'|_E-1}$, we must have that $w_j$ and $w_{j-1}$ are both letters of $\omega'$. Ergo, the suffix $E^{|\beta'|_E-1}$ of $\beta'$ is unchanged in $\beta$. Hence, $\beta=\omega E^{|\beta'|_E-1}=\omega E^{|\beta|_E-1}$ and $|\omega|_E=|\omega'|_E$.

If $|\alpha'|=l > j-2$, then by definition of the algorithm $w_{j-1}=E$, because $\alpha' \in\{NE,D\}^*$, $w_{j-2}=N$. Consequently,$\gamma_{i}=w_1\cdots w_{j-3}Nw_{j}Ew_{j+1} \cdots w_{k}$ and $\alpha=w_1\cdots w_{j-3}$ is in $\{NE,D\}^*$ and is a prefix of $\alpha'$. Thus, it is a prefix of $\gamma$. This means $\beta=Nw_{j}Ew_{j+1} \cdots w_{k}$. Each iteration of the algorithm swaps the rightmost east step that is not followed by an east step to the right. In consequence, $w_{j}\in\{N,D\}$ and we must have $\omega$ such that $\beta=\omega E^{|\beta|_E-1}$, with $w_{j-1}$ is a letter of $\omega$; otherwise the letters $w_j$ and $w_{j-1}$ would not have been swapped.
\end{proof}
\begin{lem}\label{Lem : +1-1}Let $\gamma$ be in  $\{NE,D\}^*$ and $\varphi(\gamma)=(\gamma_0,\ldots,\gamma_{\B(\gamma)})$. Then, for all $i$, such that $0\leq i < \B(\gamma)$, the following equalities hold:
\begin{align*}\A(\gamma_i)+1&=\A(\gamma_{i+1}), 
\\  \B(\gamma_i)&=\B(\gamma_{i+1})+1, 
\\  \A(\gamma_i)&=\B(\gamma_{\B(\gamma)-i}), \text{ and,}
\\  \B(\gamma_i)&=\A(\gamma_{\B(\gamma)-i}).
\end{align*}
\end{lem}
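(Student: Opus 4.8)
The plan is to reduce everything to the two one-step identities $\A(\gamma_{i+1})=\A(\gamma_i)+1$ and $\B(\gamma_{i+1})=\B(\gamma_i)-1$, and then to deduce the two symmetry identities for free. Indeed, since $\gamma_0=\gamma\in\{NE,D\}^*$ sits on the main diagonal we have $\A(\gamma_0)=0$, so the first identity gives $\A(\gamma_i)=i$ by induction; as the sequence $\varphi(\gamma)$ has exactly $\B(\gamma)+1$ terms by construction, this forces $\A(\gamma_{\B(\gamma)})=\B(\gamma)$, while the second identity gives $\B(\gamma_i)=\B(\gamma)-i$. Substituting $\B(\gamma)-i$ for the index then yields $\A(\gamma_i)=i=\B(\gamma_{\B(\gamma)-i})$ and $\B(\gamma_i)=\B(\gamma)-i=\A(\gamma_{\B(\gamma)-i})$. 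So the whole statement rests on the two one-step identities.

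For the area identity I would argue locally. One iteration replaces a factor $Ex$ (with $x\in\{N,D\}$) by $xE$; because $\gamma_i$ is a Schr\"oder path, the lattice point reached immediately after that $E$ lies on or above the main diagonal, so just before the $E$ the path is strictly above the diagonal. Swapping $E$ with the following $N$ or $D$ lifts the path over exactly one cell, and the lower-left triangle of that cell has its bottom vertex on or above the diagonal; hence it is newly counted while no other triangle changes status. This gives exactly $+1$ in both cases $EN\to NE$ and $ED\to DE$, and in particular rules out a jump by $2$ or no change. Note this step uses only the Schr\"oder condition, not the finer structure of $\varphi$.

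The bounce identity is the heart of the matter. I would use the decomposition $\B(\gamma)=\B(\Gamma(\gamma))+\N(\gamma)$ together with the reading recorded in the proof of the first lemma: summing over the peaks, $\B(\Gamma(\gamma))$ counts the north steps lying to the right of each peak's east step and $\N(\gamma)$ counts the diagonal steps lying to its right. The essential input is Lemma~\ref{Lem : prefix algorithm}, which pins every $\gamma_i$ to the form $\alpha\omega E^{m}$ with $\alpha\in\{NE,D\}^*$ a diagonal-hugging prefix and $\omega$ containing a single east step; this is exactly what excludes stray configurations (such as an east step trapped to the left of trailing diagonal steps) for which a swap would fail to change the bounce. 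I would then split on the swap type. For $ED\to DE$ the Dyck path $\Gamma(\gamma)$ is unchanged, since deleting the diagonal step leaves the east steps in the same relative order; thus $\B(\Gamma)$ is constant, and one checks that the moved diagonal step passes from the right to the left of exactly one peak's east step, dropping $\N$ by $1$. For $EN\to NE$ no diagonal step moves, so $\N$ is unaffected, and one shows $\B(\Gamma)$ drops by exactly $1$.

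The step I expect to be the main obstacle is the $EN\to NE$ case of the bounce identity: moving an east step up can in principle reorganize the bounce path and hence the entire set of peaks, so it is not a priori clear that a single north-step contribution is lost. The way I would tame this is to feed the structural form $\gamma_i=\alpha\omega E^{m}$ into the construction of the bounce path, using that the swapped east step is the unique east step of $\omega$ and is itself a peak, and that the prefix $\alpha$ already fixes the bounce path up to that point, so the reorganization is confined to one bounce and changes the statistic by exactly $1$. As an independent sanity check, one can combine the clean area identity with the facts that $\varphi(\gamma)$ has $\B(\gamma)+1$ terms and that the terminal path $\gamma_{\B(\gamma)}$ lies in $\{N,D\}^*E^{k}$ and hence has bounce $0$: strict monotonicity of the bounce over $\B(\gamma)$ steps from $\B(\gamma)$ down to $0$ then pins each decrease to exactly $1$.
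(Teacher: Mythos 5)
Your proposal is correct and follows essentially the same route as the paper's proof: the same local count for the $EN\to NE$ and $ED\to DE$ swaps giving $\A(\gamma_{i+1})=\A(\gamma_i)+1$, the same use of Lemma~\ref{Lem : prefix algorithm} to pin $\gamma_i$ to the form $\alpha\omega E^{m}$ and the same case split for bounce ($\Gamma$ unchanged and $\N$ drops by one for $ED\to DE$; $\N$ unchanged and $\B(\Gamma)$ drops by one for $EN\to NE$, confined to the single peak at the swapped east step), and the identical deduction of the last two identities from $\A(\gamma_0)=0$ and the two one-step identities. One caveat: your closing ``sanity check'' is not actually independent, since the strict single-step decrease of bounce it invokes is precisely what the one-step identity asserts---but as you offer it only as a check, this does not affect the proof.
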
 
\begin{proof}Let us first notice that the algorithm changes $EN$ for $NE$ or $ED$ for $DE$. In both cases, this adds exactly one lower triangle under the path. Therefore, $\A(\gamma_i)+1=\A(\gamma_{i+1})$. 

For the second condition, let $\gamma_{i}=w_1\cdots w_{j-1}w_{j}w_{j+1}w_{j+2} \cdots w_{k}$. By definition of the algorithm $\varphi$, we know that $\gamma_{i+1}=w_1\cdots w_{j-1}w_{j+1}w_{j}w_{j+2} \cdots w_{k}$, $w_j=E$ and $w_{j+1}\in \{N,D\}$. By Lemma~\ref{Lem : prefix algorithm}, we know $\gamma_{i+1}=\alpha\beta$ with $\alpha\in\{NE,D\}^*$, so there is a return to the main diagonal of the bounce path between $\alpha$ and $\beta$. Hence, the following east step is associated to a peak. By Lemma~\ref{Lem : prefix algorithm}, $\beta=\omega E^{|\beta|_E-1}$, $w_j$ is a letter of $\omega$ and $\omega$ contains exactly one east step. Consequently, there is a peak at $w_j$ in $\gamma_{i+1}$. Let $\gamma_i=\alpha'\beta'$, if $w_j$ is a letter in $\omega'$, the same reasoning leads to a peak at $w_j$ in $\gamma_i$. If $w_j$ is a letter in $\alpha'$, there is a peak at $w_j$ in $\gamma_i$, since  $\alpha'\in\{NE,D\}^*$. Thus, $NE=w_{j-1}w_j \in \TT(\alpha)$. 

If $w_{j+1}=D$, then $\B(\Gamma(\gamma_i))=\B(\Gamma(\gamma_{i+1}))$ because $\Gamma$ discards the diagonal steps. Recall that the peaks of a Schr\"oder path, $\gamma$, are also obtained from $\Gamma(\gamma)$, in consequence, the peaks in $\gamma_i$ and $\gamma_{i+1}$ are associated to the same east steps. Recall that numph is the number of diagonal steps, with multiplicity, positioned after a peak (higher if you consider the path itself rather than the word representation). The diagonal step $w_{j+1}$ is after the peak at $w_j$ in $\gamma_i$ and before the peak at $w_j$ in $\gamma_{i+1}$. All other peaks and diagonal steps remain unchanged. Hence, $\N(\gamma_i)=\N(\gamma_{i+1})+1$. 

If $w_{j+1}=N$, then the peak at $w_j$ moves one position to the right in the word representation (one line higher if you consider the path itself). By definition, at this point, the bounce path returns to the main diagonal and goes north to the next east step,
which, by Lemma~\ref{Lem : prefix algorithm}, are all after $\omega$. Therefore, the peak at $w_j$ does not move to a line already containing a peak unless $\omega$ ends with $w_{j}D^l$. In this last case, the peak at $w_j$ contributed $1$ to $\B(\gamma_i)$. In any case, the peak on the first line of $\Gamma(\gamma_{i+1})$, contributes $0$ to bounce. Consequently, in both cases, $\B(\Gamma(\gamma_i))=\B(\Gamma(\gamma_{i+1}))+1$. Additionally, all the east steps keep the same number of diagonal steps positioned after them. Hence, all the peaks keep the same number of diagonal steps positioned after them and $\N(\gamma_i)=\N(\gamma_{i+1})$.

Considering the path $\gamma$ in  $\{NE,D\}^*$ has an area equal to zero, the third and fourth conditions follows from the first two conditions. 
\end{proof}
We now present a map that will be useful for the discussion on crystals in Section~\ref{Sec : cristaux}.
For $\gamma$ in $\{NE,D\}^*$ we have $\varphi(\gamma)=(\gamma_0,\ldots,\gamma_{\B(\gamma)})$. With this notation we define the map:
\begin{align*}\label{Eq : tilde varphi}\tilde\varphi : \{\gamma \in\sch{}_{n,d-1} ~|~\A(\gamma)=0\}& \rightarrow\{\gamma \in \sch{}_{n,d-1} ~|~\A(\gamma)=1\}
\\  \gamma=\gamma_0&\mapsto\gamma_1
\end{align*}
This next lemma will be used in the proof of Theorem~\ref{The : main}. 
\begin{lem}\label{Lem :  ens 1 part aire=1} Let $d$ be an integer such that $1\leq d\leq n-1$, then the image of the map $\tilde\varphi$ is given by the set $\{uD^jNNEE, vNDED^jNE ~|~ u\in \{NE,D\}^{n-d-2}, v\in\{NE,D\}^{n-d-1}\}$.
\end{lem}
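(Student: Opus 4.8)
The plan is to evaluate $\tilde\varphi(\gamma_0)=\gamma_1$ directly from the first pass of the algorithm $\varphi$ and then read off the two shapes by a short case analysis on the suffix of $\gamma_0$. The one observation that drives everything is that an input $\gamma_0\in\{NE,D\}^*$ has no factor $EE$: every $E$ of $\gamma_0$ ends an $NE$-block, so it is either the terminal letter of $\gamma_0$ or is immediately followed by a letter of $\{N,D\}$. Hence the index $i$ chosen in the first iteration is forced to be the position of the last non-terminal $E$; writing $\gamma_0=B_1\cdots B_n$ with each $B_t\in\{NE,D\}$, this is the $E$ ending the rightmost $NE$-block that is still followed by at least one further block. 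The output $\gamma_1$ transposes that $E$ with the single letter to its right, and by Lemma~\ref{Lem : +1-1} it has area $1$, so it indeed lies in the codomain.

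For the inclusion of the image in the stated set I would split according to the block immediately preceding the final $NE$-block. If the last two blocks are $NE\,NE$, the distinguished $E$ is followed by $N$ and the transposition turns the suffix $NENE$ into $NNEE$, giving $\gamma_1=u\,NNEE$ with $u$ the word of the remaining blocks. If instead the final $NE$-block is preceded by a nonempty run $D^{p}$, then $\gamma_0=v\,NE\,D^{p}\,NE$, the distinguished $E$ is followed by $D$, and transposing $ED\mapsto DE$ produces $\gamma_1=v\,NDED^{\,p-1}\,NE$. These are exactly the two families $u D^{j}NNEE$ and $v NDED^{j}NE$ of the statement, the free exponent $j$ absorbing the run of $D$'s (cosmetically in the first family, structurally in the second); tracking the number of $NE$-blocks consumed by the modified suffix against the total $NE$-count of $\gamma_0$ pins $u$ and $v$ down to the sets recorded in the statement.

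For the reverse inclusion I would simply invert the transposition: given $u D^{j}NNEE$ replace $NNEE$ by $NENE$ to get $u D^{j}NENE$, and given $v NDED^{j}NE$ replace $NDED^{j}$ by $NED^{j+1}$ to get $v NED^{j+1}NE$. Each reconstructed path lies in $\{NE,D\}^*$, has area $0$, and ends in $NE$; one then checks that in each case the first pass of $\varphi$ selects precisely the $E$ we moved (no $E$ further to the right is non-terminal), so that $\tilde\varphi$ returns the given path. This exhibits a preimage for every element of the stated set and, incidentally, gives the injectivity of $\tilde\varphi$.

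The delicate point, and where I expect to spend the most care, is the behaviour of the trailing letters. Lemma~\ref{Lem : prefix algorithm} is what guarantees that the distinguished $E$ is genuinely the one moved and that the suffix $D$'s are not disturbed, so it should be invoked to justify the clean forms $NNEE$ and $NDED^{\,p-1}NE$ rather than argued by hand. One must also be careful that an input ending in $D$ (rather than in $NE$) would move its last $E$ across a $D$ and produce a suffix of the shape $\cdots NDE$, which is of neither stated form; the argument therefore relies on the inputs relevant here ending in $NE$, exactly as the $\schP$-context of Theorem~\ref{The : main} provides, and this boundary case is the one to treat explicitly when matching the image to the stated set.
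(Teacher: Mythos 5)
Your proposal is correct and follows essentially the same route as the paper: the paper's entire proof of this lemma is the single line ``Follows from the definition of $\varphi$,'' and your direct computation of the first pass of the algorithm --- identifying the moved $E$ as the last non-terminal $E$ of an area-$0$ input, splitting into the suffix cases $NENE \mapsto NNEE$ and $NE\,D^{p}\,NE \mapsto NDE\,D^{p-1}\,NE$, and exhibiting the explicit inverses for the reverse inclusion --- is precisely the verification the paper gestures at. Your closing caveat about inputs ending in $D$ (whose images end in $NDED^{j}$ and fall outside the stated set, so the lemma must be read in the $\schP$-context of paths ending in $NE$) is a genuine and correct observation that the paper's one-line proof does not address.
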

\begin{proof}Follows from the definition of $\varphi$.
\end{proof}
In order to give the decomposition in Schur functions evaluated in the variables $q$ and $t$, for $\langle \nabla(e_n), s_{d+1,1^{n-d-1}}\rangle|_{1 \pp}$, we will show that for a path $\gamma$ in $\{NE,D\}^*$, the sum $\sum_{\pi\in\varphi(\gamma)}q^{\B(\pi)}t^{\A(\pi)}$ is a Schur function in the variables $q$ and $t$. For the general result to hold, we need the intersection of these sets to be empty.
\begin{lem}\label{Lem : intersection vide}Let $\gamma$, $\pi$ be in $\{NE,D\}^*$ such that $\gamma\not=\pi$ then $\varphi(\gamma) \cap \varphi(\pi)=\emptyset$.
\end{lem}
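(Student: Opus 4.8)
The plan is to show that for each index $i$ the assignment $\gamma\mapsto\gamma_i$ (the $i$-th term of $\varphi(\gamma)$) is injective, and then to read off disjointness from this together with the area bookkeeping of Lemma~\ref{Lem : +1-1}. First I would record that, since $\gamma\in\{NE,D\}^*$ has $\A(\gamma)=0$, Lemma~\ref{Lem : +1-1} gives $\A(\gamma_i)=i$ for every term of $\varphi(\gamma)=(\gamma_0,\dots,\gamma_{\B(\gamma)})$. Hence if some path $\sigma$ lies in both $\varphi(\gamma)$ and $\varphi(\pi)$, writing $\sigma=\gamma_i=\pi_j$ forces $i=\A(\sigma)=j$: a common path necessarily occupies the same position in the two sequences. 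It therefore suffices to prove that a single forward step of the algorithm can be undone by a rule depending only on the path it is applied to, never on the sequence producing it.

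Next I would build this reverse rule from the structural description in Lemma~\ref{Lem : prefix algorithm}. Given a non-initial term $\gamma_v$ (so $v\geq 1$), decompose $\gamma_v=\alpha\beta$ with $\alpha$ the longest prefix lying in $\{NE,D\}^*$; by that lemma $\beta=\omega E^{|\beta|_E-1}$, where $\omega$ contains exactly one east step, that east step is precisely the one transported by the last iteration of $\varphi$, and its immediate left neighbour is a letter of $\omega$ in $\{N,D\}$. I would then define $R(\gamma_v)$ to be the path obtained by locating this unique $E$ of $\omega$ and swapping it with the letter immediately to its left. Because the decomposition $\alpha\beta$ and the word $\omega$ are determined by $\gamma_v$ alone, $R$ is a genuine function of the path; and by construction the swap reverses exactly the transposition performed in passing from $\gamma_{v-1}$ to $\gamma_v$, so $R(\gamma_v)=\gamma_{v-1}$ for every $\gamma\in\{NE,D\}^*$ and every $1\leq v\leq\B(\gamma)$. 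In particular each application of $R$ removes one lower triangle, i.e. $\A(R(\gamma_v))=\A(\gamma_v)-1$.

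With $R$ in hand the disjointness is immediate. Suppose again $\sigma\in\varphi(\gamma)\cap\varphi(\pi)$ and set $a=\A(\sigma)$. Reading $\sigma$ inside $\varphi(\gamma)$ we have $\sigma=\gamma_a$, and applying $R$ exactly $a$ times yields $R^{a}(\sigma)=\gamma_0=\gamma$; reading $\sigma$ inside $\varphi(\pi)$ the same computation gives $R^{a}(\sigma)=\pi$. Since $R$ is a well-defined map of paths, $R^{a}(\sigma)$ is a single path, whence $\gamma=\pi$, contradicting $\gamma\neq\pi$. Each intermediate path $R^{k}(\sigma)=\gamma_{a-k}$ with $a-k\geq 1$ is a non-initial term, hence lies in the domain of $R$, so the iteration is legitimate.

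The crux of the argument, and the step I expect to require the most care, is verifying that $R$ is an honest left inverse of the forward step \emph{independently of the sequence}: concretely, that the east step singled out inside $\omega$ is exactly the one moved at the previous iteration, and that its left neighbour is the correct non-east letter. This is where Lemma~\ref{Lem : prefix algorithm} does the real work—its guarantee that $\beta=\omega E^{|\beta|_E-1}$ with a single east step in $\omega$, together with the final clause identifying the transported letter—so that the remaining task is essentially to transcribe that lemma into the explicit swap defining $R$ and to check the bookkeeping that the index of a common path equals its area.
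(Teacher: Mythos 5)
Your proof is correct, but it takes a genuinely different route from the paper's. The paper's argument is a two-line conserved-quantity observation: each iteration of $\varphi$ swaps a factor $EN$ for $NE$ or $ED$ for $DE$, so the subword formed by the letters $N$ and $D$ (equivalently, the relative order of north and diagonal steps) is invariant along the entire sequence $\varphi(\gamma)$; since a word in $\{NE,D\}^*$ is uniquely determined by this subword, any common element of $\varphi(\gamma)$ and $\varphi(\pi)$ forces $\gamma_0=\pi_0$, i.e.\ $\gamma=\pi$. You instead build an explicit one-step inverse $R$: you use Lemma~\ref{Lem : prefix algorithm} to locate, inside the decomposition $\gamma_v=\alpha\omega E^{|\beta|_E-1}$, the unique east step of $\omega$ as the letter transported at the last iteration (its left neighbour lies in $\{N,D\}$ because $\omega$ has exactly one east step and $\beta$ cannot begin with $E$, as the path sits on the main diagonal after $\alpha$), and you use Lemma~\ref{Lem : +1-1} together with $\A(\gamma_0)=0$ to align positions via $\A(\gamma_i)=i$ before iterating $R$ back to index $0$. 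Both arguments are sound and rest on material already proved in the paper; the paper's is shorter and needs nothing beyond the swap observation, while yours is heavier but constructive --- it shows each forward step of $\varphi$ is reversible by a rule depending only on the path itself, which is strictly more than disjointness and essentially supplies, ready-made, the recovery of the unique pair $(\gamma_0,i)$ that Proposition~\ref{Prop : prob 3.1} needs to define the involution $\Omega_n$.
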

\begin{proof} Let us first notice that the algorithm changes $EN$ for $NE$ or $ED$ for $DE$. Therefore, the relative order of the north and diagonal steps does not change for all paths in $\varphi(\gamma)$ and $ \varphi(\pi)$. Hence, $\gamma_0$ and $\pi_0$ have the same relative order in regard to the north and diagonal steps which uniquely determine paths of $\{NE,D\}^*$. Consequently, $\varphi(\gamma) \cap \varphi(\pi)=\emptyset$.
\end{proof}
We can now display a bijection that inverts the statistics area and bounce. This partially solves open problem 3.11 of \cite{[H2008]}.
\begin{prop}\label{Prop : prob 3.1}Let $n$ be a positive integer and $\varphi(\{NE,D\}^n)$ be the set $\cup_{\gamma\in\{NE,D\}^n} \varphi(\gamma)$. There  is a bijection, $\Omega_n$,  of $\varphi(\{NE,D\}^n)$ onto itself. For all $n\geq 1$ we have, $\A(\gamma_i)=\B(\Omega_n(\gamma_i))$ and $\B(\gamma_i)=\A(\Omega_n(\gamma_i))$. 
\end{prop}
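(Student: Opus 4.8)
The plan is to define $\Omega_n$ to act \emph{sequence by sequence} on the disjoint blocks that make up $\varphi(\{NE,D\}^n)$, simply by reversing each block. Concretely, for $\gamma\in\{NE,D\}^n$ with $\varphi(\gamma)=(\gamma_0,\ldots,\gamma_{\B(\gamma)})$, I would set
\begin{equation*}
\Omega_n(\gamma_i):=\gamma_{\B(\gamma)-i},\qquad 0\leq i\leq \B(\gamma).
\end{equation*}
The two desired identities are then immediate from the third and fourth equations of Lemma~\ref{Lem : +1-1}: those give $\A(\gamma_i)=\B(\gamma_{\B(\gamma)-i})=\B(\Omega_n(\gamma_i))$ and $\B(\gamma_i)=\A(\gamma_{\B(\gamma)-i})=\A(\Omega_n(\gamma_i))$, exactly the statistic-swapping property required. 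So almost all of the real content is already packaged inside Lemma~\ref{Lem : +1-1}, and the remaining work is purely to certify that the prescription above actually defines a map on the union, and that it is a bijection.

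The first genuine point to check — and the step I expect to be the only delicate one — is that $\Omega_n$ is \emph{well defined}, i.e.\ independent of how we exhibit a given path as some $\gamma_i$. First, by Lemma~\ref{Lem : intersection vide} the blocks $\varphi(\gamma)$ for distinct $\gamma\in\{NE,D\}^n$ are pairwise disjoint, so each $\pi\in\varphi(\{NE,D\}^n)$ lies in a unique block $\varphi(\gamma)$. Second, within that block the first equation of Lemma~\ref{Lem : +1-1} shows the area strictly increases along $(\gamma_0,\ldots,\gamma_{\B(\gamma)})$, so the $\gamma_i$ are pairwise distinct and $\pi$ determines its index $i$ uniquely. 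Hence the pair $(\gamma,i)$ attached to $\pi$ is unambiguous, and $\Omega_n(\pi)=\gamma_{\B(\gamma)-i}$ is well defined. (Note also that $\gamma_{\B(\gamma)-i}$ again lies in the same block $\varphi(\gamma)\subseteq\varphi(\{NE,D\}^n)$, so $\Omega_n$ does map the set into itself.)

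Finally, for the bijectivity I would observe that the reversal $i\mapsto \B(\gamma)-i$ is an involution of the index set $\{0,\ldots,\B(\gamma)\}$, so $\Omega_n$ restricted to each block $\varphi(\gamma)$ is an involution of that block onto itself; since $\varphi(\{NE,D\}^n)$ is the disjoint union of these blocks, $\Omega_n$ is an involution of the whole set, in particular a bijection. Thus the only nontrivial verification is the well-definedness argument above, which rests on combining the disjointness of Lemma~\ref{Lem : intersection vide} with the strict monotonicity of area from Lemma~\ref{Lem : +1-1}; everything else follows formally, and the statistic exchange is read off directly from the last two identities of Lemma~\ref{Lem : +1-1}.
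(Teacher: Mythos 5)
Your proposal is correct and follows essentially the same route as the paper: the paper defines $\Omega_n(\gamma_i)=\gamma_{\B(\gamma_i)+\A(\gamma_i)-i}$, which by Lemma~\ref{Lem : +1-1} is exactly your block reversal $\gamma_i\mapsto\gamma_{\B(\gamma)-i}$, and it likewise invokes Lemma~\ref{Lem : intersection vide} for well-definedness and Lemma~\ref{Lem : +1-1} for the exchange of $\A$ and $\B$. Your additional observation that the index $i$ is uniquely determined because the area strictly increases along each block is a detail the paper leaves implicit, but the argument is the same.
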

\begin{proof}By Lemma~\ref{Lem : intersection vide}, for $\gamma \in \varphi(\{NE,D\}^n)$ there is a unique $\gamma_0$ and a unique $i$ such that $\gamma \in \varphi(\gamma_0)$ and $\gamma=\gamma_i$. Thus, we can define $\Omega_n(\gamma)=\gamma_{\B(\gamma_i)+\A(\gamma_i)-i}$. The result is a consequence of Lemma~\ref{Lem : +1-1}.
\end{proof}
The following Lemma gives us a full set representatives of Schur functions indexed by one part. 
\begin{lem}\label{Lem : bij schroder aire 0 et rectangle}
Let $A_d=\{ \gamma \in \{NE,D\}^n ~|~ |\gamma|_D=d \}$, there is a bijection $\theta: A_d \rightarrow \mathcal{C}^n_d$ such that $\theta(NE)=N$ and $\theta(D)=E$. Moreover, for $\gamma \in A_d$ we have $\B(\gamma)=\A(\theta(\gamma))+\binom{n-d}{2}$.
\end{lem}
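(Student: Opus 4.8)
The plan is to treat the two assertions separately: first the bijectivity of $\theta$, which is essentially formal, and then the statistic identity $\B(\gamma)=\A(\theta(\gamma))+\binom{n-d}{2}$, which is where the real content lies. For the bijection, I would observe that every $\gamma\in A_d$ is uniquely a concatenation of $n$ blocks, each block being $NE$ or $D$, with exactly $d$ blocks equal to $D$. Sending each block to a single letter via $NE\mapsto N$ and $D\mapsto E$ produces a word of length $n$ in $\{N,E\}^n$ with exactly $d$ letters $E$, that is, a path in $\mathcal{C}^n_d$. Conversely, expanding $N\mapsto NE$ and $E\mapsto D$ recovers $\gamma$, so $\theta$ is a bijection; the inverse is well defined precisely because the relative order of the blocks determines $\gamma$ (this is the same observation that drives the proof that $\varphi(\gamma)\cap\varphi(\pi)=\emptyset$).

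For the statistic identity I would use the defining formula $\B(\gamma)=\B(\Gamma(\gamma))+\N(\gamma)$ and show that the two summands produce the two terms on the right-hand side. First I would compute $\B(\Gamma(\gamma))$. Since deleting every diagonal step of $\gamma\in A_d$ leaves the word $(NE)^{n-d}$, the Dyck path $\Gamma(\gamma)$ is the minimal (area-zero) Dyck path in the $(n-d)\times(n-d)$ grid, independently of $\gamma$. Its bounce path returns to the main diagonal at every lattice point $(i,i)$, so the bounce vector, read from the top, is $(0,1,2,\ldots,n-d)$; summing and subtracting $n-d$ gives $\B(\Gamma(\gamma))=\binom{n-d+1}{2}-(n-d)=\binom{n-d}{2}$. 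This is the constant term, depending only on $d$.

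It remains to show $\N(\gamma)=\A(\theta(\gamma))$, which I expect to be the crux. The peaks of $\gamma$ are in bijection with its $NE$ blocks, since the peaks of $\Gamma(\gamma)=(NE)^{n-d}$ sit at each of its corners and reinserting the diagonal steps neither creates nor destroys peaks. By definition, $\N(\gamma)$ sums, over all diagonal steps, the number of peaks lying below that step; for an area-zero path this count is exactly the number of $NE$ blocks occurring to the left of each $D$ block. On the other side, the area of a north-east path equals, over all its east steps, the number of north steps preceding each east step, since an east step at height $h$ contributes $h$ boxes. Under $\theta$ the east steps of $\theta(\gamma)$ are the images of the $D$ blocks and the north steps are the images of the $NE$ blocks, so $\A(\theta(\gamma))$ counts, for each $D$ block, the number of $NE$ blocks before it, which is exactly $\N(\gamma)$.

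Combining the two computations yields $\B(\gamma)=\B(\Gamma(\gamma))+\N(\gamma)=\binom{n-d}{2}+\A(\theta(\gamma))$, as required. The main obstacle I anticipate is making the identification $\N(\gamma)=\A(\theta(\gamma))$ fully rigorous: one must verify carefully, from the definition of peaks reinserted into $\Gamma(\gamma)$ together with the ``with multiplicity'' convention for numph, that a peak is counted for a diagonal step precisely when its $NE$ block precedes that $D$ block in the word. The orientation is confirmed by the smallest cases $NED$ and $DNE$, where the single peak lies below a later diagonal step but above an earlier one, and I would phrase the whole argument in terms of block positions in the word $\gamma$ so as to avoid any appeal to the picture.
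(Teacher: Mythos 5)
Your proof is correct and takes essentially the same route as the paper's: the paper also treats $\theta$ as a mere relabeling of the blocks $NE\mapsto N$, $D\mapsto E$, and splits $\B(\gamma)=\B(\Gamma(\gamma))+\N(\gamma)$, observing that every east step of $\Gamma(\gamma)=(NE)^{n-d}$ returns to the main diagonal (giving $\B(\Gamma(\gamma))=\binom{n-d}{2}$) and that each diagonal step contributes to $\N$ the number of $NE$ factors preceding it, which is the number of boxes under the corresponding east step of $\theta(\gamma)$, so $\N(\gamma)=\A(\theta(\gamma))$. Your write-up is simply a more explicit version of the same argument (the bounce-vector computation $(0,1,\ldots,n-d)$ and the orientation check on $NED$ versus $DNE$ are carried out rather than asserted).
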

\begin{proof}
In $A_d$, the factor $NE$, can be changed for a letter. A path $\gamma$ in $A_d$ is a word of length $n$ with $d$ occurrences of one letter and $n-d$ occurrences of the other letter. A path in $ \mathcal{C}^n_d$ can be represented by a word with $d$ occurrences of the letter  $E$ and $n-d$ occurrences of the letter $N$. Hence, $\theta$ merely relabels the letters and is a bijection.

Furthermore, in $A_d$ all east steps are associated to a peak; therefore, the $i$-th diagonal steps contribute the number of factors $NE$  before  the $i$-th diagonal steps to $\N$. But that number is the number of boxes under the $i$-th north step in $\theta(\gamma)$. Thus, $\N(\gamma)=\A(\theta(\gamma))$. Finally, $\B(\Gamma(\gamma))=\binom{n-d}{2}$ for all paths $\gamma$ in $A_d$ because all the $n-d$ east steps return to the main diagonal.
\end{proof}
The next proposition will be generalized for parking functions by Proposition~\ref{Prop : 1 part parking} and generalized for the restriction to Schur functions indexed by a hook-shaped partition, evaluated in the variables $q$ and $t$ by Theorem~\ref{The : main}. Although the generalizations will not account for all the paths related to each Schur functions.
\begin{prop}\label{Prop : sum algo 1 part} For $\gamma$ in $\{NE,D\}^*$, we have:
\begin{equation}\label{Eq : bounce}
\sum_{\gamma_i \in \varphi(\gamma)}q^{\B(\gamma_i)}t^{\A(\gamma_i)}=s_{\B(\gamma)}(q,t),
\end{equation}
\begin{equation}\label{Eq : 1 part 1-schroder}
\langle\nabla e_n,e_{n-d}h_d\rangle|_{1 \pp}=\underset{|\gamma|_D=d}{\sum_{\gamma \in \{NE,D\}^{n}}} s_{\B(\gamma)}(q,t)
=\sum_{\gamma\in \mathcal{C}^{n}_d} s_{\A(\gamma)+\binom{n-d}{2}}(q,t), \text{ and,}
\end{equation}
\begin{equation}\label{Eq : 1 part 1-schroder schur}
\langle\nabla e_n,s_{d+1,1^{n-d-1}} \rangle|_{1 \pp}=\underset{|\gamma|_D=d}{\sum_{\gamma \in \{NE,D\}^{n-1}NE}} s_{\B(\gamma)}(q,t)=\sum_{\gamma\in \mathcal{C}^{n-1}_d} s_{\A(\gamma)+\binom{n-d}{2}}(q,t).
\end{equation}
\end{prop}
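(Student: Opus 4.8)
The plan is to establish the three equations in order, using the earlier lemmas as building blocks and reducing everything to the single key identity \eqref{Eq : bounce}.

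First I would prove \eqref{Eq : bounce}. Fix $\gamma\in\{NE,D\}^*$, so $\A(\gamma)=0$, and set $b=\B(\gamma)$. By Lemma~\ref{Lem : +1-1}, the sequence $\varphi(\gamma)=(\gamma_0,\ldots,\gamma_b)$ has $\A(\gamma_i)=i$ and $\B(\gamma_i)=b-i$ for every $0\leq i\leq b$, since $\A(\gamma_0)=0$ and each step raises area by one and lowers bounce by one. Hence
\begin{equation*}
\sum_{\gamma_i\in\varphi(\gamma)}q^{\B(\gamma_i)}t^{\A(\gamma_i)}=\sum_{i=0}^{b}q^{b-i}t^{i}=q^{b}+q^{b-1}t+\cdots+t^{b}=s_{b}(q,t),
\end{equation*}
where the last equality is the stated two-variable Schur identity $s_{a}(q,t)=\sum_{i=0}^{a}q^{a-i}t^{i}$ (the one-row case of $s_{a,b}(q,t)=q^{a-b}t^{a-b}(q^{b}+\cdots+t^{b})$ with $b=0$). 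This is the heart of the proposition, and it is not hard once Lemma~\ref{Lem : +1-1} is in hand.

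Next I would prove \eqref{Eq : 1 part 1-schroder}. By Haglund's theorem, $\langle\nabla e_n,e_{n-d}h_d\rangle=\sch_{n,d}(q,t)$. As argued in the text preceding the algorithm, the Schur functions indexed by one part appearing in $\sch_{n,d}(q,t)$ are exactly those detected by $\sch_{n,d}(q,0)$, and the only paths contributing to $q$-monomials with area zero are the paths in $\{NE,D\}^n$ with $d$ diagonal steps. The plan is to observe that the sets $\varphi(\gamma)$, as $\gamma$ ranges over these area-zero paths, partition the relevant family: by Lemma~\ref{Lem : intersection vide} they are pairwise disjoint, and by Lemma~\ref{Lem : +1-1} each $\varphi(\gamma)$ supplies precisely the anti-diagonal of bounce/area values summing to $s_{\B(\gamma)}(q,t)$. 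Summing \eqref{Eq : bounce} over all such $\gamma$ gives the first equality. For the second equality I would apply the bijection $\theta$ of Lemma~\ref{Lem : bij schroder aire 0 et rectangle}, which sends $A_d$ to $\mathcal{C}^n_d$ with $\B(\gamma)=\A(\theta(\gamma))+\binom{n-d}{2}$; reindexing the sum by $\theta$ turns $\sum_{\gamma}s_{\B(\gamma)}(q,t)$ into $\sum_{\gamma\in\mathcal{C}^n_d}s_{\A(\gamma)+\binom{n-d}{2}}(q,t)$.

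Finally, \eqref{Eq : 1 part 1-schroder schur} follows by the identical argument restricted to paths ending in $NE$. Here I would use $\langle\nabla e_n,s_{d+1,1^{n-d-1}}\rangle=\schP_{n,d}(q,t)$ from Haglund's theorem \eqref{Eq : nabla schur}, together with the definition $\schP_{n,d}=\sch_{n,d}\cap\{D,N,E\}^*NE$. The area-zero paths ending in $NE$ are exactly $\{NE,D\}^{n-1}NE$, i.e.\ those with a forced final $NE$ factor and $d$ diagonal steps among the first $n-1$ factors; summing \eqref{Eq : bounce} over them and applying $\theta$ (now landing in $\mathcal{C}^{n-1}_d$, since the final $NE$ is fixed) yields both equalities. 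The one point requiring care, and the step I expect to be the main obstacle, is justifying that the one-part restriction of $\sch_{n,d}(q,t)$ is captured entirely by area-zero paths and that the $\varphi$-orbits of these paths neither overlap nor miss any contributing monomial; Lemmas~\ref{Lem : +1-1} and \ref{Lem : intersection vide} are precisely what make this bookkeeping rigorous.
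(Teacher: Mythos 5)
Your proof is correct and follows essentially the same route as the paper's: Equation~\eqref{Eq : bounce} from Lemma~\ref{Lem : +1-1}, the reduction of the one-part restriction to area-zero paths via the $t=0$ specialization and Haglund's theorem, the bijection $\theta$ of Lemma~\ref{Lem : bij schroder aire 0 et rectangle}, and the identification of $\{NE,D\}^{n-1}NE$ with paths of $\mathcal{C}^n_d$ ending in a north step, hence with $\mathcal{C}^{n-1}_d$. Your appeal to Lemma~\ref{Lem : intersection vide} for disjointness of the $\varphi$-orbits is harmless extra bookkeeping that the paper's proof does not need, since the identity already follows from matching the $q^c$-monomials of the $t=0$ specialization against the one-part Schur functions.
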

\begin{proof}Equation~\eqref{Eq : bounce} follows from Lemma~\ref{Lem : +1-1}. For the first equality of Equation~\eqref{Eq : 1 part 1-schroder}, we notice that $s_a(q,t)=q^a+q^{a-1}t+\cdots+qt^{a-1}+t^a$, and, thus, by \hyperref[The : Hag]{Haglund's} Theorem, a Schur function indexed by a one part partition in $\langle\nabla e_n,e_{n-d}h_d \rangle$ can be associated to a path $\gamma$ in $\sch_{n,d}$ such that $\A(\gamma)=0$. But these are in $\{NE,D\}^n$ and have $d$ diagonal steps. For this reason,  by Equation~\eqref{Eq : bounce}, the equality holds.
The second equality of Equation~\eqref{Eq : 1 part 1-schroder} Follows from Lemma~\ref{Lem : bij schroder aire 0 et rectangle}. Finally, for Equation~\eqref{Eq : 1 part 1-schroder schur} we only need to notice that paths of $\mathcal{C}^n_d$ ending with a north step are in bijection with paths of $\mathcal{C}^{n-1}_d$ and have the same area. The result is a consequence of  \hyperref[The : Hag]{Haglund's} Theorem, Lemma~\ref{Lem : bij schroder aire 0 et rectangle} and Equation~\eqref{Eq : bounce}.
\end{proof}
We end this section with a result needed for the generalization of Theorem~\ref{The : main}.
\begin{cor}\label{Cor : aire 1 vs 1 part}
Let $d$ be an integer and $\gamma$ be a path in $\schP_{n,d}$. Then, the path $\gamma$ is such that $\gamma=\gamma'NDED^jNE$ or $\gamma=\gamma'NED^jNNEE$, with $\gamma'\in  \{NE,D\}^*$ if and only if $\A(\gamma)=1$ and $\gamma$ contributes to a Schur function  indexed by a partition of length $1$ in $ \langle \nabla(e_n), s_{d+1,1^{n-d-1}}\rangle$.
\end{cor}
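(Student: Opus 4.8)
The plan is to show that the two conditions on the right are jointly equivalent to $\gamma$ lying in the image of the map $\tilde\varphi$, and then to read off the explicit shape from Lemma~\ref{Lem :  ens 1 part aire=1}. The starting point is Haglund's identity \eqref{Eq : nabla schur}, which rewrites $\langle \nabla(e_n), s_{d+1,1^{n-d-1}}\rangle$ as the generating function $\schP_{n,d}(q,t)$; accordingly, ``$\gamma$ contributes to a Schur function of length one in $\langle \nabla(e_n), s_{d+1,1^{n-d-1}}\rangle$'' means that the monomial $q^{\B(\gamma)}t^{\A(\gamma)}$ of $\gamma$ is collected into a one-part Schur function $s_a(q,t)$ in the Schur expansion of $\schP_{n,d}(q,t)$.

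For the direction ``area one and one-part $\Rightarrow$ shape'', I would first invoke Proposition~\ref{Prop : sum algo 1 part}: by \eqref{Eq : 1 part 1-schroder schur} the one-part part of $\schP_{n,d}(q,t)$ is exactly $\sum_{\gamma_0} s_{\B(\gamma_0)}(q,t)$, the sum running over the area-zero paths $\gamma_0\in\{NE,D\}^{n-1}NE$ with $d$ diagonal steps, and by \eqref{Eq : bounce} each summand $s_{\B(\gamma_0)}(q,t)$ is realized monomial-by-monomial by the orbit $\varphi(\gamma_0)$. Since Lemma~\ref{Lem : intersection vide} guarantees these orbits are pairwise disjoint, a path contributes to a one-part Schur function precisely when it belongs to a unique orbit $\varphi(\gamma_0)$ of an area-zero path. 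Imposing $\A(\gamma)=1$ then pins down the position in the orbit: by Lemma~\ref{Lem : +1-1} the area along $\varphi(\gamma_0)$ increases by one at each step starting from $\A(\gamma_0)=0$, so $\gamma=\gamma_1=\tilde\varphi(\gamma_0)$ lies in the image of $\tilde\varphi$. Lemma~\ref{Lem :  ens 1 part aire=1} then delivers the two explicit forms $\gamma'NDED^jNE$ and $\gamma'NED^jNNEE$ with $\gamma'\in\{NE,D\}^*$, identifying $\gamma'=v$ in the first form and $\gamma'NE=u$ in the second.

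For the converse, ``shape $\Rightarrow$ area one and one-part'', I would run the same chain backwards. By Lemma~\ref{Lem :  ens 1 part aire=1} the paths of the two listed forms are exactly the elements of the image of $\tilde\varphi$, so any such $\gamma$ equals $\gamma_1=\tilde\varphi(\gamma_0)$ for some area-zero $\gamma_0$; hence $\A(\gamma)=1$ by the definition of $\tilde\varphi$, equivalently by Lemma~\ref{Lem : +1-1}. Moreover $\gamma$ then lies in the orbit $\varphi(\gamma_0)$, which by \eqref{Eq : bounce} assembles into the single one-part Schur function $s_{\B(\gamma_0)}(q,t)$; the monomial $q^{\B(\gamma_0)-1}t$ contributed by $\gamma$ therefore sits inside a length-one Schur function, which is what had to be shown.

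The main obstacle, and the step I would spend the most care on, is the equivalence ``$\gamma$ contributes to a one-part Schur function $\iff$ $\gamma$ lies in the $\varphi$-orbit of an area-zero path.'' Expanding $\schP_{n,d}(q,t)$ as a positive combination of Schur functions $s_\lambda(q,t)$ with $\ell(\lambda)\le 2$, I must argue that the monomials coming from the orbits account for the entire one-part part with no leakage into the two-part Schur functions and no double counting; this is exactly where Proposition~\ref{Prop : sum algo 1 part} (the precise size of the one-part part) and Lemma~\ref{Lem : intersection vide} (disjointness of orbits) are indispensable. A secondary point to verify carefully is the index and boundary bookkeeping, namely the shift between the $d$ of $\schP_{n,d}$ and the $d-1$ in the definition of $\tilde\varphi$, the role of the terminal factor $NE$ that distinguishes $\schP$ from $\sch$, and the matching of the exponents in Lemma~\ref{Lem :  ens 1 part aire=1}, so that the two stated forms are produced with the correct number of diagonal steps.
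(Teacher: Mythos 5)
Your proposal is correct and follows essentially the same route as the paper: both directions reduce to identifying the two stated shapes with the image of $\tilde\varphi$ via Lemma~\ref{Lem :  ens 1 part aire=1}, and then invoking Proposition~\ref{Prop : sum algo 1 part} (whose proof already encapsulates the orbit disjointness of Lemma~\ref{Lem : intersection vide} and the area/bounce increments of Lemma~\ref{Lem : +1-1} that you spell out). Your version is simply a more detailed unpacking of the paper's two-sentence proof, including the harmless bookkeeping between $u\in\{NE,D\}^{n-d-2}$ in the lemma and the factor $\gamma'NE$ in the corollary's second form.
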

\begin{proof} If  $\gamma=\gamma'NDED^jNE$ or $\gamma=\gamma'NED^jNNEE$), with $\gamma'\in  \{NE,D\}^*$, then, by Lemma~\ref{Lem :  ens 1 part aire=1}, $\gamma$ is in the image of $\tilde\varphi$ and the result follows from Proposition~\ref{Prop : sum algo 1 part}. 
 If $\A(\gamma)=1$ and $\gamma$ contributes to a Schur function  indexed by a partition of length $1$, by Lemma~\ref{Lem :  ens 1 part aire=1}, $\gamma$ is in the image of $\tilde\varphi$ and  the result follows from Proposition~\ref{Prop : sum algo 1 part}.
\end{proof}
\section{From Parking Functions Formulas to Schur Functions}\label{Sec : parking}
The aim of this section is to give a combinatorial formula for $\nabla^m(e_n)$ restricted to Schur functions indexed by one part partitions in the variables $q$ and $t$. We will denote this restriction   $\nabla^m(e_n)|_{1 \pp}$. In this section we will be using the diagonal inversion statistic, since bounce is not defined for parking functions. This is the main obstacle to knowing all path related to each Schur functions in the variables $q$ and $t$ in the formula. 
To obtain a formula for $\nabla^m(e_n)|_{1 \pp}$, we will give the necessary and sufficient conditions for a parking function to have a diagonal inversion statistic of $0$. We will also determine the necessary and sufficient conditions for a parking function to have a diagonal inversion statistic of $1$ if $m$ is greater or equal to $2$.  

The next three results are technical and will mostly be used to discard some reoccurring cases.
\begin{cl}\label{Cl : min dinv Park}Let $(\gamma,w)$ be in $\Pa_{n,nm}$ such that $\gamma$ has a factor $\gamma'=NE^pN$, with $ 1\leq p\leq m$, and with its north steps associated to $w_i$ and $w_{i+1}$. Then, if $w_i>w_{i+1}$, $d_i(i+1)=p-1$ and if $w_i<w_{i+1}$, $d_i(i+1)=p$.
\end{cl}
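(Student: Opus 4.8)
The plan is to reduce the claim to a single computation, namely the difference $a_{i+1}-a_i$ of consecutive row-areas across the factor $\gamma'=NE^pN$, and then to substitute this value directly into the defining formula for $d_i(i+1)$. For $(\gamma,w)\in\Pa_{n,nm}$ the Fuss parameter appearing in the $\dinv$ formula is $r=m$, so since the pair $(i,i+1)$ satisfies $i<i+1$ the relevant quantity is
$$d_i(i+1)=\chi(w_i<w_{i+1})\max(0,m-|a_i-a_{i+1}|)+\chi(w_i>w_{i+1})\max(0,m-|a_{i+1}-a_i+1|).$$
Thus everything hinges on the single number $a_{i+1}-a_i$, and the rest is a case split on the relative order of $w_i$ and $w_{i+1}$.

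First I would compute $a_{i+1}-a_i$. Writing $c_i$ for the number of east steps preceding the north step on line $i$, the factor $NE^pN$ forces $c_{i+1}=c_i+p$. Since the main diagonal of the $(n,nm)$-grid has slope $1/m$, the number of cells in row $i$ lying between the path and the main diagonal is $a_i=m(i-1)-c_i$; this is the standard row-area count, and one checks it against the $n\times n$ instance of the figures, where it specializes to $a_i=(i-1)-c_i$. Hence
$$a_{i+1}-a_i=(mi-c_{i+1})-(m(i-1)-c_i)=m-p.$$
Equivalently, passing from line $i$ to line $i+1$ enlarges the admissible strip by $m$ cells while the $p$ intervening east steps remove $p$ of them. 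The hypothesis $1\le p\le m$ guarantees $0\le a_{i+1}-a_i=m-p\le m-1$, which is exactly the range in which the path stays weakly above the diagonal, so this case hypothesis is consistent with $\gamma$ being an admissible Dyck path.

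With $a_{i+1}-a_i=m-p\ge 0$ in hand the two cases are immediate. We have $|a_i-a_{i+1}|=m-p$, and since $a_{i+1}-a_i+1=m-p+1\ge 1>0$ we also have $|a_{i+1}-a_i+1|=m-p+1$. If $w_i<w_{i+1}$ only the first summand survives and $d_i(i+1)=\max(0,m-(m-p))=\max(0,p)=p$, using $p\ge 1$; if $w_i>w_{i+1}$ only the second summand survives and $d_i(i+1)=\max(0,m-(m-p+1))=\max(0,p-1)=p-1$, again using $p\ge 1$. This yields both assertions. The only genuinely delicate point is establishing the area relation $a_{i+1}-a_i=m-p$ with the correct sign and normalisation; once the slope-$1/m$ geometry is fixed the remainder is a direct substitution, and the constraint $p\le m$ is precisely what lets us discard both absolute values without further subcases.
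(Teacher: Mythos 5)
Your proposal is correct and follows essentially the same route as the paper's proof: the paper likewise substitutes the row-area relation $a_i = a_{i+1}+p-m$ (your $a_{i+1}-a_i = m-p$) into the defining formula for $d_i(i+1)$ and reads off $p$ or $p-1$ according to the order of $w_i$ and $w_{i+1}$. The only difference is that you justify the area relation explicitly via $a_i = m(i-1)-c_i$, a step the paper leaves implicit with ``by hypothesis,'' so your write-up is if anything slightly more complete.
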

\begin{proof} By definition:
\begin{equation*}
d_i(i+1)=\chi(w_i<w_{i+1})\max(0,m-|a_i-a_{i+1}|)+\chi(w_i>w_{i+1})\max(0,m-|a_{i+1}-a_i+1|)
 \end{equation*}
Hence, by hypothesis $a_i=a_{i+1}+p-m$. Therefore:
 \begin{equation*}
 d_i(i+1)=\chi(w_i<w_{i+1})(p)+\chi(w_i>w_{i+1})(p-1)
 \end{equation*}
\end{proof}
\begin{lem}\label{Lem : dinv Park p>m}Let $(\gamma,w)$ be in $\Pa_{n,mn}$ such that $\gamma$ has a factor $\gamma'=NE^pN$, with $p > m$. Then,  $\dinv(\gamma,w)\geq m-1$. 
\end{lem}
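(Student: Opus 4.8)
The plan is to bound $\dinv(\gamma,w)=\sum_{\ell<j}d_\ell(j)$ from below by exhibiting enough pairs of lines that are forced to contribute, using that every term $d_\ell(j)$ is nonnegative. First I would record the geometry around the given factor. Writing $a_\ell$ for the area of line $\ell$, the factor $NE^pN$ gives, by exactly the computation made in the proof of Claim~\ref{Cl : min dinv Park}, the relation $a_{i+1}=a_i-(p-m)$; since $p>m$ this is a strict drop of size $k:=p-m\ge 1$, and since $a_{i+1}\ge 0$ the path must already have climbed to height $a_i\ge k$ at line $i$.

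The key elementary observation, read straight off the formula for $d_\ell(j)$, is that whenever two lines $\ell<j$ satisfy $a_\ell-a_j\in\{0,1\}$ one has $d_\ell(j)\ge m-1$ regardless of the labels: for $a_\ell=a_j$ the two label-cases give $m$ and $m-1$, and for $a_\ell=a_j+1$ they give $m-1$ and $m$. This settles the base case $p=m+1$ at once, since there the pair $(i,i+1)$ itself has $a_i-a_{i+1}=1$. More generally the pair $(i,i+1)$ contributes at least $\max(0,m-k)$, so the whole difficulty is concentrated in producing the missing units of $\dinv$ when $k\ge 2$ (where no single pair need have area-gap $\le 1$).

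To find them I would pass to the dilated $(mn,mn)$-model $(\tilde\gamma,\tilde w)$, in which the factor becomes $N^mE^pN^m$: line $i$ contributes $m$ rows of consecutive areas $a_i,\dots,a_i+m-1$, and line $i+1$ contributes $m$ rows of consecutive areas $a_{i+1},\dots,a_{i+1}+m-1$, all strictly below by the descent, and $\dinv$ is the ordinary square statistic counting lower/upper row pairs that are of equal area (smaller label below) or differ by one in area (larger label below). The crucial structural input is a discrete intermediate-value step: because in the dilated path every upward move raises the area by exactly one, the area sequence from the bottom row up to line $i$ realizes \emph{every} integer in $[0,a_i]$, so each of the $k$ intermediate levels $a_{i+1},a_{i+1}+1,\dots,a_i-1$ occurs on some row below line $i$. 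Pairing the copies of line $i+1$ against these forced lower rows, and exploiting the extremal labels coming from the no-descent condition — $w_{i+1}$ is the smallest label in its column and $w_i$ the largest in its column — I would assemble at least $m-1$ contributing pairs in total.

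The main obstacle is precisely this last matching: the intermediate-value step guarantees lower rows at the required \emph{areas}, but their \emph{labels} are not controlled a priori, and a given level may fail to contribute if the label inequality falls the wrong way. I expect to resolve this by splitting on $w_i\lessgtr w_{i+1}$ and routing each intermediate level either to the ``equal-area, smaller-label-below'' pattern or to the ``area-one-higher, larger-label-below'' pattern, playing the two patterns against the column-monotonicity of the labels so that the contributions of the $k$ descent levels together with the base pair $(i,i+1)$ sum to at least $m-1$. Checking that these charges can be realized simultaneously, are pairwise distinct, and total at least $m-1$ is where the real work lies.
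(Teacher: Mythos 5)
Your setup is sound and your route is essentially the paper's: pass to the dilated pair $(\tilde\gamma,\tilde w)$, use the discrete intermediate-value observation to find earlier rows on the relevant diagonals, and resolve the label dichotomy by playing the two contribution patterns (same diagonal with smaller label below; one diagonal higher with larger label below) against each other. Your opening observation that any pair of lines with coarse area gap $0$ or $1$ contributes at least $m-1$ irrespective of labels is correct and is a genuinely nicer dispatch of the case $p=m+1$ than the paper gives. But the proof is not complete: you explicitly defer the matching step, and the way you frame it suggests a harder problem than the one that actually remains. No label control and no simultaneity argument are needed, and in particular the column-monotonicity of $w_i$ and $w_{i+1}$ plays no role. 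The pointwise resolution is this: for each copy $k\in\{1,\ldots,m-1\}$ (from the top) of $w_{i+1}$, your intermediate-value step produces an earlier row $j_k$ on the \emph{same} diagonal as that copy. If $w_{j_k}<w_{i+1}$, the pair $(j_k,\text{copy }k)$ contributes by the equal-diagonal pattern; if $w_{j_k}>w_{i+1}$, pair $j_k$ instead with copy $k+1$, which sits exactly one diagonal lower than $j_k$, so the pair $(j_k,\text{copy }k+1)$ contributes by the other pattern. Distinctness of the $m-1$ charged pairs is automatic: the rows $j_k$ lie on pairwise distinct diagonals, hence are distinct rows, so the pairs differ in their first component. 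Summing over $k$ gives $\dinv(\gamma,w)\geq m-1$, which is precisely the argument in the paper (see Figures~\ref{Fig : djk(i+1) meme diago} and~\ref{Fig : djk(i+1) pas meme diago}).

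A second, more concrete flaw in your plan: you propose to charge the $k=p-m$ intermediate \emph{area levels} strictly between $a_{i+1}$ and $a_i$, on top of the base contribution $\max(0,m-k)$ from the pair $(i,i+1)$. This bookkeeping invites double counting, because an intermediate level may be realized only by copies of the north step of line $i$ itself, and contributions between copies of $w_i$ and copies of $w_{i+1}$ are already the constituents of $d_i(i+1)$; adding them again on top of the base term would count the same pair twice. Charging per copy of $w_{i+1}$, as above, sidesteps this entirely (the rows $j_k$ are allowed to be copies of $w_i$, and each pair is counted once), and also yields exactly $m-1$ units uniformly in $p$, with no case split on the size of $k$.
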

\begin{proof}Let $w_i$ and $w_{i+1}$ be the letters of $w$ associated to the factor $\gamma'$. Since the path is continuous and over the main diagonal, there exists $j_1,\ldots,j_m$ such that the $k$-th copy (from the top) of $w_{i+1}$ is to the north on the same diagonal than the north step associated to the letter $w_{j_k}$ in $w$. Note that the $j_k$'s are not necessarily distinct, ergo, $d_{j_k}(i+1)=d_s^t(i+1)$ for $s=j_k$ and $j_k$ is the $t$-th copy (from the top) of $w_{s}$. Which means that for $1\leq k\leq m-1$ when $w_{j_k}>w_{i+1}$ we get $d_{j_k}(i+1)\geq 1$ (see Figure~\ref{Fig : djk(i+1) meme diago}) and when $w_{j_k}<w_{i+1}$  the copy $k+1$ of $w_{i+1}$ is to the north and one diagonal lower than $w_{j_k}$. Therefore, $d_{j_k}(i+1)\geq 1$ (see Figure~\ref{Fig : djk(i+1) pas meme diago}). Hence:
 \begin{equation*}\dinv(\gamma,w)= \sum_{s=1}^{n-1}\sum_{t=1}^m \sum_{r>l}^n d_s^t(r)\geq \sum_{k=1}^{m-1} d_{j_k}(i+1)\geq m-1.
 \end{equation*}
\end{proof}
\begin{figure}[!htb]
\begin{minipage}{8cm}
\centering
\begin{tikzpicture}[scale=.75]
\draw (0,0)--(0,4);
\draw (2,5)--(2,6)--(7,6)--(7,10);
\draw[dashed, red] (-.5,1.5)--(7.75,8.75);
\node(i) at (2.5,5.5){$w_i$};
\node(i1) at (8.5,9.75){$w_{i+1}$~copy $1$};
\node(ik) at (8.5,8.25){$w_{i+1}$~copy $k$};
\node(im) at (8.55,6.25){$w_{i+1}$~copy  $m$};
\node(s1) at (1.25,3.75){$w_{s}$~copy $1$};
\node(st) at (1.95,2.1){$w_{j_k}=w_{s}$~copy $t$};
\node(sm) at (1.35,.25){$w_{s}$~copy $m$};
\end{tikzpicture}
\caption{}\label{Fig : djk(i+1) meme diago}
\end{minipage}
\begin{minipage}{8cm}
\centering
\begin{tikzpicture}[scale=.75]
\draw (0,0)--(0,4);
\draw (2,5)--(2,6)--(7,6)--(7,10);
\draw[dashed, red] (-.5,1.5)--(7.75,8.75);
\draw[dashed, red] (4,4.95)--(7.75,8.25);
\node(i) at (2.5,5.5){$w_i$};
\node(i1) at (8.5,9.75){$w_{i+1}$~copy $1$};
\node(ik) at (8.5,8.25){$w_{i+1}$~copy $k$};
\node(ik1) at (9.,7.75){$w_{i+1}$~copy $k+1$};
\node(im) at (8.55,6.25){$w_{i+1}$~copy  $m$};
\node(s1) at (1.25,3.75){$w_{s}$~copy $1$};
\node(st) at (1.95,2.1){$w_{j_k}=w_{s}$~copy $t$};
\node(sm) at (1.35,.25){$w_{s}$~copy $m$};
\end{tikzpicture}
\caption{}\label{Fig : djk(i+1) pas meme diago}
\end{minipage}
\end{figure}
\begin{lem}\label{Lem : m=1 p>2 local}Let  $(\gamma,w)$ be in $\Pa_{n,mn}$. If there is a factor $\gamma$, $\gamma'=NE^pN$ associated to the lines $i-1$ and $i$ such that $p\geq 2$, then there is $k$ such that $d_k(i)=1$, if $m=1$ and $d_k(i)\geq 1$, if $m>1$. 
\end{lem}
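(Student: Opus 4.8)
The plan is to read off from the factor $NE^pN$ the diagonal value of line $i-1$ exactly as in the computation of Claim~\ref{Cl : min dinv Park}, namely $a_{i-1}=a_i+p-m$, and then to locate an explicit north step $k<i$ whose diagonal value $a_k$ sits close enough to $a_i$ that the corresponding summand of $\dinv$ is forced to be positive. The one bookkeeping fact I will use throughout is that if north steps $j$ and $j+1$ are separated by $e$ east steps then $a_{j+1}=a_j+m-e$ (the case $e=p$ being the identity already used in Claim~\ref{Cl : min dinv Park}); consequently, along $a_1,\dots,a_{i-1}$ the value can rise by at most $m$, and it rises by exactly $m$ precisely across a factor $NN$, whose two north steps then lie in a common column. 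I treat $m=1$ and $m\ge 2$ separately, since the two preceding results behave very differently in the two regimes.

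For $m=1$ I argue directly, as Claim~\ref{Cl : min dinv Park} and Lemma~\ref{Lem : dinv Park p>m} are vacuous there. Here $a_{i-1}=a_i+p-1\ge a_i+1$ because $p\ge 2$, while $a_1=0\le a_i$. Let $k$ be the least index in $\{1,\dots,i-1\}$ with $a_k\ge a_i+1$; it exists and satisfies $k\ge 2$, and since each increment is at most $1$ we are forced to have $a_k=a_i+1$, $a_{k-1}=a_i$, with no east step between lines $k-1$ and $k$. Thus lines $k-1$ and $k$ lie in a single column, so the no-descent condition for parking functions gives $w_{k-1}<w_k$. Comparing with $w_i$: if $w_k>w_i$ then $d_k(i)=1$ because $a_k=a_i+1$ and $w_k>w_i$; if instead $w_k<w_i$ then $w_{k-1}<w_k<w_i$ together with $a_{k-1}=a_i$ gives $d_{k-1}(i)=1$. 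As $w$ is a permutation these are the only two possibilities, so one of $d_{k-1}(i),d_k(i)$ equals $1$.

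For $m\ge 2$ I split on the size of $p$. If $2\le p\le m$, then Claim~\ref{Cl : min dinv Park}, applied to the factor $NE^pN$ with its two north steps assigned to lines $i-1$ and $i$, gives $d_{i-1}(i)\in\{p-1,p\}$, which is at least $1$; here $k=i-1$ is the witness. If $p>m$, then the argument inside the proof of Lemma~\ref{Lem : dinv Park p>m}, applied to the present factor, yields for each $1\le k\le m-1$ an index $j_k<i$ with $d_{j_k}(i)\ge 1$; this range of $k$ is nonempty precisely because $m\ge 2$, so any such $j_k$ is a valid witness. The two sub-cases cover all $p\ge 2$.

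The main obstacle is the case $m=1$: there the inequalities inside $\dinv$ collapse, so that $d_\bullet(i)\in\{0,1\}$ and neither Claim~\ref{Cl : min dinv Park} nor Lemma~\ref{Lem : dinv Park p>m} supplies a positive contribution, forcing one to produce the witness by hand. The two delicate points are guaranteeing that the intermediate-value index $k$ lands strictly below line $i$ (so that the pair is actually counted) and that column-monotonicity $w_{k-1}<w_k$ dovetails with the two-sided comparison against $w_i$ so that exactly one of $d_{k-1}(i),d_k(i)$ fires; the case analysis above is arranged so that this always happens.
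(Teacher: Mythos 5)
Your proposal is correct, but it takes a genuinely different route from the paper's. The paper proves the statement uniformly in $m$ by working in the blown-up pair $(\tilde\gamma,\tilde w)$ and arguing by contradiction: assuming every $d_k(i)$ vanishes, it picks a maximal line $j$ with a copy on the same diagonal as the top copy of $w_i$ and a minimal line $l$ one diagonal higher, extracts $\tilde w_{j,s}>\tilde w_{i,1}>\tilde w_{l,r}$, and then rules out every possible number of east steps between lines $j$ and $j+1$ (zero east steps being excluded by the same in-column condition $w_j<w_{j+1}$ that you use). You instead split on $m$: for $m=1$ you give a direct minimal-witness argument — the least $k$ with $a_k\geq a_i+1$ forces $a_k=a_i+1$, $a_{k-1}=a_i$, with lines $k-1$ and $k$ in one column, so $w_{k-1}<w_k$ and exactly one of $d_{k-1}(i)$, $d_k(i)$ equals $1$ — and for $m\geq 2$ you reduce to Claim~\ref{Cl : min dinv Park} when $2\leq p\leq m$ (giving $d_{i-1}(i)\geq p-1\geq 1$) and to the witnesses $j_k$ constructed inside the proof of Lemma~\ref{Lem : dinv Park p>m} when $p>m$. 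I checked the delicate points and they hold: the increment rule $a_{j+1}=a_j+m-e$ is exactly the computation underlying Claim~\ref{Cl : min dinv Park}; $a_1=0$ legitimately forces $k\geq 2$ (and when $i-1=1$ the hypotheses are contradictory, so the case is vacuous); and the inequality $\dinv(\gamma,w)\geq\sum_{k=1}^{m-1}d_{j_k}(i)\geq m-1$ in the cited proof does supply, for $m\geq 2$, at least one pair $j_k<i$ with $d_{j_k}(i)\geq 1$. The exactness $d_k(i)=1$ demanded when $m=1$ comes for free, since each summand is capped at $1$ in that case. What each approach buys: yours is direct rather than by contradiction, recycles earlier results so the only new work is the $m=1$ case, and exhibits explicit witnesses; the paper's blow-up argument is self-contained and handles all $m$ at once without case distinctions. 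The one point worth flagging is that your $p>m$ case leans on the internal construction of Lemma~\ref{Lem : dinv Park p>m} rather than on its statement (which only bounds the total $\dinv$); that reuse is legitimate here, and you are appropriately explicit about it.
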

\begin{proof}We will work with $\tilde\gamma$ and $\tilde w$. Therefore we can use a Dyck path in an $mn\times mn$ grid. Let us suppose there is no such $k$.
 Dyck paths have the property of always having more north steps than east steps for all prefixes. Hence, there is a line $j$ in $\gamma$ such that $j<i$ and the north step on line $j_s$ is on the same diagonal than the north step on line $i_1$. We can assume $j_s$ is the upper bound of such lines. 
 
 By hypothesis, $d_j(i)=0$ for $(\gamma,w)$, in consequence, $d_{j_s}(i_1)=0$  and we must have $\tilde w_{j,s}> \tilde w_{i,1}$, the contrary would lead to $d_{j_s}(i_1)=1$.  Since $p\geq 2$, there is $l$ such that $j\leq l<i$ and the line $l_r$ is one diagonal over the diagonal passing through the north step at line $i_1$ in $(\tilde \gamma,\tilde w)$. We can assume $l$ is the smallest line satisfying these properties. Note that if $j=l$, then $r=s-1$ and if $j\not=l$, then $s=1$ (see Figure~\ref{Fig : p>2 j=l} and Figure~\ref{Fig : p>2 s=m}). Again,  $\tilde w_{l,r} < \tilde w_{i,1}$ or else we would have $d_{l_r}(i_1)=1$. This means $l\not=j$ and $l\not=j+1$, since $w_j>w_{i}>w_l$. So there must be at least one east step between the lines $j_1$ and ${j+1}_m$. If there is just one, then the line ${j+1}_m$ is on the same diagonal than the lines $i_1$ and $j_1$ contradicting  that $j$ is the upper bound. If there are two or more east steps between the lines $j_1$ and ${j+1}_m$, then the path goes under the diagonal passing through the north steps at the line $j_1$ and at the line $i_1$. But it must cross it again before the line $l_r$ because the path is continuous and the line $l_r$  is over the diagonal. Which contradicts again that $j$ is the upper bound.
\end{proof}
\begin{figure}[!htb]
\begin{minipage}{8cm}
\centering
\begin{tikzpicture}[scale=.75]
\draw (0,0)--(0,1);
\draw[dotted] (0,1)--(0,2);
\draw (0,2)--(0,4);
\draw (2,5)--(2,6)--(3,6);
\draw[dotted] (3,6)--(5,6);
\draw (5,6)--(6,6)--(6,7);
\draw[dotted] (6,7)--(6,8);
\draw[dashed,red] (-.5,2)--(6.5,9);
\draw (6,8)--(6,9);
\node(i1) at (6.5,8.75){$w_{i,1}$};
\node(js-1) at (2,3.5){$w_{j,s-1}$ or $w_{j+1,m}$};
\node(js) at (1.5,2.75){$w_{j,s}$ or $w_{j,1}$};
\end{tikzpicture}
\caption{}\label{Fig : p>2 j=l}
\end{minipage}
\begin{minipage}{8cm}
\centering
\begin{tikzpicture}[scale=.75]
\draw (0,0)--(0,1);
\draw[dotted] (0,1)--(0,2);
\draw (0,2)--(0,3.25)--(1,3.25);
\draw (2,6)--(2,7)--(3,7);
\draw[dotted] (3,7)--(5,7);
\draw (5,7)--(6,7)--(6,7.5);
\draw[dotted] (6,7.5)--(6,8);
\draw[dashed,red] (-.5,2)--(6.5,9);
\draw (6,8)--(6,9);
\draw (1.15,4.25)--(1.15,5.25)--(2.15,5.25);
\node(i1) at (6.5,8.75){$w_{i,1}$};
\node(lr) at (1.7,4.75){$w_{l,r}$};
\node(js) at (.6,2.75){$w_{j,1}$};
\end{tikzpicture}
\caption{}\label{Fig : p>2 s=m}
\end{minipage}
\end{figure}
We can now state a first condition.
\begin{lem}\label{Lem : read=w-1 m=1}Let  $(\gamma,w)$ be in $\Pa_{n,n}$. If $\dinv(\gamma,w)=0$, then $\R(\gamma,w)=w^{-1}$. Additionally, all non-trivial factors of $\gamma$, $\gamma'=NE^pN$ are such that $p=1$. 
\end{lem}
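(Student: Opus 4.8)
The statement asserts two things about a parking function $(\gamma,w)\in\Pa_{n,n}$ with $\dinv(\gamma,w)=0$: first, that $\R(\gamma,w)=w^{-1}$, and second, that every nontrivial factor $NE^pN$ of $\gamma$ has $p=1$. I would prove the second claim first, since it is the local structural constraint, and then derive the reading-word claim from it. For the factor claim, the $m=1$ case of Lemma~\ref{Lem : m=1 p>2 local} is exactly the tool: if some nontrivial factor $\gamma'=NE^pN$ had $p\geq 2$, then that lemma (with $m=1$) produces a $k$ with $d_k(i)=1$, forcing $\dinv(\gamma,w)\geq 1$, contradicting $\dinv(\gamma,w)=0$. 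Hence every such factor must have $p=1$. This disposes of the second sentence almost immediately.

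\textbf{Deriving the reading word.}
For the first claim, I would argue that $\dinv=0$ forces a very rigid diagonal structure. Recall the visual description of $\dinv$ for $(\gamma,w)\in\Pa_{n,n}$: for $i<j$, the pair contributes $1$ when either ($a_i=a_j$ and $w_i<w_j$) or ($a_j=a_i-1$ and $w_i>w_j$). With $\dinv=0$, no pair may contribute, so along each fixed diagonal the labels must be strictly decreasing as one reads from the bottom (inner) toward the top (outer), and the cross-diagonal constraint must likewise never be violated. The reading word $\R(\gamma,w)$ is obtained by reading labels diagonal by diagonal, from the diagonal farthest from the main diagonal inward, and within a diagonal from top-right to bottom-left. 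I would show that under the $\dinv=0$ constraints the label sequence produced is exactly the decreasing sequence $n, n-1, \ldots, 1$; equivalently, the permutation $\R(\gamma,w)$ sends position to value in a way that inverts $w$, i.e. $\R(\gamma,w)=w^{-1}$. The cleanest route is to track where each value $n, n-1, \ldots$ sits: the combination of "strictly decreasing along a diagonal" and the cross-diagonal rule pins down a unique placement pattern, and reading it off in the prescribed order yields $w^{-1}$.

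\textbf{The main obstacle.}
The hard part will be the first claim, not the second. Checking $p=1$ is a one-line appeal to a stated lemma, but proving $\R(\gamma,w)=w^{-1}$ requires carefully translating the combined contribution-free conditions on \emph{all} pairs into a statement about the \emph{global} reading order, rather than about a single diagonal. The subtlety is that the two contribution rules involve different diagonals (the same diagonal for $w_i<w_j$, adjacent diagonals for $w_i>w_j$), so one must verify that the reading convention interleaves diagonals consistently with the inverse permutation. I expect the bookkeeping step — confirming that reading top-to-bottom within each diagonal and outermost-diagonal-first across diagonals exactly recovers the positions $w^{-1}$ assigns — to be where the argument needs the most care, and I would organize it by induction on the diagonals or by directly exhibiting, for each value $v$, that its label is read in position $n+1-v$.
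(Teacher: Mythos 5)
Your handling of the second claim matches the paper exactly: ruling out $p\geq 2$ by invoking Lemma~\ref{Lem : m=1 p>2 local} with $m=1$ (a factor $NE^pN$, $p\geq 2$, would produce $d_k(i)=1$, contradicting $\dinv(\gamma,w)=0$) is precisely the paper's argument. The gap is in the first claim, and it is a genuine one. In this paper $w^{-1}$ denotes the \emph{reversed word} $w_nw_{n-1}\cdots w_1$ — see Section~\ref{Sec : tools}, where ``$w^{-1}=w_k\cdots w_2w_1$'' is defined and the inverse permutation is deliberately written $\mathrm{inv}(w)$ to avoid exactly this confusion. So the lemma asserts only that the lines are read from top to bottom, i.e.\ $\R(\gamma,w)=w_nw_{n-1}\cdots w_1$; it says nothing about the values forming a decreasing sequence. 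Your stated target — that the label sequence read off is $n,n-1,\ldots,1$, equivalently that the value $v$ is read in position $n+1-v$ — is a different statement, and it is false. Counterexample: $n=3$, $\gamma=NENENE$, $w=321$. All three labels sit on the main diagonal, every pair contributes $0$ (same diagonal with $w_i>w_j$, and $\max(0,1-|a_j-a_i+1|)=0$), so $\dinv(\gamma,w)=0$; the reading word, top-right to bottom-left along the single diagonal, is $123$ — neither decreasing nor the inverse permutation of $w$, but exactly the reversal $w^{-1}$, as the lemma claims.

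The correct argument is also far lighter than the diagonal-by-diagonal bookkeeping you anticipate, and it uses none of the rigidity of $\dinv=0$: it needs only the already-established fact that every factor $NE^pN$ has $p\leq 1$. Each north step raises the diagonal index by one and at most one east step intervenes between consecutive north steps, so $a_1\leq a_2\leq\cdots\leq a_n$. Hence for $i<j$ the letter $w_j$ lies on a weakly farther diagonal, and when $a_i=a_j$ it lies higher on the same diagonal; in either case $w_j$ is read before $w_i$, so reading proceeds through lines $n,n-1,\ldots,1$ and $\R(\gamma,w)=w^{-1}$. This is exactly the paper's closing step (``a direct consequence of $p\leq 1$''), recorded in general form as Claim~\ref{Cl : read=w-1}: $p\leq m$ for all factors already forces $\R(\gamma,w)=w^{-1}$. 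Your observation that $\dinv=0$ forces labels to decrease strictly along each fixed diagonal is true but irrelevant here — it constrains the \emph{values}, whereas the lemma is purely a statement about the order in which the \emph{positions} are read.
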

\begin{proof}If $p\leq 1$, then by Claim~\ref{Cl : min dinv Park}, for all factors $NE^pN$ of $\gamma$ we must have $p=1$ when $w_i>w_{i+1}$ and $p=0$ when $w_i<w_{i+1}$, since $\dinv(\gamma,w)=0$.
If $\gamma'=NE^pN$ is a factor of $\gamma$ associated to lines $i$ and $i+1$ such that and $p>1$, then, by Lemma~\ref{Lem : m=1 p>2 local}, there is $k$ such that $d_k(i+1)=1$. Therefore, $p\not>1$ because $\dinv(\gamma,w)=0$. Finally, $\R(\gamma,w)=w^{-1}$, is a direct consequence of $p\leq 1$.
\end{proof}
The same result is also true for general parking functions when $m\geq2$. The following gives somewhat of a generalization.
\begin{lem}\label{Lem : read=w-1}Let $a$ and $m$ be integers such that $2\leq a \leq m$ and  $(\gamma,w)$ be in $\Pa_{n,nm}$. If $\dinv(\gamma,w)=a-2$, then $\R(\gamma,w)=w^{-1}$. Additionally, all factors of $\gamma$, $\gamma'=NE^pN$ are such that $p\leq m$. 
\end{lem}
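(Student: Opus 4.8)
The plan is to read off both assertions from the earlier bound in Lemma~\ref{Lem : dinv Park p>m}, and then to recover $\R(\gamma,w)=w^{-1}$ from the same geometric description of the reading order that underlies the $m=1$ case, Lemma~\ref{Lem : read=w-1 m=1}. The one numerical fact driving everything is that the hypothesis pushes $\dinv(\gamma,w)$ strictly below the threshold $m-1$: since $a\leq m$ we have $\dinv(\gamma,w)=a-2\leq m-2<m-1$.

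First I would dispose of the ``additionally'' clause. By Lemma~\ref{Lem : dinv Park p>m}, a factor $NE^pN$ with $p>m$ forces $\dinv(\gamma,w)\geq m-1$. Combined with $\dinv(\gamma,w)\leq m-2$ from above, no such factor can occur, so every factor $NE^pN$ of $\gamma$ satisfies $p\leq m$. This is the second statement, and it is exactly the input needed for the first.

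To get $\R(\gamma,w)=w^{-1}$ I would argue as in the square case. Recall that the reading word lists the labels $w_i$ by diagonals parallel to the main diagonal $x=my$, taking the farthest diagonal first and, within a single diagonal, reading from the top-right down to the bottom-left, i.e.\ in decreasing order of line number. For the north step on line $i$, located at horizontal coordinate $x_i$, put $\delta_i=m(i-1)-x_i$; this is the horizontal gap between that step and the main diagonal, it is constant along each reading diagonal, and it grows with the distance to the main diagonal. Hence the reading order is the sort by $\delta_i$ decreasing with ties broken by decreasing line number, and $\R(\gamma,w)=w_n\cdots w_1=w^{-1}$ holds if and only if this sort is $n,n-1,\dots,1$, which in turn is equivalent to the sequence $\delta_1,\dots,\delta_n$ being weakly increasing.

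I would finish by computing the increment of $\delta$ across a gap. Passing from the north step on line $i$ to the one on line $i+1$ through the factor $NE^pN$ raises the height by one, contributing $+m$ to $\delta$, and advances $p$ east steps, contributing $-p$; thus $\delta_{i+1}=\delta_i+(m-p)$. Since the first part guarantees $p\leq m$ for every gap, we get $\delta_{i+1}\geq\delta_i$ throughout, so $(\delta_i)$ is weakly increasing and $\R(\gamma,w)=w^{-1}$ follows. The only step that is more than bookkeeping is the correct identification of the reading diagonals and their tie-breaking in the $(n,mn)$ grid, so that ``$(\delta_i)$ weakly increasing'' really is equivalent to ``reading order $n,\dots,1$''; once that is pinned down exactly as in the $m=1$ analysis of Lemma~\ref{Lem : read=w-1 m=1}, the rest is immediate.
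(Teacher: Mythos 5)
Your proposal is correct and follows essentially the same route as the paper: the clause $p\leq m$ is obtained exactly as in the paper by playing Lemma~\ref{Lem : dinv Park p>m} against $\dinv(\gamma,w)=a-2\leq m-2$, and your telescoped statistic $\delta_{i+1}=\delta_i+(m-p)$ is just a per-line reformulation of the paper's cumulative count $|\gamma_{i,j}|_E=\sum_{k=i}^{j-1}p_k\leq (j-i)m$, both saying that the north step on line $j>i$ lies on a weakly farther diagonal and is therefore read first. Your explicit treatment of the tie-breaking within a diagonal (higher line read first) is a point the paper leaves implicit, but it changes nothing of substance.
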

\begin{proof} By hypothesis $\dinv(\gamma,w)=a-2$. If $p > m$, then, by Lemma~\ref{Lem : dinv Park p>m}, $a-2\geq m-1$, which contradicts $a\leq m$. Hence, all factors $\gamma'=NE^pN$ of $\gamma$ are such that $p\leq m$. Thereafter, all factors $\gamma_{i,j}=NE^{p_i}NE^{p_{i+1}}\cdots NE^{p_{j-1}}N$, with $i<j$, satisfy $|\gamma_{i,j}|_E =\sum_{k=i}^{j-1}p_k\leq (j-i)m=m|\gamma_{i,j}|_N$. Consequently, for all $i<j$ we read $w_j$ before $w_i$ in $\R(\gamma,w)$ and $\R(\gamma,w)=w^{-1}$ as stated.
\end{proof}
Obviously, $\R(\gamma,w)\not=w^{-1}$ in general. But sometimes $\R(\gamma,w)=w^{-1}$ even without the condition on diagonal inversions. The last part of the proof gave us a weaker yet more general statement.
\begin{cl}\label{Cl : read=w-1} Let $m$ be an integer  and  $(\gamma,w)$ be in $\Pa_{n,nm}$. If all factors of $\gamma$, $\gamma'=NE^pN$ are such that $p\leq m$, then $\R(\gamma,w)=w^{-1}$.
\end{cl}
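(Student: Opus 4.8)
The plan is to isolate the statement from the last paragraph of the proof of Lemma~\ref{Lem : read=w-1}. There the hypothesis $\dinv(\gamma,w)=a-2$ entered \emph{only} through the conclusion that every factor $NE^pN$ of $\gamma$ satisfies $p\leq m$; since this bound is now assumed outright, the argument goes through verbatim and no control on $\dinv$ is required. Concretely, I would show that the bound $p\leq m$ forces the north step on each higher line to be read before the north step on each lower line, so that $\R(\gamma,w)$ lists the labels by line from the top line $n$ down to line $1$, which is precisely $w^{-1}$.

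The key step is to convert the local bound on column gaps into a comparison of the row areas $a_i$, which govern the reading order. Writing $\gamma=NE^{p_1}NE^{p_2}\cdots NE^{p_{n-1}}N\omega$ with $\omega\in\{E\}^{*}$, the computation in the proof of Claim~\ref{Cl : min dinv Park} shows that crossing the factor $NE^{p_k}N$ changes the row area by $a_{k+1}-a_k=m-p_k$ (the cases $p_k=0$ and $p_k>m$ being identical). Telescoping gives, for all $i<j$,
\begin{equation*}
a_j-a_i=(j-i)m-\sum_{k=i}^{j-1}p_k .
\end{equation*}
Two north steps lie on the same diagonal parallel to the main diagonal exactly when their row areas agree, and a larger row area corresponds to a diagonal farther from the main diagonal. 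Because $\R(\gamma,w)$ reads these diagonals from the farthest one inward and, within a single diagonal, from the top right toward the bottom left (hence from the higher line to the lower one), for $i<j$ the letter $w_j$ is read before $w_i$ if and only if $a_j\geq a_i$, that is, if and only if $\sum_{k=i}^{j-1}p_k\leq (j-i)m$.

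It then remains only to invoke the hypothesis: if every $p_k\leq m$, then $\sum_{k=i}^{j-1}p_k\leq (j-i)m$ for every pair $i<j$, so $a_j\geq a_i$ and $w_j$ precedes $w_i$ in the reading word for all $i<j$. Hence each higher line is read before each lower line and $\R(\gamma,w)=w_n w_{n-1}\cdots w_1=w^{-1}$, as claimed. I expect the only delicate point to be the reading-order characterization of the second paragraph, namely the identification of ``same diagonal'' with ``equal $a_i$'' together with the tie-breaking rule (higher line first) inside a fixed diagonal; this geometric bookkeeping is already implicit in Lemmas~\ref{Lem : dinv Park p>m} and~\ref{Lem : read=w-1}, and once it is made precise the conclusion reduces to the single telescoping identity and the elementary inequality.
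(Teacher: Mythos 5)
Your proof is correct and is essentially the paper's own argument: the paper derives this claim as the last part of the proof of Lemma~\ref{Lem : read=w-1}, observing that $p_k\leq m$ for every gap forces $\sum_{k=i}^{j-1}p_k\leq(j-i)m$ between any two lines $i<j$, hence $w_j$ is read before $w_i$ and $\R(\gamma,w)=w^{-1}$. Your telescoping identity $a_j-a_i=(j-i)m-\sum_{k=i}^{j-1}p_k$ and the diagonal/tie-breaking characterization merely make explicit the bookkeeping the paper leaves implicit, so no new route is involved.
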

Sadly Lemma~\ref{Lem : read=w-1} does not apply for $m=2$, when the diagonal inversion statistic has value $1$. Therefore, we have to prove it separately.
\begin{lem}\label{Lem : read=w-1 m=2}Let $(\gamma,w)$ be in $\Pa_{n,2n}$. If $\dinv(\gamma,w)=1$, then $\R(\gamma,w)=w^{-1}$. Additionally, all factors of $\gamma$, $\gamma'=NE^pN$ are such that $p\leq 2$. 
\end{lem}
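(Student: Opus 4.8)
The plan is to isolate the combinatorial core: when $\dinv(\gamma,w)=1$, no factor $NE^pN$ of $\gamma$ has $p\ge 3$. Once this is proved, every such factor satisfies $p\le 2$, which is the ``additionally'' clause, and then $\R(\gamma,w)=w^{-1}$ is immediate from Claim~\ref{Cl : read=w-1}. So I would spend the whole proof establishing, by contradiction, that a factor $NE^pN$ with $p\ge 3$ forces $\dinv(\gamma,w)\ge 2$. It is worth stressing why the earlier results do not suffice: for $m=2$, Lemma~\ref{Lem : read=w-1} only covers $\dinv=a-2$ with $a\le 2$, i.e. $\dinv=0$, while Lemma~\ref{Lem : dinv Park p>m} gives merely $\dinv\ge m-1=1$, which is compatible with $\dinv=1$. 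The point is thus to sharpen that bound from $m-1$ to $m$ in this single borderline case.

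So assume $\gamma$ has a factor $NE^pN$ with $p\ge 3$ on lines $i,i+1$, carrying letters $w_i,w_{i+1}$. As in the proof of Claim~\ref{Cl : min dinv Park}, $a_i-a_{i+1}=p-m=p-2$. First I would extract the purely local contribution from the defining formula: since $p>m=2$, the two maxima specialize to $\max(0,4-p)$ and $\max(0,5-p)$, so $d_i(i+1)=\chi(w_i<w_{i+1})\max(0,4-p)+\chi(w_i>w_{i+1})\max(0,5-p)$. For $p=3$ this is $2$ when $w_i>w_{i+1}$ and $1$ when $w_i<w_{i+1}$, and for $p\ge 4$ it never exceeds $1$. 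Since every $d_k(j)\ge 0$, the sub-case $p=3,\ w_i>w_{i+1}$ already yields $\dinv\ge d_i(i+1)=2$; the real work is confined to the cases where $d_i(i+1)\le 1$.

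For those I would pass to the doubled parking function $(\tilde\gamma,\tilde w)$ in the $(2n,2n)$-grid and follow the two copies $c_1,c_2$ (from the top) of $w_{i+1}$. A short computation places $c_1$ on diagonal $a_i+3-p$ and $c_2$ on diagonal $a_i+2-p$, both weakly below the diagonal $a_i$ of the lower copy of $w_i$ because $p\ge 3$, and both nonnegative by the Dyck condition. The strategy is to exhibit one diagonal inversion received by $c_1$ and one received by $c_2$: having distinct upper endpoints, these are distinct pairs, so their presence forces $\dinv\ge 2$. The inversion at $c_1$ is the one produced by the argument of Lemma~\ref{Lem : dinv Park p>m} for the topmost copy (its case $k=1$, the only one that lemma guarantees). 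The entire new content is to secure a second, independent inversion at $c_2$, which is now also available precisely because for $p\ge 3$ the copy $c_2$ sits at a nonnegative diagonal strictly below $a_i$; I would obtain it by the same descent, walking down the diagonal of $c_2$ to the nearest earlier north step and pairing.

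The delicate point, and the main obstacle, is the label bookkeeping that makes the inversion at $c_2$ genuine and independent of the one charged to $c_1$. The obstruction is the configuration in which the nearest same-diagonal north steps below $c_1$ and $c_2$ carry labels on opposite sides of $w_{i+1}$: then the two same-diagonal pairs I would like to use degenerate to a single inversion, exactly the phenomenon that stopped Lemma~\ref{Lem : dinv Park p>m} at copy $m-1$. To break it I would descend one more diagonal and use that the two copies of some earlier letter straddle the diagonal of $c_2$ and the one immediately above it, so that exactly one of the two resulting pairs is an inversion no matter how that letter compares with $w_{i+1}$; this is the $m=2$ analogue of the passage from copy $k$ to copy $k+1$ in Lemma~\ref{Lem : dinv Park p>m} and of the local argument in Lemma~\ref{Lem : m=1 p>2 local}. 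Running this case analysis through every $p\ge 3$ and both orderings of $w_i,w_{i+1}$, and verifying that the inversion I attach to $c_2$ is never the one already attached to $c_1$, is where the genuine effort lies; once done, $\dinv(\gamma,w)\ge 2$ contradicts the hypothesis and the lemma follows.
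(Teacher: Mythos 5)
Your proposal follows the paper's proof in all essentials: the paper likewise argues by contradiction in the doubled grid $(\tilde\gamma,\tilde w)$, locates an earlier line whose two copies straddle the diagonals through the two copies of the letter atop the bad factor, and in each of three geometric configurations exhibits two distinct contributing pairs under both label orderings — precisely your straddling mechanism for breaking the degenerate configuration — before concluding $p\leq 2$ and invoking Claim~\ref{Cl : read=w-1} to get $\R(\gamma,w)=w^{-1}$. The only differences are cosmetic: you peel off the sub-case $p=3$, $w_i>w_{i+1}$ by the local computation $d_i(i+1)=2$, which the paper's uniform case analysis makes unnecessary, and the case analysis you defer as ``where the genuine effort lies'' is exactly what the paper executes as its Cases 1--3 (where, note, distinctness of the two pairs is sometimes secured by distinct lower lines rather than by distinct upper copies as you propose, e.g.\ the pairs $(i_1,j_2)$ and $(i+1_2,j_2)$ in its last case).
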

\begin{proof} Suppose there is a factor $NE^pN$ of $\gamma$ such that $p>2$, associated to lines ${j-1}$ and $j$. We can assume $j$ to be the smallest line satisfying that property. Let us consider $\tilde\gamma$ the path found by doubling each north step in $\gamma$ and $\tilde w$ the word $w_{1,1}w_{1,2}w_{2,1}w_{2,2}\cdots w_{n,1}w_{n,2}$. Considering the path is continuous and $p>2$, the path goes over the diagonal passing through $w_{j,1}$. The path ends under that diagonal, ergo there must be $i$ such that $w_{j,2}$ is on the same diagonal as $w_{i,1}$ or $w_{i,2}$ and $w_{i+1,1}$ is strictly over the diagonal passing through $w_{j,1}$. The three cases possible are illustrated by Figure~\ref{Fig : cas 1}, Figure~\ref{Fig : cas 2} and Figure~\ref{Fig : cas 3}.

For the first case, if $w_i <w_j$, then the pairs $(i_1,j_1)$ and $(i_2,j_2)$ both contribute to dinv. If $w_i >w_j$, then the pairs $(i+1_2,j_1)$ and $(i_1,j_2)$ both contribute to dinv. Thus, the diagonal inversion statistic cannot be equal to $1$.

For the second case, if $w_{i+1} <w_j$, then the pairs $(i+1_2,j_1)$ and $(i_1,j_2)$ both contribute to dinv, since $w_i<w_{i+1}$. If $w_{i+1} >w_j$, then the pairs $(i+1_1,j_1)$ and $(i+1_2,j_2)$ both contribute to dinv. Hence, the diagonal inversion statistic cannot be equal to $1$.

For the last case, if $w_i <w_j$, then the pairs $(i_1,j_1)$ and $(i_2,j_2)$ both contribute to dinv. If $w_i >w_j$, then the pairs $(i+1_2,j_2)$ and $(i_1,j_2)$ both contribute to dinv because $w_i<w_{i+1}$. Ergo, the diagonal inversion statistic cannot be equal to $1$.
Therefore, $p\leq 2$ for all factors $NE^pN$ of $\gamma$ and, by Claim~\ref{Cl : read=w-1}, we get $\R(\gamma,w)=w^{-1}$.
\end{proof}
\begin{figure}[!htb]
\begin{minipage}{5.5cm}
\centering
\begin{tikzpicture}[scale=.75]
\draw (1.4,1)--(1.4,4);
\draw (3.5,5.5)--(5.75,5.5)--(5.75,6.75);
\draw[->, red, dashed] (.75,1.25)--(6,6.5);
\draw[->, red,dashed] (.75,.65)--(6,5.9);
\node(st) at (6.2,6.5){$w_{j,1}$};
\node(ik) at (6.2,5.75){$w_{j,2}$};
\node(s1) at (2.2,3.5){$w_{i+1,1}$};
\node(so) at (2.2,2.75){$w_{i+1,2}$};
\node(sm) at (1.9,2){$w_{i,1}$};
\node(sn) at (1.9,1.25){$w_{i,2}$};
\end{tikzpicture}
\caption{Case 1}\label{Fig : cas 1}
\end{minipage}
\begin{minipage}{5.5cm}
\centering
\begin{tikzpicture}[scale=.75]
\draw (1.4,.25)--(1.4,3.25);
\draw (3.5,5.5)--(5.75,5.5)--(5.75,6.75);
\draw[->, red,dashed] (.75,1.25)--(6,6.5);
\draw[->, red,dashed] (.75,.65)--(6,5.9);
\node(st) at (6.2,6.6){$w_{j,1}$};
\node(ik) at (6.2,5.85){$w_{j,2}$};
\node(s1) at (2.2,2.85){$w_{i+1,1}$};
\node(so) at (2.2,2.1){$w_{i+1,2}$};
\node(sm) at (1.9,1.35){$w_{i,1}$};
\node(sn) at (1.9,.5){$w_{i,2}$};
\end{tikzpicture}
\caption{Case 2}\label{Fig : cas 2}
\end{minipage}
\begin{minipage}{5.5cm}
\centering
\begin{tikzpicture}[scale=.75]
\draw (1.4,1)--(1.4,2.5)--(2.4,2.5)--(2.4,4);
\draw (3.5,5.5)--(5.75,5.5)--(5.75,6.75);
\draw[->, red,dashed] (.75,1.25)--(6,6.5);
\draw[->, red,dashed] (.75,.65)--(6,5.9);
\node(st) at (6.2,6.5){$w_{j,1}$};
\node(ik) at (6.2,5.75){$w_{j,2}$};
\node(s1) at (3.2,3.7){$w_{i+1,1}$};
\node(so) at (3.2,3.1){$w_{i+1,2}$};
\node(sm) at (1.9,2.1){$w_{i,1}$};
\node(sn) at (1.9,1.35){$w_{i,2}$};
\end{tikzpicture}
\caption{Case 3}\label{Fig : cas 3}
\end{minipage}
\end{figure}
By definition, it is fairly easy to see that for $(\gamma,w)$ the descent set of $w$ is related to the number of columns of $\gamma$. We state the following claim is order to avoid repetition.
\begin{cl}\label{Cl : max descent par colonne}
Let $(\gamma,w)$ be in $\Pa_{n,nm}$, $1\leq m$. Then, the number of descents of $w$ plus 1 is smaller or equal to  the number of distinct columns. Additionally, the descents are at the top of a column.
\end{cl}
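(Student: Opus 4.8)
The plan is to prove Claim~\ref{Cl : max descent par colonne} directly from the definition of a parking function, exploiting the no-descent condition imposed on each column. Recall that a column of $\gamma$ is a maximal factor $N^jE^k$, and that the letters of $w$ written along the north steps in a single column must form a factor with no descents; that is, the entries read bottom-to-top within one column are strictly increasing (they are distinct, being part of a permutation). The key observation is that a descent of $w$, namely a position $i$ with $w_i>w_{i+1}$, can never occur between two north steps lying in the \emph{same} column, precisely because the factor of $w$ in that column has no descents. Hence every descent of $w$ must straddle a column boundary: $w_i$ sits at the top of one column and $w_{i+1}$ sits at the bottom of the next.

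First I would set up the bookkeeping by recording, for each column, the set of consecutive line-indices whose north steps it contains, so that the lines $1,\ldots,n$ are partitioned into consecutive blocks, one per column. The heart of the argument is then the claim that the map sending a descent $i\in\DD(w)$ to the column containing line $i$ is injective, and moreover lands in the column at whose \emph{top} the line $i$ sits. Injectivity follows because within a fixed block the entries of $w$ increase, so at most one index $i$ of that block can be a descent, and it can only be the largest index in the block (the top of the column): any earlier index $i$ in the block would have $w_{i+1}$ still in the same block, forcing $w_i<w_{i+1}$, not a descent. This simultaneously gives both assertions of the claim: each descent is at the top of some column, and distinct descents sit atop distinct columns.

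From injectivity the counting inequality is immediate. If there are $\dd(w)$ descents, they occupy the tops of $\dd(w)$ distinct columns; since the topmost column of the path need not carry a descent at all (there is no line $n+1$ to compare against, and the last block simply increases to its top), we obtain $\dd(w)+1\le (\text{number of distinct columns})$. I would phrase the ``$+1$'' as follows: the block containing line $n$ contributes a column whose top is line $n$, and line $n$ is never a descent since $w$ has length $n$; thus there is always at least one column not hit by the descent-to-column map, giving the extra unit.

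I expect the only genuinely delicate point to be making precise the phrase ``descents are at the top of a column'' when a column contains several north steps, i.e.\ verifying that the no-descent condition on a column forces the descent to occur exactly at the transition between blocks and not strictly inside one. This is not hard but deserves a clean sentence: inside one block the entries strictly increase, so a descent at position $i$ with $i$ interior to a block is impossible, leaving the top-of-block position as the only candidate. Everything else is routine partitioning of $\{1,\ldots,n\}$ into the consecutive runs of north steps determined by the columns of $\gamma$, together with the elementary observation that an increasing run contributes no descents except possibly at its final index.
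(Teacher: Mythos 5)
Your proposal is correct and takes essentially the same approach as the paper's proof: the no-descent condition within each column forces every descent of $w$ to sit at the top of a column, and since $w_n$ (the top of the last column) can never be a descent position, the number of descents is at most the number of distinct columns minus one, i.e.\ $\dd(w)+1$ is at most the number of distinct columns. Your write-up merely makes explicit the injectivity bookkeeping (one descent per column top) that the paper leaves implicit.
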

\begin{proof}If $w_i$ and $w_{i+1}$ are in the same columns, then by definition of parking functions we must have $w_i<w_{i+1}$. Therefore, descents must be at the top of a column. The last column cannot have descents, since the top of that column is $w_n$ and we know the last letter of a permutation cannot be a descent. 
\end{proof}
The last result relates the number of distinct columns to the descent set of $w$. But the following relates  the number of distinct columns to the descent set of $\R(\gamma,w)^{-1}$.
\begin{lem}\label{Lem : dinv vs top colonnes}Let $m$ and $n$ be integers, $(\gamma,w)$ be in $\Pa_{n,nm}$ and $T(\gamma)$ be the number of distinct columns. Let $\sigma$ be the permutation such that $\sigma.(\R(\gamma,w)^{-1})=w$. Then:
\begin{equation*}
 \dinv(\gamma,w)\geq \begin{cases}T(\gamma)-\dd(\R(\gamma,w)^{-1})-1 &\text{ if } \sigma(n)=n,
 \\      T(\gamma)-\dd(\R(\gamma,w)^{-1})-2 &\text{ if } \sigma(n)\not=n.
 \end{cases}
\end{equation*}
\end{lem}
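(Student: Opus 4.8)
The plan is to rewrite the target inequality purely in terms of the reading word and then to classify the descents of that word by the diagonals of the path. First I would record the elementary identity $\dd(\R(\gamma,w)^{-1}) = (n-1) - \dd(\R(\gamma,w))$, since reversing a permutation interchanges its ascents and descents. Substituting this into the two branches, the bounds to be proved become
\[
\dinv(\gamma,w) \;\geq\; \dd(\R(\gamma,w)) + T(\gamma) - n \qquad\text{and}\qquad \dinv(\gamma,w) \;\geq\; \dd(\R(\gamma,w)) + T(\gamma) - n - 1 ,
\]
so it suffices to show that $\dinv(\gamma,w) + \bigl(n - T(\gamma)\bigr)$ dominates $\dd(\R(\gamma,w))$, up to the stated constant. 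Here $n - T(\gamma)$ is exactly the number of transitions between consecutive lines $i,i+1$ lying in a common column of $\gamma$ (each column of height $h$ contributing $h-1$ such transitions, and $\sum_k h_k = n$ over $T(\gamma)$ columns), and this is the quantity I will bound against.

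Next I would partition the consecutive pairs of letters of $\R(\gamma,w)$ according to whether the two letters are read on the same diagonal or across a diagonal boundary. For a pair read consecutively on the same diagonal, the convention that higher lines are read first means that a descent of $\R(\gamma,w)$ there is precisely a pair of lines $i<j$ with $a_i=a_j$ and $w_i<w_j$; by the diagonal-inversion formula $d_i(j)\geq \max(0,m)\geq 1$ for such a pair, and distinct same-diagonal descents yield distinct pairs. Hence the number of same-diagonal descents of $\R(\gamma,w)$ is at most $\dinv(\gamma,w)$. Every remaining descent sits at a diagonal boundary, so their number is at most the number of boundaries, namely (number of occupied diagonals)$\,-\,1$.

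The crux is then to bound the number of occupied diagonals. Working with the $m=1$ normalisation, the area grows by exactly one across a within-column transition and does not grow across a column top, so $\max_i a_i \le n - T(\gamma)$; and since the diagonal index can increase only one unit at a time, every value between $0$ and $\max_i a_i$ is attained, whence the occupied diagonals number exactly $\max_i a_i + 1$. Combining the two classes gives $\dd(\R(\gamma,w)) \le \dinv(\gamma,w) + \max_i a_i \le \dinv(\gamma,w) + (n - T(\gamma))$, which is the desired inequality. For general $m$ I would run the identical argument on the blown-up square path $(\tilde\gamma,\tilde w)$, on which $\dinv$ is unchanged and on which the diagonal behaves as in the square case.

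I expect the main obstacle to be exactly this passage to general $m$ through $(\tilde\gamma,\tilde w)$: since $\tilde w$ repeats each label $m$ times, equal adjacent entries are not descents, and the clean injection of same-diagonal descents into $\dinv$-pairs must be re-examined with these multiplicities (and the count $mn - T(\gamma)$ reconciled with $n - T(\gamma)$). It is at this boundary that $\sigma$ enters: I expect the condition $\sigma(n)=n$ to encode the top line being read first, i.e. lying on the maximal diagonal, which prevents the last diagonal boundary from costing an extra descent and yields the sharper constant $-1$; when $\sigma(n)\neq n$ that single boundary cannot be controlled and only the constant $-2$ survives. Isolating this one unit of slack, and confirming the same-diagonal injection is robust to the repeated labels, is the delicate point of the proof.
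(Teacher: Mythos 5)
Your reduction and your argument for $m=1$ are correct, and in that case they actually prove more than the lemma claims: since same-diagonal descents of $\R(\gamma,w)$ inject into distinct pairs with $d_i(j)=\max(0,1-|a_i-a_j|)=1$, and boundary descents number at most $\max_i a_i\leq n-T(\gamma)$ (for $m=1$ the diagonal index rises by exactly one at each of the $n-T(\gamma)$ within-column transitions, starting from $a_1=0$), you obtain $\dinv(\gamma,w)\geq T(\gamma)-\dd(\R(\gamma,w)^{-1})-1$ unconditionally, so $\sigma$ never enters. This is a genuinely different route from the paper's, which never counts diagonals: there one shows that each letter $v_i$ of $v=\R(\gamma,w)^{-1}$ sitting at the top of a column, with $i\notin\DD(v)$, $i\neq n$, and $v_i$ not in the last column, produces a distinct positive contribution $d_{\sigma(i)}(\sigma(i+1))$ or $d_{\sigma(i+1)}(\sigma(i))$ via three geometric cases on $(\tilde\gamma,\tilde w)$, and the hypothesis $\sigma(n)=n$ (equivalently, line $n$ lies on the maximal diagonal, as you surmised) serves only to make the ``last letter of $v$'' exception coincide with the ``last column'' exception, since line $n$ is the top of the last column --- a different mechanism from the boundary-descent role you guessed. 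Crucially, the paper's count is uniform in $m$, and the lemma is invoked precisely for $m\geq2$ in Lemma~\ref{Lem : m>2 dinv=1}.

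The genuine gap is therefore the case $m\geq2$, which you flag but do not close, and neither of your escape routes works as sketched. In the original $n\times nm$ grid the same-diagonal step survives ($a_i=a_j$ and $w_i<w_j$ give $d_i(j)=m\geq1$), but the crux fails: a within-column transition raises the diagonal index by $m$, skipping $m-1$ values that later column tops may then occupy, so the number of occupied diagonals is not bounded by $n-T(\gamma)+1$. Concretely, for $m=2$, $n=3$, $\gamma=NNEEENEEE$ one has $T(\gamma)=2$ but $(a_1,a_2,a_3)=(0,2,1)$, hence three occupied diagonals and two boundary pairs, exceeding $n-T(\gamma)=1$; your chain then no longer bounds $\dd(\R(\gamma,w))$ by $\dinv(\gamma,w)+(n-T(\gamma))+1$, so even the weak branch is out of reach by this count. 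The blow-up route fares no better: $\tilde w$ has repeated letters, so the identity $\dd(\R(\gamma,w)^{-1})=(n-1)-\dd(\R(\gamma,w))$ and the injection of same-diagonal descents (which needs strict comparisons) both break; the boundary budget inflates to $mn-T(\gamma)$; and, most fundamentally, the $m$ copies of $w_i$ lie on $m$ distinct consecutive diagonals of $\tilde\gamma$, so ``same diagonal'' upstairs does not match the diagonals that govern $\R(\gamma,w)$ downstairs. To repair the proof you would need a local, per-column-top injection uniform in $m$, as in the paper, rather than a global diagonal count.
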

\begin{proof}We will show that the letter at the top of a column contribute at least 1 to $\dinv$ unless they are in the descent set of $\R(\gamma,w)^{-1}$, in the last column, or the last letter of $\R(\gamma,w)^{-1}$. Notice that if $\sigma(n)=n$, then last letter of $\R(\gamma,w)^{-1}$ is in the last column, so we only need to subtract it once.
Let $\R(\gamma,w)^{-1}=v_1v_2\cdots v_n$ and let  $v_i$ be at the top of a column. If $v_i$ is not in the last column and $i\not=n$, then by definition of the reading word and because the path is continuous, we have , in $(\tilde\gamma,\tilde w)$, these three cases: the last copy from the top of $v_{i+1}$ is to the north and on the same diagonal than a copy of $v_i$, let us say the $k$-th copy from the top (see Figure~\ref{Fig : cas 1 dans read}),  the letter $v_{i+1}$ is to  south and on one of the diagonals crossing one of the $m-1$ first copies of  $v_i$ (see Figure~\ref{Fig : cas 2 dans read}), let us say the $p$-th copy, or $v_{i+1}$ is to the south one diagonal higher than the first copy of $v_i$ (see Figure~\ref{Fig : cas 3 dans read}). 
Let $\sigma$ be the permutation that send $\R(\gamma,w)^{-1}$ to $w$. When $i$ is not in the descent set of  $\R(\gamma,w)^{-1}$, our first case yields $d_{\sigma(i)}(\sigma(i+1)) = k$, by definition of the diagonal inversion statistics. The same reasoning shows $d_{\sigma(i+1)}(\sigma(i))= p+1$ for the second case and $d_{\sigma(i+1)}(\sigma(i)) = 1$ for the last case.
\end{proof}
\begin{figure}[!htb]
\begin{minipage}{5.5cm}
\centering
\begin{tikzpicture}[scale=.75]
\draw (1.6,.75)--(1.6,2.75)--(1.8,2.75);
\draw (5.75,5.5)--(5.75,7.5);
\draw[->, red,dashed] (.75,.75)--(6,6);
\draw[->, red,dashed] (1.5,.75)--(8,7.25);
\node(i1) at (7.1,7.25){$v_{i+1}$~copy $1$};
\node(ik) at (7.2,5.75){$v_{i+1}$~copy $m$};
\node(s1) at (2.75,2.5){$v_{i}$~copy $1$};
\node(st) at (2.75,1.75){$v_{i}$~copy $k$};
\node(sm) at (2.85,1){$v_{i}$~copy $m$};
\end{tikzpicture}
\caption{}\label{Fig : cas 1 dans read}
\end{minipage}
\begin{minipage}{5.5cm}
\centering
\begin{tikzpicture}[scale=.75]
\draw (1.4,1.75)--(1.4,3.75);
\draw (5.75,5.5)--(5.75,7.5);
\draw[->, red,dashed] (.75,1.25)--(6,6.5);
\draw[->, red,dashed] (.75,.75)--(8,8);
\node(i1) at (6.9,7.25){$v_{i}$~copy $1$};
\node(st) at (6.9,6.5){$v_{i}$~copy $p$};
\node(ik) at (7.,5.75){$v_{i}$~copy $m$};
\node(s1) at (2.75,3.5){$v_{i+1}$~copy $1$};
\node(sm) at (2.85,2){$v_{i+1}$~copy $m$};
\end{tikzpicture}
\caption{}\label{Fig : cas 2 dans read}
\end{minipage}
\begin{minipage}{5.5cm}
\centering
\begin{tikzpicture}[scale=.75]
\draw (1.4,1.75)--(1.4,3.75);
\draw (5.75,4)--(5.75,6);
\draw[->, red,dashed] (.75,1.25)--(6,6.5);
\draw[->, red,dashed] (.75,.75)--(7,7);
\node(i1) at (6.9,5.75){$v_{i}$~copy $1$};
\node(st) at (6.9,5){$v_{i}$~copy $p$};
\node(ik) at (7.,4.25){$v_{i}$~copy $m$};
\node(s1) at (2.75,3.5){$v_{i+1}$~copy $1$};
\node(sm) at (2.85,2){$v_{i+1}$~copy $m$};
\end{tikzpicture}
\caption{}\label{Fig : cas 3 dans read}
\end{minipage}
\end{figure}
We can now state necessary and sufficient conditions for the diagonal inversion statistic to be equal to zero.
\begin{prop}\label{Prop : critere dinv=0}Let $(\gamma,w)$ be in $\Pa_{n,nm}$, $1\leq m$. Then, $\dinv(\gamma,w)=0$ if and only if the following conditions applies:
\\
$\bullet$ The path $\gamma$ can be written as $\gamma'E^j$ where all factors of $\gamma'$ of length 2 have at most one east step.
\\
$\bullet$ If $\{i_1,\ldots,i_k,n\}$ is the set of all lines containing an east step, then $\{i_1,\ldots,i_k\}$ is the descent set.
\\
$\bullet$ $\R(\gamma,w)=w^{-1}$. 
\end{prop}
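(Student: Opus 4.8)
The plan is to prove the two implications separately, exploiting that $\dinv(\gamma,w)=\sum_{i<j}d_i(j)$ is a sum of nonnegative summands; hence $\dinv(\gamma,w)=0$ holds if and only if $d_i(j)=0$ for every pair $i<j$. One implication I would extract from the preceding claims and lemmas, the other from an explicit computation of the area sequence $(a_1,\dots,a_n)$.

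Assume first that $\dinv(\gamma,w)=0$. I would begin by excluding long horizontal runs: if $\gamma$ contained a factor $NE^pN$ with $p>m$, then Lemma~\ref{Lem : dinv Park p>m} gives $\dinv(\gamma,w)\ge m-1$, positive once $m\ge 2$, while for $m=1$ such a factor has $p\ge 2$ and Lemma~\ref{Lem : m=1 p>2 local} produces a pair with $d_k(i)=1$; either way $\dinv(\gamma,w)>0$, a contradiction. So every factor $NE^pN$ has $p\le m$, and Claim~\ref{Cl : min dinv Park} applies to each pair of consecutive north steps, giving $d_i(i+1)=p$ when $w_i<w_{i+1}$ and $d_i(i+1)=p-1$ when $w_i>w_{i+1}$. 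As all these quantities vanish, an ascent forces $p=0$ and a descent forces $p=1$ (a descent sits at the top of a column by Claim~\ref{Cl : max descent par colonne}, hence is followed by an east step, so $p\ge 1$). This is precisely conditions one and two: the interior east-runs have length at most one, the surplus east steps accumulate as a terminal block $E^j$ on line $n$, and the lines carrying an east step are exactly $\DD(w)\cup\{n\}$. Since in particular every factor has $p\le m$, Claim~\ref{Cl : read=w-1} yields $\R(\gamma,w)=w^{-1}$, which is condition three.

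Conversely, assume the three conditions and write $p_i$ for the number of east steps on line $i$. Conditions one and two give $p_i\in\{0,1\}$ for $i<n$, with $p_i=1$ exactly when $i\in\DD(w)$, while the area sequence obeys $a_{i+1}=a_i+m-p_i$ (the relation used inside the proof of Claim~\ref{Cl : min dinv Park}). Consequently, for $i<j$, one has $a_j-a_i=(j-i)m-D_{i,j}$ with $D_{i,j}:=\#\bigl(\DD(w)\cap\{i,\dots,j-1\}\bigr)$ and $0\le D_{i,j}\le j-i$; note that the terminal block $E^j$ lives on line $n$ and so never enters a difference $a_j-a_i$ with $j\le n$. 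I would then verify $d_i(j)=0$ in two cases. If $w_i<w_j$, the positions $i,\dots,j-1$ cannot all be descents, so $D_{i,j}\le j-i-1$ and $a_j-a_i\ge (j-i)m-(j-i-1)=m+(j-i-1)(m-1)\ge m$, making $\max\bigl(0,m-|a_i-a_j|\bigr)=0$. If $w_i>w_j$, then $a_j-a_i+1=(j-i)m-D_{i,j}+1\ge (j-i)(m-1)+1\ge m$, so $\max\bigl(0,m-|a_j-a_i+1|\bigr)=0$. In either case $d_i(j)=0$, whence $\dinv(\gamma,w)=0$.

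The delicate points I anticipate are organizational rather than conceptual. The main one is the $m=1$ case when excluding $p>m$, where Lemma~\ref{Lem : dinv Park p>m} alone only gives $\dinv(\gamma,w)\ge 0$ and one must instead invoke Lemma~\ref{Lem : m=1 p>2 local}. A second is the careful separation of the terminal east steps from the interior runs, so that the recurrence $a_{i+1}=a_i+m-p_i$ is applied only across interior lines $i<n$ and the bound $D_{i,j}\le j-i$ stays valid. I would also note that condition three is in fact a consequence of conditions one and two through Claim~\ref{Cl : read=w-1}, so it is included chiefly to make the characterization self-contained.
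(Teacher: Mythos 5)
Your proof is correct, and the forward direction is essentially the paper's: exclude $p>m$ via Lemma~\ref{Lem : dinv Park p>m} (with Lemma~\ref{Lem : m=1 p>2 local} covering $m=1$), read off $p=0$ at ascents and $p=1$ at descents from Claim~\ref{Cl : min dinv Park} together with Claim~\ref{Cl : max descent par colonne}, and deduce $\R(\gamma,w)=w^{-1}$; your only deviation there is to derive condition three from Claim~\ref{Cl : read=w-1} once $p\leq m$ is known, rather than citing Lemma~\ref{Lem : read=w-1 m=1} and Lemma~\ref{Lem : read=w-1} as the paper does, which is a cleaner and $m$-uniform route (the paper's Lemma~\ref{Lem : read=w-1} needs $m\geq 2$, forcing the separate $m=1$ lemma). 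In the converse, your argument is the same in spirit --- bound $|a_j-a_i|$ and $|a_j-a_i+1|$ below by $m$ so every $d_i(j)$ vanishes --- but your bookkeeping via $a_j-a_i=(j-i)m-D_{i,j}$ with $D_{i,j}$ the descent count is genuinely sharper than the paper's. The paper bounds $|a_j-a_i|\geq (j-i)(m-1)\geq 2(m-1)\geq m$, a chain whose last step fails at $m=1$ (where $2(m-1)=0<1$), and it only treats pairs where line $i$ carries an east step; your observation that $w_i<w_j$ forbids all of $i,\ldots,j-1$ being descents, giving $D_{i,j}\leq j-i-1$ and hence $a_j-a_i\geq (j-i)(m-1)+1\geq m$, closes exactly this gap and handles all pairs $i<j$ uniformly, including $j=i+1$ and lines without east steps. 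So your proposal buys a converse that is airtight for $m=1$, where the paper's displayed inequalities as written do not suffice, at the modest cost of carrying the descent-count notation; your cautionary remarks (the $m=1$ exclusion of $p>m$, and keeping the terminal block $E^j$ out of the recurrence $a_{i+1}=a_i+m-p_i$) are exactly the right delicate points.
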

\begin{proof} If $\dinv(\gamma,w)=0$, then by Claim~\ref{Cl : min dinv Park} and Lemma~\ref{Lem : dinv Park p>m}, we have the first condition. If $\gamma'=NE^pN$ is a factor of $\gamma$ associated to $w_i$ and $w_{i+1}$, then by now proven first condition and  Claim~\ref{Cl : min dinv Park}, $w_i>w_{i+1}$. Hence, the position $i$ is a descent. But, by Claim~\ref{Cl : max descent par colonne}, we know that the number of descents plus $1$ is greater or equal to the number of columns and that $w_n$ contains an east step, ergo the second condition. The last condition follows from Lemma~\ref{Lem : read=w-1 m=1} and Lemma~\ref{Lem : read=w-1}. Therefore, $\dinv(\gamma,w)=0$ does imply the stated conditions. 

Conversely, by Claim  ~\ref{Cl : max descent par colonne}, the descents are at the top of each column, since $n$ cannot be a descent. Therefore, by the first condition and by Claim~\ref{Cl : min dinv Park}, we know that, for all lines $i$, with an east step, we are at the top of a column and $d_i(i+1)=0$. For all lines $i$ with an east step, and, all lines $j$ such that $j>i+1$, we know that  $a_i+(j-i)m\geq a_j\geq a_i+(j-i)(m-1)$, since there is at most one east step between each north step. This leads to:
\begin{equation*}
(j-i)m+1\geq |a_j-a_i+1|\geq (j-i)(m-1)+1\geq 2(m-1)+1\geq m,
\end{equation*}
and:
\begin{equation*}
(j-i)m\geq |a_j-a_i|\geq (j-i)(m-1)\geq 2(m-1)\geq m,
\end{equation*}
Therefore, $d_i(j)=0$ for all $i$ and $j$. Hence, $\dinv(\gamma,w)=0$. 
\end{proof}
Note that the second statement of the previous proposition implies that the number of descents in $w$ is equal to the number of distinct columns plus $1$. 
Looking at the specialization $q=0$ is equivalent to looking only at the parking functions $(\gamma,w)$ such that $\dinv(\gamma,w)=0$, and, thus, we need the area of theses parking functions.
\begin{prop}\label{Prop : aire parking}Let $(\gamma,w)$ be in $\Pa_{n,nm}$, $1\leq m$ such that $\dinv(\gamma,w)=0$, then:
\begin{equation*} 
\A(\gamma,w)=m\binom{n}{2}-\dd(w)n+\m(w)
\end{equation*}
\end{prop}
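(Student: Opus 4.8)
The plan is to use the structural characterization of the $\dinv=0$ parking functions provided by Proposition~\ref{Prop : critere dinv=0} and simply bookkeep the area line by line. Since $\A(\gamma,w)=\A(\gamma)=\sum_{i=1}^n a_i$, where $a_i$ is the area of line $i$, I would first record the elementary fact that in an $n\times mn$ grid the main diagonal meets the bottom edge of line $i$ at horizontal coordinate $m(i-1)$, so that if $e_i$ denotes the number of east steps occurring before the $i$-th north step, then $a_i=m(i-1)-e_i$. This is the $m$-analogue of the familiar Dyck path formula; because $m$ is an integer the diagonal passes through the lattice point at each row boundary, so no rounding is needed. Summing over $i$ gives $\A(\gamma)=m\binom{n}{2}-\sum_{i=1}^n e_i$.

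Next I would convert the sum $\sum_i e_i$ over ``east steps seen so far'' into a sum over ``east steps per line''. Writing $p_l$ for the number of east steps lying on line $l$, we have $e_i=\sum_{l<i}p_l$, and exchanging the order of summation yields $\sum_{i=1}^n e_i=\sum_{l=1}^{n}p_l\,(n-l)$; note that the line $l=n$ contributes $0$, so the value $p_n$ is irrelevant. This reorganization is the crux of the computation, and it remains only to identify the $p_l$ for $l<n$.

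Here Proposition~\ref{Prop : critere dinv=0} does the work. Its first condition (that $\gamma=\gamma'E^j$ with every length-$2$ factor of $\gamma'$ carrying at most one east step) forces each line strictly below $n$ to carry at most one east step, since any such line has all of its east steps occurring before the $n$-th north step, hence inside $\gamma'$. Its second condition identifies the lines below $n$ that carry an east step with the descent set $\DD(w)$. Hence $p_l=1$ for $l\in\DD(w)$ and $p_l=0$ for $l\in\{1,\dots,n-1\}\setminus\DD(w)$. Substituting, $\sum_{i=1}^n e_i=\sum_{l\in\DD(w)}(n-l)=n\,\dd(w)-\m(w)$ by the definition of the major index, and therefore $\A(\gamma,w)=m\binom{n}{2}-n\,\dd(w)+\m(w)$, as claimed.

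The argument is essentially a reindexed count once the characterization is in hand, so the only real care needed is in justifying the per-line area identity $a_i=m(i-1)-e_i$ in the sloped $n\times mn$ grid; I expect this to be the main point to get exactly right, together with checking that the trailing east steps absorbed into line $n$ genuinely do not affect the total, which they do not, since line $n$ carries weight $n-l=0$.
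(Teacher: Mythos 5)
Your proposal is correct and follows essentially the same route as the paper's proof: both invoke Proposition~\ref{Prop : critere dinv=0} to identify the lines below $n$ carrying an east step with $\DD(w)$ and then compute $\A(\gamma,w)=m\binom{n}{2}-\sum_{l\in\DD(w)}(n-l)=m\binom{n}{2}-\dd(w)n+\m(w)$. The only difference is one of detail: the paper asserts this area formula directly, whereas you justify it via the per-line identity $a_i=m(i-1)-e_i$ and a reindexing of $\sum_i e_i$, which is a careful expansion of the same computation rather than a different argument.
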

\begin{proof}Let $\{ i_1,\ldots,i_k,n\}$ be the lines with east steps. We know that these are tops of columns and by the previous proposition we know that $\{ i_1,\ldots,i_k\}$ is the descent set. By the previous proposition, we also know that:
\begin{align*}
\A(\gamma,w) &=m\binom{n}{2}-\sum_{j=1}^k (n-i_j)
\\   &=m\binom{n}{2}-n\sum_{j=1}^k 1+\sum_{j=1}^ki_j
\\   &=m\binom{n}{2}-\dd(w)n+\m(w)
\end{align*}
\end{proof}
This last proposition allows us to give a proper formula for $\nabla^m(e_n)|_{q=0}$. Which is just an extension of the Stanley-Lusztig formula.
\begin{prop}\label{Prop : 1 part parking} For integers $n$, $m$ such that $1\leq n,m$
\begin{equation}\label{Eq : 1 part parking}\nabla^m(e_n)|_{1 \pp}=\sum_{\tau\in \SYT(n)}s_{\m(\tau)+(m-1)\binom{n}{2}}(q,t) s_{\lambda(\tau)}(X), \text{and},
\end{equation}
\begin{equation}\label{Eq : q=0 parking}\nabla^m e_n|_{q=0}=t^{(m-1)\binom{n}{2}}\sum_{w\in \mathbb{S}_n} t^{\binom{n}{2}-\dd(w)n+\m(w)}F_{\co(\DD(\inv(w^{-1})))}(X)=t^{(m-1)\binom{n}{2}}\sum_{\tau\in \SYT(n)} t^{\m(\tau)} s_{\lambda(\tau)}.
\end{equation}
\end{prop}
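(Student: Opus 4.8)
The plan is to derive every part of the statement from Mellit's combinatorial expansion of $\nabla^m(e_n)$ by specializing one of the two variables and then exploiting the $(q,t)$-symmetry of $\nabla^m(e_n)$. Since the Shuffle Theorem weights each parking function by $t^{\dinv(\gamma,w)}q^{\A(\gamma)}$, setting $t=0$ keeps exactly the parking functions with $\dinv(\gamma,w)=0$, so that $\nabla^m(e_n)|_{t=0}=\sum_{\dinv(\gamma,w)=0}q^{\A(\gamma)}F_{\co(\DD(\inv(\R(\gamma,w))))}(X)$. Because $\nabla^m(e_n)$ is symmetric in $q$ and $t$, one recovers $\nabla^m(e_n)|_{q=0}$ from this simply by interchanging $q$ and $t$, which is why the target exponents in \eqref{Eq : q=0 parking} are powers of $t$.

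First I would establish the first equality of \eqref{Eq : q=0 parking}. Proposition~\ref{Prop : critere dinv=0} characterizes the $\dinv=0$ parking functions by three conditions: the lines carrying an east step are exactly $\DD(w)\cup\{n\}$, consecutive north steps are separated by at most one east step (the remaining east steps forming a final block), and $\R(\gamma,w)=w^{-1}$. These conditions reconstruct $\gamma$ uniquely from $w$, namely one east step on each descent line of $w$ and the other $mn-\dd(w)$ east steps on the top line, so that $(\gamma,w)\mapsto w$ is a bijection from the $\dinv=0$ parking functions of $\Pa_{n,nm}$ onto $\mathbb{S}_n$. Along this bijection the quasisymmetric index becomes $\co(\DD(\inv(w^{-1})))$, while Proposition~\ref{Prop : aire parking} evaluates $\A(\gamma,w)=m\binom{n}{2}-\dd(w)n+\m(w)$. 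After the $q\leftrightarrow t$ interchange this gives $\nabla^m(e_n)|_{q=0}=\sum_{w\in\mathbb{S}_n}t^{m\binom{n}{2}-\dd(w)n+\m(w)}F_{\co(\DD(\inv(w^{-1})))}(X)$, and pulling $t^{(m-1)\binom{n}{2}}$ out of every exponent yields the first equality.

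For the second equality I would specialize the first equality to $m=1$, where the prefactor is trivial, obtaining $\nabla(e_n)|_{q=0}=\sum_{w}t^{\binom{n}{2}-\dd(w)n+\m(w)}F_{\co(\DD(\inv(w^{-1})))}(X)$; comparing this with the Stanley--Lusztig formula \eqref{Eq : S-L,H} forces the $w$-sum to equal $\sum_{\tau\in\SYT(n)}t^{\m(\tau)}s_{\lambda(\tau)}$. As this $w$-sum carries no dependence on $m$, reinstating the factor $t^{(m-1)\binom{n}{2}}$ gives the second equality of \eqref{Eq : q=0 parking} for every $m$. Finally, \eqref{Eq : 1 part parking} follows from the coefficient-extraction principle already used in Section~\ref{Sec : algo}: each coefficient of $s_\lambda(X)$ in $\nabla^m(e_n)$ is $(q,t)$-symmetric and Schur-positive, so its one-part part is detected by its $t=0$ specialization, a one-part Schur $s_c(q,t)$ corresponding to the monomial $q^c$. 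Interchanging $q$ and $t$ in the $q=0$ evaluation rewrites $\nabla^m(e_n)|_{t=0}=q^{(m-1)\binom{n}{2}}\sum_{\tau}q^{\m(\tau)}s_{\lambda(\tau)}(X)$, and promoting each monomial $q^{(m-1)\binom{n}{2}+\m(\tau)}$ to the Schur function $s_{(m-1)\binom{n}{2}+\m(\tau)}(q,t)$ produces \eqref{Eq : 1 part parking}.

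The main obstacle is the combinatorial input behind the bijection: one must verify that the conditions of Proposition~\ref{Prop : critere dinv=0} not only determine $\gamma$ from $w$ but that the reconstructed lattice path is a genuine $(n,mn)$-Dyck path (weakly above the main diagonal) for every $w\in\mathbb{S}_n$, so that the sum over $\dinv=0$ parking functions coincides with the sum over all of $\mathbb{S}_n$. Everything else is bookkeeping: the passage from area and reading word to the stated exponent and quasisymmetric index is immediate from Propositions~\ref{Prop : critere dinv=0} and~\ref{Prop : aire parking}, and the identification of the quasisymmetric sum with the Schur expansion is supplied wholesale by \eqref{Eq : S-L,H}.
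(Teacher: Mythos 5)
Your proposal is correct and follows essentially the same route as the paper: the first equality of \eqref{Eq : q=0 parking} from Propositions~\ref{Prop : critere dinv=0} and~\ref{Prop : aire parking}, the second by comparison with Equation~\eqref{Eq : S-L,H}, and Equation~\eqref{Eq : 1 part parking} from the $q,t$-symmetry of $\nabla^m(e_n)$ together with the facts that $s_\lambda(q,t)=0$ for $\ell(\lambda)>2$ and $s_{a,b}(0,t)=0$ unless $b=0$. The only differences are presentational: you make explicit the bijection $(\gamma,w)\mapsto w$ between $\dinv=0$ parking functions and $\mathbb{S}_n$ (which the paper leaves implicit in citing Proposition~\ref{Prop : critere dinv=0}), and you route the specialization through $t=0$ plus a $q\leftrightarrow t$ swap, which is equivalent; note also that Schur-positivity of the coefficients, while true, is not needed since two-variable Schur polynomials form a basis.
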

\begin{proof} The first equality of Equation~\eqref{Eq : q=0 parking} follows from Proposition~\ref{Prop : aire parking} and Proposition~\ref{Prop : critere dinv=0}. Consequently, by the Equation~inferred from \cite{[S1979]} and \cite{[H2002]} (see Equation~\eqref{Eq : S-L,H}), we have:
\begin{equation*}\sum_{w\in \mathbb{S}_n} t^{\binom{n}{2}-\dd(w)n+\m(w)}F_{\co(\DD(\inv(w^{-1})))}(X)=\sum_{\tau\in \SYT(n)} t^{\m(\tau)} s_{\lambda(\tau)}(X).
\end{equation*}
Therefore, the second equality of Equation~\eqref{Eq : q=0 parking} holds.  For Equation~\eqref{Eq : 1 part parking}, we only need to notice that $\nabla^m(e_n)$ is symmetric in $q$,$t$ and $s_{\lambda}(q,t)=0$ if $\ell(\lambda)>2$ and $s_{a,b}(q,t)=(qt)^b(q^{a-b}+q^{a-b-1}t+\cdots +qt^{a-b-1}+t^{a-b}$. Hence, $s_{a,b}(0,t)=0$ if $b\not=0$ and $s_{a}(0,t)=t^a$. Ergo, we have the stated result by Equation~\eqref{Eq : q=0 parking}.
\end{proof}
From this last proposition and Proposition~\ref{Prop : sum algo 1 part} we can obtain the following $q$-analogues that are used in \cite{[Wal2019d]}.
\begin{cor}Let $n$, $m$, $d$ be integers, then:
\begin{align*}
\langle\nabla^m(e_n), s_{d+1,1^{n-d-1}}\rangle|_{t=0}&=q^{(m-1)\binom{n}{2}+\binom{n-d}{2}}\begin{bmatrix} n-1\\d\end{bmatrix}_q=q^{m\binom{n}{2}-\binom{d+1}{2}}\begin{bmatrix} n-1\\d\end{bmatrix}_{q^{-1}}
\end{align*}
\end{cor}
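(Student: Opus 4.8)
The plan is to isolate the one-part component of $\langle\nabla^m(e_n),s_{d+1,1^{n-d-1}}\rangle$ and then set $t=0$. First I would recall that this inner product is a symmetric polynomial in $q,t$, hence a sum of Schur functions $s_{a,b}(q,t)$ with at most two parts, and that $s_{a,b}(q,t)=(qt)^b(q^{a-b}+\cdots+t^{a-b})$ gives $s_{a,b}(q,0)=0$ for $b>0$ and $s_a(q,0)=q^a$. Therefore the specialization $t=0$ annihilates every genuinely two-part term and sends each one-part term $s_a(q,t)$ to $q^a$; equivalently, $\langle\nabla^m(e_n),s_{d+1,1^{n-d-1}}\rangle|_{t=0}$ is obtained from $\langle\nabla^m(e_n),s_{d+1,1^{n-d-1}}\rangle|_{1 \pp}$ by the rule $s_a(q,t)\mapsto q^a$.

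Next I would read off the relevant coefficient from Proposition~\ref{Prop : 1 part parking}. Writing $\mu=(d+1,1^{n-d-1})$ and grouping the sum over $\SYT(n)$ by the shape $\lambda(\tau)$, self-duality of the Schur basis gives
\begin{equation*}
\langle\nabla^m(e_n),s_{\mu}\rangle|_{1 \pp}=\sum_{\tau\in\SYT(\mu)}s_{\m(\tau)+(m-1)\binom{n}{2}}(q,t).
\end{equation*}
Applying the rule $s_a(q,t)\mapsto q^a$ from the first step then yields $\langle\nabla^m(e_n),s_{\mu}\rangle|_{t=0}=q^{(m-1)\binom{n}{2}}\sum_{\tau\in\SYT(\mu)}q^{\m(\tau)}$.

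To evaluate the residual sum I would use the $m=1$ instance of Proposition~\ref{Prop : sum algo 1 part}. The $m=1$ case of the previous display makes $\sum_{\tau\in\SYT(\mu)}s_{\m(\tau)}(q,t)$ equal to $\langle\nabla e_n,s_{\mu}\rangle|_{1 \pp}$, which Equation~\eqref{Eq : 1 part 1-schroder schur} also expresses as $\sum_{\gamma\in\mathcal{C}^{n-1}_d}s_{\A(\gamma)+\binom{n-d}{2}}(q,t)$. Setting $t=0$ and invoking the classical identity $\sum_{\gamma\in\mathcal{C}^{n-1}_d}q^{\A(\gamma)}=\begin{bmatrix}n-1\\d\end{bmatrix}_q$ gives $\sum_{\tau\in\SYT(\mu)}q^{\m(\tau)}=q^{\binom{n-d}{2}}\begin{bmatrix}n-1\\d\end{bmatrix}_q$. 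Reinstating the prefactor produces the first claimed form $q^{(m-1)\binom{n}{2}+\binom{n-d}{2}}\begin{bmatrix}n-1\\d\end{bmatrix}_q$.

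Finally, to obtain the second form I would apply the reversal symmetry $\begin{bmatrix}n-1\\d\end{bmatrix}_q=q^{d(n-1-d)}\begin{bmatrix}n-1\\d\end{bmatrix}_{q^{-1}}$ and verify the exponent identity $(m-1)\binom{n}{2}+\binom{n-d}{2}+d(n-1-d)=m\binom{n}{2}-\binom{d+1}{2}$, which collapses to $\binom{n}{2}-\binom{n-d}{2}=nd-\binom{d+1}{2}$. The only point requiring care is the first step, namely confirming that the $t=0$ specialization detects precisely the one-part component; once that is secured, the remainder is assembly of Proposition~\ref{Prop : 1 part parking}, the lattice-path count of Proposition~\ref{Prop : sum algo 1 part}, and routine $q$-binomial manipulation.
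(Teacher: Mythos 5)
Your proposal is correct and follows essentially the same route as the paper: the paper's proof likewise combines Proposition~\ref{Prop : 1 part parking} with Proposition~\ref{Prop : sum algo 1 part} (via the identity $\begin{bmatrix}n\\k\end{bmatrix}_q=\sum_{\gamma\in\mathcal{C}^n_k}q^{\A(\gamma)}$) for the first equality, and uses exactly the same exponent identity $\binom{n}{2}-\binom{d+1}{2}=d(n-d-1)+\binom{n-d}{2}$ together with the palindromy $q^{d(n-1-d)}\begin{bmatrix}n-1\\d\end{bmatrix}_{q^{-1}}=\begin{bmatrix}n-1\\d\end{bmatrix}_q$ for the second. You merely spell out details the paper leaves implicit, namely that $s_{a,b}(q,0)=0$ for $b>0$ so the $t=0$ specialization isolates the one-part component via $s_a(q,t)\mapsto q^a$.
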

\begin{proof}We recall that $\begin{bmatrix}n\\k\end{bmatrix}_q=\sum_{\gamma\in \mathcal{C}^n_k}q^{\A(\gamma)}$. In consequence, by Proposition~\ref{Prop : sum algo 1 part} and  Proposition~\ref{Prop : 1 part parking}, we have the first equality. The second equality follows from $\binom{n}{2}-\binom{d+1}{2}=d(n-d-1)+\binom{n-d}{2}$ and $q^{d(n-d-1)}\begin{bmatrix} n-1\\d\end{bmatrix}_{q^{-1}}=\begin{bmatrix} n-1\\d\end{bmatrix}_q$.
\end{proof}
This next lemma emulates Proposition~\ref{Prop : critere dinv=0} for diagonal inversion statistics values of one.
\begin{lem}\label{Lem : m>2 dinv=1}
Let $(\gamma,w)$ be in $\Pa_{n,nm}$, $2\leq m$ and $T(\gamma)$ be the number of distinct columns. Then, $\dinv(\gamma,w)=1$ if and only if one of the following conditions applies:
\\
$\bullet$ All factors, $NE^pN$ of $\gamma$ are such that $p\leq1$ and $T(\gamma)=\dd(w)+2$. 
\\
$\bullet$ Exactly one factor, $NE^pN$ of $\gamma$ is such that $p=2$  all other such factors satisfy $p\leq1$ and $T(\gamma)=\dd(w)+1$.
\end{lem}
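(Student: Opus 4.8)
Throughout write $T=T(\gamma)$ and let $p_i$ denote the number of east steps between the $i$-th and $(i+1)$-th north steps, so that a factor $NE^pN$ on lines $i,i+1$ means $p_i=p$. I will use the elementary area identity $a_{i+1}-a_i=m-p_i$, hence $a_j-a_i=m(j-i)-\sum_{l=i}^{j-1}p_l$ for $i<j$; combined with Claim~\ref{Cl : min dinv Park} this reduces every computation to reading off the $p_i$ together with the relative order of the labels.

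For the forward direction, assume $\dinv(\gamma,w)=1$. I first show every factor $NE^pN$ has $p\le 2$: for $m=2$ this is Lemma~\ref{Lem : read=w-1 m=2}, while for $m\ge 3$ a factor with $p\ge 3$ would contribute at least $2$, by Claim~\ref{Cl : min dinv Park} if $p\le m$ and by Lemma~\ref{Lem : dinv Park p>m} if $p>m$. Let $e$ be the number of $p=2$ factors and $f=T-\dd(w)-1$, which is nonnegative by Claim~\ref{Cl : max descent par colonne}. A $p=2$ factor must sit at a descent, else Claim~\ref{Cl : min dinv Park} gives it contribution $2$; there it contributes exactly $d_i(i+1)=1$. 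By Claim~\ref{Cl : max descent par colonne} the descents are precisely $\dd(w)$ of the $T-1$ tops of columns, so the remaining $f$ tops are ascents, each a $p=1$ factor contributing $d_i(i+1)=1$ by Claim~\ref{Cl : min dinv Park}. These $e+f$ contributions occur at distinct lines, whence $1=\dinv(\gamma,w)\ge e+f$. The case $e=f=0$ is impossible, since it places $(\gamma,w)$ under the hypotheses of Proposition~\ref{Prop : critere dinv=0} (all $p\le 1$, $T=\dd(w)+1$, and $\R(\gamma,w)=w^{-1}$ by Claim~\ref{Cl : read=w-1}) and forces $\dinv(\gamma,w)=0$. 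Hence $e+f=1$, and $(e,f)=(1,0)$ and $(0,1)$ give exactly the two stated conditions.

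For the converse I would compute $\dinv(\gamma,w)$ directly under each condition. The adjacent contributions are governed by Claim~\ref{Cl : min dinv Park} and the area identity: a within-column step ($p_i=0$) has $a_{i+1}-a_i=m$ and contributes $0$, a descending $p=1$ transition has $a_{i+1}-a_i=m-1$ and contributes $0$, a descending $p=2$ transition contributes $1$, and an ascending $p=1$ transition contributes $1$. Under the first condition there is exactly one ascending transition and no $p=2$ factor, so the adjacent pairs total $1$; under the second there is exactly one $p=2$ descent and, as $T=\dd(w)+1$, no ascending transition, so again the adjacent pairs total $1$. It remains to kill the non-adjacent pairs. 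For $j\ge i+2$ the area identity gives $a_j-a_i=m(j-i)-\sum_{l=i}^{j-1}p_l$; under the first condition all $p_l\le 1$ yields $a_j-a_i\ge (m-1)(j-i)\ge m$, so both terms of $d_i(j)$ vanish, and under the second $\sum_{l=i}^{j-1}p_l\le (j-i)+1$ yields $a_j-a_i\ge (m-1)(j-i)-1$, which is at least $m$ whenever $m\ge 3$ or $j-i\ge 3$.

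The one genuinely delicate case, and where I expect the real work, is $m=2$, $j=i+2$ under the second condition, where $a_{i+2}-a_i$ can fall to $1<m$. This forces $\sum_{l=i}^{i+1}p_l=3$, i.e. $\{p_i,p_{i+1}\}=\{1,2\}$, so both $i$ and $i+1$ are tops of columns; since every top of a column is a descent in the second condition, $w_i>w_{i+1}>w_{i+2}$, and the surviving term of $d_i(i+2)$ is $\max(0,2-|a_{i+2}-a_i+1|)=\max(0,2-2)=0$. Thus it is precisely the descent structure imposed by $T=\dd(w)+1$ that rescues the estimate, and no spurious inversion appears; the argument mirrors, in miniature, the three-configuration analysis of Lemma~\ref{Lem : read=w-1 m=2}. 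Assembling the pieces gives $\dinv(\gamma,w)=1$ in both cases, completing the converse.
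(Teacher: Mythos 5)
Your proof is correct, and although it draws on the same toolkit as the paper --- Claim~\ref{Cl : min dinv Park} for adjacent contributions, Lemma~\ref{Lem : dinv Park p>m}, Lemma~\ref{Lem : read=w-1 m=2}, Claim~\ref{Cl : read=w-1}, and the criterion of Proposition~\ref{Prop : critere dinv=0} --- it is organized along a genuinely different route in both directions. In the forward direction the paper first obtains $\R(\gamma,w)=w^{-1}$ from Lemma~\ref{Lem : read=w-1} (or Lemma~\ref{Lem : read=w-1 m=2} when $m=2$), bounds $\dd(w)+1\le T(\gamma)\le \dd(w)+2$ via Lemma~\ref{Lem : dinv vs top colonnes} and Claim~\ref{Cl : max descent par colonne}, and then analyses the two values of $T(\gamma)$ separately; your count of $e+f$ (one unit of $\dinv$ per $p=2$ descent and per ascending column top, at distinct lines) recovers the same dichotomy in one stroke, and your explicit exclusion of $e+f=0$ through Proposition~\ref{Prop : critere dinv=0} makes rigorous a step the paper leaves implicit: in its case $T(\gamma)=\dd(w)+1$ it asserts via Claim~\ref{Cl : min dinv Park} that exactly one factor has $p=2$, without ruling out that the single inversion might instead come from a non-adjacent pair with all $p\le 1$. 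In the converse, your treatment of $m=2$, $j=i+2$, $\{p_i,p_{i+1}\}=\{1,2\}$ is a genuine completion of the paper's argument: the paper's displayed inequality $|a_j-a_i+1|\ge (m-1)(j-i)+1\ge m$ invokes $m\ge 3$ and it never bounds the other term $\chi(w_i<w_j)\max(0,m-|a_i-a_j|)$ at all (harmless for $m\ge 3$, where $a_j-a_i\ge 2m-3\ge m$, but false on area grounds alone when $m=2$ and $a_{i+2}-a_i=1$), while its closing sentence that for $m=2$ ``the proof is the same'' only swaps a lemma reference used in the forward direction; your observation that both intermediate lines are then column tops, hence descents under $T(\gamma)=\dd(w)+1$, forcing $w_i>w_{i+1}>w_{i+2}$ and killing that term, is exactly the missing verification. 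What the paper's route buys is brevity and reuse of Lemma~\ref{Lem : dinv vs top colonnes}; what yours buys is a self-contained counting argument and an airtight $m=2$ case.
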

\begin{proof}We will start by proving the statement for $m\geq3$.
If $\dinv(\gamma,w)=1$, by Lemma~\ref{Lem : read=w-1}, we have $\R(\gamma,w)=w^{-1}$. Thus,by Lemma~\ref{Lem : dinv vs top colonnes} and Claim~\ref{Cl : max descent par colonne}, $\dd(w)+1\leq T(\gamma)\leq \dd(w)+2$.
 
By Claim~\ref{Cl : max descent par colonne}, if $T(\gamma)=\dd(w)+1$, then $w_i>w_{i+1}$ for all $i$ at the top of a column. Hence, by Lemma~\ref{Lem : read=w-1}, $p\leq m$. In consequence, by Claim~\ref{Cl : min dinv Park}, there is exactly one factor, $NE^pN$ of $\gamma$ is such that $p=2$,  all other such factors satisfy $p\leq1$.  
Again, by Claim~\ref{Cl : max descent par colonne}, if $T(\gamma)=\dd(w)+2$, there is exactly one line $i$ containing an east step such that $w_i<w_{i+1}$. Ergo, by Claim~\ref{Cl : min dinv Park}, all factors, $NE^pN$ of $\gamma$ are such that $p\leq1$.

If all factors, $NE^pN$ of $\gamma$ are such that $p\leq1$ and $T(\gamma)=\dd(w)+2$, then, by Claim~\ref{Cl : max descent par colonne}, we know there is exactly one $i$ at the top of a column such that $w_i<w_{i+1}$. Moreover, $p\leq1$ for all factors $NE^pN$ of $\gamma$, so, by Claim~\ref{Cl : min dinv Park}, $\dinv(\gamma,w)=1$, if $d_i(j)=0$ for all $j\geq i+2$. But $p\leq 1$ means $m\geq a_{i+1}-a_i\geq m-1$. Hence, for $j>i$, $|a_j-a_i|\geq (m-1)(j-i)\geq m$ and $|a_j-a_i+1|\geq (m-1)(j-i)+1\geq m$, since $j-i\geq 2$ and $m\geq 3$. Consequently, $d_i(j)=0$ for all $j\geq i+2$ and $\dinv(\gamma,w)=1$.

If exactly one factor, $NE^pN$ of $\gamma$ is such that $p=2$  all other such factors satisfy $p\leq1$ and $T(\gamma)=\dd(w)+1$. By Claim~\ref{Cl : max descent par colonne}, for all $i$ at the top of a column $w_i>w_{i+1}$. Additionally, exactly one factor, $NE^pN$ of $\gamma$ is such that $p=2$,  all other such factors satisfy $p\leq1$. In consequence,  by Claim~\ref{Cl : min dinv Park}, $\dinv(\gamma,w)=1$ if $d_i(j)=0$ for all $j\geq i+2$. But $p\leq 1$ means $m\geq a_{i+1}-a_i\geq m-1$ and $p=2$ means $m\geq a_{i+1}-a_i\geq m-2$. Ergo, for $j>i$, $|a_j-a_i+1|\geq (m-1)(j-i)+1\geq m$ because $j-i\geq 2$ and $m\geq 3$. Thus, $d_i(j)=0$ for all $j\geq i+2$ and $\dinv(\gamma,w)=1$.

For $m=2$ the proof is the same, we only need to change references of Lemma ~\ref{Lem : read=w-1} to Lemma ~\ref{Lem : read=w-1 m=2}.
\end{proof}
Note that 
for $m=1$ nothing holds (see Figure~\ref{Fig : => contre-exemple m=1}, Figure~\ref{Fig : <= contre-exemple m=1 partie 1} and Figure~\ref{Fig : <= contre-exemple m=1 partie 2}) but,  in Section~\ref{Sec : bijections}, we manage to obtain a Proposition~\ref{Prop : 1 part parking} type formula  for the restriction to hook Schur functions in the variables $q$ and $t$, by using Schr\"oder paths and the $\B$ statistic.
\begin{figure}[!htb]
\begin{minipage}{8.5cm}
\centering
\begin{tikzpicture}[scale= .5]
\draw (0,0)--(0,3)--(3,3)--(3,4)--(4,4)--(0,0);
\draw[dashed,red] (0,.5)--(3,3.5);
\draw[gray] (1,1)--(1,3);
\draw[gray] (0,2)--(2,2);
\draw[gray] (2,2)--(2,3);
\draw[gray] (0,1)--(1,1);
\node(1) at (-1,.5){$1$};
\node(2) at (-1,1.5){$2$};
\node(3) at (-1,2.5){$3$};
\node(4) at (2.5,3.5){$4$};
\end{tikzpicture}
\caption{The diagonal inversion statistic is $1$ yet $p>2$.} \label{Fig : => contre-exemple m=1}
\end{minipage}
\begin{minipage}{8.5cm}
\centering
\begin{tikzpicture}[scale= .5]
\draw (0,0)--(0,2)--(1,2)--(1,3)--(2,3)--(2,4)--(4,4)--(0,0);
\draw[dashed,red] (0,1.7)--(2,3.7);
\draw[dashed,red] (0,1.3)--(1,2.3);
\draw[gray] (0,1)--(1,1)--(1,2)--(2,2)--(2,3)--(3,3)--(3,4);
\node(1) at (-.5,.5){$1$};
\node(2) at (-.5,1.5){$2$};
\node(4) at (.5,2.5){$4$};
\node(3) at (1.5,3.5){$3$};
\end{tikzpicture}
\caption{The diagonal inversion statistic is $2$ yet $p\leq1$ and $T(\gamma)=\dd(1243)+2=3$.} \label{Fig : <= contre-exemple m=1 partie 1}
\end{minipage}
\end{figure}
\begin{figure}[!htb]
\centering
\begin{tikzpicture}[scale= .5]
\draw (0,0)--(0,2)--(2,2)--(2,3)--(3,3)--(3,4)--(4,4)--(0,0);
\draw[dashed,red] (0,1.7)--(3,3.7);
\draw[dashed,red] (0,1.2)--(2,2.5);
\draw[gray] (0,1)--(1,1)--(1,2)--(2,2)--(2,3)--(3,3)--(3,4);
\node(1) at (-.5,.5){$1$};
\node(4) at (-.5,1.5){$4$};
\node(3) at (1.5,2.5){$3$};
\node(2) at (2.5,3.5){$2$};
\end{tikzpicture}
\caption{The diagonal inversion statistic is $2$ yet exactly one factor $NE^pN$ is such that $p=2$ all others are such that $p\leq1$ and $T(\gamma)=\dd(1432)+1=3$.} \label{Fig : <= contre-exemple m=1 partie 2}
\end{figure}
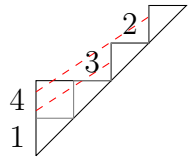
\section{Restriction to $m$-Schr\"oder Paths}\label{Sec : schroder}
This section is dedicated to the restriction to Schr\"oder paths. With this restriction we can give the necessary and sufficient conditions for a path to have a diagonal inversion statistic value of one.
Proposition~\ref{Prop : critere dinv=0} can be restated in terms of Schr\"oder paths.
\begin{cor}\label{Cor : Schroder->des=d}Let $(\gamma,w)$ be in $\sch_{n,d}^{(m)}$, $1\leq m$. Then, $\dinv(\gamma,w)=0$ if and if, one of the following conditions applies:
\\
$\bullet$ The path $\gamma$ can be written as $\gamma'E^j$ where all factors of $\gamma'$ of length 2 have at most one east step.
\\
$\bullet$ If $\{i_1,\ldots,i_k,n\}$ is the set of all lines containing an east step, then $\{w_{i_1},\ldots,w_{i_k}\} \subseteq \{n-d+1,\ldots,n\}$.
\\
$\bullet$ $\R(\gamma,w)=w^{-1}$. 
\end{cor}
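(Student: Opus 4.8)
The plan is to deduce the Corollary directly from Proposition~\ref{Prop : critere dinv=0}. For a parking function representing a Schr\"oder path the first and third bullet points of the two statements are word-for-word identical, so, since Proposition~\ref{Prop : critere dinv=0} already characterises $\dinv(\gamma,w)=0$ by its three conditions, it suffices to prove that — under the standing hypotheses $(\gamma,w)\in\sch_{n,d}^{(m)}$ and $\R(\gamma,w)=w^{-1}$ — the Proposition's second condition (the lines carrying an east step, other than line $n$, are exactly the descent set of $w$) is equivalent to the Corollary's second condition (those same lines carry labels in $\{n-d+1,\ldots,n\}$). Feeding this equivalence into Proposition~\ref{Prop : critere dinv=0} then yields both implications of the Corollary at once.

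The first thing I would establish is a monotonicity property of the labels. Since $(\gamma,w)\in\sch_{n,d}^{(m)}$, the reading word lies in $\{n-d+1,\ldots,n\}\shuffle\{n-d,\ldots,1\}$, so read from left to right its high labels $\{n-d+1,\ldots,n\}$ increase and its low labels $\{n-d,\ldots,1\}$ decrease. Because $\R(\gamma,w)=w^{-1}$ is the reversal $w_n\cdots w_1$ of $w$, reversing these two subsequences shows that along $w=w_1\cdots w_n$ the high labels form a decreasing subsequence and the low labels an increasing subsequence. The point I will use is that a descent $w_i>w_{i+1}$ must have $w_i$ a high label: a low value $w_i$ could never exceed $w_{i+1}$, since $w_{i+1}$ is either another low label (larger, by the increasing subsequence) or a high label (larger, since low values lie below high values).

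The two implications are then immediate. If the Proposition's second condition holds, its descent set $\{i_1,\ldots,i_k\}$ consists of descents, so each $w_{i_j}$ is a high label by the observation above, which is exactly the Corollary's second condition. Conversely, if each $w_{i_j}$ is a high label, then comparing $w_{i_j}$ with $w_{i_j+1}$ via the same monotonicity (the latter is smaller whether it is high, by the decreasing subsequence, or low, since low lies below high) shows $i_j\in\DD(w)$, so the lines carrying an east step are contained in the descent set; the opposite inclusion holds by Claim~\ref{Cl : max descent par colonne}, which places every descent at the top of a column and hence on a line carrying an east step distinct from $n$. These two inclusions give the Proposition's second condition.

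The only delicate point I anticipate is bookkeeping rather than depth: one must keep careful track of the reversal in $\R(\gamma,w)=w^{-1}$ and transport correctly the increasing/decreasing behaviour of the shuffle through it, and verify that the descent analysis is exhaustive — in particular that a low label can never be the larger entry of a descent. Once the monotonicity of the high and low labels along $w$ is pinned down, the remainder is a direct appeal to Proposition~\ref{Prop : critere dinv=0} and Claim~\ref{Cl : max descent par colonne}.
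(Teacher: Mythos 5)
Your proposal is correct and follows essentially the same route as the paper: the paper's proof likewise inherits conditions one and three directly from Proposition~\ref{Prop : critere dinv=0} and then observes that, since $\R(\gamma,w)\in\{n-d+1,\ldots,n\}\shuffle\{n-d,\ldots,1\}$ and $\R(\gamma,w)=w^{-1}$, the word $w$ lies in $\{n,\ldots,n-d+1\}\shuffle\{1,\ldots,n-d\}$, so its descents are exactly the positions of the high labels $n-d+1,\ldots,n$. You merely spell out what the paper leaves implicit --- the monotonicity of the high and low subsequences under reversal, and the appeal to Claim~\ref{Cl : max descent par colonne} for the inclusion $\DD(w)\subseteq\{i_1,\ldots,i_k\}$ --- which is a sound and complete filling-in of the same argument.
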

\begin{proof} Condition one and three are consequences of Proposition~\ref{Prop : critere dinv=0}. By definition of Schr\"oder paths $\R(\gamma,w) \in  \{n-d+1,\ldots,n\} \shuffle \{n-d,\ldots,1\}$. Therefore, $\R(\gamma,w)^{-1}=w\in  \{n,\ldots,n-d+1\} \shuffle \{1,\ldots,n-d\}$ and the descents of $w$ are the positions of $n-d+1,\ldots,n$ in $w$. Hence, the result follows from Proposition~\ref{Prop : critere dinv=0}.
\end{proof}
The restriction to $m$-Schr\"oder paths allow us to write the formula of Proposition ~\ref{Prop : 1 part parking} in terms of paths in a rectangular grid as we did for $m=1$ in  Proposition~\ref{Prop : sum algo 1 part}.
\begin{cor} Let $n$, $d$ and $m$ be positive integer such that $n\geq d$ and $m>1$. Then:
\begin{align*}{\sch}^m_{n,d}(q,t)|_{1 \pp}=&\langle\nabla^m e_n,e_{n-d}h_d \rangle|_{1 \pp}
\\      =&\sum_{\gamma\in \mathcal{C}^{n-1}_{d-1}} s_{(m\binom{n}{2}-\binom{d}{2}-\A(\gamma))}(q,t)+\sum_{\gamma\in \mathcal{C}^{n-1}_{d}} s_{(m\binom{n}{2}-\binom{d+1}{2}-\A(\gamma))}(q,t)
\end{align*}
Additionally:
\begin{equation}\label{Eq : 1 part m-schroder} {\sch}^m_{n,d}(q,t)|_{1 \pp}=\langle\nabla^m e_n,s_{d+1,1^{n-d-1}} \rangle|_{1 \pp}
=\sum_{\gamma\in \mathcal{C}^{n-1}_{d}} s_{(m\binom{n}{2}-\binom{d+1}{2}-\A(\gamma))}(q,t)
\end{equation}
\end{cor}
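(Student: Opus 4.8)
The first equality is the identification of $\sch^m_{n,d}(q,t)$ with the scalar product $\langle\nabla^m e_n, e_{n-d}h_d\rangle$ provided by the Shuffle Theorem through the $\dinv$--$\A$ description of $m$-Schr\"oder paths; the content is therefore the second equality together with the single display Equation~\eqref{Eq : 1 part m-schroder}. My plan is to reduce everything to Proposition~\ref{Prop : 1 part parking}. First I would observe that the restriction $|_{1 \pp}$ acts only on the $q,t$ variables while the Hall pairing $\langle\,\cdot\,,e_{n-d}h_d\rangle$ acts only on the $X$ variables; since the coefficients of $\nabla^m(e_n)$ in the Schur basis in $X$ are symmetric in $q,t$, the two operations commute, so $\langle\nabla^m e_n, e_{n-d}h_d\rangle|_{1 \pp}=\langle\,\nabla^m(e_n)|_{1 \pp}\,, e_{n-d}h_d\rangle$. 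Substituting the closed form of Proposition~\ref{Prop : 1 part parking} turns the right-hand side into $\sum_{\tau\in\SYT(n)}s_{\m(\tau)+(m-1)\binom{n}{2}}(q,t)\,\langle s_{\lambda(\tau)}, e_{n-d}h_d\rangle$.

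Next I would evaluate the pairing $\langle s_{\lambda(\tau)}, e_{n-d}h_d\rangle$ by the Pieri rule $e_{n-d}h_d=s_{(d+1,1^{n-d-1})}+s_{(d,1^{n-d})}$: by orthonormality of the Schur basis this pairing equals $1$ exactly when $\lambda(\tau)$ is one of the two hooks $(d+1,1^{n-d-1})$ or $(d,1^{n-d})$, and $0$ otherwise. This isolates $\langle\nabla^m e_n, e_{n-d}h_d\rangle|_{1 \pp}=\sum_{\tau\in\SYT(d+1,1^{n-d-1})}s_{\m(\tau)+(m-1)\binom{n}{2}}+\sum_{\tau\in\SYT(d,1^{n-d})}s_{\m(\tau)+(m-1)\binom{n}{2}}$, while Equation~\eqref{Eq : 1 part m-schroder} keeps only the $(d+1,1^{n-d-1})$ summand, since pairing with $s_{d+1,1^{n-d-1}}$ is the corresponding Kronecker delta.

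It then remains to rewrite each hook sum over standard tableaux as a sum over lattice paths. For a hook $(a,1^b)$ with $a+b=n$, a standard Young tableau is determined by the $b$-subset $L\subseteq\{2,\ldots,n\}$ of entries placed in the leg, and its descent set is $\{\ell-1:\ell\in L\}$, so $\m(\tau)=\sum_{\ell\in L}(\ell-1)$. Summing $q^{\m(\tau)}$ over all such $L$ gives the subset-sum generating function $q^{\binom{b+1}{2}}\binom{n-1}{b}_q=q^{\binom{b+1}{2}}\sum_{\gamma\in\mathcal{C}^{n-1}_b}q^{\A(\gamma)}$, using the classical identity $\binom{n-1}{b}_q=\sum_{\gamma\in\mathcal{C}^{n-1}_b}q^{\A(\gamma)}$. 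Hence the multiset $\{\m(\tau)\}$ equals $\{\binom{b+1}{2}+\A(\gamma):\gamma\in\mathcal{C}^{n-1}_b\}$. Applying this with $b=n-d-1$ and with $b=n-d$, then flipping the area by the reflection symmetry $\A(\gamma)\mapsto b(n-1-b)-\A(\gamma)$ of $\mathcal{C}^{n-1}_b$ and invoking the two binomial identities $\binom{n-d}{2}+d(n-1-d)=\binom{n}{2}-\binom{d+1}{2}$ (already used earlier in the paper) and $\binom{n-d+1}{2}+(d-1)(n-d)=\binom{n}{2}-\binom{d}{2}$, converts each subscript $\m(\tau)+(m-1)\binom{n}{2}$ into the desired $m\binom{n}{2}-\binom{d+1}{2}-\A(\gamma)$ over $\mathcal{C}^{n-1}_d$ (respectively $m\binom{n}{2}-\binom{d}{2}-\A(\gamma)$ over $\mathcal{C}^{n-1}_{d-1}$). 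Matching the multisets of subscripts on the two sides then yields both identities.

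Conceptually the argument is routine once Proposition~\ref{Prop : 1 part parking} is available; the only genuine work is the bookkeeping. The step I expect to be most delicate is keeping the three shifts consistent at once: the offset $(m-1)\binom{n}{2}$ coming from $\nabla^m$, the offset $\binom{b+1}{2}$ coming from the minimal major index of a hook tableau, and the reflection of the area statistic, so that the two binomial identities glue them into exactly $\binom{n}{2}-\binom{d+1}{2}$ and $\binom{n}{2}-\binom{d}{2}$. I would also check the boundary values separately, namely $d=n$ (where $\mathcal{C}^{n-1}_d$ is empty) and $d=n-1$ (where the hook degenerates to a single row), to make sure the degenerate hooks and empty path-sets are handled correctly.
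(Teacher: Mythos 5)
Your proposal is correct, and it arrives at the statement by a somewhat different route than the paper. The paper's own proof is a two-line deduction: it invokes Proposition~\ref{Prop : sum algo 1 part} --- the $m=1$ path formulas such as $\langle\nabla e_n,s_{d+1,1^{n-d-1}}\rangle|_{1 \pp}=\sum_{\gamma\in\mathcal{C}^{n-1}_d}s_{\A(\gamma)+\binom{n-d}{2}}(q,t)$, which were themselves obtained from the algorithm $\varphi$ of Section~\ref{Sec : algo} and Lemma~\ref{Lem : bij schroder aire 0 et rectangle} via the bounce statistic --- together with Proposition~\ref{Prop : 1 part parking}, and then performs exactly the bookkeeping you describe: the area complementation $\sum_{\gamma\in\mathcal{C}^{n-1}_d}\A(\gamma)=\sum_{\gamma\in\mathcal{C}^{n-1}_d}(n-1-d)d-\A(\gamma)$ and the identity $\binom{n}{2}-\binom{d+1}{2}=\binom{n-d}{2}+(n-1-d)d$. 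You instead bypass the algorithm entirely: after commuting $|_{1 \pp}$ with the Hall pairing and expanding $e_{n-d}h_d=s_{(d+1,1^{n-d-1})}+s_{(d,1^{n-d})}$ by Pieri (a split the paper leaves implicit), you re-derive the multiset identity between hook major indices and shifted path areas from scratch, via the descent-set description of hook standard tableaux and the subset-sum evaluation $\sum_{|S|=b,\,S\subseteq\{1,\ldots,n-1\}}q^{\sum S}=q^{\binom{b+1}{2}}\begin{bmatrix}n-1\\b\end{bmatrix}_q$. Your computations check out, including the second identity $\binom{n-d+1}{2}+(d-1)(n-d)=\binom{n}{2}-\binom{d}{2}$ and the boundary cases; the one step you leave tacit is the relabeling from $\mathcal{C}^{n-1}_{n-d-1}$ and $\mathcal{C}^{n-1}_{n-d}$ to $\mathcal{C}^{n-1}_{d}$ and $\mathcal{C}^{n-1}_{d-1}$, which is harmless since conjugation preserves area, i.e.\ $\begin{bmatrix}n-1\\b\end{bmatrix}_q=\begin{bmatrix}n-1\\n-1-b\end{bmatrix}_q$. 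What the paper's route buys is reuse of machinery that also identifies \emph{which} Schr\"oder paths carry each Schur function $s_c(q,t)$, which matters later for the crystal discussion in Section~\ref{Sec : cristaux}; what your route buys is a shorter, self-contained argument resting only on Proposition~\ref{Prop : 1 part parking}, the Pieri rule, and classical $q$-binomial facts --- though, working purely at the level of generating functions, it proves equality of exponent multisets without producing an explicit path correspondence.
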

\begin{proof}Due to $\sum_{\gamma\in \mathcal{C}^{n-1}_{d}}\A(\gamma)=\sum_{\gamma\in \mathcal{C}^{n-1}_{d}} (n-1-d)d-\A(\gamma)$ and $\binom{n}{2}-\binom{d+1}{2}=\binom{n-d}{2}+(n-1-d)d$, the result follows from Proposition~\ref{Prop : sum algo 1 part} and Proposition~\ref{Prop : 1 part parking}.
\end{proof}
The main proof of this section is very technical case-by-case proof. It will be used to prove the main theorem via Corollary~\ref{Cor : m>=1 dinv=1}.
\begin{prop}\label{Prop : m=1 dinv=1}
Let $(\gamma,w)$ be in $\sch_{n,d}$ and let $T(\gamma)$ be the number of distinct columns. Then, $\dinv(\gamma,w)=1$ if and only if one of the following conditions applies:
\\
$\bullet$ All factors, $NE^pN$ of $\gamma$ are such that $p\leq1$ and $T(\gamma)=\dd(w)+2$. 
\\
$\bullet$ Exactly one factor, $\gamma'=NE^pN$ of $\gamma$ is such that $p=2$, and all other such factors satisfy $p\leq1$  and $T(\gamma)=\dd(w)+1$.
\\
$\bullet$ Exactly one factor, $\gamma'=NE^pN$ of $\gamma$ is such that $p > 2$ and $\gamma'$ is associated to lines $n-1$, $n$, $w_{n}\in  \{n-d+1,\ldots,n\}$ and all other such factors satisfy $p\leq1$ and $T(\gamma)=\dd(w)+2$ if $w_{n-1}\in  \{n-d,\ldots,1\}$ or $T(\gamma)=\dd(w)+1$ if $w_{n-1}\in  \{n-d+1,\ldots,n\}$.
\end{prop}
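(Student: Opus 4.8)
The plan is to keep the forward/backward skeleton of the proof of Lemma~\ref{Lem : m>2 dinv=1}, while isolating the one phenomenon that is genuinely new when $m=1$: a factor $NE^pN$ with $p>2$ can contribute a \emph{single} diagonal inversion. As the examples preceding this section show, this is exactly why one must both restrict to Schr\"oder paths and add the third, boundary, case, so the argument cannot merely quote Lemma~\ref{Lem : m>2 dinv=1}.

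For the forward direction I would first bound the number of long factors. By Lemma~\ref{Lem : m=1 p>2 local}, every factor $NE^pN$ with $p\geq 2$, say at lines $i-1,i$, forces $d_k(i)=1$ for some $k<i$. Two distinct such factors occupy distinct top lines $i\neq i'$, hence produce two distinct inversions and $\dinv\geq 2$. So when $\dinv=1$ at most one factor has $p\geq 2$, and it alone carries the whole statistic; this splits the proof into the three regimes of the statement according to the largest $p$ occurring.

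In the regime where every factor satisfies $p\leq 1$ one has $\R(\gamma,w)=w^{-1}$ by Claim~\ref{Cl : read=w-1}, so $\sigma=\I$ in Lemma~\ref{Lem : dinv vs top colonnes} and $\dinv\geq T(\gamma)-\dd(w)-1$; combined with Claim~\ref{Cl : max descent par colonne} this traps $T(\gamma)$ between $\dd(w)+1$ and $\dd(w)+2$. The value $\dd(w)+1$ is precisely the $\dinv=0$ configuration of Corollary~\ref{Cor : Schroder->des=d}, so $\dinv=1$ forces $T(\gamma)=\dd(w)+2$, the first case. The two remaining regimes contain a factor $NE^pN$ with $p\geq 2$; there $a$ drops by $p-1$ across the factor, so $\R(\gamma,w)\neq w^{-1}$ and the clean bound is no longer available. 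Instead I would compute the local contribution directly from the relation $a_j=a_{j-1}-(p-1)$ exactly as in Claim~\ref{Cl : min dinv Park}, then combine it with a column-top count to obtain $T(\gamma)=\dd(w)+1$ in the $p=2$ case; this step is delicate because the descent/ascent behaviour of the long factor's top interacts with the inversion forced by Lemma~\ref{Lem : m=1 p>2 local}.

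The main obstacle is the regime $p>2$, which is the third condition and has no counterpart for $m\geq 2$. The key point to establish is that a factor $NE^pN$ with $p>2$ at lines $i-1,i$ with $i<n$ always creates a \emph{second} inversion: after the long run of east steps the path lies below the diagonal through line $i$ and, being continuous and ending at $(n,n)$, must climb back above it, and the letters on the intervening diagonals then force a pair above line $i$ to contribute, so $\dinv\geq 2$. This is the same style of continuity/boundary analysis as in Lemma~\ref{Lem : m=1 p>2 local} and in the case split of Lemma~\ref{Lem : read=w-1 m=2}. Hence the long factor can only sit at lines $n-1,n$, where nothing lies above it; a short computation distinguishing $w_{n-1}$ small from $w_{n-1}$ big (which decides whether line $n-1$ is an extra column top) produces the alternative $T(\gamma)=\dd(w)+2$ or $\dd(w)+1$, while $w_n\in\{n-d+1,\dots,n\}$ is forced by requiring that line $n$ contribute no further inversion. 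For the converse I would feed each configuration back through Claim~\ref{Cl : min dinv Park} and the area inequalities used in the converse of Lemma~\ref{Lem : m>2 dinv=1}, which pin the $a_i$ down tightly enough that $d_i(j)=0$ for all $j\geq i+2$, leaving only the single inversion identified above and giving $\dinv=1$. I expect the $p>2$ case to again be the delicate one, since there the inequalities must be verified by hand at the top of the path rather than quoted from the $p\leq 2$ estimates.
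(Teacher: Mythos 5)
Your skeleton is the paper's: Lemma~\ref{Lem : m=1 p>2 local} caps the number of factors $NE^pN$ with $p\geq 2$ at one (your observation that two such factors at distinct top lines give two distinct contributing pairs is sound), and your $p\leq 1$ regime, via Claim~\ref{Cl : read=w-1}, Lemma~\ref{Lem : dinv vs top colonnes} with $\sigma=\I$, Claim~\ref{Cl : max descent par colonne} and the $\dinv=0$ criterion, is a correct and slightly cleaner packaging of what the paper does. The genuine gap is your converse. You propose to reuse ``the area inequalities used in the converse of Lemma~\ref{Lem : m>2 dinv=1}'' to force $d_i(j)=0$ for $j\geq i+2$. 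Those inequalities are $|a_j-a_i|\geq (m-1)(j-i)\geq m$, and at $m=1$ they collapse to $|a_j-a_i|\geq 0$: for $m=1$ arbitrarily distant lines routinely lie on the same or adjacent diagonals (on $(NE)^n$ every line has $a_i=0$), so the areas pin down nothing and long-range pairs are not killed by geometry. The paper closes this with the Schr\"oder letter structure instead: when all $p\leq 1$ and lines $j<k$ share a diagonal, line $j$ carries a factor $NEN$ and is a column top; since $\R(\gamma,w)=w^{-1}$ and such tops carry letters of $\{n-d+1,\ldots,n\}$ which are read before $w_k$, one gets $w_j>w_k$, so $(j,k)$ contributes nothing. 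Without this (or an equivalent letter argument) your converse does not close for the first two bullets, nor for the third, where the paper additionally truncates the path, applies Proposition~\ref{Prop : critere dinv=0} to $(\tilde\gamma,w_1\cdots w_{n-1})$, and then controls each $d_l(n)$ by hand.

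The same under-use of the shuffle structure undermines your forward $p>2$ step. Continuity alone does not yield the second inversion excluding a long factor from lines $i,i+1$ with $i+1<n$; the paper's exclusion is letter-theoretic throughout. It first rules out $w_k>w_{i+1}$ by a maximality argument on same-diagonal lines using $\R(\gamma,w)\in\{n-d+1,\ldots,n\}\shuffle\{n-d,\ldots,1\}$; in the surviving case, reading $w_{k+1}$ before $w_{i+1}$ before $w_k$ forces $w_{i+1}\in\{n-d+1,\ldots,n\}$ and $w_k,w_{k+1}\in\{n-d,\ldots,1\}$, and if $i+1<n$ the letter $w_{i+2}$ is forced simultaneously above and below $w_k$, a contradiction. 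In particular $w_n\in\{n-d+1,\ldots,n\}$ falls out of this reading-order argument, not, as you suggest, from requiring that line $n$ contribute no further inversion; and your case split also omits the paper's subcase analysis on whether an east step separates lines $k$ and $k+1$, which is what makes $p=2$ versus $p>2$ bifurcate correctly. In short: your decomposition into regimes and your first-bullet forward argument are right, but both the converse and the middle-position exclusion stand on the identification big letters $=$ descents $=$ read earlier, which your sketch never actually deploys, so as written those steps would fail.
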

\begin{proof}If $\dinv(\gamma,w)=1$ and all factors $\gamma'=NE^pN$ are  such that $p\leq 1$, then, by Claim~\ref{Cl : min dinv Park}, there is exactly one line $i$ at the top of a column such that $w_i<w_{i+1}$. Additionally, by Claim~\ref{Cl : max descent par colonne}, we know that  $T(\gamma)\geq\dd(w)+1$. By Claim~\ref{Cl : read=w-1} and Lemma~\ref{Lem : dinv vs top colonnes}, we have $T(\gamma)\leq\dd(w)+2$. Since $w_n$ is at the top of its own column, $i$ and $n$ are not  descents and the top of all the other columns are descents. Thus, $T(\gamma)=\dd(w)+2$. 

For the remaining cases, if $\dinv(\gamma,w)=1$, then, by Lemma~\ref{Lem : m=1 p>2 local}, we know there is at most one factor of $\gamma$ say $\gamma'=NE^pN$ associated to the lines $i$ and $i+1$ such that $p >1$ and there is $k$ such that $d_k(i+1)=1$.

If $w_k>w_{i+1}$,  then the north step at line $k$ is on the diagonal above the north step at the line $i+1$. Moreover, the path is continuous and the north step at line $i$ is over the diagonal passing through the north steps at the line $k$ and $i+1$, and, thus,  there exist $l<k$ such that the north step at the line $i+1$ and the north step at the line $l$ are on the same diagonal. Assuming $l$ is the biggest such $l$. We know, $w_l>w_{i+1}$ because $d_l(i+1)=0$. This means $w_k$ is read before $w_{i+1}$ and $w_{i+1}$ is read before $w_l$. By definition of Schr\"oder paths $\R(\gamma,w)\in \{n-d+1,\ldots,n\}\shuffle \{n-d,\ldots,1\}$. Hence, $w_l\in\{n-d+1,\ldots,n\}$. There is at most one east step between the line $l$ and the line $l+1$, so $w_{l+1}$ is read before $w_{l}$. Ergo, $w_l>w_{l+1}$. Therefore, $w_{l+1}$ is not in the same column as $w_l$. Consequently,  $w_{l+1}$ is on the same diagonal as $w_l$, since there is at most one factor $NE^rN$ with $r>1$. This contradicts that $l$ is the highest line such that $l<k$, $l$ and $i+1$ are on the same diagonal. (See Figure~\ref{Fig : k diago au-dessus i+1}.)

If $w_k<w_{i+1}$, $p\geq 2$ and there is an east step between the lines $k$ and $k+1$, then $d_k(i+1)=1$ implies the lines $k$ and $i+1$ are crossed by the same diagonal. By Lemma~\ref{Lem : m=1 p>2 local}, there is exactly one east step between the lines $k$ and $k+1$, and, thus, they are on the same diagonal and $k\not=i$. Considering $\dinv(\gamma,w)=1$ we need $d_k(k+1)=0$ and $d_{k+1}(i+1)=0$. Therefore, $w_k>w_{k+1}$ and $w_{k+1}>w_{i+1}$ which is absurd. (See Figure~\ref{Fig : k,k+1,i meme diago}.)

For the case $w_k<w_{i+1}$, $p\geq 2$, $k=i-1$ and there is no east step between the lines $k$ and $k+1$. Notice that if $k=i-1$, there are $i+1-k=2$ north steps. Since $d_k(i+1)=1$, we know $k$ and $i+1$ are on the same diagonal. Hence, there is 2 east step between the north step at the line $k$ and the north step at the line $i+1$. In addition, letters on the same diagonal are separated by the same number of east steps than north steps. Thus, $p=2$.
Additionally, by Claim~\ref{Cl : max descent par colonne},  $T(\gamma)\geq\dd(w)+1$. Moreover, $w_i$ is read before $w_{i+1}$, since they are separated by more than one east step and $d_i(i+1)=0$. Thus,$w_i>w_{i+1}$. Furthermore, all descent in $w$ contribute to a different column. Hence, if $T(\gamma)>\dd(w)+1$ we must have a change of column at a line $l$, $l\not=i$, such that $l$ is not a descent. Ergo $w_l<w_{l+1}$ and because there is at most one east step between $w_l$ and $w_{l+1}$, by Lemma ~\ref{Lem : m=1 p>2 local}, we must have $d_l(l+1)=1$ which is absurd. Therefore, $T(\gamma)=\dd(w)+1$.

If $w_k<w_{i+1}$, $p\geq 2$, $k\not=i-1$ and there is no east step between the lines $k$ and $k+1$, then $k\not=i$ and $w_k<w_{k+1}$. Moreover, $d_k(i+1)=1$ implies the lines $k$ and $i+1$ are crossed by the same diagonal. Consequently, the north step at line $k+1$ is on the diagonal over the north step at  the line $i+1$. Hence, $w_{k+1}<w_{i+1}$, since  $d_{k+1}(i+1)=0$. Thus, $w_{k+1}$ is read before $w_{i+1}$ and $w_{i+1}$ is read before $w_k$ in $\R(\gamma,w)$. We know $\R(\gamma,w)\in \{n-d+1,\ldots,n\}\shuffle\{n-d,\ldots,1\}$, ergo, $w_{i+1}\in  \{n-d+1,\ldots,n\}$ and $w_{k+1},w_k \in  \{n-d,\ldots,1\}$. If $i+1\not=n$ and  $w_{i+2}$ is in the same column as $w_{i+1}$, then $w_{i+2}$ is read before $w_{i+1}$ and $w_{i+1}<w_{i+2}$. But this is impossible because $w_{i+1}$ is in the set $ \{n-d+1,\ldots,n\}$. Therefore, $w_{i+2}$ and $w_{i+1}$ are on the same diagonal and $w_{i+1}>w_{i+2}$, since $d_{i+1}(i+2)=0$. Due to $\dinv(\gamma,w)=1$ and $d_k(i+1)=1$, we have $d_{k}(i+2)=0$. The north step at line $i+2$ is on the same diagonal as the north step at the line $k$, ergo $w_k>w_{i+2}$.  For this reason, $w_{i+2}$ is read before $w_k$. But, $w_k \in  \{n-d,\ldots,1\}$ means $w_{i+2}>w_k$ which is impossible. So, $i+1=n$. (See Figure~\ref{Fig : k sous k+1 et k, i+1 meme diago}.)
\begin{figure}[!htb]
\begin{minipage}{5.5cm}
\centering
\begin{tikzpicture}[scale=.5]
\draw (0,0)--(0,1)--(1,1);
\draw[dotted] (1,1)--(2,1);
\draw (2,1)--(3,1)--(3,2);
\draw[red,dotted] (3.5,2)--(-2,-3.5);
\draw[red,dotted] (3.5,3)--(-1,-1.5);
\draw (-.5,-1.5)--(-.5,-.5);
\draw (-1.5,-3.5)--(-1.5,-2.5);
\node(i+1) at (4,1.5){$w_{i+1}$};
\node(i) at (0.5,0.5){$w_i$};
\node(k) at (0.1,-1){$w_k$};
\node(l) at (-.9,-3){$w_l$};
\end{tikzpicture}
\caption{ } \label{Fig : k diago au-dessus i+1}
\end{minipage}
\begin{minipage}{5.5cm}
\centering
\begin{tikzpicture}[scale=.5]
\draw (0,0)--(0,1)--(1,1);
\draw[dotted] (1,1)--(2,1);
\draw (2,1)--(3,1)--(3,2);
\draw[red,dotted] (3.5,2)--(-2,-3.5);
\draw (-1.5,-3.5)--(-1.5,-2.5)--(-.5,-2.5)--(-.5,-1.5);
\node(i+1) at (4,1.5){$w_{i+1}$};
\node(i) at (0.5,0.5){$w_i$};
\node(k) at (-0.9,-3){$w_k$};
\node(k+1) at (.5,-2){$w_{k+1}$};
\end{tikzpicture}
\caption{} \label{Fig : k,k+1,i meme diago}
\end{minipage}
\begin{minipage}{5.5cm}
\centering
\begin{tikzpicture}[scale=.5]
\draw (0,0)--(0,1)--(1,1);
\draw[dotted] (1,1)--(2,1);
\draw (2,1)--(3,1)--(3,2);
\draw[red,dotted] (3.5,2)--(-2,-3.5);
\draw[red,dotted] (3.5,3)--(-2,-2.5);
\draw (-1.5,-3.5)--(-1.5,-1.5);
\node(i+1) at (4,1.5){$w_{i+1}$};
\node(i) at (0.5,0.5){$w_i$};
\node(k) at (-0.9,-3){$w_k$};
\node(k+1) at (-.5,-2){$w_{k+1}$};
\end{tikzpicture}
\caption{} \label{Fig : k sous k+1 et k, i+1 meme diago}
\end{minipage}
\end{figure}

If $w_k<w_{i+1}$, $i+1=n$, $p>2$ and there is no east step between the lines $k$ and $k+1$, then, by Claim~\ref{Cl : max descent par colonne},  $T(\gamma)\geq\dd(w)+1$. As in the previous case $w_{i+1}\in  \{n-d+1,\ldots,n\}$. Additionally, $w_i$ is read before $w_{i+1}$, considering they are separated by more than one east step. Hence, $w_i<w_{i+1}$. Furthermore, all descent in $w$ contribute to a different column and the letters $w_{n-1}$, $w_{n}$ are in different columns (recall $i+1=n$ and there are $p$ east steps between $w_{n-1}$ and $w_n$). This means $T(\gamma)\geq\dd(w)+2$ if $w_{n-1}\in  \{n-d,\ldots,1\}$ and $T(\gamma)\geq\dd(w)+1$ if $w_{n-1}\in  \{n-d+1,\ldots,n\}$. In both cases if the inequality is strict, we must have a change of column at a line $l$ such that $l\not=n-1$ and $l$ is not a descent, ergo, $w_l<w_{l+1}$. Since there is at most one east step between $w_l$ and $w_{l+1}$, by Lemma ~\ref{Lem : m=1 p>2 local}, we must have $d_l(l+1)=1$ which is absurd. Therefore, $T(\gamma)=\dd(w)+2$ if $w_{n-1}\in  \{n-d,\ldots,1\}$ and $T(\gamma)=\dd(w)+1$ if $w_{n-1}\in  \{n-d+1,\ldots,n\}$.

Conversely, if all factors, $NE^pN$ of $\gamma$ are such that $p\leq1$ and $T(\gamma)=\dd(w)+2$, then there is exactly one line $i$ at the top of a column such that $i\not=n$ and $w_i<w_{i+1}$.  By Claim~\ref{Cl : min dinv Park}, $d_i(i+1)=1$ and $d_l(l+1)=0$ for all $l\not=i$ because $p\leq 1$. For the same reason, all lines $j$ and $k$ such that $k-j\geq 2$, have a number of north steps greater or equal to the number on east steps between them and $d_j(k)=0$, unless the north step on lines $j$ and $k$ are on the same diagonal. 
 But when the north step at the line $k$ and the north step at the line $j$ are on the same diagonal, $k>j$ and $p\leq 1$ we know the line $j$ is associated to a factor $NEN$ of $\gamma$ and is at the top of a column.  By Claim~\ref{Cl : read=w-1}, we have $\R(\gamma,w)=w^{-1}$, and, thus, $w_j\in \{n-d+1,\ldots,n\}$ and $w_k$ is read before $w_j$. Consequently, $w_j>w_k$ and $\dinv(\gamma,w)=1$.

If exactly one factor, $\gamma'=NE^pN$ of $\gamma$ is such that $p=2$, and all other such factors satisfy $p\leq1$ and $T(\gamma)=\dd(w)+1$, then by Claim~\ref{Cl : max descent par colonne}, all lines $j$ at the top of a column are such that $j$ is in the descent set of $w$. Hence, if $w_i$ and $w_{i+1}$ are associated to the factor $\gamma'=NE^2N$ of $\gamma$ $w_i$ is on the diagonal above $w_{i+1}$ and $w_i>w_{i+1}$, so $d_i(i+1)=1$. For the same reasons as in the previous case for all $j$ and $k$ such that $j\not=i$ and $k\not=i+1$, then $d_j(k)=0$. Therefore, $\dinv(\gamma,w)=1$.

Let us now consider the case when exactly one factor, $\gamma'=NE^pN$ of $\gamma$ is such that $p > 2$ and $\gamma'$ is associated to lines $n-1$, $n$ and all other such factors satisfy $p\leq1$ and $T(\gamma)=\dd(w)+2$. If we take out the last north step and the last east step and call the new path $\tilde\gamma$, then, by Proposition~\ref{Prop : critere dinv=0}, $\dinv(\tilde\gamma,w_1\cdots w_{n-1})=0$. Hence, for all $1\leq j<k\leq n-1$ we have $d_j(k)=0$. Because $d_j(k)$ is a local property, it is also true for $(\gamma,w)$. By continuity of the path, since $p>2$, there is a line $k$ such that $w_k$ and $w_{k+1}$ are in the same column and the north step a line $n$ is on the same diagonal than the north step at the line $k$. Thus, we read $w_{k+1}$ before $w_n$ and $w_n$ before $w_k$ in $\R(\gamma,w)$. Moreover, $w_{k+1}>w_k$, and, therefore, $w_k\in\{n-d,\ldots,1\}$ and $w_n>w_{k+1}>w_k$, is a consequence of $w_n\in\{n-d+1,\ldots,n\}$. So, $d_k(n)=1$ and $d_{k+1}(n)=0$. 
All other factors $\gamma''=NE^{p'}N$ satisfy $p'\leq1$, if there is $j$ distinct from $k$ such that $w_j$ is on the same diagonal than $w_n$, then $w_j,w_{j+1},\ldots,w_k$ are all on the same diagonal and  $w_j,w_{j+1},\ldots,w_{k-1}$ are at the top of their column. Furthermore, the condition $T(\gamma)=\dd(w)+2$ if $w_{i+1}\in  \{n-d,\ldots,1\}$ or $T(\gamma)=\dd(w)+1$ if $w_{i+1}\in  \{n-d+1,\ldots,n\}$ forces all letters of $w$ at the top of a column except for  $w_{n-1}$ and $w_n$ to be in  $\{n-d+1,\ldots,n\}$. Consequently, $w_j>w_{j+1}>\cdots>w_{k-1}>w_n>w_k$ and $d_l(n)=0$, for all $j\leq l\leq k-1$. Therefore, $\dinv(\gamma,w)=1$.
\end{proof}
The next corollary can also be deduced from the more general Lemma~\ref{Lem : m>2 dinv=1}. We only state it here, so one can notice that Proposition~\ref{Prop : m=1 dinv=1} is hiding a general statement  for $\sch_{n,d}^{(m)}$.  
\begin{cor}\label{Cor : m>1 dinv=1}
Let $m$ be an integer such that $m\geq 2$, $(\gamma,w)$ be in $\sch_{n,d}^{m}$ and let $T(\gamma)$ be the number of distinct columns. Then, $\dinv(\gamma,w)=1$ if and only if one of the following conditions applies:
\\
$\bullet$ All factors, $NE^pN$ of $\gamma$ are such that $p\leq1$ and $T(\gamma)=\dd(w)+2$. 
\\
$\bullet$ Exactly one factor, $\gamma'=NE^pN$ of $\gamma$ is such that $p=2$, and all other such factors satisfy $p\leq1$  and $T(\gamma)=\dd(w)+1$, then $\dinv(\gamma,w)=1$.
\end{cor}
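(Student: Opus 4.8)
The plan is to observe that this corollary is nothing more than the restriction of Lemma~\ref{Lem : m>2 dinv=1} to the subclass of Schr\"oder paths. Recall from Section~\ref{Sec : combi chemins} that every Schr\"oder path in $\sch_{n,d}^{(m)}$ is encoded by a parking function $(\gamma,w)\in\Pa_{n,nm}$, the extra feature being only that $\R(\gamma,w)\in\{n-d+1,\dots,n\}\shuffle\{n-d,\dots,1\}$. In particular, $\sch_{n,d}^{(m)}$ sits inside $\Pa_{n,nm}$, so any statement proved for all parking functions applies verbatim to Schr\"oder paths.

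First I would note that the quantities appearing in the statement---the statistic $\dinv(\gamma,w)$, the condition on factors $NE^pN$, the number of distinct columns $T(\gamma)$, and the descent count $\dd(w)$---are all intrinsic to the pair $(\gamma,w)$ and are computed in exactly the same way whether one views it as a general parking function or as a Schr\"oder path. Consequently the two bullet conditions of the present corollary are, word for word, the two conditions of Lemma~\ref{Lem : m>2 dinv=1}. Applying that lemma to the parking function underlying a Schr\"oder path then yields both implications of the ``if and only if'' immediately, with no further combinatorial work.

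The only point that requires a moment's thought is why no additional case arises here, in contrast with the three-case Proposition~\ref{Prop : m=1 dinv=1} that governs the $m=1$ situation. The extra boundary case there permits a single factor $NE^pN$ with $p>2$ occurring at lines $n-1,n$; this cannot happen once $m\geq2$, because Lemma~\ref{Lem : read=w-1 m=2} (when $m=2$) and Lemma~\ref{Lem : read=w-1} (when $m\geq3$) force every factor $NE^pN$ to satisfy $p\leq2$ as soon as $\dinv(\gamma,w)=1$. Thus I expect no genuine obstacle: the corollary is a direct specialization of Lemma~\ref{Lem : m>2 dinv=1}, and the disappearance of the third case for $m\geq2$ is precisely the content of those two earlier lemmas rather than something needing a fresh argument.
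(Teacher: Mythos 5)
Your proposal is correct, and it is in fact the route the paper itself flags in the sentence introducing the corollary (``can also be deduced from the more general Lemma~\ref{Lem : m>2 dinv=1}''), but it is not the proof the paper actually writes down. The paper's one-line proof instead \emph{extends the case analysis of Proposition~\ref{Prop : m=1 dinv=1}} to $m\geq 2$ by passing to the doubled path $\tilde\gamma$, noting that $w_i=w_j$ only within a column; the point of stating the corollary this way is expository, to show that the three-case $m=1$ proposition ``hides'' a two-case general statement. Your specialization argument is the cleaner and more rigorous of the two: since $\sch_{n,d}^{(m)}$ embeds in $\Pa_{n,nm}$ via the parking-function encoding and $\dinv$, $T(\gamma)$, the $NE^pN$ conditions, and $\dd(w)$ are all computed identically on the underlying pair $(\gamma,w)$, both directions of the equivalence restrict verbatim from Lemma~\ref{Lem : m>2 dinv=1}, with nothing left to check. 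What the paper's route buys instead is the structural insight about how the extra boundary case of Proposition~\ref{Prop : m=1 dinv=1} disappears. One small inaccuracy in your side remark: for $m\geq 3$, Lemma~\ref{Lem : read=w-1} (with $a=3$) only forces $p\leq m$, not $p\leq 2$; the exclusion of factors with $2<p\leq m$ is carried out inside the proof of Lemma~\ref{Lem : m>2 dinv=1} via Claim~\ref{Cl : min dinv Park}, which shows such a factor alone contributes at least $p-1\geq 2$ to $\dinv$. Since your main argument only invokes the \emph{statement} of Lemma~\ref{Lem : m>2 dinv=1}, which already has just two cases, this imprecision is harmless to the proof.
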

\begin{proof}The proof of the previous proposition can be extended to $\tilde\gamma$, since $w_i=w_j$ only if they are in the same column. 
\end{proof}
The following is the restriction to unlabelled Dyck paths.
\begin{cor}\label{Cor : m>=1 dinv=1}
Let $m$ be an integer such that $m\geq 1$, $(\gamma,w)$ be in $\sch_{n,0}^{m}$ and let $T(\gamma)$ be the number of distinct columns. Then, $\dinv(\gamma,w)=1$ if and only if one of the following conditions applies:
\\
$\bullet$ All factors, $NE^pN$ of $\gamma$ are such that $p\leq1$ and $T(\gamma)=\dd(w)+2$. 
\\
$\bullet$ Exactly one factor, $\gamma'=NE^pN$ of $\gamma$ is such that $p=2$, and all other such factors satisfy $p\leq1$  and $T(\gamma)=\dd(w)+1$, then $\dinv(\gamma,w)=1$.
\end{cor}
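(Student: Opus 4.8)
The plan is to obtain this corollary purely as a specialization of the two results that immediately precede it, splitting on whether $m=1$ or $m\geq 2$. For $m\geq 2$, the set $\sch_{n,0}^{m}$ is simply the $d=0$ instance of $\sch_{n,d}^{m}$, so Corollary~\ref{Cor : m>1 dinv=1} applies verbatim: its two bulleted conditions are exactly the two conditions asserted here, and nothing further is required. Thus the only genuine content lies in the case $m=1$.

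For $m=1$, I would invoke Proposition~\ref{Prop : m=1 dinv=1}, which characterizes $\dinv(\gamma,w)=1$ for $(\gamma,w)\in\sch_{n,d}$ by three conditions, and specialize it to $d=0$ (recalling that $\sch_{n,0}=\sch_{n,0}^{(1)}$). The first two conditions of that proposition coincide word for word with the two conditions of the present corollary, so it suffices to show that its third condition becomes vacuous when $d=0$. That third condition requires, among other things, that the unique long factor $\gamma'=NE^pN$ with $p>2$ sit on lines $n-1,n$ with $w_n\in\{n-d+1,\ldots,n\}$.

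The key observation is that for $d=0$ the high-label index set $\{n-d+1,\ldots,n\}=\{n+1,\ldots,n\}$ is empty, so the requirement $w_n\in\{n-d+1,\ldots,n\}$ can never be met. Consequently no $(\gamma,w)\in\sch_{n,0}$ can satisfy the third condition, and the characterization of $\dinv(\gamma,w)=1$ collapses to the first two conditions, which is precisely the statement of the corollary.

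I expect the only point demanding any care to be this emptiness argument: one must confirm that it is genuinely the reading-word constraint $\R(\gamma,w)\in\{n-d+1,\ldots,n\}\shuffle\{n-d,\ldots,1\}$ defining $\sch_{n,d}$ that forces $w_n$ into the now-empty high-label set under the third condition, so that the condition is impossible rather than merely exceptional. Everything else is a direct transcription of Proposition~\ref{Prop : m=1 dinv=1} and Corollary~\ref{Cor : m>1 dinv=1}, so no new case analysis or computation is needed.
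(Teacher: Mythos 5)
Your proposal is correct and follows essentially the same route as the paper: for $m\geq 2$ the paper likewise cites Corollary~\ref{Cor : m>1 dinv=1} at $d=0$, and for $m=1$ it invokes Proposition~\ref{Prop : m=1 dinv=1} together with the observation that $d=0$ forces $\R(\gamma,w)=n\cdots 1$, which is exactly your point that the high-label set $\{n-d+1,\ldots,n\}$ is empty, so the third condition of that proposition can never hold. Your writeup actually spells out the vacuousness of the third case more explicitly than the paper's one-line proof does.
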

\begin{proof}For $m=1$, the proof is a direct consequence of $\R(\gamma,w)=n\cdots1$ and Proposition~\ref{Prop : m=1 dinv=1}. For $m>1$ the proof follows from the last corollary.

Recall, from the proof of Corollary~\ref{Cor : Schroder->des=d}, that when $m\geq3$ a path $(\gamma,w)$ of $\sch_{n,d}{(m)}$ is such that $\dd(w)=d$. Hence the last two corollaries can be stated with nicer formulas.

\end{proof}
\section{Bijections With Tableaux}\label{Sec : bijections}
From Equation~\eqref{Eq : q=0 parking} of Proposition~\ref{Prop : 1 part parking}, one could wonder what tableau is associated to what path. In this section, we will first show a bijection between standard Young tableaux of shape $(d,1^{n-d})$ and the subset of Schr\"oder paths $\{\gamma \in \schP_{n,d-1} ~|~\A(\gamma)=0\}$. Afterwards, we exhibit a bijection between the set of paths $\{\gamma \in \schP_{n,d-1} ~|~\A(\gamma)=1\}$ and pairs containing a standard Young tableaux of shape $(d,1^{n-d})$ and a number $i$, $0\leq i\leq n-d$. This last bijection will allow us to write the sum over these paths with the $\A$ and $\B$ statistics in terms of hook-shaped Schur functions in the variables $q$ and $t$. In other words, we will obtain an explicit combinatorial formula for the expansion in Schur functions of $ \langle \nabla(e_n), s_{\mu}\rangle|_{\hooks}$
Before we start, we shall also notice that these bijections could easily be extended to paths ending with a diagonal step, by using the bijection between paths with $d$ diagonal steps that end with the factor $NE$ and paths with $d+1$ diagonal steps that end with a $D$ step.
\\
Recall in Section~\ref{Sec : combi chemins} we defined the touch points of a path. Notice that for a path $\gamma$ if $\A(\gamma)=0$ and $\TT(\gamma)=(\gamma_1,\gamma_2,\ldots,\gamma_k)$, then for all $i$, $\gamma_i$ is in $\{NE,D\}$. Let’s define the sets $\schP_{n,d, (i)}$ by:
\begin{equation*}
\schP{}_{n,d, (i)}=\{\gamma \in \schP{}_{n,d}~|~ \A(\gamma)=i\}. 
\end{equation*} 
Let $\{\mathcal{M}_{n,d}\}$ be a family of maps:
\begin{align*}\mathcal{M}_{n,d}:\SYT(d,1^{n-d}) &\rightarrow \schP{}_{n,d-1, (0)}
\\       \tau &\mapsto \gamma_1\gamma_2\cdots\gamma_n, 
\end{align*}
with $\gamma_n=NE$, $\gamma_{n-i}=NE$ if $i\in\DD(\tau)$ and $\gamma_{n-i}=D$ otherwise (see Figure~\ref{Fig : exemple de P} for an example).
Let $\{\mathcal{R}_{n,d}\}$ be a family of maps:
\begin{align*}\mathcal{R}_{n,d}: \schP{}_{n,d-1, (0)}&\rightarrow\SYT(d,1^{n-d}) 
\\       \gamma &\mapsto \tau, 
\end{align*}
with $\DD(\mathcal{R}_{n,d}(\gamma))=\{n-i ~|~1\leq i \leq n-1, \gamma_i=NE \in \TT(\gamma)\}$ (see Figure~\ref{Fig : exemple de R} for an example).
\begin{figure}[!htb]
\centering
\begin{minipage}{8.5cm}
\begin{tikzpicture}[scale=.5]
\draw (0,0)--(4,0)--(4,1)--(1,1)--(1,3)--(0,3)--(0,0);
\draw (1,0)--(1,1);
\draw (2,0)--(2,1);
\draw (3,0)--(3,1);
\draw (0,1)--(1,1);
\draw (0,2)--(1,2);
\node(1) at (.5,.5){$1$};
\node(2) at (1.5,.5){$2$};
\node(3) at (.5,1.5){$3$};
\node(4) at (.5,2.5){$4$};
\node(5) at (2.5,.5){$5$};
\node(6) at (3.5,.5){$6$};
\node(t) at (-1,1.5){$\tau=$};
\node(ds) at (2,-1){$\DD(\tau)=\{2,3\}$};
\node(fl) at (6,2){$\mapsto$};
\draw (7,-1)--(8,0)--(9,1)--(9,2)--(10,2)--(10,3)--(11,3)--(12,4)--(12,5)--(13,5);
\node(g1) at (8.5,-.5){$\gamma_1$};
\node(g2) at (9.5,.5){$\gamma_2$};
\node(g3) at (10.5,1.5){$\gamma_3$};
\node(g4) at (11.5,2.5){$\gamma_4$};
\node(g5) at (12.5,3.5){$\gamma_5$};
\node(g6) at (13.5,4.5){$\gamma_6$};
\end{tikzpicture}
\caption{An example of the application of map $\mathcal{M}_{6,4}$.}\label{Fig : exemple de P}
\end{minipage}
\begin{minipage}{8.5cm}
\begin{tikzpicture}[scale=.5]
\draw (8,0)--(12,0)--(12,1)--(9,1)--(9,3)--(8,3)--(8,0);
\draw (9,0)--(9,1);
\draw (10,0)--(10,1);
\draw (11,0)--(11,1);
\draw (8,1)--(9,1);
\draw (8,2)--(9,2);
\node(1) at (8.5,.5){$1$};
\node(3) at (9.5,.5){$3$};
\node(2) at (8.5,1.5){$2$};
\node(4) at (8.5,2.5){$4$};
\node(5) at (10.5,.5){$5$};
\node(6) at (11.5,.5){$6$};
\node(g) at (0,2.5){$\gamma=$};
\node(touch) at (4,-2){$\TT(\gamma)=\{D,D,NE,D,NE,NE\}$};
\node(fl) at (7,1.5){$\mapsto$};
\draw (0,-1)--(1,0)--(2,1)--(2,2)--(3,2)--(4,3)--(4,4)--(5,4)--(5,5)--(6,5);
\node(g1) at (1.5,-.5){$\gamma_1$};
\node(g2) at (2.5,.5){$\gamma_2$};
\node(g3) at (3.5,1.5){$\gamma_3$};
\node(g4) at (4.5,2.5){$\gamma_4$};
\node(g5) at (5.5,3.5){$\gamma_5$};
\node(g6) at (6.5,4.5){$\gamma_6$};
\end{tikzpicture}
\caption{An example of the application of map $\mathcal{R}_{6,4}$.}\label{Fig : exemple de R}
\end{minipage}
\end{figure}
\begin{lem}The families of maps $\{\mathcal{M}_{n,d}\}$ and $\{\mathcal{R}_{n,d}\}$ are well defined. 
\end{lem}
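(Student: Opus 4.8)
The crux of the lemma is a description of the standard Young tableaux of the hook shape $(d,1^{n-d})$ in terms of their descent sets, so I would establish that first. A tableau $\tau\in\SYT(d,1^{n-d})$ is completely determined by the set $L\subseteq\{2,\dots,n\}$ of entries lying in the column above the corner box (the ``leg''): the entry $1$ is forced into the corner, the $d-1$ remaining arm entries increase along the bottom row, and the $n-d$ leg entries increase up the column. Hence $|L|=n-d$, and conversely $L$ may be any $(n-d)$-subset of $\{2,\dots,n\}$. A direct case analysis on whether $i$ and $i+1$ sit in the arm or in the leg shows that $i\in\DD(\tau)$ precisely when $i+1\in L$: the only two configurations placing $i+1$ in a strictly higher row than $i$ are ``$i$ in the arm, $i+1$ in the leg'' and ``$i$ in the leg, $i+1$ in the leg''. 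Therefore $\DD(\tau)=\{\,j-1\mid j\in L\,\}$, so $\tau\mapsto\DD(\tau)$ is a bijection from $\SYT(d,1^{n-d})$ onto the family of $(n-d)$-subsets of $\{1,\dots,n-1\}$; in particular $\dd(\tau)=n-d$ for every such $\tau$.

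For $\mathcal{M}_{n,d}$, I would check that the word $\gamma=\gamma_1\cdots\gamma_n$ it outputs lies in $\schP_{n,d-1,(0)}$. Since each $\gamma_j\in\{NE,D\}$, every prefix $\omega$ satisfies $|\omega|_N-|\omega|_E\in\{0,1\}$, because each $E$ is immediately preceded by its matching $N$; thus the Schr\"oder prefix condition $|\omega|_N\ge|\omega|_E$ holds, and $\gamma$ has area $0$ just as every word of $\{NE,D\}^n$ does. The construction uses one $NE$ for $\gamma_n$ and one for each element of $\DD(\tau)$, so $\gamma$ has $1+\dd(\tau)=n-d+1$ factors $NE$ and hence $(n-1)-\dd(\tau)=d-1$ diagonal steps, using $\dd(\tau)=n-d$ from the first paragraph; as $\gamma_n=NE$, the path ends in $NE$. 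This gives $\mathcal{M}_{n,d}(\tau)\in\schP_{n,d-1,(0)}$.

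For $\mathcal{R}_{n,d}$, I would first note that if $\gamma\in\schP_{n,d-1,(0)}$ then $\gamma\in\{NE,D\}^n$, and since its area is $0$ every factor returns to the main diagonal; hence $\TT(\gamma)=(\gamma_1,\dots,\gamma_n)$, and the condition ``$\gamma_i=NE\in\TT(\gamma)$'' simply selects the indices $i$ for which the $i$-th factor is $NE$. Among $\gamma_1,\dots,\gamma_{n-1}$ there are $(n-d+1)-1=n-d$ such factors, one $NE$ being $\gamma_n$, so the prescribed set $\DD(\mathcal{R}_{n,d}(\gamma))$ is an $(n-d)$-subset of $\{1,\dots,n-1\}$. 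By the bijection of the first paragraph there is a unique $\tau\in\SYT(d,1^{n-d})$ with this descent set, so $\mathcal{R}_{n,d}(\gamma)$ is a well-defined tableau of the correct shape.

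The only genuinely non-formal step is the descent-set characterization of hook tableaux: once the arm/leg case analysis pins down the equivalence $i\in\DD(\tau)\iff i+1\in L$, the codomain membership for both families of maps reduces to the counting arguments above. I therefore expect essentially all the effort to go into verifying that no configuration of $i,i+1$ other than the two listed produces a descent, and that every $(n-d)$-subset of $\{1,\dots,n-1\}$ is realized as some descent set; everything after that is bookkeeping.
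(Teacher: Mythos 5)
Your proof is correct and takes essentially the same approach as the paper: both arguments reduce well-definedness to the facts that a hook-shaped standard tableau is uniquely determined by its descent set, that $\dd(\tau)=n-d$ for $\tau\in\SYT(d,1^{n-d})$, and to counting the $NE$ and $D$ factors of paths in $\schP_{n,d-1,(0)}\subseteq\{NE,D\}^{n-1}NE$. The only difference is one of detail: where the paper asserts the descent-set characterization in a single line, you prove it via the leg-set bijection and an arm/leg case analysis, which fills in exactly the step the paper leaves implicit.
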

\begin{proof}
We have already seen that $\gamma$ in $\schP_{n,d-1,(0)}$ is represented by a word in $\{NE,D\}^{n-1}NE$ such that $|\gamma|_D=d-1$. Moreover, $\mathcal{M}_{n,d}(\tau)$ is in $\{NE,D\}^{n-1}NE$ by construction. For $\tau\in \SYT(d,1^{n-d}) $ the descent set of the tableau $\tau$ is a subset of $n-d$ elements in $\{1,\ldots,n-1\}$, since for $j\not=1$ in the first column $j-1$ is lower or to the right of $j$ by definition of hook-shaped standard tableaux. Hence, there are $n-d+1$, $i$'s such that $\gamma_{n-i}=NE$ and $|\mathcal{M}_{n,d}(\tau)|_D=d-1$. 

For the map $\mathcal{R}_{n,d}$, notice that hooked-shaped tableaux are uniquely defined by their descent set. Furthermore, there are $n-d+1$ north steps in  $\gamma$   and $\gamma_n=NE\in \TT(\gamma)$. Thus, for $1\leq i\leq n-1$ there are $n-d$ factors $\gamma_i$ such that $\gamma_i=NE$ and $\dd(\mathcal{R}_{n,d}(\tau))=n-d$. Ergo, $\mathcal{R}_{n,d}(\gamma)\in\SYT(d,1^{n-d})$.
\end{proof}
\begin{prop}\label{Prop : bij}Let $n$, $d$ be integers such that $0\leq d\leq n$. The map $\mathcal{M}_{n,d}$ is a bijection from the set of standard tableaux of shape $(d,1^{n-d})$ to the subset of Schr\"oder paths $\schP_{n,d-1,(0)} $,  of inverse $\mathcal{R}_{n,d}$. Moreover,the map $\mathcal{M}_{n,d}$ is such that $\m(\tau)=\B(\mathcal{M}_{n,d}(\tau))$ and the map $\mathcal{R}_{n,d}$ is such that $\m(\mathcal{R}_{n,d}(\gamma))=\B(\gamma)$.
\end{prop}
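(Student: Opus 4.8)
The plan is to treat the two assertions separately: that $\mathcal{M}_{n,d}$ and $\mathcal{R}_{n,d}$ are mutually inverse, which is pure index bookkeeping, and that they intertwine $\m$ with $\B$, which carries the real content.

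First I would check $\mathcal{R}_{n,d}\circ\mathcal{M}_{n,d}=\mathrm{Id}$ and $\mathcal{M}_{n,d}\circ\mathcal{R}_{n,d}=\mathrm{Id}$. By the previous lemma both maps are well defined, so it suffices to track the positions of the $NE$ blocks. Writing $\gamma=\mathcal{M}_{n,d}(\tau)=\gamma_1\cdots\gamma_n$, the definition makes $\gamma_j=NE$ (for $1\le j\le n-1$) exactly when $n-j\in\DD(\tau)$, while $\gamma_n=NE$. Since $\A(\gamma)=0$, every block is a touch factor, so $\TT(\gamma)=(\gamma_1,\ldots,\gamma_n)$ and hence $\DD(\mathcal{R}_{n,d}(\gamma))=\{\,n-j : 1\le j\le n-1,\ \gamma_j=NE\,\}=\{\,n-j : n-j\in\DD(\tau)\,\}=\DD(\tau)$. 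Because a hook-shaped standard tableau is determined by its descent set (as recorded in the proof of the previous lemma), this gives $\mathcal{R}_{n,d}(\mathcal{M}_{n,d}(\tau))=\tau$; the reverse composition is the same reindexing read backwards.

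For the statistic, the key is the closed formula
\begin{equation*}
\B(\gamma)=\sum_{a\,:\,\gamma_a=NE}(n-a)\qquad\text{for }\gamma\in\{NE,D\}^n.
\end{equation*}
I would obtain this from $\B(\gamma)=\B(\Gamma(\gamma))+\N(\gamma)$ together with the two ingredients isolated in the proof of Lemma~\ref{Lem : bij schroder aire 0 et rectangle}: deleting the diagonal steps leaves the staircase $(NE)^{k}$ with $k=n-d+1$ peaks, whose bounce $\binom{k}{2}$ counts the unordered pairs of $NE$ blocks; and $\N(\gamma)$ counts, over each $D$ block, the number of $NE$ blocks preceding it, that is, the pairs consisting of an $NE$ block followed by a later $D$ block. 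Adding these, each $NE$ block at position $a$ is paired exactly once with every one of the $n-a$ blocks lying to its right (each such block being either $NE$ or $D$, so counted in precisely one of the two sums), which yields the displayed formula.

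Finally I would substitute the positions produced by $\mathcal{M}_{n,d}$. Since $\gamma_a=NE$ precisely for $a=n$ and for $a=n-i$ with $i\in\DD(\tau)$, reindexing by $i=n-a$ gives
\begin{equation*}
\B(\gamma)=\sum_{a\,:\,\gamma_a=NE}(n-a)=0+\sum_{i\in\DD(\tau)}i=\m(\tau),
\end{equation*}
and the companion statement for $\mathcal{R}_{n,d}$ follows at once because it is the inverse of $\mathcal{M}_{n,d}$. The main obstacle is the middle step: one must translate the geometric definitions of bounce, numph and the touch decomposition into the clean pair-counting formula, while keeping careful track both of the off-by-one between the $n$ blocks and the $i\mapsto n-i$ convention, and of the fact that the staircase $\Gamma(\gamma)$ contributes exactly $\binom{n-d+1}{2}$.
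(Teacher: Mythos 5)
Your proposal is correct and follows essentially the same route as the paper: the bijectivity argument via the fact that hook-shaped standard tableaux are uniquely determined by their descent sets is identical, and your closed formula $\B(\gamma)=\sum_{a:\,\gamma_a=NE}(n-a)$ is just a global repackaging of the paper's per-factor count, in which each touch factor $\gamma_{n-i}=NE$ contributes $k$ to $\N(\gamma)$ (the $D$ blocks after it) and $i-k$ to $\B(\Gamma(\gamma))$ (the $NE$ blocks after it), for a total of $i$. The only cosmetic difference is that you import the two ingredients from Lemma~\ref{Lem : bij schroder aire 0 et rectangle} explicitly, whereas the paper's proof of the proposition re-derives the same counting in place.
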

\begin{proof}For the first statement, we only need to show that $\mathcal{M}_{n,d}$ and $\mathcal{R}_{n,d}$ are inverse maps. Let $\tau$ be in $\SYT(d,1^{n-d}) $ if $i$ is in $\DD(\tau)$ (respectively, $i$ is not in $\DD(\tau)$), then the map $\mathcal{M}_{n,d}$ sends $i$ to $\gamma_{n-i}=NE$ in $\TT(\mathcal{M}_{n,d}(\tau))$ (respectively, to $\gamma_{n-i}=D$)  and $\mathcal{R}_{n,d}$ sends $\gamma_{n-i}=NE$ in $\TT(\mathcal{M}_{n,d}(\tau))$ to $i$ in $\DD(\mathcal{R}_{n,d}(\mathcal{M}_{n,d}(\tau)))$ (respectively,to $i$ not in $\DD(\mathcal{R}_{n,d}(\mathcal{M}_{n,d}(\tau)))$). Hence, $\mathcal{R}_{n,d}(\mathcal{M}_{n,d}(\tau))=\tau$, since hooked-shaped tableaux are uniquely determined by their descent set. For $\gamma$ in  $ \schP_{n,d-1,(0)}$, the proof of $\mathcal{M}_{n,d}(\mathcal{R}_{n,d}(\gamma))=\gamma$ is similar.
Additionally, in $ \schP_{n,d-1,(0)}$ all east steps are associated to a peak. For $\gamma=\gamma_1\gamma_2\cdots\gamma_n $, if $\gamma_{n-i}=NE$ and there are $k$ diagonal steps after  the factor $\gamma_{n-i}$, then the peak associated to that factor contributes $k$ to $\N$ and there is a return to the main diagonal after the factor $\gamma_{n-i}$ that contributes $i-k$ to $\B(\Gamma(\gamma))$. Moreover, the peak $\gamma_n=NE$ contributes nothing to $\B(\gamma)$ because it is the end of the path. Hence, $\m(\tau)=\B(\mathcal{M}_{n,d}(\tau))$ and $\m(\mathcal{R}_{n,d}(\gamma))=\B(\gamma)$, since both maps associate the factor $\gamma_{n-i}=NE$ in the touch sequence to $i$ in the descent set.
\end{proof}
This last proposition gives a combinatorial formula for $\langle\nabla e_n,s_{d,1^{n-d}} \rangle$ in terms of the major index as in Proposition~\ref{Prop : 1 part parking}. Although we now know to which paths the top weight are associated to (see Section~\ref{Sec : cristaux} for more on this).
\begin{cor} Let $n$, $d$ be positive integer such that $n\geq d$. Then:
\begin{equation*}\label{Eq : 1 part maj}\langle\nabla e_n,s_{d,1^{n-d}} \rangle|_{1 \pp}=\sum_{\tau\in \SYT(d,1^{n-d})} s_{\m(\tau)}(q,t)
\end{equation*}
\end{cor}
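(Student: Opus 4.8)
The plan is to recognize the left-hand side as a one-part restriction already evaluated in Section~\ref{Sec : algo}, and then to transport the resulting sum over area-zero Schr\"oder paths to a sum over standard Young tableaux by means of the bijection of Proposition~\ref{Prop : bij}. Concretely, I would first apply Equation~\eqref{Eq : 1 part 1-schroder schur} of Proposition~\ref{Prop : sum algo 1 part} after the index shift $d \mapsto d-1$. Since $(d-1)+1 = d$ and $n-(d-1)-1 = n-d$, this shift turns $s_{d+1,1^{n-d-1}}$ into exactly $s_{d,1^{n-d}}$, and the constraint $|\gamma|_D = d$ becomes $|\gamma|_D = d-1$. Thus I obtain
\[
\langle\nabla e_n, s_{d,1^{n-d}} \rangle|_{1 \pp} = \underset{|\gamma|_D=d-1}{\sum_{\gamma \in \{NE,D\}^{n-1}NE}} s_{\B(\gamma)}(q,t).
\]

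Next I would identify the indexing set of this sum. A path in $\{NE,D\}^{n-1}NE$ with exactly $d-1$ diagonal steps is precisely an element of $\schP_{n,d-1}$ of area zero, that is, an element of $\schP_{n,d-1,(0)}$, as was noted just before the definition of the maps $\mathcal{M}_{n,d}$ and $\mathcal{R}_{n,d}$. By Proposition~\ref{Prop : bij}, the map $\mathcal{M}_{n,d}$ is a bijection from $\SYT(d,1^{n-d})$ onto $\schP_{n,d-1,(0)}$ satisfying $\m(\tau)=\B(\mathcal{M}_{n,d}(\tau))$. Reindexing the sum above along this bijection replaces each $\gamma$ by $\mathcal{M}_{n,d}(\tau)$ and each $\B(\gamma)$ by $\m(\tau)$, yielding
\[
\langle\nabla e_n, s_{d,1^{n-d}} \rangle|_{1 \pp} = \sum_{\tau\in \SYT(d,1^{n-d})} s_{\m(\tau)}(q,t),
\]
which is the claimed identity.

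I do not expect a genuine obstacle here, since all the substantive work has already been carried out: the evaluation of the one-part restriction is Proposition~\ref{Prop : sum algo 1 part}, and the statistic-preserving bijection is Proposition~\ref{Prop : bij}. The only point requiring care is the bookkeeping of the index shift, namely checking that the hook $s_{d,1^{n-d}}$ corresponds to the parameter value $d-1$ in $\schP_{n,d-1}$ (equivalently, invoking Haglund's Theorem~\eqref{Eq : nabla schur} with $d$ replaced by $d-1$) and that, consequently, the area-zero paths counted are those with $d-1$ diagonal steps, so that $\mathcal{M}_{n,d}$ is the correct member of the family to apply. Once this matching of parameters is verified, the corollary follows immediately by substitution.
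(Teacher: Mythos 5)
Your proposal is correct and follows exactly the paper's route: the paper's proof is the one-line invocation of Proposition~\ref{Prop : sum algo 1 part} and Proposition~\ref{Prop : bij}, and your argument simply makes explicit the index shift $d\mapsto d-1$ in Equation~\eqref{Eq : 1 part 1-schroder schur} and the reindexing of the area-zero paths via $\mathcal{M}_{n,d}$, both of which are handled correctly.
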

\begin{proof}By Proposition~\ref{Prop : sum algo 1 part} and Proposition~\ref{Prop : bij}.
\end{proof}
In order to get the same type of formula for the restriction on hook-shaped partitions, we will partition the set  $\schP_{n,d-1,(1)}$. We need maps to do so. Let $\tau$ be in $\SYT(d,1^{n-d})$, then we define the path $\gamma_\tau=\gamma_1\gamma_2\cdots\gamma_{n-1}$ where:
\begin{equation*}
\gamma_{n-i}= \begin{cases}NE & \text{ if } i=1 \text{ or } i\in\DD(\tau)\backslash\{\max(\DD(\tau))\}
\\      D & \text{ if } i\not\in \DD(\tau)\cup\{1\}
\\      NNEE & \text{ if } i=\max(\DD(\tau)) \text{ and } 1\in \DD(\tau)
\\      NDE & \text{ if }  i=\max(\DD(\tau)) \text{ and } 1\not\in \DD(\tau)
\end{cases}
\end{equation*}
Let $V_{n,d}$ be a collection of sets defined by:
\begin{equation*}
V_{n,d}=\{\gamma \in \schP{}_{n,d-1} ~|~\gamma=D^jNNEEu \text{ or } \gamma=D^jNDEu, j\geq0, u\in \{NE,D\}^*NE \}.
\end{equation*}
Our first family of maps $\mathcal{S}_{n,d}$ is defined as follows,  for $n-d\geq 1$:
\begin{align*} \mathcal{S}_{n,d} : \SYT(d,1^{n-d}) &\rightarrow V_{n,d}
\\         \tau & \mapsto  \gamma_\tau
\end{align*}
The second family of maps is defined as follows,  for $n-d\geq 1$:
\begin{align*} \mathcal{T}_{n,d} :V_{n,d}  &\rightarrow  \SYT(d,1^{n-d}) 
\\    \gamma & \mapsto  \tau_\gamma
\end{align*}
Where for $U=\{1\}$ if $\gamma=D^jNDEv$, $U=\emptyset$ if $\gamma=D^jNNEEu$, $u\in \{NE,D\}^*$, $v\in \{NE,D\}^*NE$ we have $\DD(\tau_\gamma)=\{n-i : \gamma_i=Nw\in \TT(\gamma), w \in \{NEE,E,DE\}^*\}\backslash\{U\}$.
\begin{figure}[!htb]
\centering
\begin{minipage}{8.5cm}
\begin{tikzpicture}[scale=.5]
\draw (0,0)--(4,0)--(4,1)--(1,1)--(1,3)--(0,3)--(0,0);
\draw (1,0)--(1,1);
\draw (2,0)--(2,1);
\draw (3,0)--(3,1);
\draw (0,1)--(1,1);
\draw (0,2)--(1,2);
\node(1) at (.5,.5){$1$};
\node(2) at (1.5,.5){$2$};
\node(3) at (.5,1.5){$3$};
\node(4) at (.5,2.5){$4$};
\node(5) at (2.5,.5){$5$};
\node(6) at (3.5,.5){$6$};
\node(t) at (-1,1.5){$\tau=$};
\node(ds) at (2,-1){$\DD(\tau)=\{2,3\}$};
\node(fl) at (6,2){$\mapsto$};
\draw (7,-1)--(8,0)--(9,1)--(9,2)--(10,3)--(11,3)--(11,4)--(12,4)--(12,5)--(13,5);
\node(g1) at (8.5,-.5){$\gamma_1$};
\node(g2) at (9.5,.5){$\gamma_2$};
\node(g3) at (10.75,1.75){$\gamma_3$};
\node(g4) at (12.5,3.5){$\gamma_4$};
\node(g5) at (13.5,4.5){$\gamma_5$};
\end{tikzpicture}
\caption{Example of the map $\mathcal{S}_{6,4}$}\label{Fig : exemple de S}
\end{minipage}
\begin{minipage}{8.5cm}
\begin{tikzpicture}[scale=.5]
\draw (8,0)--(12,0)--(12,1)--(9,1)--(9,3)--(8,3)--(8,0);
\draw (9,0)--(9,1);
\draw (10,0)--(10,1);
\draw (11,0)--(11,1);
\draw (8,1)--(9,1);
\draw (8,2)--(9,2);
\node(1) at (8.5,.5){$1$};
\node(3) at (9.5,.5){$3$};
\node(2) at (8.5,1.5){$2$};
\node(4) at (8.5,2.5){$4$};
\node(5) at (10.5,.5){$5$};
\node(6) at (11.5,.5){$6$};
\node(g) at (0,2.5){$\gamma=$};
\node(touch) at (4,-2){$\TT(\gamma)=\{D,D,NNEE,D,NE\}$};
\node(fl) at (7,1.5){$\mapsto$};
\draw (0,-1)--(1,0)--(2,1)--(2,3)--(4,3)--(5,4)--(5,5)--(6,5);
\node(g1) at (1.5,-.5){$\gamma_1$};
\node(g2) at (2.5,.5){$\gamma_2$};
\node(g3) at (3.75,1.75){$\gamma_3$};
\node(g4) at (5.5,3.5){$\gamma_4$};
\node(g5) at (6.5,4.5){$\gamma_5$};
\end{tikzpicture}
\caption{Example of the map $\mathcal{T}_{6,4}$}\label{Fig : exemple de T}
\end{minipage}
\end{figure}
\begin{lem}Let $n$, $d$ be positive integers such that $n-d\geq1$, the maps $\mathcal{S}_{n,d}$ and $\mathcal{T}_{n,d}$  are well defined. 
\end{lem}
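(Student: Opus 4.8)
The plan is to verify the two containments separately: that $\mathcal{S}_{n,d}(\tau)=\gamma_\tau$ lands in $V_{n,d}$ for every $\tau\in\SYT(d,1^{n-d})$, and that $\mathcal{T}_{n,d}(\gamma)=\tau_\gamma$ is a genuine hook tableau for every $\gamma\in V_{n,d}$. Throughout I would use three facts recorded earlier: a standard tableau of shape $(d,1^{n-d})$ has exactly $n-d$ descents, its descent set is a subset of $\{1,\dots,n-1\}$, and (as in the proof for $\mathcal{M}_{n,d},\mathcal{R}_{n,d}$) a hook tableau is determined by its descent set; combined with a cardinality count this shows every size-$(n-d)$ subset of $\{1,\dots,n-1\}$ is the descent set of a unique $\tau\in\SYT(d,1^{n-d})$.

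For $\mathcal{S}_{n,d}$, set $M=\max\DD(\tau)$. I would read off the word $\gamma_\tau=\gamma_1\cdots\gamma_{n-1}$ (recall $\gamma_{n-i}$ is prescribed by $i$) and observe that the indices $i>M$ are neither descents nor equal to $1$, so they all produce $D$; hence $\gamma_\tau$ begins with $D^{\,n-1-M}$. The single index $i=M$ produces the special block ($NNEE$ if $1\in\DD(\tau)$, $NDE$ otherwise), and the indices $i<M$ produce a word $u$ over $\{NE,D\}$ whose final letter (coming from $i=1$) is $NE$; thus $\gamma_\tau=D^{\,n-1-M}\,(NNEE\text{ or }NDE)\,u$ with $u\in\{NE,D\}^*NE$, which is exactly the shape required by $V_{n,d}$. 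It then remains to count steps: in either branch one gets $\#N=\#E=n-d+1$ and $\#D=d-1$, so $\gamma_\tau$ has the correct dimensions for a path in an $n\times n$ grid with $d-1$ diagonal steps, and the prefix inequality $\#N\ge\#E$ is immediate since the running height never drops below zero after the leading diagonals, the special block returns to the diagonal, and each $NE$ keeps the balance nonnegative. Since $\gamma_\tau$ ends in $NE$, it lies in $\schP_{n,d-1}$, hence in $V_{n,d}$.

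For $\mathcal{T}_{n,d}$, I would compute the touch decomposition $\TT(\gamma)=(\gamma_1,\dots,\gamma_{n-1})$ of $\gamma=D^jNNEEu$ or $D^jNDEu$: each leading $D$, the special block, and each $NE$ and each $D$ occurring in $u$ is its own touch factor (each being a Schr\"oder path with no internal return), giving exactly $n-1$ factors. The touch factors starting with $N$, namely the $NE$'s together with the special block, are precisely those contributing $n-i$ to the candidate descent set. A direct count gives $n-d$ such factors in the $NNEE$ branch (where $U=\emptyset$) and $n-d+1$ in the $NDE$ branch, where the extra factor is the final $NE$ of $u$, contributing $n-(n-1)=1$; removing $U=\{1\}$ then leaves $n-d$ values in both cases. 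Hence $\DD(\tau_\gamma)$ is a size-$(n-d)$ subset of $\{1,\dots,n-1\}$, and by the uniqueness statement above it is the descent set of a unique hook tableau $\tau_\gamma\in\SYT(d,1^{n-d})$.

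The routine part is the step-counting; the main obstacle is the careful bookkeeping of the touch decomposition (matching the ``$\gamma_i=Nw$'' condition to exactly the $NE$-type blocks and the special block) together with the degenerate boundary cases, most notably $\max\DD(\tau)=1$, i.e.\ $n-d=1$, where the case distinctions defining $\gamma_\tau$ overlap. These I would check by hand, noting that the corresponding contribution in Theorem~\ref{The : main} is vacuous, so the degeneracy costs nothing.
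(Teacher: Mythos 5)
Your proposal is correct and takes essentially the same route as the paper's own proof: both directions are handled by the same bookkeeping --- counting the $n-d+1$ north steps and $d-1$ diagonal steps of $\gamma_\tau$ to place it in $\schP{}_{n,d-1}$, counting the $n-1$ touch factors of $\gamma$ and the $N$-containing factors separately in the $NNEE$ and $NDE$ branches, and invoking the fact that a hook-shaped standard tableau is uniquely determined by its descent set. If anything, your treatment of the boundary case $\max(\DD(\tau))=1$ (forcing $\DD(\tau)=\{1\}$, $d=n-1$) is more careful than the paper's, which asserts that the four conditions defining $\gamma_{n-i}$ are ``mutually exclusive'' even though they overlap precisely there.
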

\begin{proof}The factors $\gamma_i$ are all Schr\"oder paths ans the concatenation of Schr\"oder paths is a Schr\"oder path. Moreover, the factor $\gamma_{n-1}$ of $\gamma_\tau$ contains a north step and all the other north steps are related to an element in the descent set. Hence, we have $n-d+1$ north steps and $\gamma_\tau$ is an element of $\schP_{n,d-1}$. Additionally, the construction of $\gamma_\tau$ is based on four mutually exclusive conditions that define $\gamma_{n-i}$. If $i=\max(\DD(\tau))$,  then $n-i\leq n-k$ for all $k\in \DD(\tau)$.  Thus, by construction the path starts with $D^jNNEE$ or $D^jNDE$. Since there is only one maximum of the descent set the map $\mathcal{S}_{n,d}$ is well defined.
Notice that for all path in $V_{n,d}$ there is exactly one factor in $\{NNEE,NDE\}$ and all other are in $\{NE,D\}$. Hence, $\TT(\gamma)$ has $n-1$ factors because it is equivalent to counting the numbers of north and diagonal steps minus one. Therefore, the descent set is included in $\{1,\ldots,n-1\}$. Moreover, there are $n-d+1$ north step and one is in the factor $\gamma_{n-1}$. Ergo, $1\in\{n-i : \gamma_i=Nw\in \TT(\gamma), w \in \{NEE,E,DE\}^*\}$.
 
If $\gamma=D^jNNEEu$, $u\in \{NE,D\}^*$, then there are $n-d+1$ north steps for $n-d$ factors containing a north step. Consequently, $\dd(\tau_\gamma)=n-d$ and it uniquely determines a hooked-shaped tableau in $\SYT(d,1^{n-d})$. 

If $\gamma=D^jNDEv$, $v\in \{NE,D\}^*NE$, then there are $n-d+1$ north steps for $n-d+1$ factors containing a north step. But $U$ takes out one from the descent set. Hence, the descent set has $n-d$ elements and it uniquely determines a hooked-shaped tableau in $\SYT(d,1^{n-d})$. Consequently,$\mathcal{T}_{n,d}$  is well defined.
\end{proof}
The interesting thing about these maps is that they preserve statistics as shown in the next lemma.
\begin{lem}\label{Lem : map bounce=maj-des}Let $n$, $d$ be positive integers such that $n-d\geq1$. The maps $\mathcal{S}_{n,d}$ and $\mathcal{T}_{n,d}$ are bijective maps and $\mathcal{T}_{n,d}=\mathcal{S}_{n,d}^{-1}$. Moreover, the maps $\mathcal{T}_{n,d}$ and $\mathcal{S}_{n,d}$ preserve statistics in the following way: 
\begin{equation*}
 \B(\gamma)=\m(\mathcal{T}_{n,d}(\gamma))-\dd(\mathcal{T}_{n,d}(\gamma)),
\end{equation*}
\begin{equation*}
 \B(\mathcal{S}_{n,d}(\tau))=\m(\tau)-\dd(\tau).
\end{equation*}
\end{lem}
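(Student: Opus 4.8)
The plan is to prove the two assertions—that $\mathcal{S}_{n,d}$ and $\mathcal{T}_{n,d}$ are mutually inverse, and that they carry $\m-\dd$ to $\B$—separately, in both cases reducing everything to the touch-factor decomposition already used in Proposition~\ref{Prop : bij}. Throughout I would write $\DD(\tau)=\{i_1<\cdots<i_{n-d}\}$ and isolate one structural observation: for $n-d\geq 2$ (so that $\max\DD(\tau)\geq 2$) the special factor $NNEE$ or $NDE$ produced by $\mathcal{S}_{n,d}$ sits at word-index $n-\max\DD(\tau)$, which is strictly smaller than the index $n-i$ of every other north-initiated factor. Hence the special factor is the \emph{leftmost} peak-factor of $\gamma_\tau$, which is exactly what forces $\gamma_\tau$ into the shape $D^{j}NNEE\,u$ or $D^{j}NDE\,u$ defining $V_{n,d}$. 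I would treat the boundary case $n-d=1$ directly, since it is degenerate.

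For bijectivity, the preceding lemma already supplies well-definedness, so it suffices to check $\mathcal{T}_{n,d}\circ\mathcal{S}_{n,d}=\mathrm{Id}$ and $\mathcal{S}_{n,d}\circ\mathcal{T}_{n,d}=\mathrm{Id}$; because hook-shaped tableaux are determined by their descent sets, the first identity is purely a descent-set computation. The north-initiated touch factors of $\gamma_\tau$ are precisely the $NE$'s (the $w=E$ case), the unique $NNEE$ (the $w=NEE$ case), and the unique $NDE$ (the $w=DE$ case), occurring at the word-indices $\{\,n-i : i\in\{1\}\cup\DD(\tau)\,\}$. Reading these off, $\mathcal{T}_{n,d}$ first recovers $\{1\}\cup\DD(\tau)$ and then deletes $U$, where $U=\emptyset$ exactly when the special factor is $NNEE$ (that is, $1\in\DD(\tau)$) and $U=\{1\}$ exactly when it is $NDE$ (that is, $1\notin\DD(\tau)$); in both cases the outcome is $\DD(\tau)$, so $\mathcal{T}_{n,d}(\mathcal{S}_{n,d}(\tau))=\tau$. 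The reverse identity is the same bookkeeping run backwards, using that the leftmost north-initiated factor of $\gamma$ pins down $\max\DD(\tau_\gamma)$.

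For the statistic I would first rewrite the target as $\m(\tau)-\dd(\tau)=\sum_{i\in\DD(\tau)}(i-1)$, and then compute $\B(\gamma_\tau)=\B(\Gamma(\gamma_\tau))+\N(\gamma_\tau)$ by localizing each summand to the peak-factors, exactly as in Proposition~\ref{Prop : bij}. The key claim is that each north-initiated touch factor contributes to $\B(\gamma_\tau)$ the number of touch factors lying to its right: the diagonal factors to its right are counted by $\N$, while the north-initiated factors to its right are counted by $\B(\Gamma)$. A north-initiated factor at word-index $n-i$ has exactly $(n-1)-(n-i)=i-1$ factors to its right, so summing over the peak-factors at indices $\{\,n-i: i\in\{1\}\cup\DD(\tau)\,\}$ gives $\sum_{i\in\{1\}\cup\DD(\tau)}(i-1)=\sum_{i\in\DD(\tau)}(i-1)$, since the $i=1$ term vanishes. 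This yields $\B(\mathcal{S}_{n,d}(\tau))=\m(\tau)-\dd(\tau)$, and the companion formula $\B(\gamma)=\m(\mathcal{T}_{n,d}(\gamma))-\dd(\mathcal{T}_{n,d}(\gamma))$ then follows instantly, $\mathcal{T}_{n,d}$ being the inverse of $\mathcal{S}_{n,d}$.

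The hard part will be verifying that the special factor still contributes like a single peak-factor despite carrying the extra unit of area. Since $\Gamma(NNEE)=NNEE$ has two north steps, one must check that an earlier peak does not over-count it; this is precisely where the leftmost-peak observation of the first paragraph is used, together with the fact that replacing one staircase factor $NE$ by $NNEE$ merely raises the grid size by one and shifts every bounce return up one level, leaving the bounce sum $\sum_{\text{returns }(k,k),\,k>0}(N'-k)$ unchanged. In the $NDE$ case the complementary subtlety appears: $\Gamma(NDE)=NE$, so $\Gamma(\gamma_\tau)$ is a genuine staircase and nothing new happens for $\B(\Gamma)$, but one must instead confirm that the diagonal step hidden inside $NDE$ lies \emph{before} its peak and so contributes nothing to $\N$. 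Pinning down these two local computations, and the accompanying case split on whether $1\in\DD(\tau)$, is the only delicate point in the argument.
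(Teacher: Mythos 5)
Your proposal is correct and takes essentially the same route as the paper: the inverse-map claim is settled by the same factor-by-factor descent-set bookkeeping (matching $\gamma_{n-i}\in\{NE,NNEE,NDE\}$ with $i\in\DD(\tau)$ and using that hook tableaux are determined by their descent sets), and the statistic is computed exactly as in the paper's adaptation of Proposition~\ref{Prop : bij}, your ``number of touch factors to the right'' being precisely the paper's split of the contribution $i-1$ into $k$ (counted by $\N$) plus $i-k-1$ (counted by $\B(\Gamma(\gamma))$), with the vanishing $i=1$ term playing the role of the paper's remark that $1\notin\DD(\mathcal{T}_{n,d}(\gamma))$ adds $1-1=0$. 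Your leftmost-special-factor observation and the degenerate $n-d=1$ case are likewise handled (the former explicitly, via $n-\max(\DD(\tau))\leq n-k$) in the paper's preceding well-definedness lemma rather than in the proof itself.
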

\begin{proof}Let $\tau$ be a Standard Young tableau in of shape $(d,1^{n-d})$, $\mathcal{S}_{n,d}(\gamma)=\gamma_{\tau}$, $\gamma_\tau=\gamma_1\gamma_2\cdots\gamma_{n-1}$. For $i\geq 2$, if the factor $\gamma_{n-i}=NE$, then the map $\mathcal{T}_{n,d}$ send that factor to $i\in\DD(\mathcal{T}_{n,d}(\gamma_\tau))$. But the map $\mathcal{S}_{n,d}$ gives us $\gamma_{n-i}=NE$ when $i\in\DD(\tau)$. If $\gamma_{n-i}=NNEE$, then  the map $\mathcal{T}_{n,d}$ sends that factor to $1, i\in\DD(\mathcal{T}_{n,d}(\gamma_\tau))$ and the map $\mathcal{S}_{n,d}$ gives $\gamma_{n-i}=NNEE$ when $1, i\in\DD(\mathcal{T}_{n,d}(\gamma_\tau))$. Finally, if $\gamma_{n-i}=NDE$, then  the map $\mathcal{T}_{n,d}$ sends that factor to $i\in\DD(\mathcal{T}_{n,d}(\gamma_\tau))$ and the image of map $\mathcal{S}_{n,d}$ is $\gamma_{n-i}=NDE$ when $i\in\DD(\mathcal{T}_{n,d}(\gamma_\tau))$. Thus,$\mathcal{T}_{n,d}(\mathcal{S}_{n,d}(\tau))=\tau$. The proof of $\mathcal{S}_{n,d}(\mathcal{T}_{n,d}(\gamma))=\gamma$ is similar.

The proof that $\B(\mathcal{S}_{n,d}(\tau))=\m(\tau)-\dd(\tau)$ is almost the same as the proof in Proposition~\ref{Prop : bij}. Since there are only $\gamma_{n-1}$ factors, we need to subtract one to each contribution to $\B(\Gamma(\gamma))$. Moreover, if $\gamma_{n-i}=NNEE$ and there are $k$ diagonal steps after  the factor $\gamma_{n-i}$, then the peak associated to that factor contributes $k$ to $\N$ and there is a return to the main diagonal after the factor $\gamma_{n-i}$ that contributes $i-k-1$ to $\B(\Gamma(\gamma))$. Finally, if $\gamma_{n-i}=NDE$ and there are $k$ diagonal steps after  the factor $\gamma_{n-i}$, then the peak associated to that factor contributes $k$ to $\N$ and there is a return to the main diagonal after the factor $\gamma_{n-i}$ that contributes $i-k-1$ to $\B(\Gamma(\gamma))$. But $1\not \in \DD(\mathcal{T}_{n,d}(\gamma)$ which mean we do not add $1-1=0$ to $\DD(\mathcal{T}_{n,d}(\gamma)$. Hence, $\m(\tau)-\dd(\tau)=\B(\mathcal{S}_{n,d}(\tau))$ and $\m(\mathcal{T}_{n,d}(\gamma))-\dd(\mathcal{T}_{n,d}(\gamma))=\B(\gamma)$, because both maps associate the factor $\gamma_{n-i}=NE$ to $i$ in the descent set. Consequently, $\B(\mathcal{S}_{n,d}(\tau))=\m(\tau)-\dd(\tau)$. The map $\mathcal{S}_{n,d}$ is a bijection of inverse $\mathcal{T}_{n,d}$, and, thus, we also have $\B(\gamma)=\m(\mathcal{T}_{n,d}(\gamma))-\dd(\mathcal{T}_{n,d}(\gamma))$.
\end{proof}
To extend the maps $\mathcal{S}_{n,d}$ and $\mathcal{T}_{n,d}$ we need to partition the paths of $\schP_{n,d,(1)}$. Notice that $\schP_{n,n, (1)}=\schP_{n,n-1, (1)}=\emptyset$ and $\schP_{n,n-2, (1)}=\{D^{n-2}NNEE, D^iNDED^jNE ~|~ i+j=n-3\}$.
\\
For $d=n-2$, we define $\Pi_{n,d-1}$ to be the identity map. For $n-d+1\geq 3$, let 
\begin{align*} \Pi{}_{n,d-1} : &\schP{}_{n,d-1, (1)} &&~\rightarrow~&& \schP{}_{n,d-1, (1)}
\\    & uNNEED^jNEv && \mapsto && uNED^jNNEEv
\\    &uNDED^jNEv && \mapsto & &uNED^jNDEv
\\    &uNNEED^jNE && \mapsto & &uNED^jNNEE
\\    &D^iNEuD^jNNEE && \mapsto & &D^iNNEEuD^jNE
\\    &D^iNEuNDED^jNE && \mapsto & &D^iNDEuNED^jNE
\end{align*}
For $u$ in $\{NE,D\}^*$ and $v$  in $\{NE,D\}^*NE$. 
\begin{figure}[!htb]
\begin{minipage}{8.5cm}
\centering
\begin{tikzpicture}[scale=.5]
\draw (0,-1)--(1,0)--(2,1)--(2,2)--(3,3)--(4,3)--(4,4)--(5,4)--(5,5)--(6,5);
\node(fl) at (6,2){$\mapsto$};
\draw (7,-1)--(8,0)--(9,1)--(9,2)--(10,2)--(10,3)--(11,4)--(12,4)--(12,5)--(13,5);
\end{tikzpicture}
\caption{Example of the map $\Pi_{6,3}$}\label{Fig : exemple de pi-1}
\end{minipage}
\begin{minipage}{8.5cm}
\centering
\begin{tikzpicture}[scale=.5]
\draw (0,-1)--(1,0)--(2,1)--(2,3)--(4,3)--(5,4)--(5,5)--(6,5);
\node(fl) at (7,1.5){$\mapsto$};
\draw (7,-1)--(8,0)--(9,1)--(9,2)--(10,2)--(11,3)--(11,5)--(13,5);
\end{tikzpicture}
\caption{Example of the map $\Pi_{6,3}$}\label{Fig : exemple de pi-2}
\end{minipage}
\end{figure}
\begin{prop}\label{Prop : partition}For all integers $n$ and $d$ such that $n-d\geq 1$. The sets $\{ \Pi_{n,d-1}^k(\gamma_\tau)\}$ over all $\tau\in\SYT(d,1^{n-d})$ are a partition of the set $\schP_{n,d-1, (1)}$. Additionally, $\Pi_{n,d-1}$ is cyclic of order $n-d$.
\end{prop}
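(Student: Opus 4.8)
The plan is to first pin down the exact shape of an arbitrary element of $\schP_{n,d-1,(1)}$, then recognise $\Pi_{n,d-1}$ as a rotation of a distinguished factor, and finally use the bijection $\mathcal{S}_{n,d}$ of Lemma~\ref{Lem : map bounce=maj-des} to single out one representative per orbit.

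First I would prove a structural lemma: every $\gamma\in\schP_{n,d-1,(1)}$ decomposes uniquely as a concatenation of blocks, all lying in $\{NE,D\}$ except for a single \emph{special} block lying in $\{NNEE,NDE\}$. Existence follows from $\A(\gamma)=1$: an area-$0$ path is a word in $\{NE,D\}^*$, as recalled in Section~\ref{Sec : algo}, and one unit of area is produced either by a diagonal step sitting strictly above the main diagonal (the factor $NDE$) or by a single box trapped above the diagonal (the factor $NNEE$), so area exactly one forces exactly one such feature; uniqueness is clear since the relative order of the remaining steps is then forced. These are the local pictures already isolated in Lemma~\ref{Lem :  ens 1 part aire=1}. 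With this description, the five rules defining $\Pi_{n,d-1}$ are mutually exclusive and exhaustive, being indexed by whether the special block is $NNEE$ or $NDE$ and by whether it is followed by a further $NE$ block or sits in the terminal (wrap-around) position. Hence $\Pi_{n,d-1}$ is well defined on all of $\schP_{n,d-1,(1)}$, and inspecting each right-hand side shows that the number of each step, the area (still one special block, so $\A=1$), and—by a prefix count identical to the one used in the first lemma of Section~\ref{Sec : algo} and in Lemma~\ref{Lem : prefix algorithm}—the property of staying weakly above the diagonal are all preserved, so the image again lies in the same set.

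Next I would show $\Pi_{n,d-1}$ is a bijection by exhibiting its inverse: the generic rule $uNNEED^jNEv\mapsto uNED^jNNEEv$ and its $NDE$-analogue, together with their terminal versions, pair off with the symmetric moves that push the special block one $NE$ block back, so $\Pi_{n,d-1}$ is a self-map of the finite set with a two-sided inverse. To control the cycle structure I would introduce the position index $\ell(\gamma)\in\mathbb{Z}/(n-d)$, the number of $NE$ blocks lying to the left of the special block: there are $n-d-1$ such blocks when the special block is $NNEE$ and $n-d$ when it is $NDE$ (the special block then always preceding the terminal $NE$), so in either case $\ell$ ranges over exactly $n-d$ values. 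The key identity is $\ell(\Pi_{n,d-1}(\gamma))\equiv \ell(\gamma)+1 \pmod{n-d}$: the generic moves push the special block past one $NE$ block, raising $\ell$ by one, while the terminal moves implement the single step $\ell=n-d-1\mapsto 0$.

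From $\ell(\Pi\gamma)\equiv\ell(\gamma)+1$ it follows that every orbit has length divisible by $n-d$; to see it is exactly $n-d$ I would verify that $\Pi_{n,d-1}^{\,n-d}$ is the identity, that is, that once the special block has traversed all $n-d$ stations and wrapped around once, the arrangement of the remaining $NE$ and $D$ blocks is restored. This is the step I expect to be the main obstacle, since the diagonal blocks are displaced at each application and one must check that a complete revolution returns them to their original order; I would handle it by tracking, over one full cycle, the factor lying between consecutive positions of the special block, exactly as in the worked iterations of $\Pi_{6,3}$ in Figures~\ref{Fig : exemple de pi-1} and~\ref{Fig : exemple de pi-2}. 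Finally, each orbit of length $n-d$ contains a unique path with $\ell=0$, namely the one whose special block is preceded only by diagonal steps; this is precisely the defining form of $V_{n,d}$, and by Lemma~\ref{Lem : map bounce=maj-des} the map $\mathcal{S}_{n,d}$ is a bijection $\SYT(d,1^{n-d})\to V_{n,d}$. Thus each orbit meets $\{\gamma_\tau=\mathcal{S}_{n,d}(\tau)\}$ in exactly one point, the sets $\{\Pi_{n,d-1}^{\,k}(\gamma_\tau)\}_{\tau}$ are pairwise disjoint and cover $\schP_{n,d-1,(1)}$, and $\Pi_{n,d-1}$ is cyclic of order $n-d$, as claimed.
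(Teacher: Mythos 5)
Your proposal is correct and follows essentially the same route as the paper: both arguments rest on the uniqueness of the special factor $NNEE$ or $NDE$ in an area-one path, track the number of $NE$ factors preceding it as a position index that $\Pi_{n,d-1}$ increments cyclically modulo $n-d$, and use the bijectivity of $\mathcal{S}_{n,d}$ and $\mathcal{T}_{n,d}$ to attach exactly one orbit to each tableau in $\SYT(d,1^{n-d})$. The step you flag as the main obstacle does go through exactly as you predict --- the $D$-runs act as fixed spacers while the special block circulates, so $\Pi_{n,d-1}^{n-d}$ is the identity --- which is a mild global strengthening of the paper's computation that $\Pi_{n,d-1}^{n-d}(\gamma_\tau)=\gamma_\tau$.
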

\begin{proof}We shall first notice that the map is well  defined, since there is exactly one factor $NNEE$ or $NDE$ in path of $\A$ 1. Secondly, $\gamma_\tau$ is of shape $D^iNNEED^jNEuNE$ or $D^iNDED^jNEuNE$ and the action of $\Pi_{n,d-1}$ is to exchange the factor $NNEE$ (respectively, $NDE$) with the factor next $NE$ factor. Because there are $n-d+1$ north steps in $\gamma_\tau$, if $NNEE$ is a factor of $\gamma_\tau$  (respectively, $NDE$) there are $n-d-1$ (respectively, $n-d$)  factors $NE$ above the factor $NNEE$ (respectively, $NDE$)  and $\Pi^{n-d-1}_{n,d-1}(\gamma_\tau)=D^iNED^jNEuNNEE$ (respectively, $\Pi^{n-d-1}_{n,d-1}(\gamma_\tau)=D^iNED^jNEu'NDED^kNE$, where $u=u'NED^k$). Therefore, $\Pi^{n-d}_{n,d-1}(\gamma_\tau)=\gamma_\tau$.

Let $\gamma$ be a path in $\schP_{n,d-1,(1)}$. Then, there is a unique factor $NNEE$ or $NDE$ which corresponds to the line of $\A$ $1$. Hence, $\gamma=D^jNEu'NNEEu''NE$ (respectively, $\gamma=D^jNEu'NDEu''NE$, $\gamma=D^jNEu'NNEE$), with $u'$ and $u''$ in $\{NE,D\}$. Let $k$ be the number of north steps before the factor $NNEE$ (respectively,$NDE$, $NNEE$, with $k=n-d-1$). Consequently, $\Pi_{n,d-1}^k(\gamma_0)=\gamma$, for $\gamma_0=D^jNNEEu'NEu''NE$ (respectively,$\gamma_0=D^jNDEu'NEu''NE$, $\gamma_0=D^jNNEEu'NE$). Thus, $\mathcal{T}_{n,d}(\gamma_0)=\tau_{\gamma_0}$. So, $\gamma$ is in the set $\{ \Pi_{n,d-1}^k(\gamma_{\tau_{\gamma_0}})\}$.

Finally, let $\gamma$ be in $\{ \Pi_{n,d-1}^k(\gamma_\tau)\}\cap \{ \Pi_{n,d-1}^k(\gamma_\rho)\}$, then there is $k$ such that $\gamma=\Pi_{n,d-1}^k(\gamma_\tau)$ and $l$ such that $\gamma=\Pi_{n,d-1}^l(\gamma_\rho)$. Hence, $\Pi_{n,d-1}^{n-d-k+l}(\gamma_\rho)=\gamma_\tau$. By previous statements we know $k=l$ ; it is the number of north steps before the factor $NNEE$ of $NDE$ in $\gamma$. Ergo,$\gamma_\rho=\gamma_\tau$. Thus,by Lemma~\ref{Lem : map bounce=maj-des}, $\rho=\mathcal{T}_{n,d}(\gamma_\rho)=\mathcal{T}_{n,d}(\gamma_\tau)=\tau$. Therefore $\{ \Pi_{n,d-1}^k(\gamma_\tau)\}\cap \{ \Pi_{n,d-1}^k(\gamma_\rho)\}=\emptyset$ if $\rho\not=\tau$.
\end{proof}
One might see similarities between the next lemma and Lemma~\ref{Lem : +1-1}, since the bounce statistic increases by exactly one with each iteration of $\Pi_{n,d-1}$. But it is worth mentioning that in this case the area remains one. Thus, this is not an extension of the algorithm seen in Section~\ref{Sec : algo} for paths with a bounce statistic value of zero.
\begin{lem} Let $U=\{uNED^jNNEE, uNDED^jNE ~|~ u\in \{NE,D\}^*\}$. The map $\Pi_{n,d-1}$ is such that $\B(\Pi_{n,d-1}(\gamma))=\B(\gamma)+1$ for all $\gamma \in \schP_{n,d, (1)}\backslash U$.
\end{lem}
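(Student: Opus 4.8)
The plan is to reduce the statement to a single ``raising'' move and then split the bounce into its two ingredients $\B=\B(\Gamma)+\N$ recalled in Section~\ref{Sec : combi chemins}. First I would record that $U$ is exactly the set of paths on which $\Pi_{n,d-1}$ is given by its two wrap-around rules, those that cycle the unique area cell from the top of a $\Pi$-orbit back to the bottom (cf. Proposition~\ref{Prop : partition}); concretely, $\gamma\in U$ means that the $NE$ lying just above the special factor is the final factor of the path. Hence for $\gamma\notin U$ the map is one of the first three rules, all of the uniform shape $F\,D^j\,NE\mapsto NE\,D^j\,F$, where $F\in\{NNEE,NDE\}$ is the unique factor accounting for $\A(\gamma)=1$ and every other factor lies in $\{NE,D\}$. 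Writing $\gamma=u\,F\,D^j\,NE\,v$ and $\gamma'=\Pi_{n,d-1}(\gamma)=u\,NE\,D^j\,F\,v$, it remains to prove $\B(\gamma')=\B(\gamma)+1$ in the two cases for $F$.

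For $F=NNEE$ I would first show that $\N$ is unchanged. Recall that $\N$ counts, over all diagonal steps, the peaks preceding each of them (equivalently, the diagonals following each peak); the move only transposes the single peak of $NNEE$ with the single peak of the traded $NE$, and for every diagonal step (inside $u$, inside the middle $D^j$, or inside $v$) the number of these two peaks preceding it is the same before and after. Thus $\N(\gamma')=\N(\gamma)$, and the whole increment sits in $\Gamma$. Since $\Gamma$ deletes the diagonal steps, $\Gamma(\gamma)=\Gamma(u)\,NNEE\,NE\,\Gamma(v)$ and $\Gamma(\gamma')=\Gamma(u)\,NE\,NNEE\,\Gamma(v)$, so the two Dyck paths differ only by swapping the consecutive touch-factors $NNEE$ and $NE$, and I am reduced to $\B(\Gamma(\gamma'))=\B(\Gamma(\gamma))+1$. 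For $F=NDE$ the roles are reversed: because $\Gamma$ erases the diagonal inside $NDE$, both $\Gamma(\gamma)$ and $\Gamma(\gamma')$ equal $\Gamma(u)\,NE\,NE\,\Gamma(v)$, so $\B(\Gamma)$ is unchanged. The interior diagonal of $F$ has, in $\gamma$, only the peaks of $u$ before it, whereas after the move it also has the peak of the traded $NE$ before it; every other diagonal keeps its count (the same bookkeeping as above). Hence $\N(\gamma')=\N(\gamma)+1$, giving $\B(\gamma')=\B(\gamma)+1$ once more.

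The crux is therefore the purely Dyck-path assertion $\B(\Gamma(\gamma'))=\B(\Gamma(\gamma))+1$ for arbitrary $\Gamma(u),\Gamma(v)$, which I expect to be the main obstacle because bounce is a global statistic and the swap sits in the interior of a larger path. I would settle it by following the bounce path. Every lattice point where a Dyck path touches the main diagonal is also a touch point of its bounce path, since the bounce path is squeezed between the diagonal and the path and hence passes through that point; consequently the bounce path enters the swapped block at its bottom corner $(x,x)$ and leaves at its top corner $(x+3,x+3)$ in the same way for $\gamma$ and $\gamma'$, and outside the block the two bounce paths coincide. Inside, the only difference is the intermediate return: for $NNEE\,NE$ the bounce path first rises to height $2$ and returns at $(x+2,x+2)$, while for $NE\,NNEE$ it first rises to height $1$ and returns at $(x+1,x+1)$. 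Thus exactly one entry of the bounce vector (the distance from the top of this intermediate return) increases by one while all others stay fixed, so the sum defining $\B(\Gamma)$ rises by exactly $1$.

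Combining the two cases yields $\B(\Pi_{n,d-1}(\gamma))=\B(\gamma)+1$ for every $\gamma\in\schP_{n,d-1,(1)}\setminus U$, as claimed; one then sees that along a $\Pi$-orbit the bounce runs through the consecutive values needed to assemble the hook Schur functions later. A minor point I would confirm along the way is the identification of $U$ with the domain of the wrap-around rules in the few degenerate positions of the special factor, so that the reduction to the three ``raising'' rules is complete.
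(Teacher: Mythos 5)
Your proposal is correct and takes essentially the same route as the paper: both split $\B=\B(\Gamma)+\N$ and check the two cases, showing for $F=NNEE$ that $\N$ is preserved while $\B(\Gamma)$ increases by one, and for $F=NDE$ that $\B(\Gamma)$ is preserved (since $\Gamma$ erases the diagonal step) while $\N$ increases by one because the diagonal inside $NDE$ gains exactly the peak of the traded $NE$. The only difference is cosmetic: where the paper computes $\B(\Gamma)$ explicitly from the global form $(NE)^{a}NNEE(NE)^{b}$ forced by $\A(\gamma)=1$, you obtain the same increment by a local bounce-path argument through the touch points bounding the swapped block, which is equally valid (and slightly more general than needed here).
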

\begin{proof}If $\gamma$ as a factor $NNEE$, then the map $\Pi_{n,d-1}$ swaps the factor $NNEE$ with the next factor $NE$ and all factors return to the main diagonal in $\Pi_{n,d-1}(\gamma)$. Hence, the number of peak under each diagonal step is left unchanged and $\N(\gamma)=\N(\Pi_{n,d-1}(\gamma))$.  Moreover,  $\A(\gamma)=1$, and, therefore, there is some $k$ such that $\Gamma(\gamma)=(NE)^{n-d-k}NNEE(NE)^{k-1}$. Thus, we obtain $\B(\Gamma(\gamma))= \binom{n-d+2}{2}-k$ and $\B(\Gamma(\Pi_{n,d-1}(\gamma)))= \binom{n-d+2}{2}-k+1$.

If $\gamma$ as a factor $NDE$, then $\B(\Pi_{n,d-1}(\Gamma(\gamma)))=\B(\Gamma(\gamma))$. Since the map $\Pi_{n,d-1}$ swaps the factor $NDE$ with the next factor $NE$ and all factors return to the main diagonal, in $\Pi_{n,d-1}(\gamma)$ there is one more peak below the diagonal step coming from the factor $NDE$ than in $\gamma$. Moreover, the number of peaks below all the other diagonal steps remains unchanged. Ergo, $\N(\gamma)+1=\N(\Pi_{n,d-1}(\gamma))$.
\end{proof}
With this partition we can put forward a bijection between tableaux and Schr\"oder paths.
\begin{prop}\label{Prop : bij chemins tableaux}Let $\mathcal{Q}_{n,d}$ be a map between the product set $\SYT(d,1^{n-d})\times \{0,1,\cdots,n-d-1\}$ and the set $\schP_{n,d-1, (1)}$, defined by $\mathcal{Q}_{n,d}(\tau,i)=\Pi_{n,d-1}^i(\mathcal{S}_{n,d}(\tau))$. Then, the map $\mathcal{Q}_{n,d}$ is a bijection. Moreover, $\B(\mathcal{Q}_{n,d}(\tau,i))=\m(\tau)-\dd(\tau)+i$ for all $i$ in $\{0,1,\cdots,n-d-2\}$.
\end{prop}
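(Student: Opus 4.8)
The plan is to read off bijectivity directly from the partition established in Proposition~\ref{Prop : partition} and to obtain the bounce formula by iterating the preceding lemma. The first observation I would record is that $\mathcal{S}_{n,d}(\tau)=\gamma_\tau$, so $\mathcal{Q}_{n,d}(\tau,i)=\Pi_{n,d-1}^{i}(\gamma_\tau)$ is exactly the $i$-th element of the block indexed by $\tau$ in Proposition~\ref{Prop : partition}. Since that proposition says $\Pi_{n,d-1}$ is cyclic of order $n-d$ and that the blocks $\{\Pi_{n,d-1}^{k}(\gamma_\tau)\}$ partition $\schP_{n,d-1,(1)}$, each block is a single orbit consisting of the $n-d$ distinct paths $\gamma_\tau,\Pi_{n,d-1}(\gamma_\tau),\dots,\Pi_{n,d-1}^{n-d-1}(\gamma_\tau)$; concretely these are distinguished, as in the proof of Proposition~\ref{Prop : partition}, by the number of north steps preceding the unique $NNEE$ or $NDE$ factor, which runs through $n-d$ distinct values.

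Bijectivity then splits into the two standard halves. Surjectivity is immediate: any $\gamma\in\schP_{n,d-1,(1)}$ lies in some block, hence equals $\Pi_{n,d-1}^{i}(\gamma_\tau)=\mathcal{Q}_{n,d}(\tau,i)$ for a suitable $\tau$ and an $i\in\{0,\dots,n-d-1\}$. For injectivity I would suppose $\mathcal{Q}_{n,d}(\tau,i)=\mathcal{Q}_{n,d}(\rho,j)$; this common path belongs to both blocks, so disjointness of the partition forces the blocks to coincide, whence $\gamma_\tau=\gamma_\rho$ (the unique representative of the orbit lying in $V_{n,d}$) and therefore $\tau=\rho$ by injectivity of $\mathcal{S}_{n,d}$ (Lemma~\ref{Lem : map bounce=maj-des}); since the orbit has exactly $n-d$ elements and $i,j\in\{0,\dots,n-d-1\}$, the equality $\Pi_{n,d-1}^{i}(\gamma_\tau)=\Pi_{n,d-1}^{j}(\gamma_\tau)$ then gives $i=j$. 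Equivalently, surjectivity together with the cardinality count $|\SYT(d,1^{n-d})|\,(n-d)=|\schP_{n,d-1,(1)}|$ coming from the partition already yields the bijection.

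For the bounce identity I would start from $\B(\gamma_\tau)=\B(\mathcal{S}_{n,d}(\tau))=\m(\tau)-\dd(\tau)$, which is Lemma~\ref{Lem : map bounce=maj-des}, and then iterate the preceding lemma, which gives $\B(\Pi_{n,d-1}(\gamma))=\B(\gamma)+1$ whenever $\gamma\notin U$. Applying it along the chain $\gamma_\tau,\Pi_{n,d-1}(\gamma_\tau),\dots,\Pi_{n,d-1}^{i-1}(\gamma_\tau)$ telescopes to $\B(\mathcal{Q}_{n,d}(\tau,i))=\m(\tau)-\dd(\tau)+i$, provided none of those $i$ intermediate paths lies in $U$. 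The crux, and the one point needing care, is to identify the unique orbit element in $U$: a path is in $U$ precisely when its special factor ($NNEE$ or $NDE$) has been pushed to the top of the tower, which by the position analysis above is exactly $\Pi_{n,d-1}^{n-d-1}(\gamma_\tau)$. Hence for $i\le n-d-2$ the paths $\gamma_\tau,\dots,\Pi_{n,d-1}^{i-1}(\gamma_\tau)$ all avoid $U$, the increment applies at every step, and the formula follows. I expect this last verification, matching the two wraparound cases of $\Pi_{n,d-1}$ with membership in $U$, to be the main obstacle, but it is already essentially contained in the proof of Proposition~\ref{Prop : partition}.
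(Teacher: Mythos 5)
Your proposal is correct and follows essentially the same route as the paper's proof: bijectivity is deduced from the partition in Proposition~\ref{Prop : partition} together with the fact that $\mathcal{S}_{n,d}$ is a bijection with inverse $\mathcal{T}_{n,d}$ (Lemma~\ref{Lem : map bounce=maj-des}), and the bounce identity is obtained by iterating $\B(\Pi_{n,d-1}(\gamma))=\B(\gamma)+1$ starting from $\B(\gamma_\tau)=\m(\tau)-\dd(\tau)$, with the unique $U$-exception located at $i=n-d-1$ exactly as the paper extracts from the proof of Proposition~\ref{Prop : partition}. Your added details (the north-step count distinguishing orbit elements, and the optional cardinality argument) only make explicit what the paper leaves implicit.
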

\begin{proof} By Lemma~\ref{Lem : map bounce=maj-des} and Proposition~\ref{Prop : partition}, this map is well defined. Furthermore, for $\gamma$ in $\schP_{n,d-1, (1)}$ there is a unique $\tau$ in $\SYT(d,1^{n-d})$ an a unique integer $i$ such that $0\leq i\leq n-d-1$ and $\Pi_{n,d-1}^i(\gamma_\tau)=\gamma$. Moreover, $\mathcal{S}_{n,d}$ is a bijection of inverse $\mathcal{T}_{n,d}$. Hence, $\mathcal{T}_{n,d}(\gamma_\tau)=\tau$ and the pre-image of $\gamma$ is unique. Ergo $\mathcal{Q}_{n,d}$ is a bijection. 

By the previous lemma $\B(\Pi_{n,d-1}^i(\gamma_\tau))-i=\B(\gamma_\tau)$ if $\Pi_{n,d-1}^i(\gamma_\tau)$ is not an element of $U=\{uNED^jNNEE, uNDED^jNE ~|~ u\in \{NE,D\}^*\}$. The proof of Proposition~\ref{Prop : partition} shows that   $\Pi_{n,d-1}^i(\gamma_\tau)$ is an element of $U$ if and only if $i=n-d-1$. Hence, we only need to show $\B(\gamma_\tau)=\m(\tau)-\dd(\tau)$. But this is true by Lemma~\ref{Lem : map bounce=maj-des}.
\end{proof}
We now restate and prove our main theorem.
\begin{theo*}[1] If  $\mu\in\{(d,1^{n-d})~|~1\leq d\leq n\}$ and $\nu\vdash n$, then:
  \begin{equation} \langle \nabla(e_n), s_{\mu}\rangle|_{\hooks}=\sum_{\tau\in \SYT(\mu)}  s_{\m(\tau)}(q,t)+\sum_{i=2}^{\dd(\tau)} s_{\m(\tau)-i,1}(q,t),
  \end{equation}
    \begin{equation}\langle \nabla^m(e_n), s_\nu \rangle|_{1 \pp}=\sum_{\tau\in \SYT(\nu)}  s_{m\binom{n}{2}-\m(\tau')}(q,0)=\sum_{\tau\in \SYT(\nu)}  s_{m\binom{n}{2}-\m(\tau')}(0,t),
  \end{equation}
  and:
  \begin{equation}\langle \nabla^m(e_n), e_n\rangle|_{\hooks}=  s_{m\binom{n}{2}}(q,t)+\sum_{i=2}^{n-1} s_{m\binom{n}{2}-i,1}(q,t).
  \end{equation}
\end{theo*}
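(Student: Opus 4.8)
The plan is to turn each identity into a (signed) generating-function computation over $m$-Schr\"oder paths or parking functions via Haglund's Theorem~\ref{The : Hag} (Equation~\eqref{Eq : nabla schur}) and Mellit's formula~\eqref{Eq : nabla m schur}, and then to read off the hook content using the two-variable identity $s_{(a,b)}(q,t)=(qt)^b\,s_{(a-b)}(q,t)$ recalled in the proof of Proposition~\ref{Prop : 1 part parking}. Two consequences drive everything: in the variables $q,t$ the only hooks that survive are the one-part shapes $(a)$ and the shapes $(a,1)$, and a two-row shape $(a,b)$ with $b\ge 2$ contributes $0$ to the coefficients of $t^0$ and of $t^1$. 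Hence if $g(q,t)=\sum_a c_{(a)}s_{(a)}(q,t)+\sum_a c_{(a,1)}s_{(a,1)}(q,t)+\cdots$ is the relevant $q,t$-symmetric function, then $[t^0]g=\sum_a c_{(a)}q^a$ and $[t^1]g=q^{-1}[t^0]g+\sum_a c_{(a,1)}q^a$, so the full hook restriction $g|_{\hooks}$ is determined by the ``level $0$'' and ``level $1$'' generating functions alone, namely $\sum_a c_{(a,1)}q^a=[t^1]g-q^{-1}[t^0]g$.

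Equation~\eqref{Eq : 2 de the principale} needs no new combinatorics: it is Proposition~\ref{Prop : 1 part parking}. Pairing $\nabla^m(e_n)|_{1\pp}=\sum_{\tau\in\SYT(n)}s_{\m(\tau)+(m-1)\binom{n}{2}}(q,t)\,s_{\lambda(\tau)}(X)$ (Equation~\eqref{Eq : 1 part parking}) against $s_\nu$ restricts the sum to $\tau\in\SYT(\nu)$, and the standard fact that transposition of a standard tableau complements its descent set, i.e. $\m(\tau')=\binom{n}{2}-\m(\tau)$, rewrites $\m(\tau)+(m-1)\binom{n}{2}$ as $m\binom{n}{2}-\m(\tau')$.

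For Equation~\eqref{Eq : 1 de the principale} take $\mu=(d,1^{n-d})$, so that $\langle\nabla(e_n),s_\mu\rangle=\schP_{n,d-1}(q,t)$ by Haglund. The level-$0$ generating function is the bounce generating function over area-$0$ paths, which by the bijection $\mathcal{M}_{n,d}$ of Proposition~\ref{Prop : bij} equals $\sum_{\tau\in\SYT(\mu)}q^{\m(\tau)}$; this yields the first sum $\sum_\tau s_{\m(\tau)}(q,t)$. The level-$1$ generating function is computed with the bijection $\mathcal{Q}_{n,d}$ of Proposition~\ref{Prop : bij chemins tableaux}: summing $q^{\B}$ over the cycle $\{\Pi_{n,d-1}^i(\gamma_\tau)\}_{i=0}^{n-d-1}$ gives $\sum_{i=0}^{n-d-1}q^{\m(\tau)-\dd(\tau)+i}=\sum_{j=1}^{\dd(\tau)}q^{\m(\tau)-j}$, where one uses that every hook tableau of shape $(d,1^{n-d})$ has exactly $\dd(\tau)=n-d$ descents. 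Subtracting $q^{-1}[t^0]g=\sum_\tau q^{\m(\tau)-1}$ removes precisely the $j=1$ term and leaves $\sum_\tau\sum_{j=2}^{\dd(\tau)}q^{\m(\tau)-j}$, i.e. the second sum $\sum_\tau\sum_{i=2}^{\dd(\tau)}s_{\m(\tau)-i,1}(q,t)$.

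For Equation~\eqref{Eq : 3 de the principale}, note $e_n=s_{1,1^{n-1}}$ is the hook at $d=0$, so $\langle\nabla^m(e_n),e_n\rangle=\schP^{(m)}_{n,0}(q,t)$, and I run the same two-step extraction but with the diagonal-inversion statistic replacing bounce, since $\B$ is undefined for $m\neq 1$. The level-$0$ term is the $d=0$ instance of Equation~\eqref{Eq : 1 part m-schroder}, a single $s_{m\binom{n}{2}}(q,t)$ because $\mathcal{C}^{n-1}_0$ has only the area-$0$ path; this is the first term. For the level-$1$ term I would use the classification of $\dinv=1$ objects in Corollary~\ref{Cor : m>=1 dinv=1} together with the area formula of Proposition~\ref{Prop : aire parking} (and its $\dinv=1$ analogue) to evaluate $\sum_{\dinv=1}q^{\A}$, and then extract as above; the target is $\sum_{i=1}^{n-1}q^{m\binom{n}{2}-i}$, whose $i=1$ term is cancelled by $q^{-1}[t^0]g$ to leave $\sum_{i=2}^{n-1}s_{m\binom{n}{2}-i,1}(q,t)$. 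The shift of the leading index from $\binom{n}{2}$ to $m\binom{n}{2}$ is exactly the $(m-1)\binom{n}{2}$ appearing in Proposition~\ref{Prop : aire parking}.

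I expect the main obstacle to be this last level-$1$ computation in Equation~\eqref{Eq : 3 de the principale}: without a bounce statistic the cyclic bijection $\mathcal{Q}_{n,d}$ cannot be reused verbatim, so the $\dinv=1$ area distribution must be assembled directly from the classification in Corollary~\ref{Cor : m>=1 dinv=1} and a careful area count, and one must check that it produces the indices $m\binom{n}{2}-i$ for $2\le i\le n-1$ each with multiplicity one, and that the single extra term (the analogue of the element of $U$ at the top of the $\Pi$-cycle) is precisely the one annihilated by the shifted one-part contribution. In the $m=1$ case this delicate point is exactly the identity $\B(\Pi_{n,d-1}^{\,n-d-1}(\gamma_\tau))=\m(\tau)-1$, which makes the $i=1$ term appear and then cancel; the work is to reproduce this bookkeeping on the $\dinv$ side for all $m$.
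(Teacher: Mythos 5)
Your handling of Equations~\eqref{Eq : 1 de the principale} and~\eqref{Eq : 2 de the principale} is correct and essentially the paper's own route: Equation~\eqref{Eq : 2 de the principale} is Proposition~\ref{Prop : 1 part parking} plus descent-set complementation, and for Equation~\eqref{Eq : 1 de the principale} you use the same bijections $\mathcal{M}_{n,d}$ (Proposition~\ref{Prop : bij}) and $\mathcal{Q}_{n,d}$ (Proposition~\ref{Prop : bij chemins tableaux}); your $[t^0]$/$[t^1]$ extraction is an equivalent repackaging of the paper's bookkeeping, which instead identifies the slice $i=n-d-1$ of each $\Pi_{n,d-1}$-cycle with the set characterized in Corollary~\ref{Cor : aire 1 vs 1 part} and discards it as already absorbed into the one-part Schur functions. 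One caution there: Proposition~\ref{Prop : bij chemins tableaux} asserts $\B(\mathcal{Q}_{n,d}(\tau,i))=\m(\tau)-\dd(\tau)+i$ only for $i\leq n-d-2$, so your identity $\B(\Pi_{n,d-1}^{\,n-d-1}(\gamma_\tau))=\m(\tau)-1$ is not a citation. It does hold in aggregate, which is all your cancellation needs: by Lemma~\ref{Lem :  ens 1 part aire=1} and Lemma~\ref{Lem : intersection vide} the map $\tilde\varphi$ is a bijection from the area-$0$ paths onto that last slice, and by Lemma~\ref{Lem : +1-1} it lowers $\B$ by exactly $1$, so the multiset of bounce values on the slice is $\{\m(\sigma)-1 : \sigma\in\SYT(\mu)\}$, which is exactly $q^{-1}[t^0]g$; you should make this step explicit rather than asserting the per-cycle identity.

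The genuine gap is Equation~\eqref{Eq : 3 de the principale} for $m>1$, and you flag it yourself: you state the target $\sum_{i=1}^{n-1}q^{m\binom{n}{2}-i}$ for the $\dinv=1$ level but never produce it. As written, your plan needs a ``$\dinv=1$ analogue of Proposition~\ref{Prop : aire parking}'' that does not exist in the paper and would have to be proved, and verifying multiplicity one for each exponent directly from the classification in Corollary~\ref{Cor : m>=1 dinv=1}, uniformly in $m$, amounts to redoing the whole $m=1$ analysis on the dinv side. The paper closes this with a short transfer argument you miss: the conditions of Corollary~\ref{Cor : m>=1 dinv=1} characterizing $\dinv\leq 1$ are \emph{independent of $m$}, and the embedding $\gamma\mapsto\gamma E^{(m-1)n}$ of $\sch_{n,d}^{(1)}$ into $\sch_{n,d}^{(m)}$ shifts every area by exactly $(m-1)\binom{n}{2}$. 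Hence the $\dinv\in\{0,1\}$ part of the generating function for $m>1$ is $q^{(m-1)\binom{n}{2}}$ times the one for $m=1$, which suffices because (as in your own extraction scheme) the hook restriction is determined by the $t^0$ and $t^1$ coefficients alone; and the $m=1$ case is just Equation~\eqref{Eq : 1 de the principale} at $\mu=1^n$, whose unique tableau has $\m(\tau)=\binom{n}{2}$ and $\dd(\tau)=n-1$. Replacing your open-ended $\dinv=1$ area count by this reduction completes the proof.
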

Note that $\m(1^n)=\binom{n}{2}$, $\binom{n}{2}-\m(\tau')=\m(\tau)$ and $\dd(\tau)=n-1-\dd(\tau')$.
\begin{proof}Equation~\eqref{Eq : 2 de the principale} is true by Proposition~\ref{Prop : 1 part parking}. For $m=1$ Equation~\eqref{Eq : 3 de the principale} is a direct consequence of Equation~\eqref{Eq : 1 de the principale}.  Notice that if $\gamma$ is in  $\sch_{n,d}^{(1)}$, then $\gamma E^{(m-1)n}$ is in $\sch_{n,d}^{(m)}$. Furthermore, the difference between the area of $\gamma$ and the area of $\gamma E^{(m-1)n}$ is exactly $(m-1)\binom{n}{2}$. Ergo, for $m>1$ Equation~\eqref{Eq : 3 de the principale} follows from Corollary~\ref{Cor : m>=1 dinv=1}.
\\
The first sum on the right side of Equation~\eqref{Eq : 1 de the principale} follows from Proposition~\ref{Prop : 1 part parking}. For the second sum on the right side of Equation~\eqref{Eq : 1 de the principale}, notice that,  by Proposition~\ref{Prop : bij chemins tableaux}, there is a bijection between paths of area value equal to one with $d$ diagonal steps ending with $NE$ or $NNEE$ and the product set of standard Young tableaux of shape $(d,1^{n-d})$ and the set $\{0,\ldots,n-d-1\}$.
 Moreover,one can see that the cyclic action of the map $\Pi_{n,d-1}$, proven in Proposition~\ref{Prop : partition}, puts $\Pi_{n,d-1}^{n-d-1}(\mathcal{S}_{n,d}(\tau))$  in $\{uNED^jNNEE, vNDED^jNE ~|~ u\in \{NE,D\}^{n-d-2},  v\in \{NE,D\}^{n-d-1}\}$, since it is of order $n-d$. So, by Corollary ~\ref{Cor : aire 1 vs 1 part} and Proposition~\ref{Prop : 1 part parking}, the set $\{\Pi_{n,d-1}^{n-d-1}(\mathcal{S}_{n,d}(\tau))\}$ contains the only paths that contribute to Schur functions indexed by partition of length $1$.  Consequently, we need only to consider the set $\{0,\ldots,n-d-2\}$ because the second sum relates to partition of length $2$. Additionally, $\dd(\tau)=n-d$, for $\tau\in \SYT(d,1^{n-d})$, hence $2\leq i\leq \dd(\tau)$ if and only if $n-d-2 \geq \dd(\tau)-i\geq 0$. Therefore, we sum from $2$ to $\dd(\tau)$. Except for the paths already contributing to Schur function having only one part, the restriction to a value of one for the area corresponds to hook-shaped Schur functions. Indeed, $s_{a,b}(q,t)=(qt)^b(q^{a-b}+q^{a-b-1}t+\cdots+qt^{a-b-1}+t^{a-b})$, and, thus, the monomial $q^ct$ can only be found when $b\in\{0,1\}$. Ergo, by Proposition~\ref{Prop : bij chemins tableaux}, we have the stated result.
\end{proof}
We conjecture this is true for all $\mu$ when $m=1$. We also know, by Lemma~\ref{Lem : m>2 dinv=1}, that this is not true for $m>1$.
\begin{conj}\label{Conj : nabla(e_n)} For all  $\mu\vdash  n$:
  \begin{equation*} \langle \nabla(e_n), s_{\mu}\rangle|_{\hooks}=\sum_{\tau\in \SYT(\mu)}  s_{\m(\tau)}(q,t)+\sum_{i=2}^{\dd(\tau)} s_{\m(\tau)-i,1}(q,t),
  \end{equation*}
\end{conj}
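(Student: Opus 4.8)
The plan is to reduce the conjecture, exactly as in the proof of Theorem~\ref{The : main}, to a computation about parking functions with small diagonal inversion statistic, but now \emph{without} the detour through Schr\"oder paths, since that detour is unavailable once $\mu$ is not a hook. First I would dispose of the length-one part for free: the first sum $\sum_{\tau\in\SYT(\mu)}s_{\m(\tau)}(q,t)$ is precisely $\langle\nabla(e_n),s_\mu\rangle|_{1\pp}$, and this is given for \emph{every} $\mu$ by Proposition~\ref{Prop : 1 part parking} with $m=1$ (equivalently by Equation~\eqref{Eq : S-L,H} together with the symmetry of $\nabla(e_n)$ in $q,t$). So the whole new content of the conjecture is the second sum, i.e.\ the coefficients of the two-row hook Schur functions $s_{(a,1)}(q,t)$.

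Next I would set up the extraction of those coefficients. Because we evaluate in the two variables $q,t$, the symmetric function $\langle\nabla(e_n),s_\mu\rangle$ is supported on $s_\lambda(q,t)$ with $\ell(\lambda)\le 2$, and $|_{\hooks}$ keeps only the $s_{(a)}(q,t)$ and the $s_{(a,1)}(q,t)$. Writing $\langle\nabla(e_n),s_\mu\rangle=\sum_a k_{(a)}s_{(a)}(q,t)+\sum_a k_{(a,1)}s_{(a,1)}(q,t)+\sum_{b\ge 2}(\cdots)$ and using $s_{(a,b)}(q,t)=(qt)^b(q^{a-b}+\cdots+t^{a-b})$, one sees that the monomial $q^c t$ occurs only in $s_{(c+1)}(q,t)$ and in $s_{(c,1)}(q,t)$, so the coefficient of $q^c t$ equals $k_{(c+1)}+k_{(c,1)}$. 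Since the $k_{(a)}$ are already known from the first sum, it suffices to compute the $\dinv=1$ part of $\nabla(e_n)$. Concretely I would take $\nabla(e_n)=\sum_{(\gamma,w)\in\Pa_{n,n}}t^{\dinv(\gamma,w)}q^{\A(\gamma)}F_{\co(\DD(\inv(\R(\gamma,w))))}(X)$, keep only the summands with $\dinv(\gamma,w)=1$, expand the resulting quasisymmetric function in the Schur basis, read off the coefficient of $s_\mu(X)$, and subtract $k_{(c+1)}$; the conjecture then amounts to the clean claim $k_{(c,1)}=\#\{\tau\in\SYT(\mu):\ \m(\tau)-\dd(\tau)\le c\le\m(\tau)-2\}$.

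To carry out this $\dinv=1$ enumeration I would try to promote the hook machinery to arbitrary shape. For hooks the count was organized by the maps $\mathcal{S}_{n,d},\mathcal{T}_{n,d}$ and the cyclic map $\Pi_{n,d-1}$ of Propositions~\ref{Prop : partition} and~\ref{Prop : bij chemins tableaux}, together with Corollary~\ref{Cor : aire 1 vs 1 part}, which separates the area-one paths ``stolen'' by length-one Schur functions (those in the image of $\tilde\varphi$) from the genuine hooks. The analogue I would seek is a parking-function cyclic action that, for each $\tau\in\SYT(\mu)$, produces a string of $\dd(\tau)-1$ parking functions of $\dinv=1$ whose reading words share the descent composition recording $\lambda(\tau)=\mu$, whose area runs through $\m(\tau)-\dd(\tau),\dots,\m(\tau)-2$, and with exactly one extremal member absorbed into the length-one term (the general-$\mu$ replacement for the discarded $i=n-d-1$ step). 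Showing that these strings partition the $\dinv=1$ parking functions, with area and reading-word shape as stated, would give the count above and hence the conjecture.

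The hard part will be the very step the paper flags as failing at $m=1$: there is no clean local characterization of $\dinv=1$ parking functions in $\Pa_{n,n}$. Lemma~\ref{Lem : m>2 dinv=1} holds only for $m\ge 2$, and Figures~\ref{Fig : => contre-exemple m=1}, \ref{Fig : <= contre-exemple m=1 partie 1} and~\ref{Fig : <= contre-exemple m=1 partie 2} show that both the necessary and the sufficient ``$p\le 2$ / column-count'' conditions of Proposition~\ref{Prop : m=1 dinv=1} break down. For hooks this obstruction was bypassed by passing to Schr\"oder paths and the bounce statistic, which is defined only on the $n\times n$ grid and has no counterpart for non-hook $\mu$. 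So the crux is to find the correct \emph{global} substitute for the local column-versus-descent bookkeeping---plausibly a crystal-theoretic one, extending the $\mathfrak{sl}_2$-string picture of Section~\ref{Sec : cristaux} that realizes the $s_{(a)}(q,t)$ to a structure realizing the length-two $s_{(a,1)}(q,t)$ uniformly in $\mu$---and then to verify that the induced enumeration matches $\sum_{\tau\in\SYT(\mu)}\sum_{i=2}^{\dd(\tau)}s_{\m(\tau)-i,1}(q,t)$.
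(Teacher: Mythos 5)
You should first note that the statement you were asked to prove is stated in the paper only as Conjecture~\ref{Conj : nabla(e_n)}: the paper proves it solely for hook shapes $\mu=(d,1^{n-d})$ (Equation~\eqref{Eq : 1 de the principale} of Theorem~\ref{The : main}), via the Schr\"oder-path and bounce machinery, and explicitly leaves the general case open. Your preliminary reductions are sound and agree with the paper's own mechanics: the length-one part $\sum_{\tau\in\SYT(\mu)}s_{\m(\tau)}(q,t)$ does hold for every $\mu$ by Proposition~\ref{Prop : 1 part parking}, and your extraction of the pure-hook coefficients from the monomials $q^ct$ (which occur only in $s_{(c+1)}(q,t)$ and $s_{(c,1)}(q,t)$, since $s_{(a,b)}(q,t)$ is divisible by $(qt)^b$) is precisely the observation used at the end of the paper's proof of Theorem~\ref{The : main}. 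Likewise your restatement $k_{(c,1)}=\#\{\tau\in\SYT(\mu)~:~\m(\tau)-\dd(\tau)\le c\le \m(\tau)-2\}$ is a faithful reformulation of the conjectured second sum, and your string bookkeeping ($\dd(\tau)-1$ hook members with areas $\m(\tau)-\dd(\tau),\ldots,\m(\tau)-2$, plus one extremal member absorbed by $s_{\m(\tau)}(q,t)$) matches the structure that $\mathcal{Q}_{n,d}$ and Corollary~\ref{Cor : aire 1 vs 1 part} exhibit in the hook case.

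However, there is a genuine gap, and it swallows the entire statement: all of the new content of the conjecture is concentrated in the step you defer, namely constructing, for each $\tau\in\SYT(\mu)$, such a $\dinv=1$ string and proving that these strings partition the $\dinv=1$ part of $\Pa_{n,n}$ with the correct areas and reading-word descent compositions. You offer no candidate map and no existence argument; as you note yourself, the hook-case tools ($\mathcal{S}_{n,d}$, $\mathcal{T}_{n,d}$, the cyclic $\Pi_{n,d-1}$ of Proposition~\ref{Prop : partition}) live on Schr\"oder paths and the bounce statistic, which have no analogue once $\mu$ is not a hook, and the local characterizations of $\dinv=1$ (Lemma~\ref{Lem : m>2 dinv=1}, Proposition~\ref{Prop : m=1 dinv=1}) are unavailable or fail for general parking functions at $m=1$. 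What you ask for is, nearly verbatim, the paper's own open Problem~\ref{Prob : solution conj} --- a bijection between $\dinv=1$ parking functions and standard Young tableaux with multiplicities tied to $\m(\tau)-i$ --- which the paper itself points out would prove Conjecture~\ref{Conj : nabla(e_n)} if realized as an extension of $\zeta\circ\mathcal{Q}_{n,d}$. So your proposal is a correct reduction plus an honest description of the open problem, not a proof: the decisive combinatorial construction, which is exactly where the conjecture's difficulty lies, is missing.
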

\section{Inclusion Exclusion}\label{Sec : inclu-exclu}
In this section we will see that half the paths in $\schP_{n,d,(1)}$ are related to $\schP_{n,d-1,(1)}$ and the other half is related to $\schP_{n,d+1,(1)}$. Thereafter, this will be used to find a positive formula for an alternating sum. The outcome is needed to prove results on multivariate diagonal harmonics in \cite{[Wal2019a]}. 
Recall that $\schP_{n,d, (1)}=\{\gamma \in \schP{}_{n,d}~|~ \A(\gamma)=1\}$ and  $\schP_{n,n, (1)}=\schP_{n,n-1, (1)}=\emptyset$. Now, let:
\begin{equation*}
\schT{}_{n,d}=\{\gamma \in \schP{}_{n,d,(1)}~|~ \gamma=D^jNNEE\gamma'NENE \text{ or } \gamma=\gamma'NED^jNNEE\gamma'', j\geq 0 \},
\end{equation*}
and:
\begin{equation*}
\schB{}_{n,d}=\{\gamma \in \schP{}_{n,d,(1)}~|~ \gamma=D^jNNEE\gamma'DNE \text{ or } \gamma=\gamma'NDED^jNE\gamma'', j\geq 0\}.
\end{equation*}
\begin{lem}\label{Lem : top bottom}
For $1\leq d\leq n-4$ we have the following equality  $\schP_{n,d, (1)}=\schT{}_{n,d}\cup \schB{}_{n,d}$. Additionally, $\schP_{n,0, (1)}=\schT{}_{n,0}$, $\schP_{n,n-3, (1)}=\schT{}_{n,n-3}\cup \schB{}_{n,n-3}\cup \{D^{n-3}NNEENE\}$ and $\schP_{n,n-2, (1)}= \schB{}_{n,n-2}\cup\{D^{n-2}NNEE\}$. Furthermore, for all $d$, $\schT{}_{n,d}\cap \schB{}_{n,d}=\emptyset$.
 \end{lem}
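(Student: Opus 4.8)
The plan is to reduce everything to the single structural fact that an area-one path has a unique \emph{defect}. First I recall that $\gamma\in\schP_{n,d,(1)}$ means $\A(\gamma)=1$, and that the paths of area $0$ are exactly the words of $\{NE,D\}^*$; consequently (as already exploited in the discussion preceding Proposition~\ref{Prop : partition} and in Lemma~\ref{Lem :  ens 1 part aire=1}) a path of area one contains exactly one occurrence of a defect factor $X\in\{NNEE,NDE\}$, and admits a unique factorization $\gamma=\alpha X\beta$ with $\alpha,\beta\in\{NE,D\}^*$. Here $\alpha$ is the longest prefix lying in $\{NE,D\}^*$; uniqueness holds because after $\alpha$ the word must begin with $NN$ or $ND$ (an $N$ not followed by $E$), which pins down $X$. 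Since $\schT_{n,d}$ and $\schB_{n,d}$ are defined as subsets of $\schP_{n,d,(1)}$, the asserted equality only needs the inclusion $\subseteq$, i.e. that every such $\gamma$ matches one of the four defining shapes.

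For the generic range $1\le d\le n-4$ I run the case analysis on $(X,\alpha,\beta)$. If $X=NDE$, then since $\gamma$ ends in $NE$ we must have $\beta=D^jNE\beta'$, giving $\gamma=\alpha\,NDE\,D^jNE\,\beta'$, the second shape of $\schB_{n,d}$. If $X=NNEE$ and $\alpha$ contains at least one $NE$, write $\alpha=\gamma'NE\,D^j$ (the last $NE$ together with its trailing diagonals); then $\gamma=\gamma'NE\,D^jNNEE\,\beta$ is the second shape of $\schT_{n,d}$. The remaining possibility is $X=NNEE$ with $\alpha=D^i$ (defect at the start); writing $\beta=\delta\,NE$, the path ends in $NENE$ when $\delta$ ends in $NE$ (first shape of $\schT_{n,d}$) and in $DNE$ when $\delta$ ends in $D$ (first shape of $\schB_{n,d}$). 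The only leftover configurations are $\beta=NE$ (yielding $D^{i}NNEENE$) and $\beta=\varepsilon$ (yielding $D^{i}NNEE$); counting $|\gamma|_N=|\gamma|_E=n-d$ shows these force $d=n-3$ and $d=n-2$ respectively, hence neither occurs for $1\le d\le n-4$. This exhausts the cases and proves $\subseteq$.

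For disjointness I use that the factor $X$ and the position of its unique occurrence are determined by $\gamma$. Every element of $\schT_{n,d}$ has $X=NNEE$, whereas the second shape of $\schB_{n,d}$ has $X=NDE$; since area one forbids two defects, these never coincide. Against the first shape of $\schB_{n,d}$ (which also uses $NNEE$ at the start) I compare with $\schT_{n,d}$: its second shape has the defect strictly after an $NE$, not at the start, while its first shape ends in $NENE$ as opposed to the $DNE$ ending of $\schB_{n,d}$'s first shape; all of these are mutually exclusive, so $\schT_{n,d}\cap\schB_{n,d}=\emptyset$.

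Finally, the three boundary identities follow by direct enumeration using $|\gamma|_N=|\gamma|_E=n-d$. For $d=n-2$ only two $N$'s and two $E$'s remain, so $X=NNEE$ consumes them all and forces $\gamma=D^{n-2}NNEE$ (the listed exception), while $X=NDE$ leaves a single spare $NE$ that must sit at the end, producing exactly $\schB_{n,n-2}$ and no surviving $\schT$ path. For $d=n-3$ the same count leaves one spare $NE$, and the only configuration escaping the generic analysis is the short tail $\beta=NE$ with the defect at the start, namely $D^{n-3}NNEENE$; everything else falls into $\schT$ or $\schB$. For $d=0$ there are no diagonals, so both $X=NDE$ and the $DNE$ ending are impossible, which kills $\schB_{n,0}$ and leaves $\schP_{n,0,(1)}=\schT_{n,0}$. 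I expect the main obstacle to be precisely this boundary bookkeeping: making the uniqueness of the defect factorization fully rigorous and correctly matching the degenerate short-tail words (those ending in $NNEE$ rather than $NE$) against the loosely phrased shapes of $\schT_{n,d}$ and $\schB_{n,d}$, since it is exactly there that the two exceptional paths materialize.
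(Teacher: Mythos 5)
Your proposal is correct and follows essentially the same route as the paper: both factor an area-one path uniquely as $\alpha X\beta$ around its single defect $X\in\{NNEE,NDE\}$ with $\alpha,\beta\in\{NE,D\}^*$, run the same case analysis on the type of $X$, whether $\alpha$ contains an $NE$, and the ending $NENE$ versus $DNE$, and use the letter count $|\gamma|_N=n-d$ to isolate the exceptional paths at $d=n-3$ and $d=n-2$. If anything, your write-up is more careful than the paper's, which dispatches disjointness with ``a simple check'' and the boundary cases with a single sentence, whereas you make the uniqueness of the defect factorization and the degenerate $NNEE$-ending bookkeeping explicit.
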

 \begin{proof}A simple check shows that the four cases of $\schT_{n,d}$ and  $\schB_{n,d}$ are mutually exclusive. Hence, $\schT{}_{n,d}\cap \schB{}_{n,d}=\emptyset$.
 The cases $d=0$, $d=n-2$ and $d=n-3$ are related to the maximal number of north steps and diagonal steps. For $d$ general, $1\leq d\leq n-4$, let $\gamma$ be in $\schP_{n,d,(1)}$. When a Schr\"oder path has an area value of  $1$, there is a factor $\pi=NNEE$ or $\pi=NDE$ such that $\gamma=u\pi v$, $u\in \{NE,D\}^*$ and $v\in \{NE,D\}^*NE\cup \{\varepsilon\}$. By definition of $\schP_{n,d,(1)}$, if $v=\varepsilon$, then $\pi=NNEE$ and $\gamma$ is in $\schT{}_{n,d}$. Moreover, if $\pi=NDE$, then there is a factor $NE$ in $v$ and $\gamma$ is in $\schB{}_{n,d}$.  If $\pi=NNEE$ and $u=D^j$, then $v$ has at least two factors $NE$, since $d\leq n-4$,  and $v$ can end with $NENE$, in which case $\gamma$ is in $\schT{}_{n,d}$ or $v$ end with $DNE$ and $\gamma$ is in $\schB{}_{n,d}$. If $\pi=NNEE$ and $u\not=D^j$, then there is a factor $NE$ in $u$ and $\gamma$ is in $\schT{}_{n,d}$. \end{proof}
 
 Let $d$ be an integer such that $0\leq d \leq n-1$. For each $d$ let:
 \begin{align*} \rho_{n,d} :&\schT{}_{n,d}&&\rightarrow && \schB{}_{n,d+1}
 \\    & \gamma'NED^jNNEE\gamma''&&\mapsto &&\gamma'NDED^jNE\gamma''
 \\    &D^jNNEE\gamma'NENE&&\mapsto&&D^jNNEE\gamma'DNE
 \end{align*}

\begin{lem}\label{Lem : inclu exclu chemins}Let $d$ be an integer such that $0\leq d \leq n-1$. Then, for all $d$, $\rho_{n,d}$ is a bijection such that $\B(\rho(\gamma))=\B(\gamma)-1$.
\end{lem}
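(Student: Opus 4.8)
The plan is to exhibit an explicit inverse of $\rho_{n,d}$ and then to verify the bounce identity through the decomposition $\B=\B(\Gamma(\cdot))+\N$, handling the two defining shapes of $\schT_{n,d}$ separately, in the same spirit as the proof of the preceding lemma on $\Pi_{n,d-1}$.

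First I would check that $\rho_{n,d}$ really lands in $\schB_{n,d+1}$. Each of the two local substitutions, $NED^jNNEE\mapsto NDED^jNE$ and $NENE\mapsto DNE$, replaces an $NE$ by a single diagonal step, so it raises the diagonal count from $d$ to $d+1$ and leaves the image with a single area–contributing factor ($NNEE$ or $NDE$); thus the image keeps $\A=1$ and has $d+1$ diagonal steps. That the image is still a Schröder path is a local check on the height function $|w|_N-|w|_E$: both $NED^jNNEE$ and $NDED^jNE$ start and end at the same level and never dip below it, and likewise for $NENE$ against $DNE$; since $\gamma',\gamma''\in\{NE,D\}^*$ sit on the diagonal at their endpoints, the whole word stays weakly above the main diagonal. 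Finally the image has exactly the shape defining $\schB_{n,d+1}$, so $\rho_{n,d}$ is well defined (the boundary values of $d$ use the special shapes recorded in Lemma~\ref{Lem : top bottom}, but the same check applies).

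Next I would prove bijectivity by writing down the reverse map $\schB_{n,d+1}\to\schT_{n,d}$, namely $\gamma'NDED^jNE\gamma''\mapsto\gamma'NED^jNNEE\gamma''$ and $D^jNNEE\gamma'DNE\mapsto D^jNNEE\gamma'NENE$. By Lemma~\ref{Lem : top bottom} every area–one path lies in exactly one of $\schT$ and $\schB$, and the two shapes of $\schB_{n,d+1}$ are told apart by the type of their unique area–one factor ($NDE$ versus $NNEE$); each shape also parses uniquely, taking the maximal run $D^j$ following the $NDE$ bump, respectively the terminal $DNE$. Hence the reverse map is well defined into $\schT_{n,d}$, and a direct substitution check shows both composites are the identity, so $\rho_{n,d}$ is a bijection.

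The real work is the bounce identity, and the hard part is that in the first case the underlying Dyck path changes type. Write $a$ (resp.\ $b$) for the number of $NE$ factors in $\gamma'$ (resp.\ $\gamma''$) and $M=|\gamma|_N$. When $\gamma=\gamma'NED^jNNEE\gamma''$ one has $\Gamma(\gamma)=(NE)^{a+1}NNEE(NE)^{b}$ whereas $\Gamma(\rho_{n,d}(\gamma))=(NE)^{M-1}$ is the full staircase; comparing the bounce of an $NNEE$–bump path with that of the staircase gives $\B(\Gamma(\gamma))-\B(\Gamma(\rho_{n,d}(\gamma)))=a+1$, while after reinserting diagonal steps the new $D$ of $NDE$ has the $a$ peaks of $\gamma'$ below it, so $\N(\rho_{n,d}(\gamma))-\N(\gamma)=a$; the two contributions combine to $\B(\gamma)-\B(\rho_{n,d}(\gamma))=(a+1)-a=1$. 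When $\gamma=D^jNNEE\gamma'NENE$ the bump survives, both $\Gamma(\gamma)$ and $\Gamma(\rho_{n,d}(\gamma))$ are $NNEE$–bump paths differing by one position, and the analogous count gives $\B(\Gamma(\gamma))-\B(\Gamma(\rho_{n,d}(\gamma)))=a+2$ and $\N(\rho_{n,d}(\gamma))-\N(\gamma)=a+1$, again netting $1$. I expect the $\N$ bookkeeping—counting for each peak the diagonal steps lying above it, exactly as in Proposition~\ref{Prop : bij} and the $\Pi_{n,d-1}$ lemma—to be the only delicate ingredient; the rest is the local substitution analysis above.
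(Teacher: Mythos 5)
Your proposal is correct and follows essentially the same route as the paper: you establish bijectivity by writing down the explicit inverse on the two shapes, and you verify $\B(\rho_{n,d}(\gamma))=\B(\gamma)-1$ through the decomposition $\B=\B(\Gamma(\cdot))+\N$, and your case-by-case counts (for $\gamma=\gamma'NED^jNNEE\gamma''$, a drop of $a+1$ in $\B(\Gamma(\cdot))$ against a gain of $a$ in $\N$; for $\gamma=D^jNNEE\gamma'NENE$, a drop of $a+2$ against a gain of $a+1$) agree exactly with the paper's computations, which are phrased instead in terms of $\binom{n-d}{2}$, $|\gamma''|_N$, and the count of non-consecutive east steps. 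The only cosmetic differences are your parametrization by the number of $NE$ factors in $\gamma'$ and $\gamma''$ and your slightly more explicit check that the substitutions preserve the Schr\"oder property, neither of which changes the argument.
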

\begin{proof}Notice that $\gamma'NED^jNNEE\gamma''\not=D^kNNEE\gamma'NENE$ for all $k$ and $j$, since  one has a $NE$ factor before its $NNEE$ factor and  not the other. The path as an area value of one, ergo there is only one factor $NNEE$ in $\gamma$. Moreover, the map $\rho_{n,d}$ increases the number of diagonal steps by one. Therefore, for all $d$ the map $\rho_{n,d}$ is well defined. For the same reasons the map $\rho_{n,d}^{-1}$ defined by $\rho_{n,d}^{-1}(\gamma'NDED^jNE\gamma'')=\gamma'NED^jNNEE\gamma''$ and $\rho_{n,d}^{-1}(D^jNNEE\gamma'DNE)=D^jNNEE\gamma'NENE$ is well defined. Thus, it is the inverse of $\rho_{n,d}$ and we have a bijection.

Recall that the numph statistic is related to the number of peaks that are in a lower row than the diagonals. When the area value is one, all factors $NE$ and $NNEE$ contain a peak; they always return to the diagonal. If we compare the path $\gamma=\gamma'NED^jNNEE\gamma''$ and the path $\rho_{n,d}(\gamma)$, we see that only one diagonal step was added in $\rho_{n,d}(\gamma)$ and the factor $NDE$ in which it was added has its peak above the diagonal step. Hence, $\N(\rho_{n,d}(\gamma))$ is equal to $\N(\gamma)$ plus the number of peaks in $\gamma'$. Considering that  $\Gamma(\gamma'NED^jNNEE\gamma'')=(NE)^{|\gamma'|+1}NNEE(NE)^{|\gamma''|}$, the value of $\B(\Gamma(\gamma))$ is $\binom{n-d}{2}-|\gamma''|_N-1$. Moreover, $\Gamma(\gamma'NDED^jNE\gamma'')=(NE)^{n-d-1}$; therefore, $\B(\Gamma(\rho_{n,d}(\gamma)))=\binom{n-d-1}{2}$. Because the number of peaks in $\gamma'$ is equal to $|\gamma'|_E$ which is equal to $|\gamma'|_N$ and $|\gamma'|_N+|\gamma''|_N=n-d-3$, we obtain $\B(\gamma)-\B(\rho_{n,d}(\gamma))=1$.

Now comparing the paths $\gamma=D^jNNEE\gamma'NENE$ and $\rho_{n,d}(\gamma)$ we see that $\N(\rho_{n,d}(\gamma))$ is equal to $\N(\gamma)$ plus the number of peaks in $D^jNNEE\gamma'$. This is equivalent to counting the number of non-consecutive east steps in  $D^jNNEE\gamma'$, that is $n-d-3$, since there are $n-d$ east steps in $\gamma$. We know $\Gamma(D^jNNEE\gamma'NENE)=NNEE(NE)^{n-d-2}$. The value of $\B(\Gamma(\gamma))$ is $\binom{n-d}{2}-(n-d-1)$. Moreover, in we consider the restriction to north steps and east steps is $\Gamma(D^jNNEE\gamma'DNE)=NNEE(NE)^{n-d-3}$, then $\B(\Gamma(\rho_{n,d}(\gamma)))=\binom{n-d-1}{2}-(n-d-2)$. Hence, $\B(\gamma)-\B(\rho_{n,d}(\gamma))=1$.
\end{proof}
We now know some Schr\"oder paths of area 1 are associated to Schur functions in the variables $q$ and $t$ indexed by partitions of length one. This corollary is merely to show that the bijection in the previous lemma sends paths associated to Schur functions indexed by a length one partition to another path associated to Schur functions indexed by a length one partition.
\begin{cor}\label{Cor : enlever chemin 1 part} Let $\gamma$ be a path of $\schT_{n,d}$. Then, $\gamma$ contributes to a Schur function  indexed by a partition of length $1$ in $\langle \nabla(e_n), s_{d+1,1^{n-d-1}}\rangle$ if and only if $\rho_{n,d}(\gamma)$ contributes to a Schur function  indexed by a partition of length $1$ in $\langle \nabla(e_n), s_{d+2,1^{n-d-2}}\rangle$. Moreover, if $\gamma$ satisfies this property, then $\gamma=\gamma'NED^jNNEE$ and $\rho_{n,d}(\gamma)=\gamma'NDED^jNE$, with $\gamma'\in  \{NE,D\}^*$.
\end{cor}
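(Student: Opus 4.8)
The plan is to reduce everything to the characterization of contributing area-one paths already established in Corollary~\ref{Cor : aire 1 vs 1 part}. That corollary tells us that a path $\gamma$ of area one in $\schP_{n,d}$ contributes to a Schur function indexed by a length-one partition in $\langle \nabla(e_n), s_{d+1,1^{n-d-1}}\rangle$ if and only if $\gamma=\eta NDED^jNE$ or $\gamma=\eta NED^jNNEE$ for some $\eta\in\{NE,D\}^*$. The key structural observation I would make first is that each path of area one possesses a \emph{unique} area-creating factor, which is either $NNEE$ or $NDE$; in particular, for every $\gamma\in\schT_{n,d}$ this factor is $NNEE$, since both defining shapes $D^jNNEE\gamma'NENE$ and $\gamma'NED^jNNEE\gamma''$ contain $NNEE$ and no $NDE$. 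Because the two forms in Corollary~\ref{Cor : aire 1 vs 1 part} are distinguished by the type of their area factor and by whether the path terminates immediately after it, the whole statement becomes a matter of matching word patterns.

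First I would determine which paths of $\schT_{n,d}$ are contributing. The form $\eta NDED^jNE$ has area factor $NDE$, so no $\schT_{n,d}$ path can match it. The form $\eta NED^jNNEE$ ends in $NNEE$; among the two shapes defining $\schT_{n,d}$, the shape $D^jNNEE\gamma'NENE$ terminates in $NENE$ and is excluded, while the shape $\gamma'NED^jNNEE\gamma''$ terminates in $NNEE$ precisely when $\gamma''=\varepsilon$. Hence a path of $\schT_{n,d}$ is contributing if and only if it has the form $\gamma'NED^jNNEE$ with $\gamma'\in\{NE,D\}^*$, which already yields the ``Moreover'' clause. Applying the definition of $\rho_{n,d}$ to such a path gives $\rho_{n,d}(\gamma)=\gamma'NDED^jNE$. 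Because $\rho_{n,d}$ maps $\schT_{n,d}$ into $\schB_{n,d+1}\subseteq\schP_{n,d+1,(1)}$ and increases the number of diagonal steps by one, the image has area one and lives at level $d+1$, so Corollary~\ref{Cor : aire 1 vs 1 part} (applied with $d$ replaced by $d+1$) is available for it. Since $\gamma'NDED^jNE$ is exactly of the form $\eta NDED^jNE$ with $\eta=\gamma'\in\{NE,D\}^*$, the corollary certifies that it contributes to a length-one Schur function in $\langle \nabla(e_n), s_{d+2,1^{n-d-2}}\rangle$, giving the forward implication. For the converse I would run the same analysis on $\schB_{n,d+1}$: its shape $D^jNNEE\gamma'DNE$ has area factor $NNEE$ and ends in $DNE$, so it never matches Corollary~\ref{Cor : aire 1 vs 1 part}, while its shape $\gamma'NDED^jNE\gamma''$ matches $\eta NDED^jNE$ exactly when $\gamma''=\varepsilon$; pulling back through $\rho_{n,d}^{-1}$, which sends $\gamma'NDED^jNE$ to $\gamma'NED^jNNEE$, recovers precisely the contributing paths of $\schT_{n,d}$ found above. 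Since $\rho_{n,d}$ thus restricts to a bijection between the two contributing subsets, the biconditional holds for every $\gamma$.

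The substantive work here is purely combinatorial bookkeeping rather than a single hard step: one must confirm that the pattern matches are exhaustive and that an $NNEE$ factor and an $NDE$ factor cannot accidentally produce the other form after a shift. The one point requiring genuine care is that $\rho_{n,d}$ transforms the terminal $\dots NED^jNNEE$ (area factor $NNEE$, path ending in $NNEE$) into $\dots NDED^jNE$ (area factor $NDE$, path ending in $NE$): the area-factor type \emph{changes} under $\rho_{n,d}$, so one cannot simply invoke form-preservation and must re-verify the hypothesis of Corollary~\ref{Cor : aire 1 vs 1 part} at level $d+1$ against the new word. Uniqueness of the area-creating factor in an area-one path is what makes every parsing above unambiguous, and it is the fact I would isolate as a preliminary remark to keep the case analysis clean.
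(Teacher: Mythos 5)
Your proof is correct and follows essentially the same route as the paper, whose entire proof is the one-line citation ``By Corollary~\ref{Cor : aire 1 vs 1 part}, the definition of $\rho_{n,d}$ and the definition of $\schT_{n,d}$.'' Your write-up simply makes explicit the pattern-matching and the uniqueness of the area-creating factor ($NNEE$ or $NDE$) that the paper leaves implicit, including the genuinely delicate point that $\rho_{n,d}$ changes the area factor from $NNEE$ to $NDE$, so the characterization must be re-checked at level $d+1$ rather than transported.
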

\begin{proof}By Corollary~\ref{Cor : aire 1 vs 1 part}, the definition of $\rho_{n,d}$ and the definition of $\schT_{n,d}$.
\end{proof}
\begin{lem}\label{Lem : autre somme}
Let $d$ and $n$ be integers such that $0\leq d \leq n-2$. Then, $\mathcal{T}_{n,d+2}\circ \rho_{n,d}\circ \mathcal{S}_{n,d+1}$ is a bijection between the sets of tableaux: 
\begin{equation*}
\{\tau \in \SYT(d+1,1^{n-d-1})~|~ \{1,2\} \subseteq\DD(\tau)\}\simeq \{\tau \in \SYT(d+2,1^{n-d-2})~|~1\in\DD(\tau), 2\not\in\DD(\tau)\},
\end{equation*}
such that $\m(\tau)-\dd(\tau)=\m(\mathcal{T}_{n,d+2}\circ \rho_{n,d}\circ \mathcal{S}_{n,d+1}(\tau))-\dd(\mathcal{T}_{n,d+2}\circ \rho_{n,d}\circ \mathcal{S}_{n,d+1}(\tau))+1$.
Additionally, $\mathcal{Q}_{n,d+2}^{-1}\circ \rho_{n,d}\circ \mathcal{Q}_{n,d+1}$ is a bijection between the set :
\begin{equation*}
\{(\tau,i) \in \SYT(d+1,1^{n-d-1})\times\{1,\ldots,n-d-3\}~|~ 1\in \DD(\tau)\},
\end{equation*}
 and  the set :
 \begin{equation*}
 \{(\tau,i) \in \SYT(d+2,1^{n-d-2})\times\{0,\ldots,n-d-4\}~|~1\not\in\DD(\tau)\},
 \end{equation*}
 such that $\m(\tau)-i=\m(\mathcal{Q}_{n,d+2}^{-1}\circ \rho_{n,d}\circ \mathcal{Q}_{n,d+1}(\tau,i))-(i-1)$.
\end{lem}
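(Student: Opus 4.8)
The plan is to observe that each of the two maps is a composition of bijections already established — $\mathcal{S}_{n,d+1}$ and its inverse $\mathcal{T}_{n,d+2}$ (Lemma~\ref{Lem : map bounce=maj-des}), $\rho_{n,d}$ (Lemma~\ref{Lem : inclu exclu chemins}), and $\mathcal{Q}_{n,d+1}$, $\mathcal{Q}_{n,d+2}$ (Proposition~\ref{Prop : bij chemins tableaux}) — so that bijectivity reduces to checking that the image of each map lands in the domain of the next, and the statistic identities reduce to chaining the $\B$-tracking formulas. First I would treat $\Phi:=\mathcal{T}_{n,d+2}\circ\rho_{n,d}\circ\mathcal{S}_{n,d+1}$. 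The key structural step is that $\{1,2\}\subseteq\DD(\tau)$ forces $\mathcal{S}_{n,d+1}(\tau)$ into the first form of $\schT_{n,d}$: since $1\in\DD(\tau)$ the distinguished factor produced by $\mathcal{S}$ is $NNEE$ rather than $NDE$, and since $2\in\DD(\tau)$ the two topmost touch factors are both $NE$, so $\gamma_\tau$ ends in $NENE$ and has shape $D^jNNEE\gamma'NENE$. Then $\rho_{n,d}$ acts by its second rule, sending this to $D^jNNEE\gamma'DNE\in\schB_{n,d+1}$, which also lies in $V_{n,d+2}$ (its $NNEE$ sits at the bottom), so $\mathcal{T}_{n,d+2}$ is applicable.

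Reading off the descent set of $\Phi(\tau)$ from the touch decomposition, the final factor $NE$ forces $1\in\DD(\Phi(\tau))$ while the new $D$ introduced in the second-from-top touch position by the replacement $NENE\mapsto DNE$ forces $2\notin\DD(\Phi(\tau))$, matching the target set; surjectivity onto exactly that set I would verify by running the inverse composite $\mathcal{T}_{n,d+1}\circ\rho_{n,d}^{-1}\circ\mathcal{S}_{n,d+2}$ and checking it returns tableaux with $\{1,2\}\subseteq\DD$. The statistic identity $\m(\tau)-\dd(\tau)=\m(\Phi(\tau))-\dd(\Phi(\tau))+1$ is then immediate, since $\m(\Phi(\tau))-\dd(\Phi(\tau))=\B(\rho_{n,d}(\mathcal{S}_{n,d+1}(\tau)))=\B(\mathcal{S}_{n,d+1}(\tau))-1=\m(\tau)-\dd(\tau)-1$ by Lemmas~\ref{Lem : map bounce=maj-des} and~\ref{Lem : inclu exclu chemins}.

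For $\Psi:=\mathcal{Q}_{n,d+2}^{-1}\circ\rho_{n,d}\circ\mathcal{Q}_{n,d+1}$ I would run the same scheme with $\Pi$ inserted. With $1\in\DD(\tau)$ the path $\mathcal{S}_{n,d+1}(\tau)$ again carries the distinguished factor $NNEE$, and applying $\Pi_{n,d}^{\,i}$ for $1\le i\le n-d-3$ moves it strictly into the interior (at least one $NE$ both below and above it), so $\mathcal{Q}_{n,d+1}(\tau,i)$ lies in the second form of $\schT_{n,d}$ and $\rho_{n,d}$ applies by its first rule, producing a $\schB_{n,d+1}$-path whose distinguished factor is now $NDE$. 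Because $\mathcal{S}_{n,d+2}$ produces an $NDE$ exactly when $1\notin\DD$, the preimage $(\tau',i')=\mathcal{Q}_{n,d+2}^{-1}(\cdots)$ satisfies $1\notin\DD(\tau')$; tracking $\B$ through $\B(\mathcal{Q}_{n,d+1}(\tau,i))=\m(\tau)-\dd(\tau)+i$, the $-1$ from $\rho_{n,d}$, and $\B(\mathcal{Q}_{n,d+2}(\tau',i'))=\m(\tau')-\dd(\tau')+i'$ pins down $i'=i-1$ and $\m(\tau')=\m(\tau)-1$, which is exactly $\m(\tau)-i=\m(\tau')-(i-1)$ and keeps $i'$ inside $\{0,\ldots,n-d-4\}$.

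The main obstacle will be the purely combinatorial bookkeeping of how the factor/touch-point pattern transforms along each composite, and especially the boundary behaviour. At $d=n-2$ both tableau sets are empty, while at $d=n-3$ the domain of the first part is the single tableau with $\DD=\{1,2\}$, whose image under $\mathcal{S}$ is the exceptional path $D^{n-3}NNEENE$ that Lemma~\ref{Lem : top bottom} places outside $\schT_{n,d}\cup\schB_{n,d+1}$, so $\rho_{n,d}$ is not literally defined there; this case, together with the extreme indices $i=0$ and $i=n-d-2$ in the second part (which by Corollary~\ref{Cor : aire 1 vs 1 part} correspond to the length-one contributions), must be excluded or handled by direct inspection. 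Keeping the three-way dictionary consistent — descent set $\leftrightarrow$ touch-factor pattern $\leftrightarrow$ $\B$-value — across both rules of $\rho_{n,d}$ is where the care is concentrated, but no new idea beyond the already-proven bijections should be required.
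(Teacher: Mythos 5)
Your proposal is correct and follows essentially the same route as the paper's own proof: compose the already-established bijections $\mathcal{S}_{n,d+1}$, $\rho_{n,d}$, $\mathcal{T}_{n,d+2}$ and $\mathcal{Q}_{n,d+1}$, $\mathcal{Q}_{n,d+2}^{-1}$, check via the touch-factor patterns that each image lands in the next map's domain, read off the descent-set changes ($\DD(\tau)\setminus\{2\}$ for the first composite, $\DD(\tau)\setminus\{1\}$ with $i\mapsto i-1$ for the second), and chain the bounce identities of Lemma~\ref{Lem : map bounce=maj-des}, Lemma~\ref{Lem : inclu exclu chemins} and Proposition~\ref{Prop : bij chemins tableaux}. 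Your explicit flagging of the exceptional path $D^{n-3}NNEENE$ at $d=n-3$, where $\rho_{n,d}$ is not literally defined by its two rules, is in fact more careful than the paper, whose proof passes over this boundary case silently.
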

\begin{proof}By Corollary~\ref{Cor : aire 1 vs 1 part} we know paths of shape $\gamma'NDED^jNE$  or $\gamma'NED^jNNEE$ are associated to a Schur function of length one. Notice that by definition $\rho_{n,d}$ sends paths of theses shapes to other paths of theses shapes. Hence, $\mathcal{Q}_{n,d}^{-1}\circ \rho_{n,d}\circ \mathcal{Q}_{n,d}(\tau, n-d-2)=(\tau',n-d-3)$, where $\DD(\tau')=\DD(\tau)\backslash\{1\}$. At this point one only needs to notice that $\mathcal{Q}_{n,d+2}^{-1}\circ \rho_{n,d}\circ \mathcal{Q}_{n,d+1}(\tau,i)=(\tau',i-1)$, $\DD(\tau')=\DD(\tau)\backslash\{1\}$ and $\DD(\mathcal{T}_{n,d+2}\circ \rho_{n,d}\circ \mathcal{S}_{n,d+1}(\tau))=\DD(\tau)\backslash\{2\}$ . The rest of the proof follows Proposition~\ref{Prop : bij chemins tableaux} from the definition of the maps $\rho_{n,d}$, $\mathcal{Q}_{n,d}$, $\mathcal{S}_{n,d}$ and $\mathcal{T}_{n,d}$ .
\end{proof}
\begin{prop}
Let $k$ be an integer such that $0\leq k\leq n-3$ and  $\psi : \Lambda_\mathbb{Q} \rightarrow \mathbb{Q}[q,t]$ be a linear map defined by $\psi(s_\lambda)=q^{|\lambda|}t^{\ell(\lambda)-1}$. If:
\begin{equation*}
h_k(q):=\psi\left(\sum_{d=0}^{k} (-1)^{k-d} \langle \nabla(e_n), s_{d+1,1^{n-d-1}}\rangle |_\textrm{pure hook}\right)q^{-k+d}t^{-1},
\end{equation*}
then:
\begin{align*}h_k(q)&=\underset{\{1,2\}\subseteq \DD(\tau)}{\sum_{\tau \in \SYT(k+1,1^{n-k-1})}} q^{\m(\tau)-\dd(\tau)}
+\underset{1\in \DD(\tau)}{\sum_{\tau \in \SYT(k+1,1^{n-k-1})}} \sum_{i=2}^{n-k-2} q^{\m(\tau)-i},
\\    &=\underset{1\in \DD(\tau), 2\not\in\DD(\tau)}{\sum_{\tau \in \SYT(k+2,1^{n-k-2})}} q^{\m(\tau)-\dd(\tau)}
+\underset{1\not\in \DD(\tau)}{\sum_{\tau \in \SYT(k+2,1^{n-k-2})}} \sum_{i=2}^{n-k-2} q^{\m(\tau)-i},
\end{align*}
where the restriction $|_\textrm{pure hook}$ is the restriction to Schur function indexed by partitions $(a,1)$, with $a\geq 1$.
\end{prop}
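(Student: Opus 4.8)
The plan is to translate $h_k(q)$ into an alternating generating function over area-one Schr\"oder paths and then collapse it by inclusion--exclusion. First I would invoke Equation~\eqref{Eq : 1 de the principale} of Theorem~\ref{The : main}: on the shape $(d+1,1^{n-d-1})$ one has $\dd(\tau)=n-d-1$, so the pure-hook part of $\langle\nabla(e_n),s_{d+1,1^{n-d-1}}\rangle$ is $\sum_{\tau\in\SYT(d+1,1^{n-d-1})}\sum_{i=2}^{n-d-1}s_{\m(\tau)-i,1}(q,t)$. Applying $\psi$ together with the factor $q^{-k+d}t^{-1}$ (read inside the sum over $d$) and using Proposition~\ref{Prop : bij chemins tableaux} to identify $\m(\tau)-i$ with the bounce $\B(\gamma)$ of the area-one path $\gamma\in\schP_{n,d,(1)}$ recorded by that hook, each summand becomes $q^{\B(\gamma)-k+d}$. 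Thus $h_k(q)=\sum_{d=0}^{k}(-1)^{k-d}\sum_{\gamma}q^{\B(\gamma)-k+d}$, the inner sum running over the pure-hook-contributing paths of $\schP_{n,d,(1)}$.

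Next I would split each level by Lemma~\ref{Lem : top bottom}, $\schP_{n,d,(1)}=\schT_{n,d}\sqcup\schB_{n,d}$, and telescope. The bijection $\rho_{n,d}\colon\schT_{n,d}\to\schB_{n,d+1}$ of Lemma~\ref{Lem : inclu exclu chemins} drops bounce by one and, by Corollary~\ref{Cor : enlever chemin 1 part}, respects the pure-hook property; hence the $\schB$-part at level $d+1$ contributes $q^{(\B(\gamma)-1)-k+(d+1)}=q^{\B(\gamma)-k+d}$ with sign opposite to the $\schT$-part at level $d$, and the two cancel termwise. Because $\schB_{n,0}=\emptyset$, the whole telescope collapses to the top class, giving $h_k(q)=\sum_{\gamma\in\schT_{n,k},\ \text{pure hook}}q^{\B(\gamma)}$; the only care needed is the stray paths $D^{n-3}NNEENE$ and $D^{n-2}NNEE$ of Lemma~\ref{Lem : top bottom}, which enter only when $k$ is at the top of its range.

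Then I would coordinatize the survivors by tableaux via $\mathcal{Q}_{n,k+1}$ and $\mathcal{T}_{n,k+1}$. An $\schT_{n,k}$-path carries an $NNEE$ factor rather than $NDE$, which forces $1\in\DD(\tau)$; writing $\gamma=\Pi_{n,k}^{i'}(\mathcal{S}_{n,k+1}(\tau))$ with $\B(\gamma)=\m(\tau)-\dd(\tau)+i'$, the shifted paths $i'\ge 1$ produce the terms $q^{\m(\tau)-i}$ for $2\le i\le n-k-2$ (the second sum), while the base path $i'=0$ remains a $\schT$-survivor exactly when its $NNEE$ is not in the $\schB$-pattern $D^{j}NNEE\gamma'DNE$, i.e.\ precisely when $2\in\DD(\tau)$, giving the term $q^{\m(\tau)-\dd(\tau)}$ under $\{1,2\}\subseteq\DD(\tau)$. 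This yields the first expression. The second expression then follows by pushing the survivors once more through $\rho_{n,k}$ into $\schB_{n,k+1}$, whose tableau labels lie in $\SYT(k+2,1^{n-k-2})$ with $1\notin\DD$; Lemma~\ref{Lem : autre somme} is exactly the packaging of this step, its bijections $\mathcal{T}_{n,k+2}\circ\rho_{n,k}\circ\mathcal{S}_{n,k+1}$ and $\mathcal{Q}_{n,k+2}^{-1}\circ\rho_{n,k}\circ\mathcal{Q}_{n,k+1}$ carrying the index sets $\{1,2\}\subseteq\DD$ and $1\in\DD$ onto $\{1\in\DD,\,2\notin\DD\}$ and $1\notin\DD$ with the stated shifts of $\m-\dd$ and $\m-i$.

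I expect the main obstacle to be the bookkeeping in this last translation: aligning the three index ranges through the $\psi$-normalization and the $\B\mapsto\B-1$ shift of $\rho_{n,d}$, and matching membership in $\schT$ versus $\schB$ to the descent conditions $\{1,2\}\subseteq\DD(\tau)$, $1\in\DD(\tau)$, and $1\notin\DD(\tau)$. Getting the constant shifts in the exponents to come out consistently between the two expressions, where the $\B\mapsto\B-1$ of $\rho_{n,k}$ must be absorbed exactly once, is the delicate point, together with checking that the boundary levels of Lemma~\ref{Lem : top bottom} do not disturb the collapse.
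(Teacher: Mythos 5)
Your proposal is correct and follows essentially the same route as the paper's proof: the same splitting $\schP_{n,d,(1)}=\schT_{n,d}\sqcup\schB_{n,d}$ from Lemma~\ref{Lem : top bottom}, the same telescoping via $\rho_{n,d}$ (Lemma~\ref{Lem : inclu exclu chemins}) with pure-hook compatibility from Corollary~\ref{Cor : enlever chemin 1 part}, the same tableau coordinatization of the surviving $\schT_{n,k}$-class through $\mathcal{Q}_{n,k+1}$, $\mathcal{S}_{n,k+1}$, $\mathcal{T}_{n,k+1}$, and Lemma~\ref{Lem : autre somme} for the second expression. If anything you are more careful than the paper at the boundary: for $k=n-3$ the stray path $D^{n-3}NNEENE$ of Lemma~\ref{Lem : top bottom}, which you explicitly flag, is silently dropped in the paper's telescoping computation even though it is exactly the base path $\mathcal{S}_{n,k+1}(\tau)$ with $\DD(\tau)=\{1,2\}$ that survives and supplies the term $q^{\m(\tau)-\dd(\tau)}$ there (e.g.\ for $n=4$, $k=1$ it alone yields $h_1(q)=q$), so your deferred check is genuinely needed to make the argument airtight.
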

\begin{proof}Let $\tilde h_k(q)=\psi\left(\sum_{d=0}^{k} (-1)^{k-d} \langle \nabla(e_n), s_{d+1,1^{n-d-1}}\rangle |_\textrm{hook}\right)q^{-k+d}t^{-1}$. Due to Haglund's theorem, Lemma~\ref{Lem : inclu exclu chemins} and Lemma~\ref{Lem : top bottom}, for an integer $k$ we have:
\begin{align*}
\tilde h_k(q)&=\sum_{d=0}^{k} (-1)^{k-d}\sum_{\gamma \in \schT_{n,d}} q^{\B(\gamma)-k+d}+\sum_{\gamma \in \schB_{n,d}} q^{\B(\gamma)-k+d},
\\  &=\sum_{d=0}^{k} (-1)^{k-d}\sum_{\gamma \in \schT_{n,d}} q^{\B(\rho_{n,d}(\gamma))+1-k+d}+\sum_{d=1}^{k} (-1)^{k-d}\sum_{\gamma \in \schB_{n,d}} q^{\B(\gamma)-k+d}, 
\\  &=\sum_{d=0}^{k} (-1)^{k-d}\sum_{\gamma \in \schB_{n,d+1}} q^{\B(\gamma)+1-k+d}-\sum_{d=1}^{k} (-1)^{k-d+1}\sum_{\gamma \in \schB_{n,d}} q^{\B(\gamma)-k+d}, 
\\  &=\sum_{d=0}^{k} (-1)^{k-d}\sum_{\gamma \in \schB_{n,d+1}} q^{\B(\gamma)+1-k+d}-\sum_{d=0}^{k-1} (-1)^{k-d}\sum_{\gamma \in \schB_{n,d+1}} q^{\B(\gamma)-k+d+1}, 
\\  &=\sum_{\gamma \in \schB_{n,k+1}} q^{\B(\gamma)+1},
\\  &=\sum_{\gamma \in \schT_{n,k}} q^{\B(\gamma)}.
\end{align*}
The polynomial $h_k(q)$ only takes into account pure hooks, so we only need to consider the paths $\gamma'NED^jNNEE\gamma''$ and $D^jNNEE\gamma'NENE$, with $\gamma'$, $\gamma''\in \{NE,D\}$, as shown in Corollary~\ref{Cor : enlever chemin 1 part}. Thus, by Lemma~\ref{Lem : map bounce=maj-des}, the map $\mathcal{T}_{n,k+1}$, for $\gamma=D^jNNEE\gamma'NENE$, gives us  $\mathcal{T}_{n,k+1}(\gamma)$ is a tableau containing $\{1,2\}$ in its descent set and such that $\B(\gamma)=\m(\mathcal{T}_{n,k+1}(\gamma))-\dd(\mathcal{T}_{n,k+1}(\gamma))$. Additionally, for  $\gamma=\gamma'NED^jNNEE\gamma''$, $\mathcal{Q}_{n,k+1}^{-1}(\gamma)$ is a tableau containing $\{1\}$ in its descent set. By Proposition~\ref{Prop : bij chemins tableaux}, $\gamma$ is such that $\B(\gamma)=\m(\mathcal{Q}_{n,k+1}^{-1}(\gamma))-i$, $2\leq i \leq \dd(\mathcal{Q}_{n,k+1}^{-1}(\gamma))$. Moreover, if $\{1\}$ is in the descent set of $\tau$, then the map $\mathcal{Q}_{n,k+1}$ send $(\tau,0)$ to $D^jNNEE\gamma'NENE$ or $D^jNNEE\gamma'DNE$ the first one was already considered and the last one is in $\schB_{n,d}$. Finally, if $\{1\}$ is in the descent set of $\tau$, then the map $\mathcal{Q}_{n,k+1}$ send $(\tau,i)$ to $\schT_{n,d}$ for all $1\leq i\leq n-d-3$. Hence, we sum over $2\leq i \leq \dd(\mathcal{Q}_{n,k+1}^{-1}(\gamma))-1$. The second sum is a consequence of Lemma~\ref{Lem : autre somme}.
\end{proof}
Considering what is known about multivariate diagonal harmonics, it should be possible to extend the results of this section to the case for $\schP_{n,d,(i)}$. This generalization would lead to more results on multivariate diagonal harmonics.
 \section{Partial Crystal Decomposition}\label{Sec : cristaux}
This section is mainly to explain the underlying idea throughout this paper. We can see this as finding the crystal decomposition of the Schr\"oder paths and the parking function (since their weighted sum relate to modules). We basically found some of the top weight and for some of them gave a map that gives the remainder of the crystal. In that setting we can say that for $m=1$, we can describe all the crystals in the case where the Schur functions are indexed by partitions of length one. When $m>1$, we can characterize only the top weights. For hooked-shaped Schur functions, we can only depict the top weight, when $m=1$. 

More precisely, the maps $\mathcal{R}_{n,d}$ and $\mathcal{T}_{n,d}$ determine in which crystal the paths lie. The maps $\tilde\varphi$, defined by the map $\varphi$ in Section~\ref{Sec : algo}, give the decomposition according to the top weight. This also is well defined, since for all $\gamma\in\{NE,D\}^{n-1}NE$  we have $\mathcal{T}_{n,d}\circ\Pi\circ\tilde\varphi(\gamma)=\mathcal{R}_{n,d}(\gamma)$ (see Figure~\ref{Fig : cristaux bien def} for an example). Notice that map $\mathcal{M}_{n,d}$ (respectively,$\mathcal{Q}_{n,d}$) tells us in which crystal component are the Schr\"oder paths of area value $0$ (respectively,area value $1$).
 
  \begin{figure}[!htb]
  \centering
  \begin{tikzpicture}[scale=.5]
\draw (0,0)--(4,0)--(4,1)--(1,1)--(1,3)--(0,3)--(0,0);
\draw (1,0)--(1,1);
\draw (2,0)--(2,1);
\draw (3,0)--(3,1);
\draw (0,1)--(1,1);
\draw (0,2)--(1,2);
\node(1) at (.5,.5){$1$};
\node(2) at (1.5,.5){$2$};
\node(3) at (.5,1.5){$3$};
\node(4) at (.5,2.5){$4$};
\node(5) at (2.5,.5){$5$};
\node(6) at (3.5,.5){$6$};
\node(fl) at (6,2){$\stackrel{\mathcal{R}_{n,d}}{\longleftarrow}$};
\draw (7,-1)--(8,0)--(9,1)--(9,2)--(10,2)--(10,3)--(11,3)--(12,4)--(12,5)--(13,5);
\draw (0,-9)--(1,-8)--(2,-7)--(2,-6)--(3,-5)--(4,-5)--(4,-4)--(5,-4)--(5,-3)--(6,-3);
\node(fl2) at (6,-6){$\stackrel{\Pi}{\longleftarrow}$};
\draw (7,-9)--(8,-8)--(9,-7)--(9,-6)--(10,-6)--(10,-5)--(11,-4)--(12,-4)--(12,-3)--(13,-3);
\node(flmonte) at (2,-2){$\stackrel{{\mathcal{T}_{n,d}}}{}\uparrow$};
\node(fldescend) at (10,-2){$\downarrow\tilde\varphi$};
\end{tikzpicture}
 \caption{}\label{Fig : cristaux bien def}
 \end{figure}
 
 Using the zeta map, so far we know the top weights for all crystals containing a parking function having a diagonal inversion statistic value of $0$, and for all hook-shaped partitions we know all top weights for all crystals containing a parking function having a diagonal inversion statistic value of $1$. For crystals containing a parking function having a diagonal inversion statistic value of $0$ that are not associated to a hook-shaped partition, we do not know the exact paths. Although, we do know in which subset of parking functions the lowest weight lie. Figure~\ref{Fig : cristaux} gives an overview of what is known so far.
 
 \begin{figure}[!htb]
 \begin{tikzpicture}[scale=.25]
 \node(3p1) at (-2,2){$\cdots$};
 \draw (1,0)--(6,0)--(6,1)--(4,1)--(4,3)--(2,3)--(2,4)--(1,4)--(1,0);
  \node(3p2) at (9,2){$\cdots$};
  \node(3p3) at (24,2){$\cdots$};
  \draw (26,0)--(26,5)--(27,5)--(27,1)--(34,1)--(34,0)--(26,0);
  \node(3p4) at (36,2){$\cdots$};
  \draw (40,0)--(51,0)--(51,1)--(40,1)--(40,0);
     \draw (-3,-5)--(-3,-13);
   \draw (-3,-19)--(-3,-23);
  \node(0a) at (-3,-5){$\bullet$};
  \node(0b) at (-3,-9){\textcolor{red}{$\bullet$}};
 \node(0c) at (-3,-13){\textcolor{red}{$\bullet$}};
   \node(v3p0) at (-3,-15.5){$\vdots$};
 \node(0d) at (-3,-19){\textcolor{red}{$\bullet$}};
  \node(0e) at (-3,-23){\textcolor{red}{$\bullet$}};
  \draw (-7.5,-5)--(-7.5,-13);
   \draw (-7.5,-19)--(-7.5,-31);
     \node(3p0000) at (-5,-11){$\cdots$}; 
  \node(0000a) at (-7.5,-5){$\bullet$};
  \node(0000b) at (-7.5,-9){\textcolor{red}{$\bullet$}};
 \node(0000c) at (-7.5,-13){\textcolor{red}{$\bullet$}};
   \node(v3p0000) at (-7.5,-15.5){$\vdots$};
 \node(0000d) at (-7.5,-19){\textcolor{red}{$\bullet$}};
  \node(0000e) at (-7.5,-23){\textcolor{red}{$\bullet$}};
   \node(0000f) at (-7.5,-27){\textcolor{red}{$\bullet$}};
   \node(0000g) at (-7.5,-31){\textcolor{red}{$\bullet$}};
     \node(3p0) at (3.5,-11){$\cdots$}; 
   \draw (1,-9)--(1,-13);
   \draw (1,-19)--(1,-27);
    \node(00a) at (1,-9){\textcolor{red}{$\bullet$}};
 \node(00b) at (1,-13){\textcolor{red}{$\bullet$}};
   \node(v3p00) at (1,-15.5){$\vdots$};
 \node(00c) at (1,-19){\textcolor{red}{$\bullet$}};
  \node(00d) at (1,-23){\textcolor{red}{$\bullet$}};
    \node(00e) at (1,-27){\textcolor{red}{$\bullet$}};
  \draw (5.5,-9)--(5.5,-13);
  \node(000a) at (5.5,-9){\textcolor{red}{$\bullet$}};
   \node(000b) at (5.5,-13){\textcolor{red}{$\bullet$}};
   \node(v3p000) at (5.5,-15.5){$\vdots$};
 \node(000c) at (5.5,-19){\textcolor{red}{$\bullet$}};
      \node(3p0) at (8,-16.5){$\cdots$}; 
     \draw (10,-15)--(10,-19);
   \node(0000000) at (10,-19){\textcolor{red}{$\bullet$}};
     \node(0000000a) at (14,-19){\textcolor{red}{$\bullet$}};
        \node(0000000) at (10,-15){\textcolor{red}{$\bullet$}};
  \node(1) at (20,-5){$\bullet$};
  \node(1a) at (20,-9){$\bullet$};
  \node(1b) at (20,-13){$\bullet$}; 
  \node(v3p1) at (20,-15.5){$\vdots$};
  \node(1c) at (20,-19){$\bullet$};
  \node(1d) at (20,-23){$\bullet$};
  \node(1e) at (20,-27){$\bullet$};
  \node(1f) at (20,-31){$\bullet$};
  \node(1g) at (20,-35){$\bullet$};
  \node(1h) at (20,-39){$\bullet$};
  \draw (20,-5)--(20,-14);
  \draw (20,-18)--(20,-39);
    \node(11) at (24.5,-5){$\bullet$};
  \node(11a) at (24.5,-9){$\bullet$};
  \node(11b) at (24.5,-13){$\bullet$}; 
  \node(v3p11) at (24.5,-15.5){$\vdots$};
  \node(11c) at (24.5,-19){$\bullet$};
  \node(11d) at (24.5,-23){$\bullet$};
  \node(11e) at (24.5,-27){$\bullet$};
  \draw (24.5,-5)--(24.5,-14);
  \draw (24.5,-18)--(24.5,-27);
    \draw (33,-9)--(33,-14);
  \draw (33,-18)--(33,-19);
  \node(2) at (33,-9){$\bullet$};
  \node(2b) at (33,-13){\textcolor{red}{$\bullet$}}; 
  \node(v3p2) at (33,-15.5){$\vdots$};
  \node(2c) at (33,-19){\textcolor{red}{$\bullet$}};
  \node(3p6) at (22.5,-11){$\cdots$}; 
    \node(3p7) at (31,-11){$\cdots$}; 
    \draw (28.5,-9)--(28.5,-14);
  \draw (28.5,-18)--(28.5,-31);
  \node(3) at (28.5,-9){$\bullet$};
  \node(3b) at (28.5,-13){\textcolor{red}{$\bullet$}}; 
  \node(v3p3) at (28.5,-15.5){$\vdots$};
  \node(3c) at (28.5,-19){\textcolor{red}{$\bullet$}};
    \node(2d) at (28.5,-23){\textcolor{red}{$\bullet$}};
  \node(2e) at (28.5,-27){\textcolor{red}{$\bullet$}};
  \node(2f) at (28.5,-31){\textcolor{red}{$\bullet$}};
    \node(3p2) at (35.5,-16.5){$\cdots$}; 
     \draw (37.5,-15)--(37.5,-19);
     \node(222) at (37.5,-19){\textcolor{red}{$\bullet$}};
     \node(222a) at (41.5,-19){\textcolor{red}{$\bullet$}};
        \node(222b) at (37.5,-15){\textcolor{red}{$\bullet$}};
  \node(7) at (46,-5){$\bullet$};
  \node(b0) at (3,-4){$\overbrace{\hspace{170pt}}$};
  \node(b1) at (30,-4){$\overbrace{\hspace{170pt}}$};
  \node(b3) at (46,-4){$\overbrace{\hspace{10pt}}$};
  \node(P) at (-12,3){Partition of  $n$};
   \node(d) at (-12,-3){area};
\node(d0) at (-12,-5){$0$};
\node(d1) at (-12,-9){$1$};
\node(d2) at (-12,-13){$2$};
\node(vd) at (-12,-15.5){$\vdots$};
\node(d0) at (-15,-5){$s_a$};
\node(d1) at (-15,-9){$s_{a,1}$};
\node(d2) at (-15,-13){$s_{a,2}$};
\node(vd) at (-15,-15.5){$\vdots$};
\node(d2) at (-15,-19){$s_{a,i}$};
 \end{tikzpicture}
 \caption{The nodes represent paths. Each chain is associated to a Schur function in the variables $q$ and $t$. The height of the first node determines which Schur function. The partitions determine the Schur function in the variables $X$. Each chain can be associated to a Standard Young tableau corresponding to the shape of the partition. More than one chain can be associated with the same tableau. When nodes are in black, we know which paths they relate to, in red we do not.}\label{Fig : cristaux}
 \end{figure}
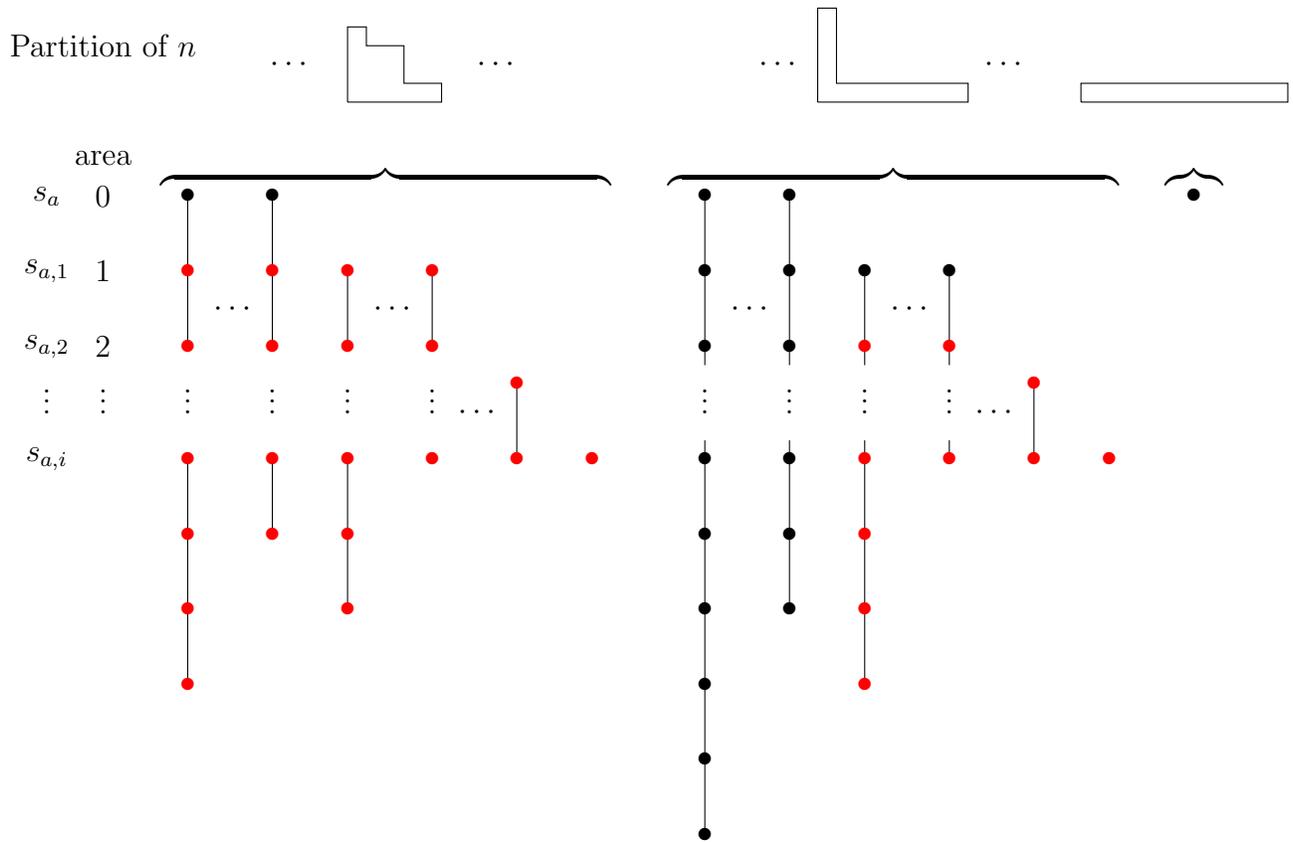

\section{Conclusion and Further Questions}\label{Sec : conclu}
Proving Conjecture~\ref{Conj : nabla(e_n)} would be a great start. Moreover, can one describe nicely the algorithm described in Section~\ref{Sec : algo}, in terms of diagonal inversions and extend it to $m$-Schr\"oder paths? Here we are looking for more than just applying the zeta map.  This would allow us to know exactly what paths contribute to each Schur function, even when $m\not=1$. It might be easier to start by the following problem:
\begin{prob} Using the bounce statistic, generalize the algorithm in Section~\ref{Sec : algo} to all Schr\"oder paths.
\end{prob} 
This would give the Schr\"oder paths associated to all the Schur functions and not only the one with one part. It would also answer completely Haglund's open problem 3.11 of \cite{[H2008]}. One could also generalize the algorithm for labelled Dyck paths, relating to the Delta conjecture, and get a partial decomposition in Schur functions in the variables $q$ and $t$ indexed by partitions of length one. 
Using Corollary~\ref{Cor : m>1 dinv=1} it should be possible to decompose $\nabla^m(e_n)$ into the basis $s_\mu(q,t)s_\lambda(X)$. 
The following problems could lead to finding a partial decomposition $\nabla^m(e_n)$ into Schur functions in the $X$, $s_\lambda(X)$, when $\lambda$ is not a hook. Which is a known hard problem. 
\begin{prob}Using Lemma~\ref{Lem : m>2 dinv=1} decompose $\nabla^m(e_n)$ into the basis $s_\mu(q,t)F_c(X)$, for $\mu$ a hook and $c$ a composition.
\end{prob}
Even if the decomposition is in fundamental quasisymmetric functions, it would help get a partial decomposition $\nabla^m(e_n)$ into Schur functions, since it should be easier to regroup the fundamental quasisymmetric functions into Schur functions because there will be fewer coefficients.
Extending, the maps in this paper from $m$-Schr\"oder paths to tableaux with the multiplicity of the descent set, would help decompose completely the Schr\"oder paths into crystals. Of course the extended map must somewhat preserve the area and diagonal inversion statistic or area and bounce statistic through the major index and the number of descents. Another related problem is:
  \begin{prob}\label{Prob : solution conj}Find a bijection between parking functions $(\gamma,w)$, in an $n\times n$, grid, having diagonal inversion statistic value of $1$ and  Standard Young tableaux, $\tau$, with a multiplicity related to the major index of the tableau ($\m(\tau, i)-i-1$) such that  $\m(\tau, i)-i=\A(\mathcal{B}(\tau,i))$.
 \end{prob}
 Using the zeta map, we already have a bijection for such $n\times n$ parking functions $(\gamma,w)$ such that $\R(\gamma,w)\in \{n-d+1,\ldots,n\}\shuffle\{n-d,\ldots,1\}$. The idea here is to “extend” that bijection, with multiplicity. We need the multiplicity because there are more than just the parking functions $(\gamma,w)$ such that $\R(\gamma,w)\in \{n-d+1,\ldots,n\}\shuffle\{n-d,\ldots,1\}$, that contribute to $\nabla (e_n)$, seen as a sum of parking functions. The said paths are merely representatives. If the solution to Problem~\ref{Prob : solution conj} is indeed an extension of $\zeta\circ\mathcal{Q}_{n,d}$ this would  solve Conjecture~\ref{Conj : nabla(e_n)}.
 
As mentioned in Section~\ref{Sec : inclu-exclu} the insight coming from multivariate diagonal harmonics foresees a solution to the problem:
\begin{prob} Find a general map $\rho_{n,d}^{(i)}$ that partitions $\schP_{n,d,(i)}$.
\end{prob}  
This generalization would lead to more results on combinatorial formulas for multivariate diagonal harmonics, like the one in \cite{[Wal2019d]}. Actually, any explicit decomposition in terms of Schur functions of parking functions could be lifted with the tools discussed in \cite{[Wal2019a]} (the long version of \cite{[Wal2019d]}) and give an explicit combinatorial formula for a partial Schur decomposition of the multivariate diagonal harmonics.
   Finally, Proposition~\ref{Prop : 1 part parking} suggests a bijection between permutations and tableaux with a multiplicity such that $\binom{n}{2}-\dd(w)n+\m(w)=\m(\tau)$. Research on this last problem could lead to a decomposition of $\nabla^m(e_n)$ altogether. Since it further our knowledge of how fundamental quasi-symmetric functions index by permutations relate to Schur functions.
   \begin{prob}Find a combinatorial proof of Proposition~\ref{Prop : 1 part parking}.
   \end{prob}
\section*{Acknowledgments} 
Thank you to my advisor, Fran\c cois Bergeron, for introducing me to this beautiful subject and other helpful comments.
\bibliographystyle{alpha}
\bibliography{bibliographie}
\end{document}